\documentclass[12pt, reqno]{amsart}

\usepackage[a4paper,top=3cm,bottom=2.5cm,left=2.75cm,right=2.75cm,marginparwidth=1.75cm]{geometry}

\usepackage{amsmath}
\usepackage{amsthm}
\usepackage{amsfonts}
\usepackage{amssymb}
\usepackage{mathtools}
\usepackage[mathscr]{euscript}
\usepackage{bm}
\usepackage[table]{xcolor}
\usepackage[all,cmtip]{xy}
\usepackage{enumitem}
\usepackage{tikz}  
\usepackage{tikz-cd}
\usepackage{graphicx}
\usepackage{scalerel}
\usepackage{microtype}
\usepackage{comment}

\usepackage{verbatim}
\usepackage{libertine}

\usepackage[pdftex,
              pdfauthor={},
              pdftitle={},
              pdfsubject={},
              pdfkeywords={},
              colorlinks=true, 
              citecolor=citation, 
              urlcolor=citation, 
              linkcolor=reference]{hyperref}

\usepackage{cleveref} 

\usepackage[T2A,T1]{fontenc}
\DeclareSymbolFont{cyrillic}{T2A}{cmr}{m}{n}
\DeclareMathSymbol{\Sha}{\mathalpha}{cyrillic}{216}

\makeatletter 
\@mparswitchfalse%
\makeatother
\normalmarginpar 

\usepackage{comment}
\usepackage{array}
\usepackage{tikz-cd}

\definecolor{citation}{rgb}{0,.40,.80}
\definecolor{reference}{rgb}{.80,0,.40}
\definecolor{todogray}{HTML}{E4E4E4}

\numberwithin{equation}{section}

\theoremstyle{plain}
\newtheorem{theorem}{Theorem}[section]
\newtheorem{lemma}[theorem]{Lemma}
\newtheorem{proposition}[theorem]{Proposition}

\newtheorem{question}[theorem]{Question}

\theoremstyle{definition}
\newtheorem{definition}[theorem]{Definition}

\newtheorem{remark}[theorem]{Remark}

\newtheorem*{remark*}{Remark}

\newcommand{\Db}{\mathrm{D^b}}

\newcommand{\Gr}{\mathrm{Gr}}

\DeclareMathOperator{\Pic}{Pic}
\DeclareMathOperator{\NS}{NS}

\newcommand{\functionstar}[4]{
\begin{array}{rcl} #1 &\longrightarrow &#2 \\ #3&\longmapsto &#4 \end{array}
}

\newcommand{\function}[5]{\begin{array}{lrcl} 
#1: & #2 & \longrightarrow & #3 \\
    & #4 & \longmapsto & #5 \end{array}}

\DeclareMathOperator{\Hom}{Hom}

\DeclareMathOperator{\Ext}{Ext}
\DeclareMathOperator{\Aut}{Aut}

\DeclareMathOperator{\Bl}{Bl}

\newcommand{\id}{\mathrm{id}}

\newcommand{\sgn}{\mathrm{sgn}}
\newcommand{\Lts}{\ensuremath{L_{2\text{-}6}}}
\newcommand{\Lte}{\ensuremath{L_{2\text{-}8}}}
\newcommand{\Lto}{\ensuremath{L_{3\text{-}1}}}
\newcommand{\Ato}{\ensuremath{A_{3\text{-}1}}}

\DeclareMathOperator{\FM}{FM}

\DeclareMathOperator{\Jac}{J}

\DeclareMathOperator{\Br}{Br}

\newcommand{\disc}{\mathrm{disc}}

\newcommand{\rk}{\mathrm{rk}}

\docsvlist{A,B,C,D,E,F,G,H,I,J,K,L,M,N,O,P,Q,R,S,T,U,V,W,X,Y,Z} 

\docsvlist{A,B,C,D,E,F,G,H,I,J,K,L,M,N,O,P,Q,R,S,T,U,V,W,X,Y,Z} 

\newcommand{\bC}{\mathbf{C}}

\newcommand{\bZ}{\mathbf{Z}}
\newcommand{\bP}{\mathbf{P}}
\newcommand{\bQ}{\mathbf{Q}}
\newcommand{\bR}{\mathbf{R}}
\newcommand{\bL}{\mathbf{L}}

\usepackage[numbers]{natbib}
\begin{document}

\title[Categorical Torelli Theorems for higher Picard rank Fano Double Covers]{Categorical Torelli Theorems for higher Picard rank Fano Double Covers}

\author[Augustinas Jacovskis and Reinder Meinsma]{Augustinas Jacovskis and Reinder Meinsma}
\address{AJ: Nacionalinis Fizinių ir Technologijos Mokslų Centras, 
Saulėtekio al. 3, Vilnius, 10257, Lithuania}
\email{\texttt{augustinas.jacovskis@ff.vu.lt}}
\address{RM: Fakultät für Mathematik und Informatik,
Universität des Saarlandes,
Campus E2.4, 66123 Saarbrücken, Germany
}
\email{\texttt{meinsma@math.uni-sb.de}}

\begin{abstract}
We prove categorical Torelli theorems for four families of Fano double covers with Picard rank $\rho >1$. Among these is the family of Verra fourfolds. The other three families manifest as double covers of Fano threefolds, branched in anticanonical K3 surfaces. For the three families of threefolds, our proof is based on reducing equivalences between the Kuznetsov components of Fano threefolds in the same deformation family to derived equivalences of their respective K3 branch divisors, and deducing that the resulting isomorphism of branch divisors gives rise to an isomorphism of the Fano threefolds for each family. For Verra fourfolds, we show that an equivalence of their Kuznetsov components induces an isomorphism of the branch divisors using the theory of $2$-torsion Brauer classes on K3 surfaces.
\end{abstract}

\maketitle

\tableofcontents

\section{Introduction}
\addtocontents{toc}{\protect\setcounter{tocdepth}{1}}
\renewcommand{\thetheorem}{\Alph{theorem}}

Let $\Db(X)$ be the bounded derived category of coherent sheaves on a variety $X$. The notion of a \emph{subcategory} of $\Db(X)$ determining $X$ is known as a \emph{categorical Torelli theorem}. In the setting of smooth Fano threefolds of Picard rank $1$, the question has been settled for almost all deformation families; see \cite{PS} for a survey. The subcategory in question in this context is known as the \emph{Kuznetsov component} (see e.g. \cite{KuzSOD}). 

In the present paper, we prove categorical Torelli theorems for four deformation families of Picard rank $ \rho \coloneqq \rk \Pic X > 1$ Fano varieties over the field of the complex numbers $\bC$. The members of the families are given as double covers $f \colon X \to Y$ branched in a divisor $Z$, where 

\begin{enumerate}
    \item \textbf{Family 2-6(b):} $Y$ is a divisor of $\bP^2 \times \bP^2$ of bidegree $(1,1)$ and $Z$ is an anticanonical divisor. These are also known as \emph{special Verra threefolds};
    \item \textbf{Family 2-8:} $Y = \Bl_p \bP^3$ and $Z$ is an anticanonical divisor, such that the intersection of $Z$ and the exceptional divisor $\iota \colon E \to Y$ is smooth;
    \item \textbf{Family 3-1:} $Y= \bP^1 \times \bP^1 \times \bP^1$ and $Z$ is a divisor of tridegree $(2,2,2)$;
    \item \textbf{Verra fourfolds:} $Y = \bP^2 \times \bP^2$ and $Z$ is an \textit{ordinary Verra threefold}, that is, $Z$ is a divisor of bidegree $(2,2)$ in $Y$.
\end{enumerate}

The Kuznetsov components $\cA_X$ of the four families can be defined as follows.
The base $Y$ of a Fano threefold $X$ in Family 2-6(b) can be equivalently expressed as the projective bundle $Y\simeq \bP(T_{\bP^2})$, and the derived category of $X$ has the following semiorthogonal decomposition (see (\ref{eq: 2-6(b) Kuznetsov component}))
 \begin{equation*}
     \Db(X) = \langle \cA_X, \pi^* \Db(\bP^2) \rangle
 \end{equation*}
 where $\pi \colon \bP(T_{\bP^2}) \to \bP^2$ is the projection. 
 
 The Fano threefolds of Family 2-8 have semiorthogonal decompositions (see Lemma \ref{lem:blow_up_sod})
\begin{equation*}
    \Db(X) = \langle \cA_X, \cO_X(-H), \cO_X(-E), \cO_X  \rangle,
\end{equation*}
where $H$ and $E$ are the class of the hyperplane and exceptional divisor on $Y$, respectively. 

The Fano threefolds of Family 3-1 have semiorthogonal decompositions (see Lemma \ref{lem:3-1 A_X SOD})
\begin{equation*}
    \Db(X) = \langle \cA_X, \cO_X(0,0,0), \cO_X(1,0,0), \cO_X(0,1,0) , \cO_X(0,0,1) \rangle ,
\end{equation*}
where e.g. $\cO_X(1,0,0) = \cO_{\bP^1}(1) \boxtimes \cO_{\bP^1} \boxtimes \cO_{\bP^1}$. 

Finally Verra fourfolds have the following semiorthogonal decomposition (see Section \ref{sec: Verra fourfolds}): 
\begin{equation*}
    \Db(X) = \langle \cA_X, \cO_{X}(-1,0), \cO_{X}(0,0) , \cO_{X}(1,0) ,
        \cO_{X}(0,1), \cO_{X}(1,1) , \cO_{X}(2,1) \rangle.
\end{equation*}

The subcategories $\cA_X$ in each case are defined as the right orthogonals to the exceptional collections on their right hand sides, and these subcategories are known as the \emph{Kuznetsov components} of each $X$, respectively. Our main theorems are as follows:

\begin{theorem}[= Theorem \ref{thm: main theorem}] \label{thm: intro main theorem}
    Let $X$ and $X'$ be very general Fano threefolds both of deformation type either 2-6(b), 2-8, or 3-1. Then an equivalence of Kuznetsov components $\cA_X \simeq \cA_{X'}$ 
    implies an isomorphism between $X$ and $X'$. 
\end{theorem}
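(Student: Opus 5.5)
The plan follows the strategy outlined in the abstract: recover the K3 branch surface $Z$ from $\cA_X$, then recover $X$ from $Z$.

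\textbf{Step 1: equivariant structure.} The covering involution $\tau$ of $f\colon X\to Y$ preserves each exceptional collection defining $\cA_X$, because all of its objects are pulled back from $Y$. Hence $\tau$ acts on $\cA_X$. By the equivariant-category results of Kuznetsov--Perry for double covers, the $\bZ/2$-equivariant category $(\cA_X)^{\bZ/2}$ decomposes as
\begin{itemize}
\item the corresponding component of $\Db(Y)$,
\item together with a copy of $\Db(Z)$.
\end{itemize}
Alternatively, one can use the rotation functor together with the residual $\bZ/2$-action to produce the subcategory $\Db(Z)$.

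\textbf{Step 2: transport the equivalence.} The aim is to show that an equivalence $\Phi\colon\cA_X\simeq\cA_{X'}$ intertwines the involutions. The key point is that $\tau$ should be intrinsic to $\cA_X$, up to shift and twist by an autoequivalence; for instance, the Serre functor of $\cA_X$ ought to be expressible through $\tau$. Once $\Phi$ commutes with the $\bZ/2$-actions, it induces an equivalence of equivariant categories and hence, after matching the exceptional pieces, an equivalence $\Db(Z)\simeq\Db(Z')$.

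A possible alternative route goes through Hodge theory. The numerical Grothendieck group, or the Mukai lattice of $\cA_X$, contains $H^3(X)$ and remembers $\Pic Z$ via the pulled-back classes. This would show that $Z$ and $Z'$ are Fourier--Mukai partners.

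\textbf{Step 3: from derived equivalence to isomorphism.} For very general $X$, the branch surface $Z$ is a very general member of the relevant lattice-polarized family of K3 surfaces, with $\Pic Z=\Pic Y$ restricted to $Z$. The relevant ranks are $2$, $2$ and $3$ for Families 2-6(b), 2-8 and 3-1 respectively. For such lattices one counts Fourier--Mukai partners via Oguiso's / Hosono--Lian--Oguiso--Yau's formula. Because the polarizing lattices here contain hyperbolic-plane-like pieces or have small discriminant, the count should give exactly one partner, so that $Z\cong Z'$.

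\textbf{Step 4: recover $X$.} The isomorphism $Z\cong Z'$ must then be shown to come from an automorphism of $Y$. For very general $Z$, $\Aut$ and the ample cone of $Z$ are controlled by $\Pic Z$. The isomorphism preserves the nef cone, so it permutes the pulled-back nef generators, namely the two or three projections, or $H$ and $H-E$. Consequently it extends to an automorphism of $Y$, possibly after composing with a factor swap, which is allowed. Concretely, the linear systems recover the embedding $Z\subset Y$:
\begin{itemize}
\item for 3-1, via the three elliptic fibrations;
\item for 2-8, via the maps to $\bP^3$ and $\bP^2$;
\item for 2-6(b), via the two maps to $\bP^2$.
\end{itemize}
Since $X$ is the double cover of $Y$ branched along $Z$, it follows that $X\cong X'$.

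\textbf{Main obstacle.} I expect Step 2 to be the hardest: showing that an arbitrary equivalence of Kuznetsov components yields an equivalence of the K3 surfaces, rather than merely an equivalence of some twisted or partial data. My first attempt would be to characterize $\tau$ via the Serre functor $S_{\cA_X}$, since $S_{\cA_X}^{2}$ or $S_{\cA_X}\circ[-2]$ should be $\tau$ composed with a shift. Failing that, I would fall back on the lattice-theoretic route, which is where the very general hypothesis enters.
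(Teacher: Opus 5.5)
Your architecture matches the paper's. You lift $\Phi$ to the $\mu_2$-equivariant categories using $S_{\mathcal{A}_X}\simeq\tau_{\mathcal{A}_X}[2]$; your guess is exactly the isomorphism the paper uses, obtained from \cite{IK} because $\langle\underline{\mathcal{E}}(-D),\underline{\mathcal{E}}\rangle$ is a full exceptional collection on $Y$ with $-K_Y=2D$, together with \cite[Lemma 6.2]{DJR}. You then identify $\mathcal{A}_X^{\mu_2}\simeq\Db(Z)$, rule out Fourier--Mukai partners, and recover $X$ from $Z$. The step that fails as written is Step 4. An isomorphism $\phi$ does preserve $\mathrm{Nef}(Z)$, but $\mathrm{Nef}(Z)$ is strictly larger than the restriction of $\mathrm{Nef}(Y)$ and is not spanned by the restricted classes. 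So nothing forces $\phi^*h_i'\in\{h_j\}$.
\begin{itemize}
\item For $\Lts$ there are no $(-2)$-classes, so $\mathrm{Nef}(Z)$ is the closed positive cone, whose boundary directions $a/b=-2\pm\sqrt3$ (for $ah_1+bh_2$) are irrational. Thus $h_1,h_2$ are interior, and the covering involution of $f_{h_1}$ already sends $h_2$ to the ample square-$2$ class $4h_1-h_2$.
\item For $\Lto$ every square lies in $4\bZ$, so $\mathrm{Nef}(Z)$ is a round cone with infinitely many rational isotropic rays (e.g.\ $2h_1+2h_2-h_3$), i.e.\ infinitely many elliptic fibrations.
\item For $\Lte$, the class $h-e$ is interior to $\mathrm{Nef}(Z)$ and has square $2\neq 4=h^2$, so it cannot be permuted with $h$. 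The second boundary ray is $3h-4e$, orthogonal to the effective $(-2)$-class $2h-3e$, so a priori $\phi^*h'$ may be $3h-4e$.
\end{itemize}

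The missing idea is to argue modulo $\Aut(Z)$, using the discriminant group and the global Torelli theorem. Show that every ample class $x$ of square $2$ satisfies $\tfrac{1}{2}x\in\{\tfrac{1}{2}h_1,\tfrac{1}{2}h_2\}$ in $A_{\NS(Z)}$. For $\Lto$, show instead that every primitive nef isotropic class generates one of the only three isotropic order-$2$ subgroups $\langle\tfrac{1}{2}h_i\rangle$ of $A_{\Lto}$. Then build $\sigma\in O(\NS(Z))$ preserving the ample cone with $\sigma(x)=h_i$ and $\overline{\sigma}=\pm\id$, extend it by $\pm\id_{T(Z)}$, and realise it by an automorphism of $Z$. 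This is how the paper handles 2-6(b) and 3-1. For 2-8 the same device applies to the isometry $h\leftrightarrow 3h-4e$, $e\leftrightarrow 2h-3e$, which has $\overline{\sigma}=-\id$. Only then do the linear systems recover $Z\subset Y$; for 2-6(b) you still need that the branch sextic of one projection pins down $Z$ inside $Y$.

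Two other steps are thinner than you indicate.
\begin{itemize}
\item In Step 1, $\mathcal{A}_X^{\mu_2}$ has no leftover piece from $\Db(Y)$: it is exactly $\Db(Z)$. ``Matching the exceptional pieces'' would be a cancellation that is not available in general. Kuznetsov--Perry applies directly only to 2-6(b). For 2-8 and 3-1 you must compare the two decompositions of $\Db(X)^{\mu_2}$: move $f_0^*\underline{\mathcal{E}}(-D)$ to the right end with the inverse Serre functor $-\otimes\mathcal{O}_X(D)\rho_1$ (up to shift), which turns it into $f_1^*\underline{\mathcal{E}}$, and then left-mutate $j_{0*}\Db(Z)$ through $f_0^*\underline{\mathcal{E}}$.
\item In Step 3 your heuristic does not reach 3-1. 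Here $\Lto\simeq U(2)\oplus\langle -4\rangle$ contains no copy of $U$, and $A_{\Lto}\simeq(\bZ/2\bZ)^2\oplus\bZ/4\bZ$ has length equal to the rank. So neither the section criterion nor Nikulin's uniqueness criterion applies. The paper instead uses the relative-Jacobian and Brauer-class argument of \cite{MS24}, fed by the fact that $A_{\Lto}$ has exactly three isotropic subgroups of order $2$ and none of order $4$. For 2-6(b) and 2-8 the counting formula does work, but only after proving uniqueness in the genus and $O(A_{\NS(Z)})=\{\pm\id\}$.
\end{itemize}
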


In the statement of the theorem later in the paper, we use a more precise notion of generality (see Definition \ref{def: Z general}). 

\begin{theorem}[= Theorem \ref{thm: categorical torelli for verra fourfolds}]\label{thm: categorical torelli for verra fourfolds INTRO}
    Let $V_4$ and $V_4'$ be very general Verra fourfolds. Then an equivalence of Kuznetsov components $\cA_{V_4} \simeq \cA_{V_4'}$ 
    implies an isomorphism between $V_4$ and $V_4'$.
\end{theorem}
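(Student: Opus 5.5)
The statement concerns Verra fourfolds $V_4 \to \bP^2\times\bP^2$, double covers branched in a $(2,2)$-divisor $Z$. The plan is to pass from the Kuznetsov component to a twisted K3 category, then to the branch divisor $Z$, and finally to $V_4$ itself.

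First I would identify $\cA_{V_4}$ with a twisted derived category of a K3 surface. Projecting $V_4 \to \bP^2$ via the first factor realises $V_4$ as a fibration in quadric surfaces over $\bP^2$. Each fibre is the double cover of a fibre $\bP^2$ branched in a conic, i.e. a quadric in $\bP^3$. Its discriminant is a sextic curve $C_1\subset\bP^2$, and the associated double cover $S_1\to\bP^2$ branched along $C_1$ is a degree $2$ K3 surface. By Kuznetsov's theory of quadric fibrations, $\Db(V_4)$ contains $\Db(\bP^2,\mathcal{B}_0)$, the derived category of the even Clifford algebra, and this is equivalent to $\Db(S_1,\alpha_1)$ for a Brauer class $\alpha_1\in\Br(S_1)[2]$. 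Mutating the exceptional objects, I would identify $\cA_{V_4}\simeq\Db(S_1,\alpha_1)$. The second projection gives similarly $\cA_{V_4}\simeq\Db(S_2,\alpha_2)$.

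Second, an equivalence $\cA_{V_4}\simeq\cA_{V_4'}$ now becomes a twisted derived equivalence $\Db(S_1,\alpha_1)\simeq\Db(S_1',\alpha_1')$. For very general $V_4$ the surface $S_1$ has Picard rank $1$, with $\NS(S_1)=\bZ h$ and $h^2=2$. By Canonaco–Stellari and Huybrechts–Stellari, such an equivalence gives a Hodge isometry of twisted transcendental lattices $T(S_1,\alpha_1)\simeq T(S_1',\alpha_1')$. Two-torsion Brauer classes on a degree $2$ K3 of Picard rank $1$ are classified by van Geemen into a few types, distinguished by the discriminant and parity of the index-$2$ sublattice $\ker(\alpha)\subset T(S)$. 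I would determine which type $\alpha_1$ is, presumably via the geometry of the quadric fibration, with lines in fibres giving the double cover. Then I would count the overlattices and use that $\operatorname{O}(T)=\{\pm1\}$ for very general $S$. The goal is to conclude that $T(S_1)\simeq T(S_1')$ Hodge isometrically and that $\alpha_1'$ corresponds to $\alpha_1$, so that $S_1\simeq S_1'$ by the Torelli theorem for K3 surfaces, given rank $1$ and the uniqueness of the polarisation. Controlling this lattice step, including excluding Fourier–Mukai partners and mismatched Brauer classes, is where I expect the main obstacle.

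Third, I need to recover $V_4$ from $(S_1,\alpha_1)$ and $(S_2,\alpha_2)$. The threefold $Z$ is determined as a conic bundle, or dually as a family of conics over $\bP^2$, and a $2$-torsion Brauer class on a degree $2$ K3 of Picard rank $1$ corresponds to a net of quadrics, hence to a Verra-type presentation. Concretely, I would use van Geemen's description: the pair $(S_1,\alpha_1)$ with $\alpha_1$ of the appropriate type determines a $(2,2)$-divisor in $\bP^2\times\bP^2$ up to $\Aut(\bP^2\times\bP^2)$. The equivalence should respect the first or second projection, possibly after swapping the factors. Reconstructing $Z$, and hence $V_4$ as the double cover branched along $Z$, then gives $V_4\simeq V_4'$. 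If the direct reconstruction fails, I would instead show that $(S_1,\alpha_1)$ determines the Clifford algebra $\mathcal{B}_0$ on $\bP^2$, and hence the quadric fibration $V_4\to\bP^2$ up to isomorphism.
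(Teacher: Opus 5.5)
\textbf{Gap: step 2 aims at a false conclusion, and a key lemma for step 3 is missing.}

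Your steps 1 and 3 broadly follow the paper's route. Step 2, however, aims at something false. The twisted K3 surface $(S_1,\alpha_1)$ has a genuine non-trivial Fourier--Mukai partner, namely the pair $(S_2,\alpha_2)$ coming from the other projection:
\[
\Db(S_1,\alpha_1)\simeq\mathcal{A}_{V_4}\simeq\Db(S_2,\alpha_2).
\]
The paper uses \cite[Theorem 1.1]{ADPZ} to compare the two Kuznetsov components, so your ``similarly'' in step 1 also needs that input. At the same time, $S_1\not\simeq S_2$ for very general $V_4$ by \cite{KKM}.

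On the lattice side, $A_{T(S_1,\alpha_1)}\simeq(\bZ/2\bZ)^3$ with $q=\tfrac12x^2+yz$ has exactly two non-zero isotropic elements. These give two even index-$2$ overlattices; one is realised by $T(S_1)$ and the other by $T(S_2)$. So you cannot ``exclude Fourier--Mukai partners'' or force $T(S_1')\simeq T(S_1)$. Indeed, for a fixed labelling this fails already when $V_4'$ is $V_4$ with the factors of $\bP^2\times\bP^2$ swapped. With $O_{\mathrm{Hodge}}=\{\pm\mathrm{id}\}$ (automatic here, since $\rho=1$ is odd), lattice theory only gives $|\FM(S_1,\alpha_1)|\le 2$; this is the paper's lemma via Ma's counting formula. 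Combined with $S_1\not\simeq S_2$, this yields
\[
\FM(S_1,\alpha_1)=\{(S_1,\alpha_1),(S_2,\alpha_2)\},
\]
and $(S_1',\alpha_1')$ could be either of the two.

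What closes the argument despite this ambiguity is a lemma your proposal lacks. Both $\alpha_1$ and $\alpha_2$ are of Verra type, and the Verra threefold each determines is the branch locus $V_3$ of $V_4$ itself. The paper proves this at the generic point with Clifford algebras. Let $K=\bC(\bP^2)$ and $L=\bC(S_i)$. The generic fibre of the $i$-th quadric surface fibration is $q_4=\langle 1\rangle\oplus q_3$, where $q_3$ is the generic conic of $V_3\to\bP^2$. One then has
\[
\mathcal{B}_0(q_4)\simeq\mathcal{B}(q_3)\simeq\mathcal{B}_0(q_3)\otimes_K L,
\]
so $\alpha_i=\phi_h^*[\mathcal{B}_0(q_3)]$.

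Your step 3 reconstructs some $(2,2)$-divisor from $(S_1,\alpha_1)$ via van Geemen. Without this lemma you know neither that it is the branch divisor of $V_4$, nor that the other partner $(S_2,\alpha_2)$ returns the same divisor. That is exactly what your phrase ``possibly after swapping the factors'' silently requires. With the lemma, whichever partner $(S_1',\alpha_1')$ turns out to be, you get $V_3'\simeq V_3$. Then, since $H_1,H_2$ are the only nef primitive isotropic classes for the cubic form $6ab(a+b)$, the isomorphism respects the two projections. It therefore extends to $\bP^2\times\bP^2$, giving $V_4\simeq V_4'$. Your fallback of recovering the quadric fibration from $\mathcal{B}_0$ is not needed, and would itself require an argument you have not supplied.
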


\subsection*{Sketch of the proof}
To prove Theorem \ref{thm: intro main theorem}, we descend the equivalence of Kuznetsov components to an equivalence of $\mu_2$-equivariant Kuznetsov components: $$\cA_X\simeq \cA_{X'} \implies \cA_X^{\mu_2} \simeq \cA_{X'}^{\mu_2},$$ where $\mu_2$ is generated by the canonical involution of the double cover. We show this by proving that in each case, we have an isomorphism of functors
\[
    S_{\cA_X} \simeq \tau_{\cA_X}[2],
\]
where $\tau$ denotes the pullback along the covering involution, and $S_{\cA_X}$ is the Serre functor of the Kuznetsov component (see Theorem \ref{thm: any equivalence lifts to equivariant categories}). This result uses the techniques from \cite{IK}.

In Theorem \ref{thm: equivariant kuznetsov components are the branch loci}, we deduce that the equivariant Kuznetsov component is equivalent to the derived category of the branch divisor: 
\[
\cA_{X}^{\mu_2}\simeq \Db(Z), \quad   \cA_{X'}^{\mu_2}\simeq \Db(Z')
\]
We establish Theorem \ref{thm: equivariant kuznetsov components are the branch loci} with a study of several semiorthogonal decompositions of the equivariant derived category $\Db(X)^{\mu_2}$ that arise from the geometry of $X$.

The next step is to study Fourier--Mukai partners of the branch divisors. Since the branch divisors are K3 surfaces, this can be done using the Derived Torelli Theorem for K3 surfaces \cite{Muk,OrlovK3}. The Derived Torelli Theorem was used in \cite{HLOY} and \cite{MS24} to establish counting formulas for Fourier--Mukai partners of K3 surfaces. We use these counting formulas in Theorem \ref{thm: the k3s have no FM partners} to show that the branch divisors have no non-trivial Fourier--Mukai partners:
\[
 \Db(Z) \simeq \Db(Z') \implies Z\simeq Z'.
\]

The final piece of the proof is Theorem \ref{thm: branch determines Fano}, which shows that the isomorphism class of the branch divisor $Z$ determines $X$ up to isomorphism:
\[
Z\simeq Z' \implies X\simeq X'.
\]
We prove this via an analysis of the geometry of the base $Y$ of the double cover, combined with a study of the Neron--Severi lattices of the branch divisors. We put this all together in Section \ref{sec:categorical torelli theorems}.

Our proof of Theorem \ref{thm: categorical torelli for verra fourfolds INTRO} skips a few of the above steps, as we can directly prove the following implication in Proposition \ref{prop: the derived category of the verra twisted K3 determines the Verra threefold}:
\[
\cA_{V_4} \simeq \cA_{V_4'}\implies V_3 \simeq V_3'.
\]
Here, $V_3$ and $V_3'$ denote the branch loci of the Verra fourfolds $V_4$ and $V_4'$, respectively. The proof of this implication uses the theory of $2$-torsion Brauer classes on K3 surfaces of degree $2$ developed by Van Geemen in \cite{vGe}. 

Similarly to the other three families, the next step of the proof is to show the following (see Proposition \ref{prop: branch determines cover V4}):
\[
V_3 \simeq V_3'\implies V_4 \simeq V_4'.
\]
This finishes the proof of Theorem \ref{thm: categorical torelli for verra fourfolds INTRO}. 

\subsection*{Relation to previous work}

A similar approach was considered for special Gushel--Mukai threefolds in \cite[Theorem 9.9]{JLLZ}. These are double covers of a linear section of the Grassmannian $\Gr(2,5)$ branched in a K3 surface. In the special Gushel--Mukai case, the equivariant Kuznetsov component is also equivalent to the derived category of the branch K3. However, since the Picard rank of a special Gushel--Mukai threefold is $1$, the counting of Fourier--Mukai partners, as well as showing that the branch divisor determines the cover is much simpler. The deformation families of Fano threefolds (and in particular their branch divisors) considered in the present paper have Picard rank greater than $1$, and the bases of two out of three of the families have non-rectangular Lefschetz decompositions. This necessitates a more subtle analysis. 

Fano threefolds of Picard rank $1$ manifesting as double covers with \emph{canonically polarised} branch divisors were considered in \cite{DJR}. Due to the equivariant Kuznetsov component containing (rather than being equivalent to) the derived category of the branch divisor in that context, the analogous equivalences of equivariant Kuznetsov components in those cases were reduced to Hodge isometries of the middle primitive cohomologies of the branch divisors. Classical Torelli theorems were then employed to yield the desired categorical Torelli theorems. 

\begin{remark*}
    There are two other deformation families of smooth Fano threefolds of Picard rank $\rho \geq 2$ which are listed in \cite{fanography} as double covers that we do not consider, since the methods of the present paper do not apply to them (see Section \ref{sec: remaining double covers}).
\end{remark*}

\subsection*{Notation and conventions} We work over the field of complex numbers $\bC$. All functors are assumed to be derived. 

\subsection*{Acknowledgements} We would like to thank Pieter Belmans for suggesting the case of higher Picard rank Fano threefolds, Sasha Kuznetsov for pointing out the reference \cite{IK}, and Francesco Denisi and Yuta Takada for interesting and helpful discussions. We thank Jason Starr for answering a question regarding Family 2-18. Finally, we warmly thank Benedetta Piroddi for many helpful discussions and interest in our work. AJ was supported by Research Council of Lithuania
Grant No. P-PD-25-081. This work is a contribution to Project A22 in SFB 195 No. 286237555 of DFG.

\setcounter{theorem}{0}
\renewcommand{\thetheorem}{\thesection.\arabic{theorem}}

\section{Background} \label{sec:background}
\subsection{Derived categories}

For objects $E, F$ in a triangulated category $\cC$, we write
\begin{equation*}
    \Hom^\bullet(E, F) \coloneqq \bigoplus_{i \in \bZ} \Ext^i(E, F)[-i] .
\end{equation*}
A subcategory $\cA \subset \cC$ is called \emph{admissible} if both the left and right adjoint (denoted $i^!$ and $i^*$, respectively) to the inclusion $i \colon \cA \hookrightarrow \cC$ exist. An object $E \in \cC$ is called \emph{exceptional} if $\Hom^\bullet(E,E) = \bC[0]$.

\begin{definition}
    We say $\cC = \langle \cA_1, \dots , \cA_n \rangle$ is a \emph{semiorthogonal decomposition} of a triangulated category $\cC$ if
    \begin{enumerate}
        \item $\Hom^\bullet(F, G) = 0$ for all $F \in \cA_i, G \in \cA_j$ if $i > j$;
        \item for any $F \in \cC$, there exists a sequence of morphisms
        \begin{equation*}
        0 = F_n \to F_{n-1} \to \cdots \to F_1 \to F_0 = F
        \end{equation*}
        such that $\mathrm{Cone}(F_i \to F_{i-1}) \in \cA_i$.
    \end{enumerate}
    When the subcategories $\cA_i$ are all exceptional objects satisfying condition (1) above, we call $\{ \cA_1, \dots , \cA_n \}$ an \emph{exceptional collection}. An exceptional collection in a category is called \emph{full} if it generates the category. 
\end{definition}

\begin{lemma} \label{lem:SODs and Serre functors}
    If $\cC = \langle \cA_1, \cA_2 \rangle $ is a semiorthogonal decomposition, then so are 
    \begin{equation*}
        \cC = \langle S_{\cC}(\cA_2), \cA_1 \rangle = \langle \cA_2, S_{\cC}^{-1}(\cA_1)  \rangle ,
    \end{equation*}
    where $S_{\cC}$ is the Serre functor of $\cC$.
\end{lemma}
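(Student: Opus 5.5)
The plan is to check the two axioms of a semiorthogonal decomposition directly: the vanishing condition comes from Serre duality, and the filtration condition comes from the functorial triangle that $\cC = \langle \cA_1, \cA_2\rangle$ already gives us. I treat $\langle S_{\cC}(\cA_2), \cA_1\rangle$ in detail. The other decomposition follows by the symmetric argument with $S_{\cC}^{-1}$. Alternatively, apply the first statement to the new decomposition and then apply the autoequivalence $S_{\cC}^{-1}$, which sends decompositions to decompositions.

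\textbf{Step 1 (semiorthogonality).} Since $S_{\cC}$ is an equivalence, $S_{\cC}(\cA_2)$ is a full triangulated subcategory. For $F \in \cA_1$ and $G \in \cA_2$, Serre duality gives
\[
\Hom^\bullet(F, S_{\cC}G) \simeq \Hom^\bullet(G,F)^\vee .
\]
The right-hand side vanishes by condition (1) for $\cC = \langle \cA_1, \cA_2\rangle$, since $G$ lies in the later component. This is exactly the required vanishing of $\Hom^\bullet$ from the second component $\cA_1$ to the first component $S_{\cC}(\cA_2)$.

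\textbf{Step 2 (filtration).} Let $F \in \cC$. The original decomposition gives a distinguished triangle
\[
F_2 \to F \to F_1 \to F_2[1],\qquad F_2\in\cA_2,\ F_1\in\cA_1 .
\]
So $\cA_1 = \cA_2^{\perp}$ (right orthogonal), and $\cA_2$ is left admissible with left adjoint $F \mapsto F_1$. I would use Step 1 to identify $\cA_1 = {}^{\perp}(S_{\cC}\cA_2)$: indeed $G \in {}^\perp(S_\cC \cA_2)$ if and only if $\Hom^\bullet(\cA_2, G)=0$ by Serre duality, that is, $G\in\cA_2^\perp=\cA_1$.

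It then remains to show that $\cA_1$ is right admissible, i.e. that its inclusion has a right adjoint. For this, apply $S_{\cC}$ to the triangle above for $S_{\cC}^{-1}F$. This yields
\[
S_{\cC}(G_2) \to F \to S_{\cC}(G_1) \to,
\]
with $S_{\cC}(G_2) \in S_{\cC}(\cA_2)$ and $S_{\cC}(G_1) \in S_{\cC}(\cA_1)$. This is the decomposition $\cC = \langle S_\cC\cA_1, S_\cC\cA_2\rangle$, not yet the one we want.

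To reach the desired form, I would instead use the standard criterion: if $\cB\subset\cC$ is right admissible, then $\cC = \langle \cB^{\perp}, \cB\rangle$. With $\cB = \cA_1$, we get $\cA_1^{\perp} = S_{\cC}(\cA_2)$. This identification holds because $\cA_1^\perp = S_\cC({}^\perp \cA_1)$ by Serre duality, and ${}^\perp\cA_1 = \cA_2$ from the original decomposition.

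\textbf{Main obstacle.} The delicate point is the admissibility of $\cA_1$, which requires knowing that $\cA_2$ is left admissible and that the Serre functor exists. The right adjoint of $\cA_1\hookrightarrow\cC$ should be constructed as
\[
S_{\cA_1}^{-1}\circ i_1^{*}\circ S_{\cC},
\]
where $i_1^*$ is the left adjoint coming from the original triangle. This requires $\cA_1$ to have a Serre functor. In the paper's setting that is automatic: $\cC = \Db(X)$ for smooth projective $X$, all components are admissible, and admissible subcategories inherit Serre functors. Granting this, the filtration for $\langle S_\cC\cA_2,\cA_1\rangle$ is given by the counit triangle of this right adjoint, and the proof is complete.
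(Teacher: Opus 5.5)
The paper states this lemma as standard background and gives no proof, and your outline is the standard argument. Serre duality gives $\Hom^\bullet(F,S_{\mathcal{C}}G)\simeq\Hom^\bullet(G,F)^\vee=0$ for $F\in\mathcal{A}_1$, $G\in\mathcal{A}_2$. The filtration then follows from $\mathcal{C}=\langle\mathcal{A}_1^{\perp},\mathcal{A}_1\rangle$ once $\mathcal{A}_1$ is right admissible, because $\mathcal{A}_1^{\perp}=S_{\mathcal{C}}({}^{\perp}\mathcal{A}_1)=S_{\mathcal{C}}(\mathcal{A}_2)$. All of that is correct.

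The last paragraph, which carries the real content, is wrong as written. The right adjoint is not $S_{\mathcal{A}_1}^{-1}\circ i_1^*\circ S_{\mathcal{C}}$. For $A\in\mathcal{A}_1$ and $F\in\mathcal{C}$,
\[
\Hom(i_1A,F)\simeq\Hom(S_{\mathcal{C}}^{-1}F,i_1A)^\vee\simeq\Hom(i_1^*S_{\mathcal{C}}^{-1}F,A)^\vee\simeq\Hom\bigl(A,S_{\mathcal{A}_1}i_1^*S_{\mathcal{C}}^{-1}F\bigr),
\]
so $i_1^!\simeq S_{\mathcal{A}_1}\circ i_1^*\circ S_{\mathcal{C}}^{-1}$. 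Your expression follows the pattern $i^*\simeq S_{\mathcal{A}}^{-1}\circ i^!\circ S_{\mathcal{C}}$, which turns a right adjoint into a left one, and is not adjoint to $i_1$ in general. Relatedly, the sentence ``$\mathcal{A}_2$ is left admissible with left adjoint $F\mapsto F_1$'' mislabels the adjoints. The original triangle supplies exactly $i_2^!\colon F\mapsto F_2$ and $i_1^*\colon F\mapsto F_1$. The new decomposition needs precisely the adjoint that is not supplied, namely $i_1^!$ (equivalently $i_2^*$).

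Second, your source for $S_{\mathcal{A}_1}$ is circular. You obtain it from admissibility of $\mathcal{A}_1$, but admissibility already includes the existence of $i_1^!$, so nothing is being constructed. If you grant admissibility, conclude $\mathcal{C}=\langle\mathcal{A}_1^{\perp},\mathcal{A}_1\rangle$ directly. The corrected formula is only useful when $S_{\mathcal{A}_1}$ is known for independent reasons.

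Some input of this kind is genuinely needed. The first two isomorphisms in the display show that, for a general $\mathcal{C}$ with a Serre functor, right admissibility of $\mathcal{A}_1$ is equivalent to representability of all the functors $\Hom_{\mathcal{A}_1}(B,-)^\vee$, i.e.\ to $\mathcal{A}_1$ having a (right) Serre functor. This is extra input, not automatic. In the paper it is harmless, but note that the lemma is applied in $\Db(X)^{\mu_2}$, not $\Db(X)$. There $\mathcal{A}_1$ is generated by an exceptional collection, hence admissible in the Ext-finite category $\Db(X)^{\mu_2}$.

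Finally, the second decomposition is just $S_{\mathcal{C}}^{-1}$ applied to $\langle S_{\mathcal{C}}(\mathcal{A}_2),\mathcal{A}_1\rangle$. Your ``alternatively'' applies the first statement once more before $S_{\mathcal{C}}^{-1}$, and that only gives back $\langle\mathcal{A}_1,\mathcal{A}_2\rangle$.
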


\begin{definition}
    Let $\cA \subset \cC$ be a subcategory. Define the \emph{right orthogonal} $\cA^\perp$ of $\cA$ inside $\cC$ to be the subcategory
    \begin{equation*}
        \cA^\perp \coloneqq \{  F \in \cC \mid \Hom^\bullet(G, F) = 0 \text{ for all } G \in \cA  \}
    \end{equation*}
    and the \emph{left orthogonal} $\cA^\perp$ of $\cA$ inside $\cC$ to be the subcategory
    \begin{equation*}
        {}^\perp\cA \coloneqq \{  F \in \cC \mid \Hom^\bullet(F, G) = 0 \text{ for all } G \in \cA  \} .
    \end{equation*}
\end{definition}

\begin{definition}
        Let $i \colon \cA \hookrightarrow \cC$ be an admissible subcategory, and $F \in \cC$ an object. We define the 
    \emph{left mutation of $F$ along $\cA$}, denoted $\bL_\cA F$, by the triangle 
    \begin{equation*}
        ii^!F \to F \to \bL_\cA F
    \end{equation*}
    and the \emph{right mutation of $F$ along $\cA$}, denoted $\bR_\cA F$, by the triangle 
    \begin{equation*}
        \bR_\cA F \to F \to ii^* F .
    \end{equation*}
    In particular, when the admissible subcategory $\cA$ is a single exceptional object $E$, the triangles defining the left and right mutations along $E$ become 
    \begin{equation*}
        E \otimes \Hom^\bullet(E, F) \to F \to \bL_E F
    \end{equation*}
    and
    \begin{equation*}
        \bR_E F \to F \to E \otimes \Hom^\bullet(F, E)^\vee,
    \end{equation*}
    respectively.
\end{definition}

\begin{definition}\label{def: rectangular lefschetz decomposition}
    Let $X$ be a smooth projective variety, and let $H$ be an ample divisor of $X$. The derived category $\Db(X)$ is said to have a \emph{rectangular Lefschetz decomposition} of length $n$ if it admits a semiorthogonal decomposition
    \begin{equation*}
        \Db(X) = \langle \cA, \cA(1), \dots, \cA(n)  \rangle 
    \end{equation*}
    where $\cA \subset \cC$ is an admissible subcategory, and $\cA(i) \coloneqq \cA \otimes \cO_X(iH)$.
\end{definition}

\subsection{Equivariant categories}

For this section, we mostly follow \cite[Section 3]{KP}.

\begin{definition}
    Let $\cC$ be a category, and $G$ a finite group. An \emph{action of $G$ on $\cC$} is the following data:
    \begin{enumerate}
        \item For every $g \in G$, and autoequivalence $g^* \colon \cC \xrightarrow{\sim} \cC$;
        \item For every $g, h \in G$, and isomorphism of functors $c_{g,h} \colon (gh)^* \xrightarrow{\sim} h^* \circ g^*$, such that the diagram
        \begin{equation*}
\begin{tikzcd}
	{(fgh)^*} & {h^* \circ (fg)^*} \\
	{(gh)^* \circ f^*} & {h^* \circ g^* \circ f^*}
	\arrow["{c_{fg, h}}", from=1-1, to=1-2]
	\arrow["{c_{f, gh}}"', from=1-1, to=2-1]
	\arrow["{h^* c_{f,g}}", from=1-2, to=2-2]
	\arrow["{c_{g,h} f^*}", from=2-1, to=2-2]
\end{tikzcd}
        \end{equation*}
        commutes for all $f,g,h \in G$.
    \end{enumerate}
\end{definition}

\begin{definition}
    Let $G$ be an action on $\cC$. A \emph{$G$-equivariant object} of $\cC$ is a pair $(F, \phi)$, where $F$ is an object of $\cC$ and $\phi = \{ \phi_g \}_{g \in G}$ is a collection of isomorphisms $\phi_g \colon F \to g^* F$ for $g \in G$, such that the diagram
    \begin{equation*}
        \begin{tikzcd}
	F & {h^*(F)} & {h^*(g^*(F))} \\
	&& {(gh)^*(F)}
	\arrow["{\phi_h}", from=1-1, to=1-2]
	\arrow["{\phi_{gh}}"', from=1-1, to=2-3]
	\arrow["{h^*(\phi_g)}", from=1-2, to=1-3]
	\arrow["{c_{g,h}(F)}"', from=2-3, to=1-3]
\end{tikzcd}
    \end{equation*}
    commutes for all $g, h \in G$. The \emph{$G$-equivariant category} $\cC^G$ of $\cC$ is the category of $G$-equivariant objects of $\cC$.
\end{definition}

If $\cC$ is the derived category of a variety $X$, or if $\cC$ is a semiorthogonal component of $\Db(X)$, then $\cC^G$ is also triangulated. 

\begin{theorem}[{\cite[Theorem 6.3]{ElaginDescent}}] \label{thm:G action commutes through SOD}
    Let $X$ be a quasi-projective variety with an action of a finite group $G$. Assume that $\Db(X) = \langle \cA_1, \dots, \cA_n \rangle$ is a semiorthogonal decomposition preserved by $G$, i.e. each $\cA_i$ is preserved by the action of $G$. Then there is a semiorthogonal decomposition
    \begin{equation*}
        \Db(X)^G = \langle \cA_1^G, \dots, \cA_n^G \rangle.  
    \end{equation*}
\end{theorem}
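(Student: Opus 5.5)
The plan is to build the semiorthogonal decomposition of $\Db(X)^G$ directly from the one on $\Db(X)$, using the fact that the projection functors onto the components are canonically $G$-equivariant. Write $\alpha_i \colon \cA_i \hookrightarrow \Db(X)$ for the inclusions. First, since each $g^*$ is an autoequivalence preserving every $\cA_i$, the restrictions $g^*|_{\cA_i}$ together with the restricted $c_{g,h}$ give a $G$-action on each $\cA_i$. Hence $\cA_i^G$ makes sense, and it embeds fully faithfully into $\Db(X)^G$ as the full subcategory of equivariant objects $(F,\phi)$ with $F\in\cA_i$.

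Semiorthogonality comes from the averaging formula for morphisms:
\[
\Hom_{\Db(X)^G}((F,\phi),(F',\phi')) = \Hom_{\Db(X)}(F,F')^G ,
\]
which holds because $|G|$ is invertible in $\bC$. If $F\in\cA_i$, $F'\in\cA_j$ with $i>j$, the right-hand side is zero, and the same holds for all shifts. So condition (1) of the definition holds for $\langle \cA_1^G,\dots,\cA_n^G\rangle$.

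Generation is the main step. Given $(F,\phi)$, I would take the canonical filtration $0=F_n\to\cdots\to F_0=F$. By uniqueness of semiorthogonal filtrations, this filtration is functorial: each $F_k$ is obtained by applying iterated adjoints of the $\alpha_i$, for instance $F_{1}\to F$ is the counit of the right adjoint to the inclusion of $\langle\cA_2,\dots,\cA_n\rangle$. Because $g^*$ preserves every $\cA_i$, it commutes with these adjoints up to canonical isomorphism. Concretely, $g^*$ carries the filtration of $F$ to a filtration of $g^*F$ with graded pieces in the correct components, so by uniqueness it is the canonical filtration of $g^*F$. Transporting $\phi_g$ along this identification gives isomorphisms $F_k\to g^*F_k$. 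The cocycle condition follows from the cocycle condition for $\phi$ together with the uniqueness (full faithfulness) of the maps between filtrations. The cones then inherit equivariant structures as well: although cones are not functorial in general, here each cone $C_k$ is itself a functorial projection of $F$ into $\cA_k$ (iterated adjoints), so it is canonically determined and the equivariant structure transfers.

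I expect the main technical obstacle to be the non-functoriality of cones in triangulated categories, which blocks the naive argument that cones of equivariant maps are equivariant. I would get around it as above, by expressing everything through the projection functors $\alpha_i\alpha_i^!$ and their mutation composites. An alternative is to work with dg enhancements, where $G$-equivariance can be handled at the level of functors; this also explains why $\Db(X)^G$ is triangulated. Finally, the equivariant triangles $(F_k,\phi)\to(F_{k-1},\phi)\to(C_k,\phi)$ become distinguished in $\Db(X)^G$, giving condition (2). Admissibility follows because the equivariant projection functors provide the adjoints.
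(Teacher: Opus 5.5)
Your construction is the right one. It is essentially the mechanism behind the result of Elagin that the paper cites without giving a proof of its own: the canonical projections commute with each $g^*$ up to unique isomorphism, and that uniqueness yields equivariant structures on $F_k$ and $C_k$ satisfying the cocycle condition.

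The gap is the last step. The sentence ``the equivariant triangles become distinguished in $\Db(X)^G$'' is asserted, not proved, and this is where the substance of the theorem lies. Distinguished triangles are not part of the data of the category of pairs; the triangulated structure on $\Db(X)^G$ comes from comparison with $\Db$ of $G$-equivariant sheaves, or from an enhancement. Nothing you wrote shows that a triangle of equivariant objects whose underlying triangle is distinguished in $\Db(X)$ is distinguished in $\Db(X)^G$. You also never check that the connecting morphism $\delta\colon C_k\to F_k[1]$ is equivariant, so you do not yet even have a triangle in $\Db(X)^G$. This is the non-functoriality of cones reappearing. (A smaller point: $\Hom_{\Db(X)^G}=\Hom^G$ is the definition of morphisms of pairs, not a consequence of $|G|$ being invertible. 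Semiorthogonality only needs faithfulness of the forgetful functor; invertibility of $|G|$ is what is used to make $\Db(X)^G$ triangulated.)

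The same uniqueness principle you already use fills the step. Grant, as the paper does, that $\Db(X)^G$ is triangulated with exact forgetful functor.
\begin{itemize}
\item Complete $u\colon F_k\to F_{k-1}$ to a distinguished triangle $F_k\to F_{k-1}\xrightarrow{v'}C'\xrightarrow{\delta'}F_k[1]$ in $\Db(X)^G$.
\item After forgetting, (TR3) gives an isomorphism $w\colon C_k\to C'$ in $\Db(X)$ with $w\circ v=v'$ and $\delta'\circ w=\delta$.
\item Any other $w'$ with $w'\circ v=v'$ differs from $w$ by a map factoring through $\delta$, i.e.\ through $\Hom(F_k[1],C')\simeq\Hom(F_k[1],C_k)$. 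This group is $0$, since $F_k\in\langle\mathcal{A}_{k+1},\dots,\mathcal{A}_n\rangle$ and $C_k\in\mathcal{A}_k$.
\item So $w$ is unique, hence $G$-invariant (because $v$ and $v'$ are), hence an isomorphism in $\Db(X)^G$.
\item Then $\delta=\delta'\circ w$ is automatically equivariant, and your triangle is isomorphic to a distinguished one.
\end{itemize}

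Alternatively, you can avoid triangles altogether. Your lifted $\alpha_n\alpha_n^!F$, with its equivariant counit, represents $\Hom_{\Db(X)^G}(-,F)$ on $\mathcal{A}_n^G$: take invariants of the bijection $\Hom(A,\alpha_n\alpha_n^!F)\simeq\Hom(A,F)$. So $\mathcal{A}_n^G$ is right admissible and $\Db(X)^G=\langle(\mathcal{A}_n^G)^\perp,\mathcal{A}_n^G\rangle$. Moreover $(\mathcal{A}_n^G)^\perp=(\mathcal{A}_n^\perp)^G$, because $\Hom_{\Db(X)^G}(\bigoplus_g g^*A,F)=\Hom(A,F)$ and $\bigoplus_g g^*A\in\mathcal{A}_n^G$ for $A\in\mathcal{A}_n$. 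Now induct on $n$.
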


An action of a group $G$ on a category $\cC$ is said to be \emph{trivial} if for each $g \in G$, we have an isomorphism of functors $t_g \colon \id \xrightarrow{\sim} g^*$ such that 
\begin{equation*}
    c_{g,h} \circ t_{gh} = h^* t_g \circ t_h
\end{equation*}
for all $g, h \in G$.

\begin{proposition}[{\cite[Proposition 3.3]{KP}}] \label{prop:trivial action SOD}
    Let $\cC$ be a triangulated category with a trivial action of a finite group $G$. Then $\cC^G$ is also a triangulated category with a semiorthogonal decomposition
    \begin{equation*}
        \cC^G = \langle \cC \otimes \rho_0 , \dots , \cC \otimes \rho_n \rangle , 
    \end{equation*}
    where $\rho_1, \dots, \rho_n$ are the finite-dimensional irreducible representations of $G$.
\end{proposition}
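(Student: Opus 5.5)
The plan is to turn the trivialization of the action into an honest representation of $G$ on each equivariant object, and then split objects by isotypic components using the semisimplicity of $\bC[G]$.

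\textbf{Step 1: equivariant objects as representations.} Fix the trivialization $t_g \colon \id \xrightarrow{\sim} g^*$. For a $G$-equivariant object $(F,\phi)$, set
\[
a_g \coloneqq t_g(F)^{-1} \circ \phi_g \in \Aut(F).
\]
Combine the cocycle condition $c_{g,h} \circ t_{gh} = h^* t_g \circ t_h$ with the equivariance diagram for $\phi$. Naturality of $t_h$ moves $h^*(\phi_g)$ past $t_h$. A direct diagram chase should then give $a_{gh} = a_g \circ a_h$, or the opposite order; I will fix conventions so that $g \mapsto a_g$ is a homomorphism $G \to \Aut(F)$. Conversely, any homomorphism $G \to \Aut(F)$ gives an equivariant structure $\phi_g = t_g(F) \circ a_g$. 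Morphisms in $\cC^G$ correspond exactly to morphisms in $\cC$ commuting with the $a_g$. Hence $\cC^G$ is equivalent to the category of objects of $\cC$ with a $\bC[G]$-module structure, that is, to $\bC[G]$-module objects in $\cC$.

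\textbf{Step 2: the functors $-\otimes\rho_i$ and the vanishing of Homs.} For $F \in \cC$ and an irreducible representation $\rho_i$, the object $F \otimes \rho_i$ is a finite direct sum of copies of $F$, with $G$ acting through $\rho_i$; it is therefore an object of $\cC^G$. In the module description, equivariant morphisms are the $G$-invariants of the natural $G$-action on ordinary Homs. This gives
\[
\Hom_{\cC^G}(F\otimes\rho_i, F'\otimes\rho_j) = \big(\Hom_\cC(F,F') \otimes \Hom_\bC(\rho_i,\rho_j)\big)^G = \Hom_\cC(F,F') \otimes \Hom_G(\rho_i,\rho_j).
\]
By Schur's lemma this is $\Hom_\cC(F,F')$ when $i=j$ and $0$ when $i \neq j$. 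The same holds for all shifts. So each functor $-\otimes\rho_i$ is fully faithful, and the images are mutually orthogonal in both directions, which is stronger than semiorthogonality.

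\textbf{Step 3: generation, which I expect to be the main obstacle.} Let $e_i \in \bC[G]$ be the central isotypic idempotents. For $(F,a)$, each $e_i$ acts as an idempotent endomorphism of $F$ commuting with the $G$-action. This needs $\cC$ to be idempotent complete (Karoubian). That holds in the cases of interest, since admissible subcategories of $\Db(X)$ are idempotent complete, so I will assume it. Then $F = \bigoplus_i e_iF$ in $\cC^G$. Put $F_i \coloneqq \Hom_G(\rho_i, e_iF)$, defined as the image of a suitable idempotent on $\rho_i^\vee \otimes e_iF$. Then $e_iF \simeq F_i \otimes \rho_i$, so every equivariant object is a direct sum of objects in the images $\cC\otimes\rho_i$. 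This gives the decomposition as an orthogonal, in particular semiorthogonal, one.

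The remaining subtlety is the triangulated structure on $\cC^G$ itself. Here I would use that $\cC$ comes with a dg enhancement, so that $\cC^G$ is computed at the dg level (as in \cite{ElaginDescent}). Alternatively, Step 1 with Step 3 shows $\cC^G \simeq \bigoplus_i \cC$ as additive categories, and one transports the triangulated structure componentwise. Distinguished triangles are then those whose isotypic components are distinguished, and this is compatible with the forgetful functor to $\cC$.
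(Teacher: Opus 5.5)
The paper gives no proof of its own and simply cites \cite[Proposition 3.3]{KP}. Your argument is the standard proof of that result and is correct: you turn $\phi_g$ into an honest $G$-action $a_g=t_g(F)^{-1}\circ\phi_g$ (the cocycle condition plus naturality of $t_h$ do give $a_{gh}=a_ga_h$), use Schur's lemma for full faithfulness and complete orthogonality, and split objects by the central idempotents of $\bC[G]$ to get generation.

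Your added hypothesis that $\mathcal{C}$ be idempotent complete is not a weakness of your proof but a hypothesis that the statement as quoted leaves implicit. Without it, for an involution $a$ of an object, the idempotent $\tfrac12(1+a)$ need not split, and then the decomposition fails. The hypothesis does hold in the cases where the paper uses the result, for example $\langle \mathcal{O}_X(-H)\rangle\simeq\Db(\mathrm{pt})$ or $\Db(Z)$ with the trivial action.
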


We will often abbreviate $\cC \otimes \rho_i$ as $\cC \rho_i$.

\begin{definition} \label{def: FM transform} \leavevmode
    Let $X$ and $Y$ be schemes. A functor $\Phi_{\cE} \colon \Db(X) \to \Db(Y)$ given by $\Phi_{\cE}(F) \coloneqq p_{Y *}(p_X^*(F) \otimes \cE)$ is called a \emph{Fourier--Mukai} functor with \emph{Fourier--Mukai kernel} $\cE \in \Db(X \times Y)$, where $p_X$ and $p_Y$ are the obvious projections.
\end{definition}

\subsection{Generalities on derived categories of cyclic covers}

Let $f \colon X \to Y $ be a double cover branched in $i \colon Z \hookrightarrow Y$. Denote the embedding of $Z$ as the ramification divisor $j \colon Z \hookrightarrow X$. Let $\mu_2 = \{ 1, -1 \}$ be the group generated by the involution associated to the double cover. Let $\widehat{\mu}_2 \simeq \bZ / 2 \bZ $ be the dual group of $\mu_2$. Denote by $\rho_0, \rho_1 \in \widehat{\mu}_2$ the irreducible representations of $\mu_2$, which correspond to the primitive characters of $\widehat{\mu}_2$. Let 
\begin{equation}
    f_k^* \colon \Db(Y) \xrightarrow{- \otimes \rho_k} \Db(Y)^{\mu_2} \xrightarrow{f^*} \Db(X)^{\mu_2}
\end{equation}
and 
\begin{equation}
    j_{k *} \colon \Db(Z) \xrightarrow{- \otimes \rho_k} \Db(Z)^{\mu_2} \xrightarrow{j_*} \Db(X)^{\mu_2} .
\end{equation}

\begin{theorem}[{\cite[Theorem 5.1]{Gluing}, \cite[Theorem 1.6]{McKay}, \cite[Theorem 4.1]{KP}}] \label{thm: KP thm 4.1}
    For $k =0, 1$, the functors $f_k^*$ and $j_{k*}$ are fully faithful. Moreover, we have a semiorthogonal decomposition
    \begin{equation*}
        \Db(X)^{\mu_2} = \langle f_0^* \Db(Y), j_{0*} \Db(Z)  \rangle .
    \end{equation*}
\end{theorem}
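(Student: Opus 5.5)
This is the known theorem of Kuznetsov--Perry (and Collins--Polishchuk, Bridgeland--King--Reid). The plan is to exhibit each piece as the image of a fully faithful functor with an explicit adjoint, and then check semiorthogonality and generation.

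\emph{Full faithfulness of $f_k^*$.} The functor $f^*\colon \Db(Y)^{\mu_2}\to \Db(X)^{\mu_2}$ has right adjoint $f_*$, where $f_*$ is followed by taking the induced equivariant structure. For $\mu_2$ acting trivially on $Y$, we have $f_*\cO_X \simeq \cO_Y\rho_0 \oplus \cO_Y(-L)\rho_1$ as equivariant sheaves, where $2L = Z$ is the branch divisor class. Hence, for $F,G\in\Db(Y)$,
\[
\Hom(f^*F\rho_k, f^*G\rho_k)=\Hom_{\mu_2}(F\rho_k, G\otimes f_*\cO_X\otimes\rho_k) = \Hom(F,G),
\]
since taking invariants kills the $\rho_1$-summand. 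So $f_k^*$ is fully faithful.

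\emph{Full faithfulness of $j_{k*}$.} The involution acts trivially on $Z$, so $\Db(Z)^{\mu_2}$ is described by Proposition \ref{prop:trivial action SOD}. We compute $j^*j_*F$ for $F\in\Db(Z)$: there is a triangle
\[
F\otimes \cO_Z(-Z)[1]\to j^*j_*F\to F,
\]
with conormal bundle $\cO_Z(-Z)|_{X}$, on which $\mu_2$ acts by the nontrivial character $\rho_1$ because $Z$ is the fixed locus. Taking invariants of $\Hom(j^*j_*F\rho_k, G\rho_k)$, the twisted term contributes $\Hom(F\otimes N^\vee\rho_1[1],G)^{\mu_2}=0$, leaving $\Hom(F,G)$.

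\emph{Semiorthogonality.} We need $\Hom(j_{0*}F, f_0^*G)=0$. By Grothendieck duality, $j^!f^*G = j^*f^*G\otimes N_{Z/X}[-1]=i^*G\otimes N\rho_1[-1]$. Then
\[
\Hom(j_*F\rho_0, f^*G\rho_0)=\Hom_{Z}(F\rho_0, i^*G\otimes N\rho_1[-1])^{\mu_2}=0,
\]
because the characters differ.

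\emph{Generation.} This is the main obstacle. We must show that an object $E\in\Db(X)^{\mu_2}$ right-orthogonal to both pieces vanishes. First, orthogonality to $f_0^*\Db(Y)$ means $(f_*E)^{\mu_2}=0$. Next, $\Db(X)^{\mu_2}$ also contains $f_1^*\Db(Y)$, and the restriction triangle
\[
f^*G\rho_1\to f^*G\rho_0\to j_*i^*G\rho_0
\]
(up to twist, coming from $0\to\cO_X(-R)\to\cO_X\to\cO_R\to0$ with $\cO_X(-R)$ carrying the sign character) relates the three subcategories: $f_1^*$ of a suitable twist lies in $\langle f_0^*\Db(Y), j_{0*}\Db(Z)\rangle$. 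Therefore $E$ is also orthogonal to $f_1^*\Db(Y)$, so $f_*E$ has both isotypic components zero. Since $f$ is finite, $f_*E=0$ forces $E=0$. The delicate points are the equivariant bookkeeping of characters on $\cO_X(-R)$ and on $N$, which I would fix first by a local computation $y=x^2$.
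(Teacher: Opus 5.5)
Your proposal is correct. The paper gives no proof of this statement and simply cites Kuznetsov--Perry and the other references; your argument is essentially the standard direct proof found in Kuznetsov--Perry. It proceeds by full faithfulness via adjunction and the equivariant splitting $f_*\mathcal{O}_X\simeq\mathcal{O}_Y\rho_0\oplus\mathcal{O}_Y(-L)\rho_1$, the Koszul triangle for $j^*j_*$ with conormal bundle of character $\rho_1$, duality for $\Hom(j_{0*}\Db(Z),f_0^*\Db(Y))=0$, and generation via the triangle $f_1^*(G(-L))\to f_0^*G\to j_{0*}i^*G$ together with conservativity of $f_*$. Right admissibility of both pieces, needed for the generation step, is supplied by the right adjoints you write down.
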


\begin{lemma}[{\cite[Lemma 5.1]{KP}}] \label{lem: SOD from rectangular lefschetz base}
    Let $Y$ be a smooth projective variety with a rectangular Lefschetz decomposition $\Db(Y) = \langle \cB, \cB(1), \dots , \cB(m-1) \rangle$. Let $d$ be a positive integer such that $d < m$, and let $f \colon X \to Y$ be a double cover branched in a divisor in $|\cO_Y(2d)|$.
    \begin{enumerate}
        \item The functor $f^* \colon \Db(Y) \to \Db(X)$ is fully faithful on the subcategory $\cB \subset \Db(Y)$.
        \item Let $\cB_X \coloneqq f^* \cB$. There is a semiorthogonal decomposition
        \begin{equation*}
            \Db(X) = \langle  \cA_X , \cB_X , \cB_X(1), \dots , \cB_X(m-d-1) \rangle 
        \end{equation*}
        where the \emph{Kuznetsov component} $\cA_X$ is the right orthogonal to $$ \langle \cB_X , \cB_X(1), \dots , \cB_X(m-d-1) \rangle.$$
    \end{enumerate}
\end{lemma}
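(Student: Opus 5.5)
We prove the two parts of Lemma \ref{lem: SOD from rectangular lefschetz base} in order, both resting on the projection formula for the finite flat morphism $f$. Write the branch divisor as $Z \in |\cO_Y(2d)|$. Then
\begin{equation*}
    f_*\cO_X \simeq \cO_Y \oplus \cO_Y(-d),
\end{equation*}
the summands being the $\rho_0$- and $\rho_1$-eigensheaves of the covering involution. For $F, G \in \Db(Y)$, adjunction and the projection formula give
\begin{equation*}
    \Hom^\bullet(f^*F, f^*G) \simeq \Hom^\bullet(F, G \otimes f_*\cO_X) \simeq \Hom^\bullet(F, G) \oplus \Hom^\bullet(F, G(-d)).
\end{equation*}
This one formula reduces every statement in the lemma to a vanishing on $Y$ that follows from the Lefschetz decomposition.

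\textbf{Full faithfulness.} Take $F, G \in \cB$. The only thing to check is that $\Hom^\bullet(F, G(-d)) = 0$. By Serre duality on $Y$,
\begin{equation*}
    \Hom^\bullet(F, G(-d)) \simeq \Hom^\bullet(G(-d), F \otimes \omega_Y[\dim Y])^\vee,
\end{equation*}
so it suffices to compare $G(-d)$ with $F \otimes \omega_Y$. For a rectangular Lefschetz decomposition of length $m$ one has $S_Y^{-1}(\cB) = \cB(m)$, since $\langle \cB(1), \dots, \cB(m) \rangle$ is also a semiorthogonal decomposition by Lemma \ref{lem:SODs and Serre functors}. Consequently $\omega_Y \simeq \cO_Y(-m)$ up to the component data. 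I would verify this claim using Lemma \ref{lem:SODs and Serre functors}, or simply assume $\omega_Y = \cO(-m)$ as in the intended applications.

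Granting it, we twist by $\cO(m)$, which maps the semiorthogonal decomposition $\langle \cB, \dots, \cB(m-1) \rangle$ to $\langle \cB(m), \dots, \cB(2m-1) \rangle$. The required vanishing becomes $\Hom^\bullet(G(m-d), F) = 0$. Since $1 \le m-d \le m-1$, the object $G(m-d)$ lies in $\cB(m-d)$, which sits to the right of $\cB$ in the decomposition, so this vanishes by semiorthogonality. This proves (1).

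\textbf{The semiorthogonal decomposition.} For $0 \le j < i \le m-d-1$ and $F, G \in \cB$, the projection formula gives
\begin{equation*}
    \Hom^\bullet(f^*F(i), f^*G(j)) \simeq \Hom^\bullet(F(i), G(j)) \oplus \Hom^\bullet(F(i), G(j-d)).
\end{equation*}
The first summand vanishes by semiorthogonality on $Y$. For the second we have $i - j + d \le m-1$, so $F(i-j+d) \in \cB(i-j+d)$ with $1 \le i-j+d \le m-1$, which is again a component to the right of $\cB$, and the second summand vanishes as well. Hence $\cB_X, \cB_X(1), \dots, \cB_X(m-d-1)$ form a semiorthogonal sequence. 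Each piece is admissible because it is the image of an admissible subcategory under a fully faithful functor $f^*$ that has both adjoints, namely $f_*$ and $f_*(- \otimes \omega_f)$. We then define $\cA_X$ as the right orthogonal of the sequence, and standard results give (2).

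\textbf{Main obstacle.} The delicate step is the Serre-functor bookkeeping in part (1), i.e. relating $\omega_Y$ to $\cO(-m)$ and getting the direction of the inequality right. If that relation is not available, I would instead run the vanishing argument through $S_Y(\cB) = \cB(-m)$ directly at the level of subcategories.
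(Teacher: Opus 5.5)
Your proof is correct, provided part (1) is run through your fallback rather than through $\omega_Y\simeq\mathcal{O}_Y(-m)$. The paper gives no argument of its own here and simply cites \cite[Lemma 5.1]{KP}. Your part (2) is the standard projection-formula argument: $f_*\mathcal{O}_X\simeq\mathcal{O}_Y\oplus\mathcal{O}_Y(-d)$ splits $\Hom^\bullet(f^*F(i),f^*G(j))$ into two summands, and twisting reduces each to a vanishing $\Hom^\bullet(\mathcal{B}(a),\mathcal{B})=0$ with $1\le a\le m-1$.

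Part (1) is where you deviate, and the Serre-duality detour there is unnecessary. We have $\Hom^\bullet(F,G(-d))\simeq\Hom^\bullet(F(d),G)$, and $F(d)\in\mathcal{B}(d)$ with $1\le d\le m-1$, so this vanishes by semiorthogonality at once. This is exactly your part (2) computation with $i=j$, so both parts are one calculation and the ``main obstacle'' you describe is not actually an obstacle.

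Moreover, the intermediate claim $\omega_Y\simeq\mathcal{O}_Y(-m)$ is false in general. Take $Y=\bP^1\times\bP^1$ with $\mathcal{B}=p_1^*\Db(\bP^1)$ and the ample twist $\mathcal{O}_Y(1)=\mathcal{O}(2,1)$. Then $\Db(Y)=\langle\mathcal{B},\mathcal{B}(1)\rangle$, so $m=2$, yet $\omega_Y=\mathcal{O}(-2,-2)\not\simeq\mathcal{O}(-4,-2)=\mathcal{O}_Y(-2)$.

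What does hold is the subcategory statement $\mathcal{B}\otimes\omega_Y=\mathcal{B}(-m)$. It follows from your (correct) identification $S_Y^{-1}(\mathcal{B})=\mathcal{B}(m)$, since $S_Y$ commutes with twists. This is all your fallback needs, and with it the vanishing $\Hom^\bullet(G(m-d),F')=0$ goes through. So your route works, but it uses more input than necessary; the direct route needs only twist-invariance of the decomposition and $d<m$.
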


When $\Db(Y)$ contains a (not necessarily full) exceptional collection and is not necessarily rectangular Lefschetz, we have the following.  

\begin{proposition}[{\cite[Lemma 3.1 and Proposition 3.2]{IK}}] \label{prop: double cover SODs IK}
    Let $f \colon X \to Y$ be as above, and let $Z$ be the branch divisor. Suppose for some $H \in \Pic Y$, we have $Z \sim 2H \sim -K_Y$. Assume $\underline{\cE} =  \{ \cE_1, \dots, \cE_n  \}$ is an exceptional collection in $\Db(Y)$ such that the collection
    \begin{equation} \label{eq: E(-H), E}
        \{ \underline{\cE}(-H), \underline{\cE} \}
    \end{equation}
    is also exceptional in $\Db(Y)$. Then the collection $f^* \underline{\cE} \in \Db(X)$ is exceptional in $\Db(X)$, and in particular, 
    \begin{equation*}
        \Db(X) = \langle \cA_X , f^*\underline{\cE} \rangle
    \end{equation*}
    is a semiorthogonal decomposition, where the \emph{Kuznetsov component} $\cA_X$ is the right orthogonal to the exceptional collection $f^* \underline{\cE}$.
\end{proposition}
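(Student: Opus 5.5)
The plan is to compute $\Hom^\bullet_X(f^*\cE_i, f^*\cE_j)$ through the adjunction $f^* \dashv f_*$ together with the standard description of the pushforward of the structure sheaf of a double cover. Since $f$ is finite and flat of degree $2$, branched in $Z \sim 2H$, we have
\begin{equation*}
    f_*\cO_X \simeq \cO_Y \oplus \cO_Y(-H).
\end{equation*}
The projection formula then gives, for any $F, G \in \Db(Y)$,
\begin{equation*}
    \Hom^\bullet_X(f^*F, f^*G) \simeq \Hom^\bullet_Y(F, f_*f^*G) \simeq \Hom^\bullet_Y(F, G) \oplus \Hom^\bullet_Y(F, G(-H)).
\end{equation*}
I would first check that this identification holds for $f_*\cO_X$, that is, that the anti-invariant summand is $\cO_Y(-H)$ rather than some other square root of $\cO_Y(-Z)$. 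This is part of the data of the double cover: $X \subset \mathrm{Tot}\,\cO_Y(H)$ is cut out by $s^2$ minus the equation of $Z$.

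Next I would apply the displayed formula to $F = \cE_i$ and $G = \cE_j$. The first summand is $\Hom^\bullet_Y(\cE_i, \cE_j)$. It equals $\bC[0]$ when $i=j$, and it vanishes when $i>j$ because $\underline{\cE}$ is exceptional. The second summand is
\begin{equation*}
    \Hom^\bullet_Y(\cE_i, \cE_j(-H)).
\end{equation*}
Here the hypothesis that $\{\underline{\cE}(-H), \underline{\cE}\}$ is exceptional is used: every object of $\underline{\cE}(-H)$ precedes every object of $\underline{\cE}$. So semiorthogonality gives $\Hom^\bullet(\cE_i, \cE_j(-H)) = 0$ for all $i, j$, including $i \le j$. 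Hence $\Hom^\bullet(f^*\cE_i, f^*\cE_i) = \bC[0]$, and $\Hom^\bullet(f^*\cE_i, f^*\cE_j) = 0$ for $i > j$. Therefore $f^*\underline{\cE}$ is an exceptional collection in $\Db(X)$.

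For the semiorthogonal decomposition, I would use that any exceptional collection in $\Db(X)$ of a smooth projective variety generates an admissible subcategory. Its right orthogonal $\cA_X$ then satisfies $\Db(X) = \langle \cA_X, f^*\underline{\cE} \rangle$ by the standard construction with mutations: each exceptional object is admissible, and one peels off one object at a time. I do not expect any genuine obstacle. The only point needing care is the sign of the twist in $f_*\cO_X$, so that the correct half of the hypothesis (\ref{eq: E(-H), E}) is the one that gets used. The condition $2H \sim -K_Y$ is not needed for exceptionality itself; it matters later, for the Serre functor computations.
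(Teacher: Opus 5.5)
Your proposal is correct and is essentially the standard argument behind the cited result. The paper gives no proof of its own and only cites \cite{IK}, where the same computation appears: $\Hom^\bullet(f^*\cE_i, f^*\cE_j) \simeq \Hom^\bullet(\cE_i,\cE_j)\oplus\Hom^\bullet(\cE_i,\cE_j(-H))$ via $f_*\cO_X\simeq\cO_Y\oplus\cO_Y(-H)$, with the second summand killed by the exceptionality of $\{\underline{\cE}(-H),\underline{\cE}\}$. Your side remarks are also accurate: the anti-invariant summand being $\cO_Y(-H)$ is part of the double cover data, and $2H\sim -K_Y$ only enters the Serre functor statement (Proposition \ref{prop: IK prop 3.2}).
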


Adding one extra assumption gives a convenient description of the Serre functor of the Kuznetsov component.

\begin{proposition}[{\cite[Proposition 3.2]{IK}}] \label{prop: IK prop 3.2}
    Suppose we are in the setting of Proposition \ref{prop: double cover SODs IK}, and further assume that the exceptional collection (\ref{eq: E(-H), E}) is full in $\Db(Y)$. Let $\tau_{\cA_X}$ be the involution functor induced by the involution $\tau$ of the double cover. Then the Serre functor of $\cA_X$ is given by
    \begin{equation*}
        S_{\cA_X} \simeq \tau_{\cA_X}[\dim X -1].
    \end{equation*}
\end{proposition}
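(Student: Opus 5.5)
We need to prove that $S_{\cA_X} \simeq \tau_{\cA_X}[\dim X - 1]$ when $\Db(Y) = \langle \underline{\cE}(-H), \underline{\cE} \rangle$ is full. Throughout, write $\cB = \langle \underline{\cE}\rangle \subset \Db(Y)$ and $\cB_X = f^*\cB$.

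The plan is to compare $\Db(X)$ with $\Db(Y)$ through the equivariant category and the relative canonical sheaf. Two standard facts about the double cover will be used:
\begin{itemize}
\item $f_*\cO_X \simeq \cO_Y \oplus \cO_Y(-H)$;
\item $\omega_X \simeq f^*(\omega_Y \otimes \cO_Y(H)) = f^*\cO_Y(-H)$, since $-K_Y \sim 2H$.
\end{itemize}

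\textbf{Step 1: identify $\cA_X$ inside $\Db(X)^{\mu_2}$.} Everything in sight is $\mu_2$-invariant: $\cB_X$ is invariant because $\tau^* f^* = f^*$, so $\cA_X$ is preserved by $\tau$. By Theorem \ref{thm:G action commutes through SOD} we get
\[
\Db(X)^{\mu_2} = \langle \cA_X^{\mu_2}, (f^*\cB)^{\mu_2}\rangle .
\]
The action of $\mu_2$ on $f^*\cB$ is trivial, so by Proposition \ref{prop:trivial action SOD} we have $(f^*\cB)^{\mu_2} = \langle f_0^*\cB, f_1^*\cB\rangle$.

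Next compare with Theorem \ref{thm: KP thm 4.1}. We have $f_1^* F \simeq f_0^*(F(-H))$ twisted appropriately, since the sign representation corresponds to the line bundle $\cO_Y(-H)$ on the branched cover up to $\cO(Z)$-twists. Using this and $\Db(Y) = \langle \cB(-H), \cB\rangle$, one should obtain
\[
\langle f_0^*\cB, f_1^*\cB\rangle = \langle f_0^*\cB(-H), f_0^*\cB\rangle \ \text{(after mutation)} = f_0^*\Db(Y).
\]
Hence $\cA_X^{\mu_2} \simeq j_{0*}\Db(Z)$, up to the ordering and mutation conventions. That is the key structural input, but for the Serre functor a more direct route is cleaner.

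\textbf{Step 2: compute the Serre functor directly.} On $\Db(X)$ we have $S_X = -\otimes f^*\cO_Y(-H)[n]$ with $n = \dim X$. For a decomposition $\Db(X) = \langle \cA_X, \cB_X\rangle$, Lemma \ref{lem:SODs and Serre functors} gives
\[
S_{\cA_X}^{-1} = \bL_{\cB_X}\circ S_X^{-1},
\]
restricted to $\cA_X$.

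For $A \in \cA_X$, consider the adjunction triangle $A \to f_*f^*$-type relations; concretely, use the canonical triangle on $X$
\[
\tau^*A \to f^*f_*A \to A,
\]
valid for the double cover (it comes from $f^*f_* \simeq \id \oplus \tau^*$ in the equivariant sense, or from the exact sequence $0\to \tau^*F(-R)\to f^*f_*F \to F \to 0$ for sheaves, with ramification $R$, $\cO_X(R)=f^*\cO_Y(H)$). Twisting by $f^*\cO_Y(H)$, this becomes
\[
\tau^*A \to f^*\bigl(f_*A\otimes \cO_Y(H)\bigr) \to A\otimes f^*\cO_Y(H) = S_X^{-1}A[n].
\]

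\textbf{Step 3: show the middle term lies in $\cB_X$.} Since $A \in \cA_X = \cB_X^\perp$, adjunction gives $f_*A \in \cB^\perp$ in $\Db(Y)$. By fullness, $\cB^\perp = \cB(-H)$: indeed $\langle \cB(-H),\cB\rangle = \Db(Y)$, so $\cB^{\perp} = \cB(-H)$. Therefore $f_*A\otimes\cO_Y(H) \in \cB$, and the middle term lies in $\cB_X$.

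\textbf{Step 4: conclude.} Applying $\bL_{\cB_X}$, which kills the middle term, gives
\[
\bL_{\cB_X}(S_X^{-1}A)[n] \simeq \tau^*A[1].
\]
Hence $S_{\cA_X}^{-1} \simeq \tau^*[1-n]$, that is, $S_{\cA_X}\simeq \tau_{\cA_X}[n-1]$, using $\tau^2=\id$.

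\textbf{Main obstacle.} The delicate point is Step 2: establishing the triangle $\tau^*A\otimes\cO(-R) \to f^*f_*A \to A$ functorially for complexes, not just for sheaves. Its functoriality in $A$ is what upgrades the objectwise isomorphism to an isomorphism of functors. I would handle this by writing $f^*f_*$ as a Fourier–Mukai functor with kernel $\cO_{X\times_Y X}$. Since $X\times_Y X = \Delta \cup \Gamma_\tau$, glued along the ramification divisor $R$, there is a short exact sequence of kernels
\[
0\to \cO_{\Gamma_\tau}(-R)\to \cO_{X\times_YX}\to \cO_\Delta\to 0,
\]
which yields the required triangle of functors. The remaining bookkeeping of shifts and sign conventions is routine.
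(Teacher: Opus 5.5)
Your Steps 2--4 give a correct and complete proof. The paper itself does not prove this statement; it quotes it from \cite[Proposition 3.2]{IK}. Your argument is the standard one for results of this type. In the language of \cite{KuzCY, KP}: since $\omega_X^{-1}\simeq\mathcal{O}_X(R)\simeq f^*\mathcal{O}_Y(H)$, the formula $S_{\mathcal{A}_X}^{-1}\simeq\bL_{\mathcal{B}_X}\circ S_X^{-1}$ reduces the claim to showing that the rotation functor $\bL_{\mathcal{B}_X}\circ(-\otimes\mathcal{O}_X(R))$ is isomorphic to $\tau^*[1]$ on $\mathcal{A}_X$. You prove this with the kernel triangle $\mathcal{O}_{\Gamma_\tau}(-R)\to\mathcal{O}_{X\times_YX}\to\mathcal{O}_\Delta$. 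Fullness is used exactly once, to get $\mathcal{B}^\perp=\mathcal{B}(-H)$. Working at the level of kernels correctly gives an isomorphism of functors rather than only objectwise isomorphisms.

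Three repairs to the write-up are needed.

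First, delete Step 1. It plays no role in the argument, and its displayed identity $\langle f_0^*\mathcal{B},f_1^*\mathcal{B}\rangle=f_0^*\Db(Y)$ is false.
\begin{itemize}
\item In $\Db(X)^{\mu_2}$ one has $f_0^*\Db(Y)=(j_{0*}\Db(Z))^\perp$.
\item Because $\mu_2$ acts on the normal bundle of $R$ by the nontrivial character, $\Hom^\bullet(j_{0*}\mathcal{O}_Z,f_1^*\mathcal{O}_Y)\simeq H^{\bullet-1}(Z,\mathcal{O}_Z(H))$. This is nonzero, for example, in Families 2-8 and 3-1, where $\mathcal{O}_Y\in\mathcal{B}$, so $f_1^*\mathcal{B}\not\subset f_0^*\Db(Y)$.
\item The correct relation is $f_1^*\mathcal{B}=S^{-1}_{\Db(X)^{\mu_2}}(f_0^*\mathcal{B}(-H))$. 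Using it requires moving $f_0^*\mathcal{B}(-H)$ past $j_{0*}\Db(Z)$, as the paper does in Propositions \ref{prop:2-8_equivariant_Ku} and \ref{prop:3-1_equivariant_Ku}.
\end{itemize}

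Second, fix the first triangle in Step 2. It must read $\tau^*A\otimes\mathcal{O}_X(-R)\to f^*f_*A\to A$. Your parenthetical claim $f^*f_*\simeq\mathrm{id}\oplus\tau^*$ holds only for \'etale covers; for a branched cover you only have this non-split triangle. The twisted version you actually use afterwards is correct.

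Third, justify $S_{\mathcal{A}_X}^{-1}\simeq\bL_{\mathcal{B}_X}\circ S_X^{-1}$ properly, since it is not Lemma \ref{lem:SODs and Serre functors} verbatim. The functor $\bL_{\mathcal{B}_X}$ is left adjoint to the inclusion $\mathcal{A}_X=\mathcal{B}_X^\perp\hookrightarrow\Db(X)$. Hence $\Hom(\bL_{\mathcal{B}_X}S_X^{-1}A,A')\simeq\Hom(S_X^{-1}A,A')\simeq\Hom(A',A)^\vee$ for $A,A'\in\mathcal{A}_X$, and the formula follows by Yoneda.
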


\subsection{Derived categories of quadric fibrations}

For the study of Verra fourfolds and ordinary Verra threefolds later in the paper, we will require a result on the semiorthogonal decompositions of quadric bundles. 

\begin{theorem}[{\cite[Theorem 4.2]{KuzQuad}}] \label{thm: Kuz quadric fibrations SOD}
    Let $p \colon X \to S$ be a flat quadric fibration of relative dimension $n-2$. Then there is a semiorthogonal decomposition
    \begin{equation*}
        \Db(X) = \langle \Db(S, \cB_0), p^* (\Db(S)) \otimes \cO_{X/S}(1), \dots, p^* (\Db(S)) \otimes \cO_{X/S}(n-2) \rangle .
    \end{equation*}
\end{theorem}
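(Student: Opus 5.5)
The plan is to follow Kuznetsov's original strategy: embed the quadric fibration in a projective bundle and construct the component $\Db(S, \cB_0)$ explicitly via Clifford modules. Write $X \subset \bP_S(E) \xrightarrow{q} S$, where $E$ is a vector bundle of rank $n$ on $S$. Then $X$ is the zero locus of a section $\sigma \in H^0(\bP_S(E), \cO(2) \otimes q^*L)$ for some line bundle $L$, corresponding to a quadratic form $\sigma \colon S^2 E \to L^\vee$. The Koszul resolution $0 \to \cO(-2)\otimes q^*L^\vee \to \cO \to \cO_X \to 0$ lets me reduce all $\Hom$ computations on $X$ to Bott-type computations on the projective bundle. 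The sheaf of algebras $\cB_0$ is the even part of the Clifford algebra of $\sigma$:
\[
\cB_0 = \cO_S \oplus (\Lambda^2 E \otimes L) \oplus (\Lambda^4 E \otimes L^2) \oplus \cdots .
\]

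The first step is semiorthogonality of the pieces $p^*\Db(S)\otimes\cO_{X/S}(i)$ for $1 \le i \le n-2$. Each is the image of a fully faithful functor, because $p_*\cO_X \simeq \cO_S$ by flatness and the cohomology of a quadric. Semiorthogonality reduces, by the projection formula, to $p_*\cO_X(-k)=0$ for $1\le k\le n-3$. This follows from the Koszul sequence together with $q_*\cO(-k)=0=q_*\cO(-k-2)$ for $0<k<n-2$ (the relative dimension of $\bP_S(E)$ is $n-1$), with the extreme value handled by relative Serre duality on $\bP_S(E)$.

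The second step constructs the functor $\Phi \colon \Db(S,\cB_0) \to \Db(X)$. The Clifford multiplication gives a canonical morphism of $q^*\cB$-modules on $\bP_S(E)$,
\[
q^*\cB_0 \otimes \cO(-2) \to q^*\cB_1 \otimes \cO(-1),
\]
which is injective with cokernel supported on $X$. That cokernel $\cE$ is a flat family of (generalised) spinor sheaves over the fibres. I then set $\Phi(M) = \cE \otimes_{\cB_0} p^* M$, possibly after a twist to place the image in the right orthogonal of the line-bundle pieces. I will check that $\Phi$ lands in the orthogonal of $\langle p^*\Db(S)(1),\dots,p^*\Db(S)(n-2)\rangle$ by computing $p_*(\cE(-i))$ through the defining resolution. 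Full faithfulness amounts to showing that the adjunction unit $M \to \Phi^!\Phi M$ is an isomorphism. After the projection formula this reduces to the single identity $p_*\mathcal{H}om(\cE,\cE) \simeq \cB_0$ as a $\cB_0$-bimodule, with no higher direct images. I will compute this from the two-term resolution and the relative Bott vanishings above.

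The third step is generation. Suppose $F$ lies in the right orthogonal of all the pieces. After base change to affine opens (compatible with everything, since all functors are defined relatively over $S$), it suffices to show that the derived restriction of $F$ to every fibre vanishes. On a smooth fibre this is the classical statement that spinor bundles together with $\cO(1),\dots,\cO(n-2)$ generate the derived category of a quadric (Kapranov). The main obstacle is the singular fibres, where the quadric is degenerate and the fibre of $\cB_0$ is no longer semisimple. There I would argue directly on $\bP_S(E)$ instead of fibrewise. Orthogonality to the line-bundle pieces forces $i_*F$ to lie in the subcategory generated by $q^*\Db(S)\otimes\cO(-1)$ and $q^*\Db(S)\otimes\cO$ inside Orlov's decomposition of $\Db(\bP_S(E))$. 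The Clifford resolution then shows that such objects supported on $X$ are exactly those of the form $\Phi(M)$, which forces $F = 0$ when $F$ is also orthogonal to $\Phi$. The same local computation is where I expect the identity $p_*\mathcal{H}om(\cE,\cE)\simeq\cB_0$ to require the most care.
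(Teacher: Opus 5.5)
The paper gives no proof of this statement; it is quoted from Kuznetsov, so your proposal has to stand on its own.

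Your first two steps are sound in outline.
\begin{itemize}
\item \emph{Semiorthogonality.} The argument for the pieces $p^*\Db(S)\otimes\mathcal{O}_{X/S}(i)$ is exactly right. In fact $k+2\le n-1$ throughout, so no Serre duality is needed.
\item \emph{The twist.} The twist you hedge about is forced. The kernel must be $\mathcal{E}=\operatorname{coker}(\mathcal{B}_0\otimes\mathcal{O}(-1)\to\mathcal{B}_1\otimes\mathcal{O})$, because for your untwisted cokernel $p_*(\mathcal{E}(2-n))\neq 0$.
\item \emph{Local freeness.} In the full faithfulness step you silently use that $\mathcal{E}$ is locally free on $X$. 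You need this to write $\mathrm{R}\mathcal{H}om(\mathcal{E},-)=\mathcal{E}^\vee\otimes -$ and to apply the projection formula. It is true even at singular points of degenerate fibres: Clifford multiplication by a nonzero isotropic vector, $\mathcal{B}_0\to\mathcal{B}_1$ and $\mathcal{B}_1\to\mathcal{B}_0$, has rank exactly $2^{n-2}$, and since the two compose to $\sigma$ this forces $\mathcal{E}$ to be locally free of rank $2^{n-2}$. You should state and check this, since $X$ can be singular there and coherent sheaves on $X$ need not have finite Tor-dimension.
\end{itemize}

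\textbf{The genuine gap is generation.} Your observation that $i_*F\in\langle q^*\Db(S)\otimes\mathcal{O}(-1),q^*\Db(S)\rangle$ is correct. After that, everything rests on the sentence that such objects supported on $X$ ``are exactly those of the form $\Phi(M)$''. That sentence is the whole content of generation, namely essential surjectivity of $\Phi$ onto $\langle p^*\Db(S)(1),\dots,p^*\Db(S)(n-2)\rangle^{\perp}$, and you give no argument for it.

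Read literally, as an isomorphism $i_*F\simeq i_*\Phi(M)$ in $\Db(\bP_S(E))$, it would not even suffice. The functor $i_*$ is not fully faithful. The group $\Hom(i_*\Phi(M),i_*F)=\Hom_X(Li^*i_*\Phi(M),F)$ also receives a contribution from $\Hom_X(\Phi(M)\otimes\mathcal{O}_X(-X)[1],F)$. So there is no contradiction with $F\perp\Phi(\Db(S,\mathcal{B}_0))$. Over a general base there is also no evident way to extract a $\mathcal{B}_0$-module from the Orlov data, because the cohomology sheaves of $F$ need not satisfy the vanishing conditions individually.

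Your fibrewise reduction, which you abandoned at the degenerate fibres, actually closes the argument if you run your $\bP$-argument on each fibre.
\begin{itemize}
\item Over a field, $\Db(k(s))$ is semisimple, so an object of $\langle\mathcal{O}(-1),\mathcal{O}\rangle$ is a direct sum $\bigoplus_j\operatorname{Cone}(V^j\otimes\mathcal{O}(-1)\xrightarrow{\phi^j}W^j\otimes\mathcal{O})[-j]$.
\item If this object is $i_*F_s$, then each $\phi^j$ is injective, since its kernel is torsion-free but supported on the quadric.
\item Moreover $\operatorname{coker}\phi^j=i_*\mathcal{H}^j(F_s)$ is killed by $q_s$. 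Hence $q_s=\phi^j\psi^j=\psi^j\phi^j$ for a linear $\psi^j$.
\item This linear matrix factorisation is a graded $Cl(q_s)$-module. The grading is strong because $q_s\neq 0$, so $\mathcal{H}^j(F_s)\simeq\mathcal{E}_s\otimes_{\mathcal{B}_{0,s}}V^j$.
\item Therefore $F_s$ lies in the essential image of $\Phi_s$ for every fibre, degenerate or not, with no appeal to Kapranov. This image is triangulated by your step two applied over $\operatorname{Spec}k(s)$.
\end{itemize}
Now combine this with base change for $p_*$ and for $\Phi^!=p_*(\mathcal{E}^\vee\otimes -)$, which is where local freeness and $S$-flatness of $\mathcal{E}$ enter, and with Nakayama. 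Then $\Phi^!F=0$ gives $\Phi_s^!F_s=0$, so $F_s=0$ for all $s$, hence $F=0$. Some care is needed when $S$ is singular, since then $Li_s^*F$ is only bounded above.
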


Here, $\cB_0$ is the sheaf of even parts of Clifford algebras on $S$, see \cite[Section 3]{KuzQuad}.

\subsection{Lattices}
In this subsection, we recall some facts about lattices necessary to study the N\'eron--Severi lattices of the branch divisors of our Fano threefolds. 
Our main reference for lattice theory is \cite{Nik80}.

A \textit{lattice} is a free, finitely generated abelian group $L$ equipped with a non-degenerate integral symmetric bilinear form $b\colon L\times L \to \bZ$. For $v,u\in L$, we usually write $v\cdot u \coloneqq b(v,u)$ and $v^2 \coloneqq b(v,v)$. An isomorphism of lattices is called an \textit{isometry.} The group of isometries of $L$ is denoted $O(L)$.
The \textit{dual} of a lattice $L$ is the group $L^* \coloneqq \Hom(L,\bZ)$. The bilinear form on $L$ induces an injective group homomorphism 
\[
\functionstar{L}{L^*}{v}{b(v,-).}
\]
The cokernel $A_L\coloneqq L^*/L$ is called the \textit{discriminant group} of $L$, and the order of the discriminant group $$\disc(L)\coloneqq |A_L|$$ is called the \textit{discriminant} of $L$. We say that $L$ is \textit{unimodular} if $\disc(L)=1$. 
Since the dual can be characterised as $$L^* \simeq \left\{x\in L\otimes \bQ\mid \forall v\in L: x\cdot v\in \bZ\right\},$$ 
it follows that $L^*$ inherits a non-degenerate symmetric bilinear form taking values in $\bQ$. If we assume our lattice $L$ to be \textit{even}, that is, we have $v^2 \in 2\bZ$ for all $v \in L$, then $A_L$ inherits a quadratic form $q\colon A_L \to \bQ/2\bZ$ from $L^*$. Note that any isometry $\sigma \in O(L)$ naturally induces an isometry $\overline{\sigma}\in O(A_L)$.

The \textit{divisibility} of an element $v\in L$ is the positive integer
\begin{equation}\label{eq: divisibility}
\mathrm{div}(v) \coloneqq \gcd_{u\in L}(v\cdot u).
\end{equation}

For a lattice $L$, we define the \textit{signature} $\sgn(L) = (r,s)$ of $L$ to be the signature of the real quadratic space $L\otimes \bR$. We say $L$ is \textit{indefinite} if $r,s>0$. 

\begin{definition}\label{def: genus}
The \textit{genus} of a lattice $L$ is the set, denoted $\cG(L)$, of isomorphism classes of lattices $L'$ that satisfy the following two conditions:
\begin{enumerate}
    \item We have $\sgn(L) = \sgn(L')$.
    \item There is an isomorphism $A_L\simeq A_{L'}$ respecting the natural quadratic forms on $A_L$ and $A_{L'}$. 
\end{enumerate} 
    
\end{definition}

An embedding of lattices $N\hookrightarrow L$ is said to be \textit{primitive} if the cokernel $L/N$ is torsion-free. If, in addition, $L$ is unimodular, we have two natural isomorphisms of groups:
\[
\xymatrix{
    &\frac{L}{N\oplus N^\perp} \ar[dr]_{f_{N^\perp}}^\simeq \ar[dl]^{f_{N}}_\simeq & \\
    A_N&& A_{N^\perp}.
}
\]
Explicitly, an element $x\in L$ induces an element $(x\cdot -)|_N\in N^*$, and the homomorphism $f_N$ sends the class of $x$ in $\frac{L}{N\oplus N^\perp}$ to the class of $(x\cdot-)|_N$ in $A_N$.
The group isomorphism $f_{N^\perp}\circ f_N^{-1}$ is an isometry if we multiply the quadratic form on $A_{N^\perp}$ by $-1$, i.e. we have an isometry $$f_{N^\perp}\circ f_N^{-1}\colon A_N \simeq A_{N^\perp}(-1).$$ 

\begin{lemma}[{\cite[Proposition 1.6.1]{Nik80}}]  \label{lem: extending isometries} 
    Let $N\hookrightarrow L$ be a primitive embedding of even lattices, and suppose $L$ is unimodular. Let $\sigma \in O(N)$. Then there exists an isometry $\tilde{\sigma}\in O(L)$ such that the diagram 
    \[
    \xymatrix{
        N \ar@{^(->}[r] \ar[d]_\sigma^\simeq& L \ar[d]^{\tilde{\sigma}}_\simeq \\
        N \ar@{^(->}[r]& L 
    }
    \]
    commutes, if and only if there is an isometry $\tau\in O(N^\perp)$ such that the diagram 
    \[
    \xymatrix{
        A_N \ar[r]^\simeq \ar[d]_{\overline{\sigma}}^\simeq& A_{N^\perp}(-1) \ar[d]^{\overline{\tau}}_\simeq \\
        A_N \ar[r]^\simeq& A_{N^\perp}(-1)
    }
    \]
    commutes.
\end{lemma}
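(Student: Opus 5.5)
The plan is to realise $L$ inside $N^*\oplus (N^\perp)^*$ and read off both directions from how $L$ sits there. Since $N\oplus N^\perp$ has finite index in $L$, we have
\[
N\oplus N^\perp \subset L = L^* \subset N^*\oplus (N^\perp)^*,
\]
where the last inclusion is $x\mapsto \bigl((x\cdot -)|_N,(x\cdot -)|_{N^\perp}\bigr)$. This map is injective because the form is non-degenerate. Passing to the quotient by $N\oplus N^\perp$, the subgroup
\[
\Gamma \coloneqq L/(N\oplus N^\perp)\subset A_N\oplus A_{N^\perp}
\]
has both projections equal to the maps $f_N$ and $f_{N^\perp}$ recalled above. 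These maps are isomorphisms; this is the input already stated in the text, coming from primitivity and unimodularity. Hence $\Gamma$ is the graph of $\gamma\coloneqq f_{N^\perp}\circ f_N^{-1}\colon A_N\to A_{N^\perp}$. Moreover, $L$ is exactly the preimage of $\Gamma$ under $N^*\oplus (N^\perp)^*\to A_N\oplus A_{N^\perp}$. This identification is the one step that needs care, and I will verify it first.

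For the forward direction, suppose $\tilde\sigma\in O(L)$ extends $\sigma$. Since $\tilde\sigma$ is an isometry with $\tilde\sigma(N)=N$, it also preserves $N^\perp$. Set $\tau\coloneqq\tilde\sigma|_{N^\perp}\in O(N^\perp)$. Extend $\tilde\sigma=\sigma\oplus\tau$ $\bQ$-linearly to $L\otimes\bQ$. It then preserves $N^*\oplus(N^\perp)^*$ and induces $\overline{\sigma}\oplus\overline{\tau}$ on $A_N\oplus A_{N^\perp}$. Because $\tilde\sigma(L)=L$, we get $(\overline{\sigma}\oplus\overline{\tau})(\Gamma)=\Gamma$. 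For a graph, this condition says exactly that $\gamma\circ\overline{\sigma}=\overline{\tau}\circ\gamma$, which is the commutativity of the square in the statement.

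For the converse, take $\tau\in O(N^\perp)$ making the square commute. Define $\phi\coloneqq\sigma\oplus\tau$ on $N\oplus N^\perp$ and extend it $\bQ$-linearly to an isometry of $L\otimes\bQ = (N\oplus N^\perp)\otimes \bQ$. Then $\phi$ preserves $N^*\oplus(N^\perp)^*$, since isometries preserve duals. On the discriminant, $\phi$ induces $\overline{\sigma}\oplus\overline{\tau}$, and this map sends $(a,\gamma a)$ to $(\overline{\sigma}a,\overline{\tau}\gamma a)=(\overline{\sigma}a,\gamma\overline{\sigma}a)\in\Gamma$, so $\phi$ preserves $\Gamma$. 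Since $L$ is the preimage of $\Gamma$, we get $\phi(L)\subset L$, and the same argument for $\phi^{-1}$ gives equality. So $\tilde\sigma\coloneqq\phi|_L\in O(L)$ restricts to $\sigma$ on $N$.

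The main obstacle is the identification of $L$ with the preimage of the graph $\Gamma$, in particular the surjectivity of the restriction map $L\to N^*$. This surjectivity follows from $L=L^*$ together with primitivity of $N$: every functional on $N$ extends to $L$ because $L/N$ is free. Everything else is formal bookkeeping with graphs.
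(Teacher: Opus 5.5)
Your argument is correct. The paper gives no proof of its own and only cites Nikulin; your graph description of $L/(N\oplus N^\perp)\subset A_N\oplus A_{N^\perp}$ is the standard argument behind that citation, and it amounts to Lemma \ref{lem: overlattice subgroup correspondence} applied to the overlattice $N\oplus N^\perp\subset L$ with the isometry $\sigma\oplus\tau$.

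One correction of emphasis: that $L$ is the full preimage of $\Gamma$ is automatic, because $L$ contains the kernel $N\oplus N^\perp$. The surjectivity of $L\to N^*$ is needed for a different reason, namely so that $\Gamma$ is a graph over all of $A_N$ (i.e.\ so that $f_N$ is an isomorphism).
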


An embedding of even lattices $N\hookrightarrow L$ is said to be an even \textit{overlattice} if the cokernel $L/N$ is a finite group. In this case, we have a sequence of finite-index embeddings
\[
N\hookrightarrow L \hookrightarrow L^* \hookrightarrow N^*.
\]
Therefore, the quotient $H_L\coloneqq L/N$ comes with a natural embedding into $A_N = N^*/N$. 

\begin{lemma}[{\cite[Proposition 1.4.2]{Nik80}}] \label{lem: overlattice subgroup correspondence} 
    Let $N\hookrightarrow L$ and $N'\hookrightarrow L'$ be even overlattices. Then an isometry $\sigma \colon N\simeq N'$ can be extended to an isometry $\tilde{\sigma}\colon L\simeq L'$ if and only if $$\overline{\sigma}(H_L) = H_{L'}.$$ 
\end{lemma}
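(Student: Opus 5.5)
This is a cited result (Nikulin, Proposition 1.4.2), so the plan is to reconstruct the standard argument directly. Its content is that an isometry of the sublattices extends to the overlattices exactly when its induced map on discriminant groups carries one isotropic subgroup to the other.

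Everything takes place inside rational spaces. Since $L/N$ is finite, $N\otimes\bQ = L\otimes\bQ$, and likewise $N'\otimes\bQ = L'\otimes\bQ$. The isometry $\sigma\colon N\simeq N'$ extends uniquely to a $\bQ$-linear isometry $\sigma_\bQ\colon N\otimes\bQ\to N'\otimes\bQ$. Any isometry $\tilde\sigma\colon L\simeq L'$ extending $\sigma$ must agree with $\sigma_\bQ$, because it is determined on a finite-index sublattice. So the question reduces to whether $\sigma_\bQ(L)=L'$.

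The key observation is that $\sigma_\bQ$ carries $N^*$ onto $N'^*$. Indeed, $N^*$ is characterised intrinsically as $\{x\in N\otimes\bQ : x\cdot N\subset\bZ\}$, and $\sigma_\bQ$ preserves the form while mapping $N$ onto $N'$. It follows that $\sigma_\bQ$ descends to the map $\overline{\sigma}\colon A_N=N^*/N\to N'^*/N'=A_{N'}$, and this is exactly the induced isometry of discriminant groups.

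We also use the chain $N\subset L\subset L^*\subset N^*$ recalled before the lemma, which places $L$ inside $N^*$. Hence $L$ is the full preimage of $H_L = L/N$ under the quotient map $N^*\to A_N$, and similarly $L'$ is the preimage of $H_{L'}\subset A_{N'}$. This gives
\[
\sigma_\bQ(L) = \text{preimage of } \overline{\sigma}(H_L) \text{ in } N'^*.
\]
Therefore $\sigma_\bQ(L)=L'$ if and only if $\overline{\sigma}(H_L)=H_{L'}$, since a subgroup of $N'^*$ containing $N'$ is determined by its image in $A_{N'}$.

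For the direction "if", when $\overline{\sigma}(H_L)=H_{L'}$, restricting $\sigma_\bQ$ to $L$ gives a group isomorphism $L\to L'$. It preserves the bilinear form because $\sigma_\bQ$ does, so it is the required isometry $\tilde\sigma$. For "only if", an extension $\tilde\sigma$ equals $\sigma_\bQ|_L$ by the uniqueness noted above, so $\sigma_\bQ(L)=L'$, and taking images in $A_{N'}$ gives $\overline{\sigma}(H_L)=H_{L'}$.

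The only mildly delicate point is checking that $\sigma_\bQ(N^*)=N'^*$ and that its reduction modulo $N'$ really is $\overline{\sigma}$. Both follow directly from the intrinsic description of the dual lattice, so no step presents a serious obstacle.
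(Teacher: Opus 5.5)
Your proposal is correct, and it is the standard argument behind Nikulin's Proposition 1.4.2: identify everything inside $N\otimes\bQ$, note that $\sigma_\bQ$ maps $N^*$ onto $N'^*$ and reduces to $\overline{\sigma}$ modulo $N$, and use that $L$ and $L'$ are the full preimages of $H_L$ and $H_{L'}$ in $N^*$ and $N'^*$. The paper gives no proof of its own and simply cites Nikulin, whose proof is essentially this one, so there is nothing further to compare.
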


\subsection{Hodge structures and K3 surfaces}
We recall some fundamental facts about the integral cohomology groups of K3 surfaces. Our main reference for this subsection is \cite{HuyK3}. We assume all our K3 surfaces to be algebraic.

For a K3 surface $Z$, the group $H^2(Z,\bZ)$ is a free abelian group of rank $22$. The cup-product on $H^2(Z,\bZ)$ is an integral, symmetric, non-degenerate, even bilinear form of signature $(3,19)$, thus $H^2(Z,\bZ)$ is isometric to the \textit{K3-lattice}:
\[
\Lambda_{K3}\coloneqq U^{\oplus3} \oplus E_8(-1)^{\oplus2}.
\]
Here, $U$ is the \textit{hyperbolic plane}, which is the unique even unimodular lattice of signature $(1,1)$, and $E_8$ is the unique even unimodular positive-definite lattice of rank 8. 

Additionally, $H^2(Z,\bZ)$ carries a Hodge structure of weight 2:
\[
H^2(Z,\bC) \simeq  H^{2,0}(Z) \oplus  H^{1,1}(Z)  \oplus
  H^{0,2}(Z),
\]
and we have $H^{2,0}(Z) \simeq H^0(Z,K_Z) \simeq \bC$. 
A group isomorphism $H^2(Z,\bZ) \simeq H^2(Z',\bZ)$ is called a \textit{Hodge isometry} if it is simultaneously an isometry and an isomorphism of Hodge structures. 
The \textit{N\'eron--Severi lattice} of $Z$ is $\NS(Z) \coloneqq H^{1,1}(Z) \cap H^2(Z,\bZ)$, seen as a sublattice of $H^2(Z,\bZ)$. Via the exponential sequence, we obtain an isomorphism $\Pic Z \simeq \NS(Z)$. 
The \textit{transcendental lattice} $T(Z)$ of $Z$ is defined to be the minimal, primitive, integral sub-Hodge structure of $H^2(Z,\bZ)$ such that $H^{2,0}(Z)\subset T(Z)_\bC$. Since $Z$ is algebraic, we have $T(Z) = \NS(Z)^\perp\subset H^2(Z,\bZ)$, see \cite[Lemma 3.3.1]{HuyK3}. The transcendental lattice is a primitive sublattice of $H^2(Z,\bZ)$, and it also carries a natural Hodge structure inherited from $H^2(Z,\bZ)$.

\begin{theorem}[{Torelli Theorem for K3 surfaces, \cite[Theorem 7.5.3]{HuyK3}}] \label{thm: torelli theorem}
    Let $Z$ and $Z'$ be K3 surfaces, and let $\psi\colon H^2(Z,\bZ) \simeq H^2(Z',\bZ)$ be a Hodge isometry. Then there exists an isomorphism $f\colon Z'\simeq Z$ such that $\psi = f^*$ if and only if $\psi$ maps an ample class on $Z$ to an ample class on $Z'$. 
\end{theorem}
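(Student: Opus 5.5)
The plan is to treat the two implications separately. The forward direction is formal. The converse is the global Torelli theorem proper, which I would prove by the classical route of Pjatecki\u{\i}-\v{S}apiro--\v{S}afarevi\v{c}, Burns--Rapoport and Looijenga--Peters, following \cite[Ch.~6--7]{HuyK3}.

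\textbf{Forward direction.} Suppose $\psi = f^*$ for an isomorphism $f\colon Z'\simeq Z$. Pullback along an isomorphism respects the cup product and the Hodge decomposition. It also sends $\Pic Z$ isomorphically to $\Pic Z'$ and ample line bundles to ample line bundles. Since $\Pic \simeq \NS$, the isometry $\psi$ therefore maps ample classes to ample classes.

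\textbf{Converse: preliminary reductions.} Assume $\psi$ maps some ample class $h$ on $Z$ to an ample class $\psi(h)$ on $Z'$.
\begin{enumerate}
    \item Fix markings $H^2(Z,\bZ)\simeq \Lambda_{K3}\simeq H^2(Z',\bZ)$ compatible with $\psi$. Consider the moduli space $\mathfrak{N}$ of marked K3 surfaces and its period map $\mathcal{P}\colon \mathfrak{N}\to D=\{x\in \bP(\Lambda_{K3}\otimes\bC) \mid x^2=0,\ x\cdot\bar x>0\}$. Then $Z$ and $Z'$ have the same period.
    \item Prove local Torelli. Deformations of a K3 surface are unobstructed, since $H^2(T_Z)\simeq H^0(\Omega_Z)^\vee=0$. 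The differential of $\mathcal{P}$ is the isomorphism $H^1(T_Z)\simeq \Hom(H^{2,0},H^{1,1})$ induced by contraction with a symplectic form. Hence $\mathcal{P}$ is a local isomorphism.
    \item Enlarge to the space of marked pairs $(Z,\kappa)$ with $\kappa$ a K\"ahler class. The refined period map to the space of positive $3$-planes is then still a local isomorphism.
\end{enumerate}

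\textbf{Converse: the Kummer case.} Next I prove the theorem for Kummer surfaces, reducing it to the elementary Torelli theorem for complex tori. A Hodge isometry of Kummer lattices that respects the Kummer structure comes from an isomorphism of tori, and hence from an isomorphism of the Kummer surfaces. I then show that Kummer periods are dense in $D$. This uses that twistor lines through Kummer points connect the period domain.

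\textbf{Converse: the limiting argument.} This is the heart of the proof. Choose a path in $D$ from the common period of $Z$ and $Z'$ to a general point, approximated by Kummer periods. Lift it via local Torelli to families $\mathcal{Z}_t$ and $\mathcal{Z}'_t$ whose markings match. For Kummer parameters, i.e.\ a dense set of $t$, there are isomorphisms $f_t$ realising $\psi_t$ up to $\pm$ reflections. Their graphs $\Gamma_t$ have bounded volume with respect to the matched K\"ahler classes. By Bishop's theorem they converge to an effective cycle
\[
\Gamma = \Gamma_0 + \sum_i C_i\times C'_i \subset Z\times Z',
\]
where $\Gamma_0$ is the graph of a bimeromorphic map and the $C_i$, $C'_i$ are curves. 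The action $[\Gamma]_*=\psi$ on $H^2$ differs from $(\Gamma_0)_*$ by a composition of reflections in $(-2)$-curves. Hence $(\Gamma_0)_*=w\circ\psi$ for some $w$ in the Weyl group generated by such reflections, and $\Gamma_0$ extends to an isomorphism, because a bimeromorphic map between minimal surfaces is an isomorphism.

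\textbf{Converse: using ampleness.} The isomorphism $g$ with $g_*=w\circ\psi$ sends the ample cone of $Z$ onto that of $Z'$. By hypothesis $\psi$ also sends the ample class $h$ into the ample cone of $Z'$. The Weyl group acts simply transitively on the chambers cut out by $(-2)$-classes, so $w=\id$ and $\psi = g_*$. Setting $f=g^{-1}$ gives $\psi=f^*$.

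\textbf{Main obstacle.} The step I expect to be hardest is the limiting argument: controlling the volume of the graphs $\Gamma_t$ and identifying the cohomological action of the limit cycle. This requires the K\"ahler cone description and the non-Hausdorffness analysis of Burns--Rapoport. I would follow \cite[Ch.~7]{HuyK3} closely there rather than reprove it.
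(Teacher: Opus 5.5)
The paper gives no proof of this statement. It quotes \cite[Theorem 7.5.3]{HuyK3}, which is formulated with K\"ahler classes. The ample version used in the paper follows at once for algebraic K3 surfaces, because ample classes are K\"ahler; your forward direction is the trivial half.

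Note that the source you say you would follow does not go through Kummer surfaces. As I recall it, it shows, following Verbitsky, that the Hausdorff reduction of a connected component of the moduli space of marked K3 surfaces is a covering of the simply connected period domain, using surjectivity of the period map and generic twistor lines. Hence marked surfaces with equal periods in one component are non-separated, and only then is the Burns--Rapoport limit-of-graphs argument applied. Your classical Kummer-density route is a legitimate alternative, but its central step, as written, does not go through.

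The problem is the passage from the limit cycle to $(\Gamma_0)_*=w\circ\psi$. There are two issues.

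\emph{The volume bound.} If at Kummer parameters you only know that $f_t$ realises $\psi_t$ up to a sign and a Weyl group element $w_t$, then neither the volume bound nor $[\Gamma]_*=\psi$ follows. The volume of $\Gamma_t$ with respect to $p^*\kappa_t+q^*\kappa_t'$ equals $\tfrac12(\kappa_t^2+\kappa_t'^2)+[\Gamma_t]_*(\kappa_t)\cdot\kappa_t'$. This stays bounded when $[\Gamma_t]_*=\psi_t$, but need not be bounded if the $w_t$ vary.

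\emph{The Weyl group claim.} Even granting $[\Gamma]_*=\psi$, nothing in the decomposition of $\Gamma$ shows that $\psi-(\Gamma_0)_*=\sum_i n_i[C_i\times C_i']_*$ is absorbed by a Weyl group element. Statements of that kind are normally derived from the strong Torelli theorem plus the chamber structure, so you cannot take it as an input. It is also exactly where the ampleness hypothesis has to enter.

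The repair is to use your step (3) in earnest.
\begin{enumerate}
\item Take Kummer refined periods converging to that of $(Z,h)$ and lift them through the local refined period maps of $(Z,h)$ and $(Z',\psi(h))$. Then each $\psi_t$ sends a K\"ahler class to a K\"ahler class, and the effective Torelli theorem for Kummer surfaces gives $[\Gamma_t]_*=\psi_t$ exactly.
\item Then $[\Gamma]_*=\psi$. With $g$ the isomorphism extending $\Gamma_0$, the class $\psi(h)-g_*h=\sum_i n_i(h\cdot C_i)[C_i']$ is effective.
\item The transposed cycle acts by the adjoint $\psi^{-1}$. The same computation applied to $\psi(h)$ shows that $h-g^*\psi(h)$ is effective, hence so is $g_*h-\psi(h)$.
\item Pairing with $\psi(h)$ gives $\sum_i n_i(h\cdot C_i)(\psi(h)\cdot C_i')=0$. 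So there are no components $C_i\times C_i'$ and $\psi=g_*$ directly; no Weyl group argument is needed.
\end{enumerate}

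Two smaller points. First, twistor lines do not give density of Kummer periods; the classical proofs obtain it by a lattice-theoretic argument. Nor do you need a path in $D$: a sequence of Kummer points inside the neighbourhood where both local refined period maps are isomorphisms suffices. Second, the Kummer case you need is the effective one. You must first use Nikulin's criterion (sixteen disjoint smooth rational curves with $2$-divisible sum) to know that $Z_t$ and $Z_t'$ are Kummer surfaces at all. Effectivity of $\psi_t$ is then what shows it carries the sixteen exceptional curves of one Kummer structure to those of another, rather than assuming $\psi_t$ respects Kummer structures.
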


\begin{theorem}[Derived Torelli Theorem for K3 surfaces \cite{Muk, OrlovK3}] \label{thm: derived torelli theorem}
    Let $Z$ and $Z'$ be K3 surfaces. Then $Z$ and $Z'$ are derived equivalent if and only if there exists a Hodge isometry $T(Z)\simeq T(Z')$.
\end{theorem}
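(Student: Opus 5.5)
We plan to treat the two implications separately and to work throughout with the Mukai lattice $\widetilde{H}(Z,\bZ) = H^0(Z,\bZ)\oplus H^2(Z,\bZ)\oplus H^4(Z,\bZ)$. This lattice carries the Mukai pairing $(r,l,s)\cdot(r',l',s') = l\cdot l' - rs' - r's$ and the weight-two Hodge structure in which $H^0\oplus H^4$ is of type $(1,1)$. As a lattice it is isometric to $\Lambda_{K3}\oplus U$. Its algebraic part is $N(Z) = H^0\oplus \NS(Z)\oplus H^4 \simeq \NS(Z)\oplus U$, and its transcendental part is again $T(Z)$, because $T(Z)$ is the minimal primitive sub-Hodge structure whose complexification contains $H^{2,0}(Z)$.

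For the implication ``derived equivalent $\Rightarrow$ Hodge isometry'', we start from an exact equivalence $\Phi\colon \Db(Z)\simeq \Db(Z')$. By Orlov's representability theorem, $\Phi \simeq \Phi_{\cE}$ for some kernel $\cE\in \Db(Z\times Z')$ in the sense of Definition \ref{def: FM transform}. The Mukai vector $v(\cE) = \mathrm{ch}(\cE)\sqrt{\mathrm{td}_{Z\times Z'}}$ then induces a correspondence $\Phi^H\colon \widetilde{H}(Z,\bZ)\to\widetilde{H}(Z',\bZ)$. We will use three facts about $\Phi^H$: it is integral because $\sqrt{\mathrm{td}}$ is integral on a K3 surface; it is an isometry for the Mukai pairing, via Grothendieck--Riemann--Roch and the fact that its inverse is induced by the adjoint kernel; and it preserves the Hodge structures since it is given by an algebraic class. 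Because $\Phi^H$ maps $H^{2,0}(Z)$ onto $H^{2,0}(Z')$, minimality of the transcendental lattices then forces $\Phi^H(T(Z)) = T(Z')$. This direction is routine.

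For the converse, let $\varphi\colon T(Z)\simeq T(Z')$ be a Hodge isometry. The first step is to extend $\varphi$ to a Hodge isometry $\widetilde{\varphi}\colon \widetilde H(Z,\bZ)\simeq \widetilde H(Z',\bZ)$. We plan to do this with Lemma \ref{lem: extending isometries} in the unimodular lattice $\Lambda_{K3}\oplus U$. The orthogonal complements $N(Z)$ and $N(Z')$ are even, indefinite, contain a copy of $U$, and have isomorphic discriminant forms, namely those of $T(Z)(-1)\simeq T(Z')(-1)$. By Nikulin's uniqueness results, such a lattice is unique in its genus and $O(N)\to O(A_N)$ is surjective. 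Hence there is an isometry $N(Z)\simeq N(Z')$ inducing the required map on discriminant groups, and gluing it with $\varphi$ gives $\widetilde\varphi$, which is automatically Hodge.

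Next, set $v\coloneqq \widetilde\varphi^{-1}(0,0,1)$. This is a primitive isotropic vector in $N(Z)$. After composing with the isometries $-\id$, multiplication by $\exp(B)$ for $B\in\NS(Z)$, and possibly the reflection exchanging $H^0$ and $H^4$, we may assume $v=(r,l,s)$ with $r>0$. We then choose $w\in N(Z)$ with $v\cdot w = 1$, which is possible because $N(Z)\supset U$ makes $v$ have divisibility one in $N(Z)$. We take $M$ to be the moduli space of $H$-stable sheaves with Mukai vector $v$ for a $v$-generic polarisation $H$. By Mukai's results, $M$ is a smooth projective K3 surface; because of $w$ it is fine, and its universal family $\cU$ gives an equivalence $\Phi_{\cU}\colon \Db(M)\simeq\Db(Z)$ by the Bondal--Orlov/Bridgeland criterion. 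The map $\Phi_{\cU}^H$ sends $(0,0,1)$ to $v$, so it identifies $H^2(M,\bZ)\simeq v^\perp/v$ Hodge-isometrically. Composing with $\widetilde\varphi$ then yields a Hodge isometry $H^2(M,\bZ)\simeq (0,0,1)^\perp/(0,0,1)\simeq H^2(Z',\bZ)$. After correcting by $\pm1$ and by reflections in $(-2)$-classes so that an ample class goes to an ample class, Theorem \ref{thm: torelli theorem} gives $M\simeq Z'$, and hence $\Db(Z')\simeq \Db(M)\simeq\Db(Z)$.

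I expect the main obstacle to be this last stage. It requires arranging $v$ to have positive rank with an algebraic $w$ satisfying $v\cdot w=1$, and then proving that the moduli space is a fine, nonempty, two-dimensional K3 surface whose universal family is a Fourier--Mukai equivalence. This is Mukai's deep input, and I would simply cite it rather than reprove it.
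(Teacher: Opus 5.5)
The paper gives no proof of this theorem; it quotes it as background from Mukai and Orlov. Your outline is the standard argument of those sources: Orlov's representability and the cohomological Fourier--Mukai transform for one direction, and for the other, Nikulin's extension to the Mukai lattice, a fine moduli space of sheaves with Mukai vector $v=\widetilde\varphi^{-1}(0,0,1)$, and the classical Torelli theorem. The overall route is correct, but two of your local justifications do not hold as stated.

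\textbf{Integrality.} Integrality of $\Phi^H$ does not follow from integrality of $\sqrt{\mathrm{td}}$ alone. On the fourfold $Z\times Z'$ the components $\mathrm{ch}_3(\mathcal{E})$ and $\mathrm{ch}_4(\mathcal{E})$ a priori have denominators. The usual repair uses topological $K$-theory:
\begin{itemize}
\item K3 surfaces have torsion-free topological $K$-theory with $K^1=0$, so $K(Z\times Z')\simeq K(Z)\otimes K(Z')$ by the K\"unneth theorem.
\item Mukai vectors of topological classes on a single K3 surface are integral, because the intersection form is even.
\item Hence $v(\mathcal{E})=\sum_i v(a_i)\boxtimes v(b_i)$ is integral.
\end{itemize}

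\textbf{The vector $w$.} The existence of $w\in N(Z)$ with $v\cdot w=1$ is not a consequence of $N(Z)\supset U$. For instance, if $\NS(Z)\simeq\Lto$, then $(0,h_1,0)$ is a primitive isotropic vector of $N(Z)$ of divisibility $2$, since $h_1$ pairs evenly with all of $\Lto$. The correct reason is transport of structure. Set $w\coloneqq\widetilde\varphi^{-1}(-1,0,0)$; it lies in $N(Z)$ and satisfies $v\cdot w=(0,0,1)\cdot(-1,0,0)=1$. Your normalising isometries $-\id$, $\exp(B)$ and the swap of $H^0$ and $H^4$ act on $v$ and $w$ simultaneously, so this equality survives the normalisation.

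With these two repairs the argument goes through.
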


Together, the Torelli Theorem and the Derived Torelli Theorem can be used to count Fourier--Mukai partners of K3 surfaces using the lattice theory of Nikulin \cite{Nik80}.

\begin{definition}
    For a K3 surface $Z$, we denote by $\FM(Z)$ the set of isomorphism classes of K3 surfaces $Z'$ such that $\Db(Z)\simeq \Db(Z')$. That is, $\FM(Z)$ is the set of Fourier--Mukai partners of $Z$.
\end{definition}

Note that $\FM(Z)$ is never empty, as we have $Z\in \FM(Z)$. 

\begin{theorem}[{Counting Formula for Fourier--Mukai partners of K3 surfaces, \cite[Theorem 2.3]{HLOY}}] \label{thm: counting formula} 
    Let $Z$ be a K3 surface. Then we have 
    \[
    |\FM(Z)| = \sum_{N\in \cG(\NS(Z))}|O(N) \backslash O(A_N) / O_{\mathrm{Hodge}}(T(Z))|.
    \]
    Here, $\cG(\NS(Z))$ is the genus of $\NS(Z)$, see Definition \ref{def: genus}, and $O_{\mathrm{Hodge}}(T(Z))$ denotes the group of Hodge isometries of $T(Z)$.
\end{theorem}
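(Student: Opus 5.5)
Since the statement is quoted from \cite[Theorem 2.3]{HLOY}, I only sketch how one would reconstruct it from the Torelli and Derived Torelli theorems; nothing specific to the paper is needed. The plan is to build a surjection from pairs (lattice in the genus, double coset) onto $\FM(Z)$ and then check that it is injective.

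First step: let $Z'\in\FM(Z)$. By Theorem \ref{thm: derived torelli theorem} there is a Hodge isometry $T(Z)\simeq T(Z')$. Via the canonical isomorphisms $A_{\NS}\simeq A_{T}(-1)$ coming from the primitive embeddings into the unimodular lattice $H^2$, we get $A_{\NS(Z')}\simeq A_{\NS(Z)}$ as quadratic forms. The signatures also agree, both being $(1,\rho-1)$. Hence $N'\coloneqq\NS(Z')$ lies in $\cG(\NS(Z))$.

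Conversely, take $N\in\cG(\NS(Z))$. Overlattices of $N\oplus T(Z)$ that are unimodular and contain both summands primitively are encoded by isometries $\gamma\colon A_N\simeq A_{T(Z)}(-1)$: the overlattice is the graph of $\gamma$, as in Lemma \ref{lem: overlattice subgroup correspondence}. Fix a reference gluing $\gamma_0$ and write $\gamma=\gamma_0\circ g$ with $g\in O(A_N)$.

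Each such overlattice is even, unimodular of signature $(3,19)$, hence isometric to $\Lambda_{K3}$. It carries the Hodge structure induced from $T(Z)$. By surjectivity of the period map, together with Theorem \ref{thm: torelli theorem} after composing with a suitable Weyl group element and $\pm1$ so that the positive cone contains ample classes, we obtain a K3 surface $Z_g$. This $Z_g$ satisfies $\NS(Z_g)\simeq N$ and $T(Z_g)\simeq T(Z)$ Hodge-isometrically, so $Z_g\in\FM(Z)$ by Theorem \ref{thm: derived torelli theorem}. This gives a surjective map
\[
\bigsqcup_{N\in\cG(\NS(Z))} O(A_N)\longrightarrow \FM(Z).
\]

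The main step is to identify the fibres, and this is where I expect the real difficulty. By Theorem \ref{thm: torelli theorem}, $Z_g\simeq Z_{g'}$ exactly when there is a Hodge isometry of the glued lattices sending ample classes to ample classes. Up to the sign and Weyl group ambiguity, which must be shown to be absorbed by $-1\in O_{\mathrm{Hodge}}(T)$ and by reflections acting trivially on $A_N$, such an isometry is a pair $(\sigma,\tau)\in O(N)\times O_{\mathrm{Hodge}}(T(Z))$ compatible with the gluings. Using Lemma \ref{lem: extending isometries} and Lemma \ref{lem: overlattice subgroup correspondence}, compatibility means $\overline\tau\,\gamma_0\, g=\gamma_0\, g'\,\overline\sigma$, that is, $g'$ lies in the same double coset as $g$ in $O(N)\backslash O(A_N)/O_{\mathrm{Hodge}}(T(Z))$.

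The delicate points are therefore the ample-cone normalisation and checking that the left and right actions are exactly those of $O(N)$ and of $O_{\mathrm{Hodge}}(T(Z))$ via $\overline{\tau}$. Once these hold, summing the number of double cosets over the genus gives the formula.
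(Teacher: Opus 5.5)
Your sketch is correct and is essentially the argument of \cite{HLOY}, which the paper cites without proof: glue $N\oplus T(Z)$ along isometries $A_N\simeq A_{T(Z)}(-1)$, realise each gluing as $H^2$ of a K3 surface by surjectivity of the period map, and use the Torelli theorem to identify the fibres with double cosets. The sign/Weyl-group issue you flag as delicate is automatic, because $-\id$ and reflections in $(-2)$-classes restrict to elements of $O(N)$ (and to $\pm\id$ on $T(Z)$), so every Hodge isometry of glued lattices is already a compatible pair $(\sigma,\tau)$; also, your relation actually gives $g'=(\gamma_0^{-1}\overline{\tau}\gamma_0)\,g\,\overline{\sigma}^{-1}$, i.e.\ the double quotient with the two factors swapped, which has the same cardinality via $g\mapsto g^{-1}$.
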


From Theorem \ref{thm: counting formula}, we see that the number $|\FM(Z)|$ depends only on $\NS(Z)$ and $O_{\mathrm{Hodge}}(T(Z))$, hence it is important to understand the group $O_{\mathrm{Hodge}}(T(Z))$. Note that $\pm\id_{T(Z)}$ are always Hodge isometries, so that we have an injective group homomorphism $\bZ/2\bZ\hookrightarrow O_{\mathrm{Hodge}}(T(Z))$. In fact, a K3 surface which is very general in a moduli space of lattice-polarised K3 surfaces of Picard rank $\rho\leq 19$ satisfies $O_{\mathrm{Hodge}}(T(Z)) \simeq \bZ/2\bZ$, see for example \cite[Lemma 3.9]{SZ}. 
Moreover, if $\rho$ is odd, then we always have $O_{\mathrm{Hodge}}(T(Z)) \simeq \bZ/2\bZ$:

\begin{lemma}[{\cite[Lemma 4.1]{Ogu}}]  \label{lem: odd picard rank few hodge isometries}
    Let $Z$ be a K3 surface whose Picard rank $\rho$ is odd. Then we have $O_{\mathrm{Hodge}}(T(Z)) \simeq \bZ/2\bZ$.
\end{lemma}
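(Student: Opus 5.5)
This is Oguiso's lemma: for a K3 surface $Z$ of odd Picard rank $\rho$, the group $O_{\mathrm{Hodge}}(T(Z))$ is $\{\pm\id_{T(Z)}\}$. I would argue via the standard cyclotomic argument.

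\textbf{Step 1: reduction to roots of unity.} Let $g\in O_{\mathrm{Hodge}}(T(Z))$. Since $g$ preserves $H^{2,0}(Z)=\bC\omega$, we have $g(\omega)=\lambda\omega$ for some $\lambda\in\bC^*$. Because $Z$ is algebraic, $T(Z)\otimes\bR$ has signature $(2,20-\rho)$, and the positive definite plane is spanned by $\mathrm{Re}\,\omega$ and $\mathrm{Im}\,\omega$. The isometry $g$ preserves this plane, so $|\lambda|=1$. The key fact, from Huybrechts' book (Ch.~3, Cor.~3.3.4), is that $g$ has finite order: $T(Z)$ is the minimal primitive sub-Hodge structure containing $\omega$. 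Hence $\ker(g-\lambda)$ is nonzero as soon as $\lambda$ is rational, and more generally the minimal polynomial argument below applies.

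\textbf{Step 2: irreducibility of the characteristic polynomial.} Let $\Phi$ be the minimal polynomial of $\lambda$ over $\bQ$. Consider the $\bQ$-subspace $\ker\Phi(g)\subset T(Z)_\bQ$. It is $g$-stable and its complexification contains $\omega$. By minimality of $T(Z)$ it equals $T(Z)_\bQ$. A semisimplicity argument applies: $g$ preserves a form that is definite on a suitable complement and $T_\bQ$ is an irreducible Hodge structure. From this, $T(Z)_\bQ$ is a $\bQ(\lambda)$-vector space with $g$ acting as multiplication by $\lambda$. Its characteristic polynomial is therefore $\Phi^{m}$, so $\varphi(\lambda):=[\bQ(\lambda):\bQ]$ divides $\rk T(Z)=22-\rho$.

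\textbf{Step 3: conclude finiteness and parity.} Every Galois conjugate of $\lambda$ is an eigenvalue on some $H^{2,0}$-type eigenvector after conjugating the Hodge structure, hence also has absolute value $1$; alternatively, $g$ preserves the lattice, so its eigenvalues are algebraic integers. An algebraic integer all of whose conjugates have modulus $1$ is a root of unity (Kronecker), so $\lambda$ is a primitive $n$-th root of unity. If $\lambda\neq\pm1$, then $\varphi(n)$ is even, so $22-\rho$ is even and $\rho$ is even, a contradiction. Hence $\lambda=\pm1$. Then $g\mp\id$ is a rational endomorphism whose kernel contains $\omega$, and by minimality of $T(Z)$ we get $g=\pm\id$. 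Since $\pm\id$ are distinct Hodge isometries, $O_{\mathrm{Hodge}}(T(Z))\simeq\bZ/2\bZ$.

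\textbf{Main obstacle.} The subtle point is the claim that all conjugates of $\lambda$ have modulus $1$, equivalently the finiteness of the order of $g$. I would justify it in either of two ways. First, by the Hodge--Riemann argument: the form $\langle x,\bar y\rangle$ restricted to the orthogonal complement of the real plane spanned by $\mathrm{Re}\,\omega,\mathrm{Im}\,\omega$ is negative definite. Second, by simply quoting Huybrechts Cor.~3.3.4. Either way, the parity argument of Step 3 then only needs $\varphi(n)$ to be even for $n>2$.
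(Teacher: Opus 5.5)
Your argument is correct, and it is the standard proof of this result. The paper gives no proof of its own and simply cites Oguiso; the usual argument is yours: every Hodge isometry of $T(Z)$ has finite order, $T(Z)_\bQ$ is a $\bQ(\lambda)$-vector space so that $\varphi(n)\mid 22-\rho$, and $\varphi(n)$ is even for $n\geq 3$.

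The one justification to discard is the first one in Step 3. A Galois conjugate $\sigma(\omega)$ is an eigenvector for $\sigma(\lambda)$, but it need not satisfy $\sigma(\omega)\cdot\overline{\sigma(\omega)}>0$, so it gives no bound on $|\sigma(\lambda)|$. The correct reason is the one in your last paragraph: $g$ preserves the positive plane $\langle \mathrm{Re}\,\omega,\mathrm{Im}\,\omega\rangle$ and its negative definite complement in $T(Z)_\bR$, so it lies in a compact group. Hence all its eigenvalues have modulus $1$, and by Step 2 these eigenvalues are exactly the conjugates of $\lambda$.
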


\section{The families and their associated K3 surfaces} \label{sec:families}

\subsection{Family 2-6(b)} \label{sec:family_2-6b_prelims}

The first family we study is Family 2-6(b) from Fanography \cite{fanography}, sometimes also referred to as \textit{special Verra threefolds}. We briefly recall the construction here. Consider a bidegree $(1,1)$-divisor $Y\subset \bP^2 \times \bP^2$ (in other words, $Y$ is a member of Family 2-32 in \cite{fanography}). Let $Z\in |-K_Y|$ be an anticanonical divisor. Then there exists a branched double cover 
\[
X \overset{2:1}{\longrightarrow} Y
\]
with branch locus $Z\subset Y$. The Fano threefold $X$ constructed in this way is a member of Family 2-6(b), and we will denote this deformation class by $\cX_{2\text{-}6(b)}$. We will refer to $Z$ as the K3 surface \textit{associated to} $X$. 
\begin{remark}
    In this section, we study special Verra threefolds. On the other hand, Fano threefolds in Family 2-6(a) are called \textit{ordinary Verra threefolds}. These are bidegree $(2,2)$ divisors in $\bP^2 \times \bP^2$, and will not be considered this section. 
    A categorical Torelli theorem for ordinary Verra threefolds has already been sketched in \cite[Remark 5.3]{GRZ}.
\end{remark}
The $(1,1)$-divisor $Y$ comes equipped with two projections $p_i\colon Y \to \bP^2$, and we denote 
\[
H_i \coloneqq p_i^*L \in \Pic Y,
\]
where $L\in \Pic\bP^2$ is the class of a line.
It follows from the Adjunction Formula that we have $$K_Y = -2(H_1 + H_2).$$ Finally, let $i_Z\colon Z \hookrightarrow Y$ be the inclusion map. Then we write 
\[
h_i \coloneqq i_Z^*H_i \in \Pic Z.
\]
We now show that the classes $h_1,h_2$ form a basis for $\Pic Z$, under the assumption that $Z$ is a very general member of $|-K_Y|$.
\begin{proposition} \label{prop: branch description 2-6(b)}
    Let $Z \in |-K_Y|$. 
    \begin{enumerate}
        \item If $Z$ is a general element of $|-K_Y|$, then it is a smooth K3 surface;
        \item If $Z$ is a very general element of $|-K_Y|$, then it is a smooth K3 surface and we have $\Pic Z = \NS(Z) = \bZ[h_1] \oplus \bZ[h_2]$ with Gram matrix 
        \begin{equation*}
        \begin{pmatrix}
            2 & 4 \\
            4 & 2
        \end{pmatrix} .
        \end{equation*}
    \end{enumerate}
\end{proposition}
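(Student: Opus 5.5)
Part (1) follows from Bertini and adjunction. The divisor $-K_Y = 2(H_1+H_2)$ is very ample on $Y$, since $H_1+H_2$ is the restriction of the very ample Segre class $\cO(1,1)$. So a general $Z\in|-K_Y|$ is a smooth connected surface. Adjunction gives $K_Z = (K_Y+Z)|_Z = 0$. Lefschetz hyperplane theorem, applied to the ample divisor $Z$ in the threefold $Y$, gives $H^1(Z,\cO_Z)=H^1(Y,\cO_Y)=0$, since $Y$ is Fano. Hence $Z$ is a K3 surface.

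For the Gram matrix in part (2), I will compute intersection numbers on $Y$ and then on $Z$. In $\bP^2\times\bP^2$ write $a,b$ for the two hyperplane classes. Then $Y=a+b$, and for $i+j=3$ we have $H_1^iH_2^j = a^ib^j(a+b)$.
\begin{itemize}
\item For the squares, $h_1^2 = H_1^2\cdot Z = 2H_1^2(H_1+H_2) = 2a^2(a+b)^2 = 2a^2b^2 = 2$, and by symmetry $h_2^2=2$.
\item For the mixed term, $h_1h_2 = 2H_1H_2(H_1+H_2) = 2ab(a+b)^2 = 2\cdot 2a^2b^2 = 4$.
\end{itemize}
The determinant is $4-16=-12\neq 0$, so $h_1,h_2$ are linearly independent in $\NS(Z)$ and $\rho(Z)\ge 2$.

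The remaining work is to show that, very generally, $\NS(Z)$ equals the lattice $\bZ h_1\oplus\bZ h_2$. This needs two things: Picard rank exactly $2$, and saturation of the sublattice.

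For the rank, I would use a Noether--Lefschetz type argument. One option is a Moishezon/Lefschetz result for very general members of a sufficiently ample linear system on a threefold, giving $\rho(Z)=\rho(Y)=2$ since $H^2(Y)$ is spanned by $H_1,H_2$ and $-K_Y$ is very ample. A cleaner alternative is moduli counting. The family of pairs $(Y,Z)$ dominates a component of the moduli of lattice-polarised K3 surfaces polarised by the lattice $\langle h_1,h_2\rangle$; this has dimension $20-2=18$. One then argues that the locus with $\rho\ge3$ is a countable union of proper subvarieties, so the very general member has $\rho=2$. To show the family is not contained in such a union, one can exhibit a single member with $\rho=2$, or check that the count of moduli is $18$.

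For saturation, I would show that $\langle h_1,h_2\rangle$ is primitive. An overlattice of index $k$ would have discriminant $12/k^2$, so only $k=2$ is possible, giving discriminant $3$. The candidate new elements are $(h_1+h_2)/2$, $h_1/2$ and $h_2/2$. Their squares are $(2+8+2)/4=3$, $1/2$ and $1/2$, none of which is an even integer. This contradicts the fact that $\NS(Z)$ is even, so the sublattice is primitive.

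I expect the main obstacle to be the Noether--Lefschetz step. Its cleanest justification is citing Moishezon's theorem, which requires checking the vanishing and ampleness hypotheses on $Y$.
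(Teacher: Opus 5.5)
Your proposal is correct and follows the paper's route: Bertini and adjunction for (1), and for (2) a Noether--Lefschetz theorem together with intersection numbers computed on $Y$. Your intersection numbers are right, with $H_1^3=0$ and $H_1^2H_2=1$ on $Y$; the paper's intermediate values are transposed, though its final Gram matrix agrees.

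The only difference is in how the Picard group is pinned down. The paper cites Ravindra--Srinivas to get $\Pic Y\simeq\Pic Z$ integrally in one step. You instead take the rank from Moishezon's theorem and then check saturation by hand using evenness of $\NS(Z)$, which is also valid. Keep Moishezon as your main route: the moduli-count alternative would additionally need generic finiteness of the period map to give dominance.
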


\begin{proof}
    For (1), firstly, $Z$ is smooth by Bertini's Theorem. By adjunction and since $Z \in |-K_Y|$, we have $K_Z = 0$. Furthermore, $H^1(Z, \cO_Z) = 0$, hence $Z$ is a K3 surface.
    
    Note that we have $\Pic Y = \bZ [H_1] \oplus \bZ [H_2]$. Then $\Pic Z \simeq \Pic Y$ by \cite[Theorem 1]{RS}. For the intersections, recall that as a homology class, $[Z] = 2H_1 + 2H_2$. Then $h_1^2 = H_1^2 \cdot [Z] = H_1^2\cdot(2H_1+2H_2) = 2H_1^3 + 2H_1^2\cdot H_2 = 2(1)+0 = 2$. The rest of the intersections follow similarly. 
\end{proof}

\subsection{Family 2-8} \label{sec:family_2-8_prelims}

Next, we consider Family 2-8. Let $p\in \bP^3$ be a point, and denote the blow-up of $\bP^3$ in $p$ by $Y\coloneqq \Bl_p\bP^3$, and denote the blow-up morphism by 
\[
\pi \colon Y \to \bP^3.
\]
Let $Z\in |-K_Y|$ be an anticanonical divisor. Then there exists a branched double cover
\[
    X \overset{2:1}{\longrightarrow} Y
\]
with branch locus $Z\subset Y$. The Fano threefold $X$ is a member of Family 2-8, and we will denote the deformation class of these Fano threefolds by $\cX_{2\text{-}8}$.

We denote by $H \in \Pic Y$ the pullback of a plane in $\bP^3$ along $\pi$, and by $E\in \Pic Y$ the exceptional divisor of $\pi$. Since $Y$ is a blow-up, we have $\Pic Y = \bZ[H] \oplus \bZ[E]$.
The canonical divisor of $Y$ is 
\[
K_Y = 2E-4H.
\]
As before, let $i_Z\colon Z\hookrightarrow Y$ be the inclusion map. Then we denote
\[
    h \coloneqq i_Z^*H, \quad e \coloneqq i_Z^*E \in \Pic Z.
\]
We now show that the classes $h, e$ form a basis for $\Pic Z$, under the assumption that $Z$ is a very general member of $|-K_Y|$.

\begin{proposition} \label{prop: branch description 2-8}
    Let $Z \in |-K_Y|$. 
    \begin{enumerate}
        \item If $Z$ is a general element of $|-K_Y|$, then it is a smooth K3 surface;
        \item If $Z$ is a very general element of $|-K_Y|$, then it is a smooth K3 surface and we have $\Pic Z = \mathrm{NS}(Z) = \bZ [h] \oplus \bZ [e]$ where the Gram matrix for the lattice is 
        \begin{equation*}
            \begin{pmatrix}
                4 & 0 \\
                0 & -2
            \end{pmatrix}.
        \end{equation*}
    \end{enumerate}
\end{proposition}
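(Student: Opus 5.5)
I would follow the proof of Proposition \ref{prop: branch description 2-6(b)} step by step, replacing the bidegree intersection theory by that of the blow-up $Y=\Bl_p\bP^3$.

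For (1), I first check that $-K_Y = 4H-2E$ is base point free; in fact it is ample, since $Y$ is Fano of Family 2-33 in \cite{fanography}. The linear system $|4H-2E|$ consists of quartic surfaces in $\bP^3$ that are singular at $p$, and these have no common base points on $Y$. Then a general $Z$ is smooth by Bertini's Theorem. Adjunction gives $K_Z=(K_Y+Z)|_Z=0$. From the sequence $0\to\cO_Y(K_Y)\to\cO_Y\to\cO_Z\to 0$, Kodaira vanishing, and $H^1(Y,\cO_Y)=0$, we get $H^1(Z,\cO_Z)=0$. Hence $Z$ is a K3 surface.

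For (2), I would invoke the same Noether--Lefschetz-type result \cite[Theorem 1]{RS} used before. It applies because $-K_Y$ is ample and base point free on the smooth Fano threefold $Y$, and it gives that the restriction $\Pic Y\to\Pic Z$ is an isomorphism for very general $Z\in|-K_Y|$. Since $\Pic Y=\bZ[H]\oplus\bZ[E]$, we get $\Pic Z=\NS(Z)=\bZ[h]\oplus\bZ[e]$. This reliance on the external theorem is the main point where care is needed. One has to make sure its hypotheses hold (ampleness of $-K_Y$, and the vanishing needed to exclude extra classes), and this is what forces "very general" rather than "general".

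It remains to compute the Gram matrix. On $Y$ we have $H^3=1$ and $H\cdot E=0$ as a cycle class, since a general plane misses $p$; so $H^2E=HE^2=0$. Also $E^3=(-\ell)\cdot\ell$ computed on $E\simeq\bP^2$, because $E|_E=\cO_E(-1)$; hence $E^3=1$. With $[Z]=4H-2E$ this gives
\begin{equation*}
h^2=H^2(4H-2E)=4,\quad h\cdot e=HE(4H-2E)=0,\quad e^2=E^2(4H-2E)=-2E^3=-2.
\end{equation*}
As a consistency check, $e$ is the class of the conic $Z\cap E\subset E\simeq\bP^2$, which is a smooth rational curve, so $e^2=-2$ as expected.
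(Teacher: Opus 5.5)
Your proposal is correct and follows the paper's proof: Bertini and adjunction for (1), a Noether--Lefschetz-type theorem giving $\Pic Y\simeq\Pic Z$ (here the paper cites the toric result \cite[Theorem 3.8]{BG} rather than \cite[Theorem 1]{RS}, but either serves), and the same intersection computation with $[Z]=4H-2E$. Two cosmetic slips to fix: $E^3=(E|_E)^2=(-\ell)\cdot(-\ell)=1$ rather than $(-\ell)\cdot\ell$, and $\Bl_p\bP^3$ is Family 2-35, not 2-33.
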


\begin{proof}
    For (1), the proof is the same as the proof of Proposition \ref{prop: branch description 2-6(b)}(1).

    For (2), the fact that $\Pic Y \simeq \Pic Z$ follows from \cite[Theorem 3.8]{BG}. For the lattice, firstly note that $h^2 = 4$ and $e^2 = -2$. Indeed, as a homology class, $[Z] = 4H - 2E$. Thus, $h^2 = H \cdot H \cdot [Z] = H \cdot H \cdot (4H - 2E) = 4H^3 - 2(H^2 \cdot E) = 4(1) - 2(0) = 4$. Also, $e^2 = E \cdot E \cdot [Z] = E \cdot E \cdot (4H - 2E) = 4(E^2 \cdot H) - 2 E^3 = 4(0) - 2(1) = -2$. Furthermore, since $e$ corresponds to the exceptional divisor, we have $h \cdot e = 0$.
\end{proof}

\subsection{Family 3-1}

Consider Family 3-1 from \cite{fanography}. That is, $X$ is a double cover of \[Y = \bP^1 \times \bP^1 \times \bP^1\] branched in a divisor \[Z \in |-K_Y| = |\cO_Y(2,2,2)|.\] 
For $1\leq i \leq 3$, we write \[H_i \coloneqq p_i^*P,\] where $p_i\colon Y \to \bP^1$ denotes the projection onto the $i$-th factor, and $P\in \bP^1$ is a closed point. In other words, we have for example $\cO_Y(H_1) = \cO_Y(1,0,0)$.
Note that these three classes generate $\Pic Y$, i.e. we have \[\Pic Y = \bZ[H_1]\oplus \bZ[H_2]\oplus \bZ[H_3].\] 

Finally, for $1\leq i \leq 3$, let $h_i \coloneqq H_i|_Z$ be the pullbacks of $H_i$ to $Z$.

We show that the classes $h_1,h_2,h_3$ form a basis of $\Pic Z$, under the assumption that $Z$ is a very general member of $|-K_Y|$.

\begin{proposition}\label{prop: branch description 3-1}
     Let $Z \in |-K_Y|$. 
    \begin{enumerate}
        \item If $Z$ is a general element of $|-K_Y|$, then it is a smooth K3 surface;
        \item If $Z$ is a very general element of $|-K_Y|$, then it is a smooth K3 surface and we have $\Pic Z = \NS(Z) = \bZ[h_1] \oplus \bZ[h_2] \oplus \bZ[h_3]$ where the Gram matrix for the lattice is
        \begin{equation*}
        \begin{pmatrix}
            0 & 2 & 2 \\
            2 & 0 & 2 \\
            2 & 2 & 0
        \end{pmatrix} .
        \end{equation*}
    \end{enumerate}
\end{proposition}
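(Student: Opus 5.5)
The proof follows the same pattern as Propositions \ref{prop: branch description 2-6(b)} and \ref{prop: branch description 2-8}. It has three steps: smoothness and the K3 property, identification of the Picard lattice, and computation of the intersection numbers.

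For (1), the linear system $|-K_Y| = |\cO_Y(2,2,2)|$ is base point free, since $\cO_Y(2,2,2)$ is very ample on $\bP^1\times\bP^1\times\bP^1$. Bertini's Theorem therefore makes a general member $Z$ smooth, and connected because the divisor is ample. Adjunction gives $K_Z = (K_Y + Z)|_Z = 0$. For $H^1(Z,\cO_Z)$, use the exact sequence $0\to \cO_Y(-2,-2,-2)\to \cO_Y \to \cO_Z\to 0$. We have $H^1(Y,\cO_Y)=0$, and $H^2(Y,\cO_Y(-2,-2,-2)) = 0$ by Künneth, because $H^\bullet(\bP^1,\cO(-2))$ is concentrated in degree $1$, so this sheaf only has cohomology in degree $3$. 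Hence $H^1(Z,\cO_Z)=0$ and $Z$ is a K3 surface.

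For (2), the key input is a Noether--Lefschetz type statement: for very general $Z\in|-K_Y|$, restriction gives an isomorphism $\Pic Y \xrightarrow{\sim} \Pic Z$. I would cite the same results used in the previous two families, namely \cite[Theorem 1]{RS} or \cite[Theorem 3.8]{BG}, which treat very general anticanonical (ample, base point free) divisors in smooth Fano threefolds whose $|-K_Y|$ is sufficiently positive. Checking that the hypotheses hold for $\bP^1\times\bP^1\times\bP^1$ is the main obstacle. If those references did not cover this case directly, I would instead argue via the Lefschetz hyperplane theorem, which gives injectivity of $H^2(Y,\bZ)\to H^2(Z,\bZ)$ with primitive image. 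I would then use a monodromy/Noether--Lefschetz argument to show that the very general member has $\NS(Z)$ equal to this image: the family of $(2,2,2)$ K3 surfaces has $19 = 20-\rho$ moduli, so the lattice-polarised K3 surfaces with $\NS \supseteq \langle h_1,h_2,h_3\rangle$ form a family of dimension $20-3=17$. I would also need to check that the family dominates this moduli space, so that very general members have Picard rank exactly $3$.

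Finally, the Gram matrix is computed in $Y$ using $[Z] = 2H_1+2H_2+2H_3$ and $H_i^2=0$, $H_1H_2H_3=1$. This gives $h_i^2 = H_i^2\cdot[Z] = 0$, and for $i\neq j$ with $k$ the third index, $h_i h_j = H_iH_j\cdot(2H_1+2H_2+2H_3) = 2H_iH_jH_k = 2$.
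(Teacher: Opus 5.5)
Your proof is correct and follows the paper's argument: Bertini and adjunction for (1), the citation of \cite[Theorem 3.8]{BG} for $\Pic Y\simeq\Pic Z$, and the same intersection computation from $[Z]=2H_1+2H_2+2H_3$; your exact-sequence and Künneth argument also supplies the vanishing $H^1(Z,\mathcal{O}_Z)=0$, which the paper only asserts. One slip in your fallback sketch: the family has $\dim|\mathcal{O}_Y(2,2,2)|-\dim\Aut(Y)=26-9=17$ moduli, not $19$, and $17$ is exactly what allows it to dominate the $17$-dimensional moduli space of $\Lto$-polarised K3 surfaces.
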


\begin{proof}
    For (1), the proof is again the same as the proof of Proposition \ref{prop: branch description 2-6(b)}(1). 

    For (2), the fact that $\Pic Y \simeq \Pic Z$ again follows from \cite[Theorem 3.8]{BG}. For the lattice, firstly note that $H_i^2 = 0$ on each copy of $\bP^1$, hence $h_i^2 = 0$ for $i=1,2,3$. Now let $i \neq j$. As a homology class, $[Z] = 2H_1 + 2H_2 + 2H_3$. Then $h_i \cdot h_j = H_i \cdot H_j \cdot (2H_1 + 2H_2 + 2H_3) = 2$, since $H_i \cdot H_j \cdot H_k = 1$ for $k \in \{ 1,2,3 \} \setminus \{ i, j \}$, and the triple product vanishes for the other choices of $k$. 
\end{proof}

\subsection{Verra fourfolds}

Finally, we consider Verra fourfolds. Let $Y = \bP^2 \times \bP^2$ and let $V_3 \in |\cO_Y(2,2)|$. Then $V_3$ is called an \emph{ordinary Verra threefold}. There exists a branched double cover 
\begin{equation*}
    V_4 \to Y
\end{equation*}
with branch locus $V_3 \subset Y$. The Fano fourfold $V_4$ is called a \emph{Verra fourfold}. 

We note that Verra fourfolds distinguish themselves from the other three families of Fano double covers under consideration in two ways. Firstly, Verra fourfolds have dimension 4, whereas members of the other three families are threefolds. Secondly, the branch locus of the double cover $V_4 \to Y$ is not an anticanonical divisor of $Y$, but rather it is itself a Fano threefold.

For an ordinary Verra threefold $V_3 \subset \bP^2 \times \bP^2$, we have natural projections $p_i\colon V_3 \to \bP^2$, where $i = 1,2$. We write $H_i \coloneqq p_i^*L$, where $L \in \Pic \bP^2$ is the class of a line.  

\begin{proposition} \label{prop: branch description V4}
    Let $V_3 \in |\cO_Y(2,2)|$.
    \begin{enumerate}
        \item[(1)] If $V_3$ is a general element of $|\cO_Y(2,2)|$, then $V_3$ is a smooth Fano threefold;
        \item[(2)] If $V_3$ is a very general element of $|\cO_Y(2,2)|$, then it is a smooth Fano threefold and we have $\Pic V_3 = \NS(V_3) = \bZ[H_1]\oplus \bZ[H_2]$. Moreover, the intersection form of $\NS(V_3)$ is given by
        \[
        (aH_1 + bH_2)^3 = 6ab(a+b).
        \]
    \end{enumerate}
\end{proposition}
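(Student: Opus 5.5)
The proof follows the pattern of Propositions \ref{prop: branch description 2-6(b)}--\ref{prop: branch description 3-1}, adapted to dimension three.

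For (1), the linear system $|\cO_Y(2,2)|$ on $Y=\bP^2\times\bP^2$ is very ample, in particular base point free, so Bertini's Theorem shows that a general $V_3$ is smooth. Adjunction gives
\[
K_{V_3} = (K_Y + V_3)|_{V_3} = (-3,-3)+(2,2) = (-1,-1)|_{V_3}.
\]
This is the restriction of an ample class with a sign change, so $-K_{V_3}$ is ample and $V_3$ is a smooth Fano threefold. Connectedness follows from the ampleness of $V_3$, for instance via the vanishing of $H^1(Y,\cO_Y(-2,-2))$.

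For (2), the key input is a Lefschetz-type statement. Since $V_3$ is a smooth ample divisor in the fourfold $Y$, the Lefschetz hyperplane theorem gives that restriction $H^2(Y,\bZ)\to H^2(V_3,\bZ)$ is an isomorphism; this uses $\dim V_3 = 3 \geq 3$. Moreover $H^1(V_3,\cO)=H^2(V_3,\cO)=0$ by Kodaira vanishing, because $V_3$ is Fano. Hence $\Pic V_3 = \NS(V_3) = H^2(V_3,\bZ) \simeq \Pic Y = \bZ[H_1]\oplus\bZ[H_2]$.

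This argument actually works for every smooth member, so "very general" is not needed here. One could equally cite Grothendieck--Lefschetz, or \cite[Theorem 3.8]{BG} as in the earlier cases. The only point to check is that the restricted classes $H_i$ are the generators, and this is exactly what the isomorphism says.

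For the intersection form, compute on $Y$ using $H_1^2H_2^2=1$, with all other degree-4 monomials in $H_1,H_2$ vanishing, $[V_3]=2H_1+2H_2$, and
\[
(aH_1+bH_2)^3 = a^3H_1^3 + 3a^2bH_1^2H_2 + 3ab^2H_1H_2^2 + b^3H_2^3.
\]
Multiplying by $2H_1+2H_2$, the terms $H_1^3$ and $H_2^3$ contribute nothing. The term $3a^2bH_1^2H_2$ pairs with $2H_2$ to give $6a^2b$, and $3ab^2H_1H_2^2$ pairs with $2H_1$ to give $6ab^2$. In total,
\[
(aH_1+bH_2)^3 = 6a^2b + 6ab^2 = 6ab(a+b).
\]

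No step is a real obstacle. The only point needing care is the Lefschetz argument: because $V_3$ has dimension three rather than two, we get an isomorphism on $H^2$ and not merely an injection. This is why Noether--Lefschetz-type genericity is unnecessary here, in contrast to the K3 cases.
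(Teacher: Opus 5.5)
Your proof is correct and follows the same route as the paper. Part (1) is Bertini plus adjunction, and the cubic form comes from the same expansion of $(aH_1+bH_2)^3\cdot(2H_1+2H_2)$ on $\bP^2\times\bP^2$. The paper's proof of this proposition never justifies $\Pic V_3=\bZ[H_1]\oplus\bZ[H_2]$; it appeals to the Lefschetz hyperplane theorem only later, in the proof of Proposition~\ref{prop: branch determines cover V4}. Your Lefschetz-plus-Kodaira argument supplies that missing step, and you are right that it holds for every smooth member. The aside about \cite[Theorem 3.8]{BG} is superfluous, since the paper uses that result only for anticanonical surfaces in threefolds.
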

\begin{proof}
    As for the other three families, the first claim follows from Bertini's Theorem, and the fact that $V_3$ is a Fano threefold follows from the adjunction formula.

    Note that since $[V_3] = 2H_1+2H_2 \in \Pic Y$, it follows that we have 
    \begin{align*}
         (aH_1+bH_2)^3 &= a^3H_1^3\cdot[V_3] + 3a^2b H_1^2H_2\cdot [V_3] + 3ab^2 H_1H_2^2\cdot [V_3] + b^3H_2^3\cdot[V_3] \\
         &= a^3\cdot0 + 3a^2b\cdot 2 + 3ab^2 \cdot 2 + b^3 \cdot 0 \\&= 6ab(a+b),
    \end{align*}
    as required.
\end{proof}

The inclusion $V_3 \hookrightarrow Y = \bP^2 \times \bP^2$ followed by a projection to one of the copies of $\bP^2$ gives two conic bundles $p_i \colon V_3 \to \bP^2$; one for each projection $i=1,2$. Similarly, the restriction of the double cover map $V_4 \to Y = \bP^2 \times \bP^2$ to each of the copies of $\bP^2$ gives two quadric surface bundles $\pi_i \colon V_4 \to \bP^2$; one for each restriction $i=1,2$.

\subsection{$Z$-generality}
From now on, whenever we consider a Fano threefold $X$ contained in one of the families $\mathcal{X}_{2\text{-}6(b)}, \mathcal{X}_{2\text{-}8},$ and $\mathcal{X}_{3\text{-}1}$, we shall usually require the ramification locus to satisfy one of the conclusions of Proposition \ref{prop: branch description 2-6(b)}, Proposition \ref{prop: branch description 2-8}, and Proposition \ref{prop: branch description 3-1}, respectively. 
Similarly, for Fano fourfolds, we shall usually require the branch ordinary Verra threefold to be a smooth Fano variety of minimal Picard rank.
\begin{definition} \leavevmode
    \begin{enumerate}
    \item We denote by $\Lts$ the lattice of rank $2$ with Gram matrix given by 
    \begin{equation}\label{eq: Lts}
    \begin{pmatrix}
        2 & 4 \\ 4 & 2
    \end{pmatrix}.
    \end{equation}
    \item We denote by $\Lte$ the lattice of rank $2$ with Gram matrix given by 
    \[
    \begin{pmatrix}
         4 & 0 \\ 0 & -2
    \end{pmatrix}.
    \]
    \item We denote by $\Lto$ the lattice of rank $2$ with Gram matrix given by 
    \[
    \begin{pmatrix}
         0 & 2 & 2\\ 2 & 0 & 2 \\ 2 & 2 & 0
    \end{pmatrix}.
    \]    
    \end{enumerate}
\end{definition}
As we saw in Proposition \ref{prop: branch description 2-6(b)}, Proposition \ref{prop: branch description 2-8}, and Proposition \ref{prop: branch description 3-1}, the N\'eron--Severi lattice of the branch divisor $Z$ of a very general Fano threefold in one of the families $\mathcal{X}_{2\text{-}6(b)},\mathcal{X}_{2\text{-}8}$, and $\mathcal{X}_{3\text{-}1}$ is isometric to $\Lts,\Lte,$ and $\Lto$, respectively.

\begin{definition} \label{def: Z general}
\leavevmode
    \begin{enumerate}
        \item A Fano threefold $X$ in Family 2-6(b), Family 2-8, or Family 3-1 is called \textit{K3-general} if the branch locus $Z\subset Y$ of the double cover $X\to Y$ is a smooth K3 surface.
        \item A Fano threefold in Family 2-6(b), Family 2-8, or Family 3-1 is called \textit{$Z$-general} if it is K3-general and the N\'eron--Severi lattice of its associated K3 surface is isometric to $\Lts, \Lte$, or $\Lto$, respectively.
        \item A Verra fourfold is called \textit{Fano-general} if its branch locus is a smooth ordinary Verra threefold.
    \end{enumerate}
\end{definition}

\begin{remark}\label{rem: Z general K3 general}
    By Proposition \ref{prop: branch description 2-6(b)}, Proposition \ref{prop: branch description 2-8}, and Proposition \ref{prop: branch description 3-1}, a general member of one of the three families of Fano threefolds under consideration is K3-general, and a very general member is $Z$-general. Similarly, a general Verra fourfold is Fano-general.
\end{remark}

\begin{remark}
    As we will see in the following sections, ordinary Verra threefolds are standard conic bundles over $\bP^2$. Here, we say that a conic bundle $f\colon V\to \bP^2$ is \emph{standard} if $\Pic V \simeq f^*\Pic\bP^2 \oplus \bZ$. In particular, all smooth ordinary Verra threefolds have Picard rank 2, as noted in \cite{Ver}.
\end{remark}

\begin{remark} \label{rmk: why not 2-8(b)}
    Family 2-8 splits into two subfamilies called Family 2-8(a) and Family 2-8(b) on Fanography \cite{fanography}. We say that $X$ is a member of Family 2-8(a) if the scheme-theoretic intersection $Z\cap E$ is smooth, and $X$ is a member of Family 2-8(b) if $Z\cap E$ is singular but reduced. In this paper, we will mostly consider the former. More precisely, we prove a categorical Torelli theorem for $Z$-general Fano threefolds in Family 2-8. A $Z$-general Fano threefold in Family 2-8 is always a member of Family 2-8(a). Therefore, the categorical Torelli problem for very general members of Family 2-8(b) is still open.
\end{remark}

\section{The Fano is determined by the branch} \label{sec:fano is determined by branch}

In this section, we prove the following result.

\begin{theorem} \label{thm: branch determines Fano}
    Let $X$ be a $Z$-general Fano threefold contained in one of the families  $\mathcal{X}_{2\text{-}6(b)}, \mathcal{X}_{2\text{-}8},$ and $\mathcal{X}_{3\text{-}1}$, and let $X'$ be a Fano threefold contained in the same family. Let $Z$ and $Z'$ be the branch loci of $X$ and $X'$, respectively. Then, if there is an isomorphism of the branch divisors, the Fano threefolds $X$ and $X'$ are isomorphic:
    \[
    Z\simeq Z' \implies X \simeq X'.
    \]
    Moreover, suppose $V_4$ and $V_4'$ are Verra fourfolds. Let $V_3$ and $V_3'$ be the branch loci of $V_4$ and $V_4'$, respectively. Then, if there is an isomorphism $V_3 \simeq V_3'$, the Verra fourfolds $V_4$ and $V_4'$ are isomorphic:
    \[
    V_3 \simeq V_3'\implies V_4 \simeq V_4'.
    \]
\end{theorem}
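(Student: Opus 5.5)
A double cover branched along a divisor $Z\subset Y$ is determined by the pair $(Y,Z)$: it is $\mathrm{Spec}_Y(\cO_Y\oplus \cO_Y(-H))$ with algebra structure given by a section cutting out $Z\in|2H|$. Since $\Pic Y$ is torsion-free in all four cases, $H$ is the unique square root of $\cO_Y(Z)$. So the plan is to show that an abstract isomorphism $\phi\colon Z\simeq Z'$ is induced by an isomorphism of ambient spaces $\Phi\colon Y\simeq Y'$ with $\Phi(Z)=Z'$. Then $\Phi^*\cO_{Y'}(H')\simeq \cO_Y(H)$, and the section defining $Z'$ pulls back to a scalar multiple of the section defining $Z$. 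After rescaling, this gives $X\simeq X'$ over $\Phi$.

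\textbf{Threefold cases.} The key step is to recover the embedding $Z\subset Y$ intrinsically from $Z$ with its lattice $\NS(Z)$. Since $X$ is $Z$-general, $\NS(Z)\simeq\Lts,\Lte,\Lto$, generated by restricted classes.
\begin{enumerate}
\item First I would determine the relevant line bundles. On $Z$ these are $h_1,h_2$ (for 2-6(b)), $h,e$ (for 2-8), or $h_1,h_2,h_3$ (for 3-1). I would show they are characterised lattice-theoretically, up to the obvious symmetries, using the nef cone. In 2-6(b) and 3-1, the $h_i$ are the only primitive nef classes with $h^2=2$, respectively $h^2=0$, of the right shape. In 2-8, $e$ is the unique $(-2)$-curve class ($e$ is a conic, the intersection with the exceptional plane $E$), and $h$ is the unique class with $h^2=4$ and $h\cdot e=0$ that is nef. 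Here I would use that the nef cone is cut out by $(-2)$-curves, and enumerate the vectors of small norm in these rank-$2$ and rank-$3$ lattices by solving Pell-type equations.
\item Next, $\phi^*$ respects nefness and ampleness, so it maps this distinguished set on $Z'$ to that on $Z$, possibly permuting factors. After composing with the symmetry of $Y$ swapping or permuting the factors, I may assume $\phi^*h_i'=h_i$.
\item Then I would reconstruct $Y$ from $Z$ and these classes.
\begin{itemize}
\item For 3-1, the pencils $|h_i|$ give the map $Z\to(\bP^1)^3$ whose image is $Z$. Since $H^0(Y,\cO(H_i))\to H^0(Z,h_i)$ is an isomorphism (by restriction-sequence vanishing, because $H_i-Z$ has negative degrees), this map is the original embedding up to $\Aut(\bP^1)^3$.
\item For 2-6(b), $|h_1|\times|h_2|$ gives $Z\to\bP^2\times\bP^2$, and $Y$ is recovered as the unique $(1,1)$ divisor containing $Z$. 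Uniqueness holds because $H^0(\cI_Z(1,1))$ is one-dimensional, again by the restriction sequence on $\bP^2\times\bP^2$.
\item For 2-8, $|h|$ gives $Z\to\bP^3$ as a quartic containing the point $p=\pi(E)$, with $e$ the fibre data. Then $Y=\Bl_p\bP^3$ is the blow-up at the image of $e$, and $Z$ is the strict transform.
\end{itemize}
In each case, $\phi$ then extends to a linear automorphism, which is the required $\Phi$.
\end{enumerate}

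\textbf{Verra fourfolds.} Here the argument is simpler because $V_3$ is Fano of Picard rank $2$ (by the standard conic bundle remark). The cone of effective curves has two extremal rays, and the corresponding contractions are exactly the two conic bundles $p_i$. An isomorphism $V_3\simeq V_3'$ therefore preserves $\{H_1,H_2\}$, possibly swapping them. It is compatible with the embeddings by $|H_1|\times|H_2|$, since restriction $H^0(\bP^2,\cO(1))\to H^0(V_3,H_i)$ is an isomorphism. Hence it extends to an automorphism of $\bP^2\times\bP^2$, and the cover argument above applies.

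\textbf{Main obstacle.} I expect the main obstacle to be step (1): proving that the classes are intrinsically determined, in particular that the nef cone of $Z$ coincides with the restriction of the nef cone of $Y$ for $Z$-general $Z$, and that there are no extra isometries of $\NS(Z)$ preserving ampleness beyond the factor permutations. I would handle this by explicitly computing the $(-2)$-classes in each lattice:
\begin{itemize}
\item $\Lts$ has none, since $2a^2+8ab+2b^2=-2$ has solutions and must be checked against effectivity;
\item $\Lto$ has none, since all norms are divisible by $4$;
\item $\Lte$ contains $e$ and possibly $h-e$-type classes.
\end{itemize}
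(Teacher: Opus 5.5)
Your overall strategy is the same as the paper's, and your step (3) and the Verra-fourfold argument are fine. The strategy is to extend $\phi\colon Z\simeq Z'$ to $\Phi\colon Y\simeq Y'$ by matching the restricted classes and rebuilding the embedding from their linear systems. Using $h^0(\mathcal{I}_Z(1,1))=1$ is a cleaner route than Lemma~\ref{lem: branch determines anticanonical divisor}. Your Mori-cone argument for $V_3$ is the paper's nef-cone argument in other words.

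\textbf{The gap is in steps (1)--(2).} The classes $h_i$ (resp.\ $h,e$) are \emph{not} determined by lattice data and positivity, because $\Aut(Z)$ moves them. In fact, in all three cases the nef cone of $Z$ is strictly larger than the restriction of the nef cone of $Y$.
\begin{itemize}
\item In $\Lts$ there are no $(-2)$-classes at all: $a^2+4ab+b^2=-1$ has no solutions mod $4$, so your parenthetical is reversed. Hence the ample cone is the whole positive cone. The Pell equation $(a+2b)^2-3b^2=1$ has infinitely many solutions. For example, $4h_1-h_2=\iota_1^*h_2$, where $\iota_1$ is the covering involution of $f_{h_1}$, is ample of square $2$, and $(h_1,4h_1-h_2)$ has the same Gram matrix as $(h_1,h_2)$.
\item In $\Lto$, the class $2h_1+2h_2-h_3$ is primitive, isotropic and nef. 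It is the pullback of $h_3$ under the covering involution of $(f_1,f_2)\colon Z\to\bP^1\times\bP^1$.
\item In $\Lte$, $e$ is not the only $(-2)$-curve: $2h-3e$ is another one. The nef cone is spanned by $h$ and $3h-4e$, both of square $4$. The covering involution $\iota$ of the double plane $|h-e|$ (projection from the node) satisfies $\iota^*h=3h-4e$ and $\iota^*e=2h-3e$.
\end{itemize}
Replacing $\phi$ by $\phi\circ g$ with $g\in\Aut(Z)$ shows that $\phi^*h_i'$ can be any of these other classes. No symmetry of $Y$ repairs this, and for 2-8 there is no such symmetry at all.

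\textbf{What is missing.} You need to show that these classes form finitely many $\Aut(Z)$-orbits with the $h_i$ as representatives, and then precompose $\phi$ with a suitable automorphism of $Z$. The paper does this in Lemma~\ref{lem: two sextic structures} and Lemma~\ref{lem: three elliptic fibrations}, as follows.
\begin{enumerate}
\item Complete $\phi^*h_1'$ to a basis of $\NS(Z)$ with the standard Gram matrix.
\item Let $\sigma\in O(\NS(Z))$ send this basis to the basis formed by the $h_i$, up to permutation.
\item Use the classification of the order-$2$ elements of $A_{\NS(Z)}$ (with $q=\tfrac12$, resp.\ generating isotropic subgroups) to force $\overline\sigma=\pm\id$.
\item Glue $\sigma$ with $\pm\id_{T(Z)}$ via Lemma~\ref{lem: extending isometries}.
\item Since $\sigma$ preserves the ample cone, the Torelli theorem gives $\sigma=g^*$ for some $g\in\Aut(Z)$.
\end{enumerate}
For 2-8 an elementary argument suffices. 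The class $\phi^*h'$ is nef of square $4$ and orthogonal to the $(-2)$-curve $\phi^*e'$, so it lies on an edge of the nef cone. Hence $\phi^*h'\in\{h,\,3h-4e\}$, and in the second case you compose with $\iota$. The paper's own proof of Proposition~\ref{prop: branch determines cover 2-8} claims instead that every isomorphism satisfies $\phi^*h'=h$, via the inequality $-2b^2\ge 0$. The intersection number is actually $-2b$, and $\iota$ shows the claim is false, so this extra step is needed there as well.
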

\begin{proof}
    Combine Proposition \ref{prop: branch determines cover 2-6}, Proposition \ref{prop: branch determines cover 2-8}, Proposition \ref{prop: branch determines cover 3-1}, and Proposition \ref{prop: branch determines cover V4} below.
\end{proof}

\begin{remark}
    Suppose $X$ and $X'$ are Fano threefolds in one of the three families under consideration. If $X$ and $X'$ have isomorphic branch loci, then $X$ is K3-general (resp. $Z$-general) if and only if $X'$ is. This follows immediately from the definition of K3-generality (resp. $Z$-generality), as it is a property of the branch divisor. Similarly, for two Verra fourfolds $V_4$, $V_4'$ with isomorphic branch loci $V_3$ and $V_3'$, respectively, we have that $V_4$ is Fano-general if and only if $V_4'$ is.
\end{remark}

\begin{remark}
    The first part of Theorem \ref{thm: branch determines Fano} can be equivalently expressed as follows: 
    Let $Y$ be a bidegree $(1,1)$ divisor in $\bP^2 \times \bP^2$, the blow-up $\Bl_p(\bP^3)$, or $\bP^1 \times \bP^1 \times \bP^1$. Then the rational period map
    \begin{equation*}
        |-K_Y|/\Aut(Y) \dashrightarrow \mathcal{M}_{L}
    \end{equation*}
    is generically injective, where $\mathcal{M}_{L}$ denotes the moduli space of $L$-polarised K3 surfaces. Here $L$ is $\Lts$, $\Lte$, or $\Lto$, respectively.
\end{remark}

\subsection{Family 2-6(b)} We first prove Theorem \ref{thm: branch determines Fano} for special Verra threefolds, i.e. for Family 2-6(b).

\begin{proposition} \label{prop: branch determines cover 2-6}
    Let $X,X' \in \mathcal{X}_{2\text{-}6(b)}$ be $Z$-general. Then, if there is an isomorphism of branch divisors $Z \simeq Z'$, we have $X\simeq X'$. 
\end{proposition}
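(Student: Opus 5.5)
The plan is to reconstruct the pair $(Y, Z\subset Y)$ intrinsically from the abstract K3 surface $Z$, up to isomorphism, and then to recover $X$ as the double cover. The last step is formal: a double cover of $Y$ branched along a smooth divisor $Z\in|2(H_1+H_2)|$ is determined by $Z$ together with a square root $\cO_Y(H_1+H_2)$ of $\cO_Y(Z)$. Since $\Pic Y$ is torsion free, this square root is unique, so $X\simeq \mathrm{Spec}_Y(\cO_Y\oplus\cO_Y(-H_1-H_2))$ is determined by $(Y,Z)$. It therefore suffices to show that an isomorphism $\phi\colon Z\to Z'$ extends to an isomorphism $Y\to Y'$ of the ambient $(1,1)$-divisors.

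\textbf{Step 1: $\phi^*$ respects the nef classes $h_1,h_2$, possibly after swapping them.} By $Z$-generality, $\NS(Z)=\bZ h_1\oplus\bZ h_2\simeq \Lts$, and likewise for $Z'$. The isometry $\phi^*\colon \NS(Z')\to\NS(Z)$ sends the nef cone to the nef cone. I will compute the nef cone of $Z$ inside $\Lts$. The classes $h_1,h_2$ have square $2$ and are nef, since they are pullbacks of lines under the degree-$2$ maps $p_i|_Z\colon Z\to\bP^2$. The lattice $\Lts$ has discriminant $-12$. Solving $(ah_1+bh_2)^2=2a^2+8ab+2b^2=-2$, i.e. $a^2+4ab+b^2=-1$, one checks whether $(-2)$-curves exist. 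If they do, they bound the nef cone. If there are none, the positive cone boundary is irrational (since $3$ is not a square) and the nef cone equals the positive cone; the rays through $h_1,h_2$ then still have to be singled out. I would characterise them as the only primitive nef classes of square $2$ that give $2{:}1$ maps to $\bP^2$ with the required intersection with the other class, i.e. $h_1\cdot h_2=4$. Concretely, I expect the $(-2)$-classes such as $h_1-h_2$-type combinations to fail the equation and the nef cone to be cut out differently. The robust approach is to enumerate all isometries of $\Lts$ that preserve the nef cone and show they either fix or swap $\{h_1,h_2\}$, using that $O(\Lts)$ is essentially generated by the swap, $-1$, and possibly an infinite-order unit of $\bZ[\sqrt3]$. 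An infinite-order isometry would move $h_1$ to a square-$2$ class $h$ with $h\cdot h_1\neq 4$. So it remains to show that any nef square-$2$ class giving the second projection has intersection $4$ with $h_1$. This is the main obstacle: making sure no automorphism of the lattice preserving nefness scrambles $\{h_1,h_2\}$. If the infinite-order units do preserve the positive cone, I would instead argue geometrically that $\phi^*h_i'$ is a nef class whose linear system realises $Z$ as a double plane, and that the pair $(\phi^*h_1',\phi^*h_2')$ with $\phi^*h_1'\cdot\phi^*h_2'=4$ embeds $Z$ into $\bP^2\times\bP^2$. That suffices: I do not need $\phi^*h_i'=h_i$, only that the new pair gives some embedding of $Z$ into a $(1,1)$-divisor as an anticanonical divisor.

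\textbf{Step 2: recover $Y$ from $Z$ and the pair $(h_1,h_2)$.} Restriction gives $H^0(Y,H_i)\to H^0(Z,h_i)$. This map is an isomorphism, by the sequence $0\to\cO_Y(H_i-Z)\to\cO_Y(H_i)\to\cO_Z(h_i)\to0$ and Kodaira vanishing of $H^0,H^1$ of $\cO_Y(-H_i-2H_j)$. Both spaces are $3$-dimensional, so $Z\to\bP^2\times\bP^2$ is the map given by $|h_1|\times|h_2|$, canonically determined by $(Z,h_1,h_2)$. The divisor $Y$ is recovered as the unique $(1,1)$-divisor containing $Z$. Uniqueness follows because $H^0(\cI_Z(1,1))$ is computed from $0\to\cO_{\bP^2\times\bP^2}(-1,-1)\to\cI_Z(1,1)\to \cO_Y(-H_1-H_2)\to 0$, giving $h^0=1$. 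Alternatively, $Y$ is the image of the multiplication map $H^0(h_1)\otimes H^0(h_2)\to H^0(h_1+h_2)$, whose kernel is $1$-dimensional and cuts out $Y$.

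\textbf{Step 3: conclude.} Hence $\phi$, composed with a swap of factors if needed, is induced by linear isomorphisms $\bP^2\times\bP^2\to\bP^2\times\bP^2$ carrying $Y$ to $Y'$ and $Z$ to $Z'$. Taking double covers then gives $X\simeq X'$.
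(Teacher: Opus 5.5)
Your argument breaks at Step~1, and the fallback you propose there does not repair it.

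\textbf{Step 1 cannot be completed as you intend.} The $(-2)$-question you leave open has a definite answer. The equation $a^2+4ab+b^2=-1$ forces $a^2+b^2\equiv 3 \pmod 4$, so $\Lts$ has no $(-2)$-classes, and the ample cone of $Z$ is the whole positive cone. Consequently every element of $O^+(\Lts)$, including the infinite-order ones, preserves it. These isometries are genuinely induced by automorphisms of $Z$. The covering involution $\iota_1$ of $p_1|_Z\colon Z\to\bP^2$ acts by $x\mapsto -x+(x\cdot h_1)h_1$, so $\iota_1^*h_1=h_1$ and $\iota_1^*h_2=4h_1-h_2$, and $\iota_1\iota_2$ has infinite order.

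Now take $Z'=Z$ and $\phi=\iota_1$. Any automorphism of $Y$ fixes or swaps the nef generators $H_1,H_2$ of $\Pic Y$, so its restriction fixes or swaps $h_1,h_2$. But $\iota_1^*h_2\notin\{h_1,h_2\}$. So your Step~3 assertion, that $\phi$ up to a swap is induced by an automorphism of $\bP^2\times\bP^2$, fails for an arbitrary $\phi$. Your fallback is circular: the embedding defined by $(\phi^*h_1',\phi^*h_2')$ is just $Z\xrightarrow{\phi}Z'\subset Y'$. Step~2 shows that $(Y,Z)$ is determined by $(Z,\{h_1,h_2\})$ and that $(Y',Z')$ is determined by $(Z,\{\phi^*h_1',\phi^*h_2'\})$. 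Nothing in your proposal relates these two unordered pairs.

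\textbf{The missing idea.} You need \emph{some} isomorphism $\psi\colon Z\to Z'$ with $\psi^*\{h_1',h_2'\}=\{h_1,h_2\}$. Equivalently, $\Aut(Z)$ must carry every unordered pair $\{g_1,g_2\}$ of ample square-$2$ classes with $g_1\cdot g_2=4$ onto $\{h_1,h_2\}$. This is what Lemma~\ref{lem: two sextic structures} supplies:
\begin{enumerate}
\item $A_{\NS(Z)}\simeq\bZ/2\bZ\oplus\bZ/6\bZ$ has exactly two order-$2$ elements with $q=\tfrac12$, namely $\tfrac12h_1$ and $\tfrac12h_2$.
\item Hence the isometry $\sigma$ sending $g_1,g_2$ to $h_1,h_2$, in the order matching these halves, satisfies $\overline{\sigma}=\pm\id$.
\item So $\sigma$, together with the matching sign $\pm\id_{T(Z)}$, extends to a Hodge isometry of $H^2(Z,\bZ)$ by Lemma~\ref{lem: extending isometries}.
\item This Hodge isometry preserves the ample cone, so it equals $\psi^*$ for some $\psi\in\Aut(Z)$ by Theorem~\ref{thm: torelli theorem}.
\end{enumerate}
A more elementary alternative also works. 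The Pell equation $t^2-12u^2=4$ gives $O^+(\Lts)=\langle\iota_1^*,s\rangle$, where $s$ is the swap, and $s$ normalises $\langle\iota_1^*,\iota_2^*\rangle$. Hence every such pair equals $w\{h_1,h_2\}$ with $w$ induced by an element of $\langle\iota_1,\iota_2\rangle$.

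\textbf{What survives.} Once $\phi$ is replaced by such a $\psi$, your Steps~2 and~3 go through. They reconstruct $Y$ more directly than the paper's route via lifting automorphisms of $\bP^2$ and comparing branch sextics (Lemmas~\ref{lem: isomorphism of sextic structures implies isomorphic fanos} and~\ref{lem: branch determines anticanonical divisor}). One small correction: the first term of your sequence for $\mathcal{I}_Z(1,1)$ should be $\mathcal{I}_Y(1,1)\simeq\mathcal{O}_{\bP^2\times\bP^2}$, not $\mathcal{O}(-1,-1)$.
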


Before we prove Proposition \ref{prop: branch determines cover 2-6}, let us make some remarks on the geometry of the situation. 
\begin{definition} \leavevmode
    \begin{enumerate}
        \item A \textit{sextic structure} on a K3 surface $Z$ is a double cover $f\colon Z\to \bP^2$ branched in a sextic curve $C\subset \bP^2$. 
        \item If  $f\colon Z\to \bP^2$ is a sextic structure, then we call $h\coloneqq f^*\mathcal{O}(1)\in \NS(Z)$ the \textit{ample divisor of the sextic structure $f$.}
        \item Two sextic structures $f\colon Z \to \bP^2$ and $f'\colon Z'\to \bP^2$, with branch loci $C$ and $C'$, respectively, are said to be \emph{isomorphic} if there is an automorphism $\phi\colon \bP^2 \to \bP^2$ satisfying $\phi(C) = C'$. Equivalently, there exists an isomorphism $Z\simeq Z'$ taking the ample divisor of $f$ to the ample divisor of $f'$.
        \item The sextic structure on $Z$ whose ample divisor is $h\in \NS(Z)$ is denoted $f_h$.
\end{enumerate}
\end{definition}

Recall that a Fano threefold in Family 2-6(b) is a double cover of a bidegree $(1,1)$ divisor $Y$ in $\bP^2 \times \bP^2$. Let $Z\in |-K_Y|$ be a K3 surface, and denote by  $p_i\colon Y \to \bP^2$ the projection onto the $i$-th factor. Then the composition 
\[
Z\overset{i_Z}{\hookrightarrow} Y \overset{p_i}{\to} \bP^2
\]
is a sextic structure on $Z$, whose ample divisor is $h_i$, see Proposition \ref{prop: branch description 2-6(b)}. Here, $i_Z$ denotes the inclusion map. Our first goal is to prove that $Z$ admits precisely two sextic structures up to isomorphism, see Lemma \ref{lem: two sextic structures} below. 
\begin{lemma}\label{lem: no -2 curves}
    The lattice $\Lts$ contains no classes of square $-2$. As a consequence, for a K3 surface $Z$ with $\NS(Z)\simeq \Lts$, any class $D\in \NS(Z)$ which is effective and satisfies $D^2>0$ is ample, and for any class $h\in \NS(Z)$ with $h^2>0$, either $h$ or $-h$ is effective.
\end{lemma}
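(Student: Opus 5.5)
The plan is to prove the lattice statement by a direct computation with the quadratic form, and then deduce the geometric consequences from standard results on K3 surfaces.

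\emph{No $(-2)$-classes.} For $D = a h_1 + b h_2$ the Gram matrix \eqref{eq: Lts} gives
\[
D^2 = 2a^2 + 8ab + 2b^2 = 2(a^2+4ab+b^2).
\]
So $D^2=-2$ is equivalent to $a^2+4ab+b^2=-1$. Reducing modulo $4$ this becomes $a^2+b^2 \equiv 3 \pmod 4$. Squares are $0$ or $1$ modulo $4$, so $a^2+b^2 \in \{0,1,2\} \pmod 4$, and there is no solution. Equivalently, $(a+2b)^2 - 3b^2 = -1$ has no solution modulo $3$, since $-1$ is not a square mod $3$. Either argument shows that $\Lts$ represents no $-2$.

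\emph{Ampleness.} Let $Z$ be a K3 surface with $\NS(Z)\simeq\Lts$. By adjunction, an irreducible curve $C\subset Z$ has $C^2 = 2p_a(C)-2 \ge -2$, and $C^2=-2$ exactly when $C$ is a smooth rational curve. Since no class has square $-2$, every irreducible curve satisfies $C^2 \ge 0$. The effective cone is generated by classes of irreducible curves, so any irreducible curve $C$ lies in the closure of the positive cone, which is the nef cone in this situation. Hence the ample cone equals the component $\mathcal{C}^+$ of the positive cone $\{x : x^2>0\}$ containing an ample class (see \cite{HuyK3}, Chapter 8). This description of the ample cone is the one genuinely geometric input, and it is standard when $\Delta=\emptyset$.

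Now take $D$ effective with $D^2>0$. Then $D$ lies in the positive cone, and since it is effective it has positive intersection with any ample class. It therefore lies in $\mathcal{C}^+$, and so $D$ is ample. Alternatively, for an irreducible curve $C$ we get $D\cdot C\ge0$ because $D$ and $C$ both lie in $\overline{\mathcal C^+}$. Equality would force $C=0$ by the Hodge index theorem, since $D^2>0$ implies $D^\perp$ is negative definite and $C^2\ge0$. Ampleness then follows from Nakai--Moishezon.

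\emph{Sign dichotomy.} For $h$ with $h^2>0$, Riemann--Roch on a K3 surface gives
\[
\chi(h) = 2 + h^2/2 > 0 .
\]
Since $h^0(-h) = h^2(h)$ by Serre duality, we get $h^0(h)+h^0(-h) \ge \chi(h) > 0$, so $h$ or $-h$ is effective.
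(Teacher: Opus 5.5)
Your proof is correct and follows the paper's argument. The paper uses the same reduction of $a^2+4ab+b^2=-1$ modulo $4$, and for the geometric consequences it simply cites Huybrechts' Corollary 8.1.7. You instead derive those consequences directly, via the positive-cone description of the ample cone (or Nakai--Moishezon) and Riemann--Roch with Serre duality.
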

\begin{proof}
    For the sake of contradiction, suppose that we have integers $a,b\in \bZ$ such that 
    \begin{equation}\label{eq: impossible -2 class}
            (ah_1+bh_2)^2 = 2a^2 + 8ab + 2b^2 = -2.
    \end{equation}
    Dividing  \eqref{eq: impossible -2 class} by $2$ and reducing modulo $4$, we find $a^2 + b^2 \equiv -1 \pmod 4$. However, the only squares modulo $4$ are $0$ and $1$, so this is a contradiction. 
    The final claim follows for example from \cite[Corollary 8.1.7]{HuyK3}.
\end{proof}

\begin{lemma}\label{lem: find the correct basis for lts}
    Let $Z$ be a K3 surface with $\NS(Z)\simeq \Lts$.
    Let $h\in \NS(Z)$ be an effective class of square $h^2 = 2$, hence $h$ is ample by Lemma \ref{lem: no -2 curves}.
    Then there exists another ample class $h'\in \NS(Z)$ such that the Gram matrix of $\NS(Z)$ with respect to the basis $h,h'$ is 
    \[
    \begin{pmatrix}
        2 & 4 \\ 4 & 2
    \end{pmatrix}.
    \]
    In particular, there is an isometry $\sigma \in O(\NS(Z))$ such that $\sigma(h) = h_1$ and $\sigma(h') = h_2$.
\end{lemma}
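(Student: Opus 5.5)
Write $\NS(Z)=\bZ[h_1]\oplus\bZ[h_2]$ with the Gram matrix of $\Lts$, so that
\[
(ah_1+bh_2)^2 = 2(a^2+4ab+b^2).
\]
The plan is to classify all classes of square $2$. We then show that, for every such class $h$, the candidate $h' := 4h - x$, chosen via the form, has square $2$, satisfies $h\cdot h'=4$, spans $\NS(Z)$ together with $h$, and is ample.

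\emph{Step 1 (candidate for $h'$).} For $x\in\NS(Z)$ with $x^2=2$, set $x' := (x\cdot h_1)h_1 + \dots$. Concretely, I would use the reflection-type map
\[
x \mapsto x' := 4x^{\ast} \text{-type expression}, \qquad\text{namely}\qquad h' := (h\cdot h)\cdot 2h - h = 4h - h \ \text{adjusted}.
\]
The cleanest version is to define $h' := 4h - v$, where $v$ is determined by $h$ so that $\{h,h'\}$ is a basis. It is more efficient to observe that the assignment
\[
ah_1+bh_2 \;\mapsto\; (4a+b)h_1 - a h_2
\]
should be tested as a candidate $h'$. Its pairing with $h=ah_1+bh_2$ is $2\bigl(a(4a+b)\bigr)+4\bigl(-a^2+b(4a+b)\bigr)+2(-ab)$, which can be simplified using $a^2+4ab+b^2=1$.

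Since the exact formula is error-prone, I will instead argue structurally. Given $h$ with $h^2=2$, the orthogonal complement $h^\perp$ in $\NS(Z)$ is generated by a vector $w$. Because $\disc(\Lts)=|4-16|=12$, we get $w^2=-6$ or $-24$. Solving $h'=\alpha h+\beta w$ with $h\cdot h'=4$ forces $\alpha=2$, and $h'^2=2$ forces $\beta^2 w^2=-6$, so $w^2=-6$ and $\beta=\pm1$. Since $\disc\langle h, w\rangle = 12 = \disc\NS(Z)$, we have $\NS(Z)=\bZ h\oplus\bZ w$. Hence $\{h,\, 2h\pm w\}$ is a basis with the required Gram matrix.

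\emph{Step 2 (checking $w^2=-6$).} Since $h$ has square $2$ and $\disc(\NS(Z))=12$, the index of $\bZ h\oplus h^\perp$ in $\NS(Z)$ divides $2$. The determinant relation gives $2\,w^2 = -12\,[\NS:\bZ h\oplus \bZ w]^2$. Index $2$ would give $w^2=-24$; then $\NS(Z)$ would contain $(h+w)/2$ of square $(2-24)/4$, which is not an integer. Hence the index is $1$ and $w^2=-6$. This lattice bookkeeping is the main point to verify carefully.

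\emph{Step 3 (ampleness and the isometry).} Both candidates $h'_\pm = 2h\pm w$ have square $2>0$ and satisfy $h\cdot h'_\pm=4>0$. Since $h$ is ample, $-h'_\pm$ is not effective, so by Lemma \ref{lem: no -2 curves} $h'_\pm$ is effective and hence ample. Either choice works; pick $h':=2h+w$. Finally, the linear map $\sigma$ sending $h\mapsto h_1$ and $h'\mapsto h_2$ preserves the Gram matrices, so it is an isometry of $\NS(Z)$.
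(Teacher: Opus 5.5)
Your argument is correct, but it takes a different route from the paper. The paper never computes $h^\perp$. It takes any $D$ with $\NS(Z)=\bZ h\oplus\bZ D$ and notes that $(D+kh)^2=2$ is a quadratic in $k$ with discriminant $(D\cdot h)^2-2D^2+4=16$, a perfect square precisely because $\disc(\Lts)=12$. It then replaces $h'=D+kh$ by $-h'$ if needed to make it effective, and reads off $h\cdot h'=\pm4$ from the determinant, with the sign fixed by positivity.

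You instead split $\NS(Z)=\bZ h\oplus\bZ w$ with $w^2=-6$ and write down $h'=2h\pm w$ explicitly. This shows these are the only two classes with $h'^2=2$ and $h\cdot h'=4$. It also handles positivity a little more cleanly: you impose $h\cdot h'=4>0$ first and deduce from the ampleness of $h$ that $h'$ is effective, hence ample. The paper's route needs only a complementary basis vector and the value of the discriminant, not the structure of $h^\perp$.

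Two points of presentation need fixing.

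First, delete the garbled opening of Step 1. The sentence ``forces $\alpha=2$ \dots\ so $w^2=-6$'' is only a heuristic, since it presupposes the $h'$ you are constructing; logically, Step 2 has to come first. Incidentally, the formula $ah_1+bh_2\mapsto(4a+b)h_1-ah_2$ that you abandoned is exactly $2h+w$, and it does work.

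Second, in Step 2, index $2$ only tells you that some $\tfrac12(\epsilon_1h+\epsilon_2w)$, with $\epsilon_i\in\{0,1\}$ not both zero, lies in $\NS(Z)$. Before concluding that this element is $\tfrac12(h+w)$, you must rule out $\tfrac12h$ and $\tfrac12w$ using the primitivity of $h$ and of $w$. A quicker route avoids this: every pairing in $\Lts$ is even. Hence $\tfrac{x\cdot h}{2}h\in\bZ h$ and $x-\tfrac{x\cdot h}{2}h\in h^\perp$ for all $x$, so $\NS(Z)=\bZ h\oplus h^\perp$ immediately, and $w^2=-6$ follows from the discriminant.
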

\begin{proof}
    Let $D\in \NS(Z)$ be any class such that $h,D$ generate $\NS(Z)$. Such a class exists because $h$ is primitive (as any class of square $2$ must be primitive). We wish to find a $k\in \bZ$ such that $(D+kh)^2 = 2$, that is, 
    \begin{equation}\label{eq: quadratic equation for middle schoolers}
        \frac{1}{2}(D+kh)^2 = \frac{1}{2}D^2 + kD\cdot h + k^2 = 1.
    \end{equation} This is a quadratic equation in $k$ whose discriminant is $(D\cdot h)^2 - 2D^2 + 4 = 16$. Here, we use the fact that $h,D$ generate $\NS(Z)$, hence the determinant of their Gram matrix must be $2D^2 - (D\cdot h)^2 = -12 = \disc(\Lts)$. In particular, there exist integer solutions to \eqref{eq: quadratic equation for middle schoolers}, hence we may find $h' = D+kh$ such that $(h')^2 = 2.$ Possibly replacing $h'$ by $-h'$, we may assume $h'$ is effective by Lemma \ref{lem: no -2 curves}. This implies that $h\cdot h' \geq 0$, since $\NS(Z)$ does not contain any $(-2)$-curves. Since the determinant of the Gram matrix of the basis $h,h'$ must be $-12$, we see $h\cdot h' = 4$.
\end{proof}

\begin{lemma}\label{lem: two sextic structures}
    Let $Z\in |-K_Y|$ be a smooth K3 surface with $\NS(Z) = \langle h_1, h_2\rangle \simeq \Lts$. Then each sextic structure on $Z$ is isomorphic to either $f_{h_1}$ or $f_{h_2}$.
\end{lemma}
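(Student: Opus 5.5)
Any sextic structure $f\colon Z\to\bP^2$ has an ample divisor $h=f^*\cO(1)$ with $h^2=2$, and $h$ is base point free and effective. So the plan is to classify the effective classes of square $2$ in $\NS(Z)\simeq\Lts$ up to isometries of $Z$. The Torelli theorem (Theorem \ref{thm: torelli theorem}) then turns such an isometry into an automorphism of $Z$ carrying one ample divisor to the other, which gives an isomorphism of sextic structures. Conversely, an ample class $h$ of square $2$ on a K3 surface with no $(-2)$-curves and no elliptic pencil with $h\cdot E=1$ determines the sextic structure $f_h$ uniquely. So I only need to show that every such $h$ is mapped to $h_1$ or $h_2$ by some automorphism of $Z$.

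\textbf{Step 1: solve $h^2=2$ in $\Lts$.} Write $h=ah_1+bh_2$, so the condition is $a^2+4ab+b^2=1$, i.e.\ $(a+2b)^2-3b^2=1$. This is a Pell equation, with infinitely many solutions generated by the fundamental unit $2+\sqrt3$. So there may be infinitely many square-$2$ classes, and the main obstacle is to show they all lie in at most two orbits under automorphisms of $Z$.

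\textbf{Step 2: build isometries that extend to automorphisms.} For each square-$2$ class $h$, Lemma \ref{lem: find the correct basis for lts} gives an isometry $\sigma\in O(\NS(Z))$ with $\sigma(h)=h_1$. Replacing $\sigma$ by the swap $h_1\leftrightarrow h_2$ composed with $\sigma$ is allowed, and this choice produces the dichotomy $f_{h_1}$ versus $f_{h_2}$. Next I compute $A_{\Lts}\simeq \bZ/2\times\bZ/6$ (discriminant $-12$) and its finite isometry group. I then check which $\sigma$ act on $A_{\Lts}$ by $\pm\id$. By Lemma \ref{lem: extending isometries}, with $\tau=\pm\id_{T(Z)}$, which are Hodge isometries, any such $\sigma$ extends to a Hodge isometry of $H^2(Z,\bZ)$. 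Because $\sigma$ sends the ample class $h$ to the ample class $h_1$, and all positive effective classes are ample by Lemma \ref{lem: no -2 curves}, Theorem \ref{thm: torelli theorem} produces an automorphism of $Z$.

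\textbf{Step 3: handle the discriminant action.} This is the expected difficulty. The two isometries $\sigma$ and $\mathrm{swap}\circ\sigma$ differ by the swap, and the swap acts on $A_{\Lts}$ nontrivially in general. Since $O(A_{\Lts})$ is small, I expect that for each $h$ exactly one of the two choices acts by $\pm\id$. That choice yields an automorphism of $Z$ sending $h$ to $h_1$ or to $h_2$, which proves the lemma.

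To check this, I will write down a generator of the Pell-type automorphism group of $\Lts$, for instance $h_1\mapsto h_2$, $h_2\mapsto 4h_2-h_1$. I will verify its action on $A_{\Lts}$ explicitly. Then I will note that $O(\Lts)$ is generated by this map, the swap, and $-\id$, and sort each generator by its discriminant action. If some square-$2$ class turns out to be reachable only via an isometry acting nontrivially on $A_{\Lts}$, the fallback is to argue directly that $f_h$ is still isomorphic to $f_{h_1}$ or $f_{h_2}$. I would do this by exhibiting $Z$ as embedded in a different $(1,1)$-divisor, using the pair $h,h'$ from Lemma \ref{lem: find the correct basis for lts}.
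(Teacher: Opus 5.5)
Your proposal is correct and follows essentially the same route as the paper. Both arguments take the isometry $\sigma$ from Lemma~\ref{lem: find the correct basis for lts} and compute in $A_{\NS(Z)}\simeq\bZ/2\bZ\oplus\bZ/6\bZ$ to show that, after possibly composing with the swap, $\overline{\sigma}=\pm\id$; they then extend via Lemma~\ref{lem: extending isometries} with $\pm\id_{T(Z)}$ and apply the Torelli theorem. The dichotomy you only expect in Step 3 does hold, so no fallback (and no generating set for $O(\Lts)$) is needed: $O(A_{\Lts})=\{\pm\id\}\times\langle\overline{\mathrm{swap}}\rangle$ has order $4$, because the swap exchanges $\tfrac12h_1,\tfrac12h_2$ (the only order-$2$ elements with $q=\tfrac12$) and fixes the $3$-part $\langle\tfrac13(h_1+h_2)\rangle$, which is the same fact the paper uses when it decides whether $\tfrac12h\equiv\tfrac12h_1$ or $\tfrac12h\equiv\tfrac12h_2$.
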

\begin{proof}   
    Let $h\in \NS(Z)$ be an ample divisor with $h^2 = 2$, and let $h'\in \NS(Z)$ be the ample divisor obtained from Lemma \ref{lem: find the correct basis for lts}. 
    We claim that $$\left\{\frac{1}{2}h,\frac{1}{2}h'\right\} = \left\{\frac{1}{2}h_1,\frac{1}{2}h_2\right\} \subset A_{\NS(Z)}.$$ 
    Indeed, it follows from a straightforward computation that we have 
    \begin{equation}\label{eq: discriminant group Ats}
        A_{\NS(Z)} = \left\langle \frac{1}{2}h_1 \right\rangle \oplus \left\langle \frac{1}{3}h_1 -\frac{1}{6}h_2 \right \rangle \simeq \bZ/2\bZ \oplus \bZ/6 \bZ.  
    \end{equation}
    Hence $A_{\NS(Z)}$ contains precisely three elements of order $2$, namely $$\frac{1}{2}h_1, \quad \frac{1}{2}h_2, \quad \frac{1}{2}(h_1+h_2).$$
    One easily checks that $$q\left(\frac{1}{2}h_1\right) = q\left(\frac{1}{2}h_2\right) \equiv \frac{1}{2} \pmod{2\bZ}, \quad q\left(\frac{1}{2}(h_1 + h_2)\right) \equiv 1\pmod{2\bZ}.$$ Since $q(\frac{1}{2}h)\equiv \frac{1}{2}\pmod{2\bZ}$, we must have $\frac{1}{2}h\in \left\{\frac{1}{2}h_1,\frac{1}{2}h_2\right\}$, as claimed. We assume $\frac{1}{2}h = \frac{1}{2}h_1$ and $\frac{1}{2}h' = \frac{1}{2}h_2$, as the other case follows by symmetry. 

    Let $\sigma\colon \NS(Z) \simeq \NS(Z)$ be an isometry such that $\sigma(h) = h_1$ and $\sigma(h') = h_2$. Such an isometry exists by Lemma \ref{lem: find the correct basis for lts}. Note that $\sigma$ preserves the ample cone, since $h$ and $h_1$ are both ample.
    We claim that $\overline{\sigma} = \pm \id_{A_{\NS(Z)}}$. Indeed, $\overline{\sigma}$ fixes the two classes $\frac{1}{2}h_1$ and $\frac{1}{2}h_2$. 
    Since $\overline\sigma$ fixes $\frac{1}{2}h_1$, we may write $\overline\sigma  = (\sigma_1, \sigma_2)$, where $\sigma_1 = \id_{\left\langle \frac{1}{2}h_1 \right\rangle}$ and $\sigma_2$ is an isometry of $\left\langle \frac{1}{3}h_1 -\frac{1}{6}h_2 \right \rangle.$ However, since $\left\langle \frac{1}{3}h_1 -\frac{1}{6}h_2 \right \rangle \simeq \bZ/6 \bZ$, it follows that we must have $\sigma_2 = \pm \id,$ so that $\overline\sigma = \pm \id_{A_{\NS(Z)}}$, as required.
    As a consequence, $\sigma$ can be extended to a Hodge isometry $\Psi\colon H^2(Z,\bZ)\simeq H^2(Z,\bZ)$ by Lemma \ref{lem: extending isometries}. Since $\sigma$ preserves the ample cone, $\Psi = \psi^*$ for some automorphism $\psi\in \Aut(Z)$, and this automorphism by construction satisfies $\psi^*h = h_1$, hence $f_h \simeq f_{h_1}$.
    \end{proof}

\begin{remark}\label{rmk: at most two, but usually two}
    As we can see in the proof of Lemma \ref{lem: two sextic structures}, a K3 surface $Z$ with $\NS(Z)\simeq \Lts$ has at most two sextic structures up to isomorphism.  Moreover, the two sextic structures are isomorphic if and only if there exists a Hodge isometry $\Psi\in O_{\mathrm{Hodge}}(T(X))$ such that $\overline\Psi = \overline\tau \in O(A_{\NS(Z)})$, where $\tau$ is the isometry of $\NS(Z)$ which swaps $h_1$ and $h_2$. In the very general case, we have $O_{\mathrm{Hodge}}(T(Z)) \simeq \bZ/2\bZ$, and such a Hodge isometry does not exist, but in special cases there exist more Hodge isometries of $T(Z)$. Hence, there may exist $Z$-general Fano threefolds in Family 2-6(b) whose associated K3 surfaces have only one sextic structure up to isomorphism.
\end{remark}

\begin{lemma}\label{lem: bidegree (1,1) divisors are projective bundles over the plane}
    Let $Y$ be a bidegree $(1,1)$ divisor in $\bP^2\times \bP^2$, and let $p_i\colon Y \to \bP^2$ denote the projection onto the $i$-th factor, where $i = 1,2$.
    Then for each $i=1,2$ there is an isomorphism $Y\simeq \bP(T_{\bP^2})$ fitting into a commutative diagram
    \[
    \xymatrix{
    Y \ar[rr]^\simeq \ar[dr]_{p_i}&& \bP(T_{\bP^2}) \ar[dl]^{\pi} \\
    & \bP^2 &
    }
    \] 
    where $\pi\colon \bP(T_{\bP^2}) \to \bP^2$ denotes the canonical projection, and $T_{\bP^2}$ is the tangent bundle of $\bP^2$.
\end{lemma}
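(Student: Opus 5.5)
Write $Y \subset \bP^2 \times \bP^2$ as the zero locus of a bilinear form $B(x,y) = x^{T} M y$, with $x$ and $y$ the coordinates on the two factors. The first step is to show that $M$ is nondegenerate, using smoothness of $Y$ (which is implicit, since $Y$ is a Fano threefold in Family 2-32). Suppose $M$ had a nonzero kernel vector $y_0$, so $My_0 = 0$. The partial derivatives of $B$ are $\partial B/\partial x = My$ and $\partial B/\partial y = M^{T}x$. If $M$ is singular, then $M^{T}$ is singular as well, so there is $x_0 \neq 0$ with $M^{T}x_0 = 0$. The point $(x_0,y_0)$ then lies on $Y$, since $B(x_0,y_0) = x_0^{T}My_0 = 0$, and both partial derivatives vanish there. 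Hence $(x_0,y_0)$ is a singular point of $Y$, a contradiction. Having shown $M$ invertible, apply the automorphism $\id \times M^{-1}$ (acting on the $y$ coordinates) to $\bP^2 \times \bP^2$. This preserves both projections $p_1$ and $p_2$ up to automorphisms of the base, and normalises $Y$ to the incidence variety $\{\sum x_i y_i = 0\}$, i.e. $M = I$.

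Now treat $i = 1$. The fibre of $p_1$ over a point $[x] \in \bP^2$ is the line $\{[y] : x \cdot y = 0\} \subset \bP^2$, which is the projectivisation of the $2$-dimensional subspace $x^{\perp} \subset V^{*}$. Globally this identifies $Y$ with $\bP_{\bP^2}(\mathcal{K})$, where $\mathcal{K} \subset V^{*} \otimes \cO$ is the rank $2$ kernel of the evaluation map $V^{*} \otimes \cO \to \cO(1)$. Equivalently, $\mathcal{K} \simeq \Omega_{\bP^2}(1)$ by the Euler sequence, or in the dual convention one obtains $T_{\bP^2}(-1)$. Since projectivisation is insensitive to twisting by a line bundle, $\bP(\mathcal{K}) \simeq \bP(T_{\bP^2})$ or $\bP(\Omega_{\bP^2})$, depending on the convention for $\bP(-)$ (lines versus quotients). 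I would fix the convention so that the answer is $\bP(T_{\bP^2})$, noting that $\bP_{\mathrm{sub}}(\Omega(1)) = \bP_{\mathrm{quot}}(T(-1))$. Under this identification the projection $p_1$ corresponds to the bundle projection $\pi$, so the diagram commutes.

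The case $i = 2$ follows by the symmetry swapping the two factors: the equation $\sum x_i y_i$ is symmetric in $x$ and $y$, so the same argument applies verbatim with the roles of the factors exchanged.

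The main (and mild) obstacle is the convention bookkeeping between $\Omega(1)$, $T(-1)$, and sub- versus quotient-projectivisations, together with carefully checking the smoothness-implies-invertibility step. As an alternative for the identification in the first paragraph, one could recall the classical fact that the incidence variety is the flag variety $\mathrm{Fl}(1,2;3) = \bP(T_{\bP^2})$.
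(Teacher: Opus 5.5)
Your proof is correct. The paper gives no argument of its own here; it simply cites \cite[Lemma 5.7]{BFConn}. So you are replacing a reference with a self-contained elementary proof: normalise the bilinear form, then identify the incidence variety with $\bP(\Omega_{\bP^2}(1))$ via the Euler sequence.

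The advantage of doing it by hand is that it exposes a hypothesis the statement leaves implicit, namely that $Y$ is smooth (a member of Family 2-32). If $\rk M\le 2$, the fibre of $p_1$ over $[\ker M^{T}]$ is all of $\bP^2$ and the conclusion fails. Your Jacobian argument is exactly what rules this case out.

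Two small remarks.

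First, the convention issue you single out as the main obstacle is not really there. For a rank-$2$ bundle $E$ one has $E^{\vee}\simeq E\otimes(\det E)^{-1}$, so $\Omega_{\bP^2}(1)\simeq T_{\bP^2}(-2)$. Hence $\bP(\Omega_{\bP^2}(1))\simeq \bP(T_{\bP^2})$ over $\bP^2$ whichever convention for $\bP(-)$ you use; both are the flag variety.

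Second, your normalisation $\id\times M^{-1}$ commutes with $p_1$ on the nose but with $p_2$ only up to an automorphism of the base. So for $i=2$ the triangle does not yet commute exactly. You can fix this in either of two ways:
\begin{enumerate}
\item normalise on the first factor instead, via $(x,y)\mapsto(M^{T}x,y)$, which commutes with $p_2$ on the nose;
\item lift that automorphism of $\bP^2$ to $\bP(T_{\bP^2})$, which is possible for every element of $\mathrm{PGL}_3$ since $T_{\bP^2}$ is $\mathrm{GL}_3$-equivariant.
\end{enumerate}
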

\begin{proof}
    See \cite[Lemma 5.7]{BFConn}.
\end{proof}

Recall that for an anticanonical K3 surface $Z\in |-K_Y|$, we write $i_Z\colon Z\hookrightarrow Y$ for the inclusion map.
\begin{lemma}\label{lem: isomorphism of sextic structures implies isomorphic fanos}
    Let $Y\coloneqq \bP(T_{\bP^2})$, and let $Z, Z'\in |-K_Y|$ smooth K3 surfaces with $\NS(Z) \simeq \Lts \simeq \NS(Z')$. Let $f \coloneqq \pi \circ i_Z\colon Z \to \bP^2$ and $g\coloneqq \pi \circ i_{Z'}\colon Z'\to \bP^2$ be the induced sextic structures on $Z$ and $Z'$. Assume moreover that $f$ and $g$ are isomorphic via an automorphism $\phi\colon \bP^2 \simeq \bP^2$. Then $\phi$ lifts to an automorphism $\tilde{\phi} \colon Y \simeq Y$ such that $\tilde{\phi}(Z) = Z'$. 
\end{lemma}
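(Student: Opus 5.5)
Our plan is to realise $Y=\bP(T_{\bP^2})$ as the incidence variety $\{(x,\ell) : x\in \ell\}\subset \bP^2\times(\bP^2)^\vee$, the standard $(1,1)$-divisor. In this model the projection $\pi$ is the first projection. Any $\phi\in \mathrm{PGL}_3=\Aut(\bP^2)$ then lifts canonically to $\tilde\phi(x,\ell) = (\phi(x), \phi(\ell))$, where $\phi$ acts on lines via the induced (inverse-transpose) action on $(\bP^2)^\vee$. Equivalently, $\tilde\phi$ is the projectivisation of the natural isomorphism $d\phi\colon T_{\bP^2}\simeq \phi^*T_{\bP^2}$. By construction $\pi\circ\tilde\phi = \phi\circ\pi$. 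It therefore remains to show that $\tilde\phi(Z)=Z'$ can be arranged.

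Set $Z'' \coloneqq \tilde\phi(Z)$. This is again a smooth member of $|-K_Y|$, since $\tilde\phi$ is an automorphism and $K_Y$ is preserved. The sextic structure $\pi|_{Z''}$ has branch curve $\phi(C)=C'$, the same as that of $g$. So $Z''$ and $Z'$ are two anticanonical K3 surfaces in $Y$ whose double covers of $\bP^2$ have the same branch sextic. It suffices to show that the sextic structure $\pi\circ i_Z\colon Z\to\bP^2$ determines $Z$ as a subvariety of $Y$, up to automorphisms of $Y$ over $\bP^2$ preserving the situation.

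To do this, we analyse $Z\subset Y$ fibrewise. A member of $|{-K_Y}| = |\cO(2,2)|$ restricted to a fibre $\pi^{-1}(x)\simeq\bP^1$ (the pencil of lines through $x$) is a degree-2 divisor. Thus $Z\to\bP^2$ is the double cover given by a section of $\mathrm{Sym}^2$ of the relevant rank-2 bundle twisted by $\cO(2)$, and the discriminant of this fibrewise quadratic form is the branch sextic $C$. The double cover $Z\to\bP^2$ is determined by $C$, being the canonical double cover branched along $C$. The embedding $Z\hookrightarrow Y$ over $\bP^2$ corresponds to a morphism $Z\to \bP(T_{\bP^2})$ over $\bP^2$, that is, to a line-bundle quotient of $f^*\Omega$ (or a sub-line-bundle of $f^*T_{\bP^2}$). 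This quotient is a line bundle $L$ on $Z$ with $L\cdot$ (fibre data) prescribed. On $Z$ the relevant line bundle must be $\cO_Z(h_2)$ or a twist of it, since $i_Z^*\cO_Y(H_2)$ pulls back the tautological bundle. The key point is that $\NS(Z)=\langle h_1,h_2\rangle$ and $h_2$ is the unique class, up to the covering involution (which sends $h_2\mapsto 4h_1-h_2$... via the reflection $x\mapsto -x+(x\cdot h_1)h_1$), giving such a map. Hence the two points of $Y$ over $x$ cut out by $Z$ are exchanged by the covering involution of $f$, and the embedding is determined by $f$ up to composing with that involution. Either choice gives the same image subvariety, so $Z''=Z'$.

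The main obstacle is this uniqueness step: showing that an abstract double cover $f\colon Z\to\bP^2$ with $\NS(Z)\simeq\Lts$ admits essentially one lift $Z\hookrightarrow \bP(T_{\bP^2})$ over $\bP^2$ with anticanonical image. We would first try a cohomological count, identifying lifts with surjections $f^*\Omega_{\bP^2}(1)\twoheadrightarrow L$ with $L$ in the class $h_2-h_1$ (or a similar twist). We would then use $\Hom(f^*\Omega_{\bP^2}(1),L)=H^0(L\otimes f^*T_{\bP^2}(-1))$ and Riemann–Roch to show this space is one-dimensional for each admissible $L$. The admissible $L$ are exactly the two classes swapped by the covering involution. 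If the count fails, we would instead argue directly with equations: $Z''$ and $Z'$ are both cut out by $(1,1)\cdot(1,1)$-type equations whose discriminants agree, and we would compare them via the symmetric-matrix description of the conic/quadratic form to conclude equality up to the fibrewise automorphism group, which is trivial here.
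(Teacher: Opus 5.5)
Your reduction matches the paper's: lift $\phi$ to $\tilde\phi\in\Aut(Y)$, set $Z''=\tilde\phi(Z)$, and reduce to the claim (the paper's Lemma~\ref{lem: branch determines anticanonical divisor}) that two anticanonical K3 surfaces in $Y$ with lattice $\Lts$ and the same branch sextic coincide. Your lattice input also works once you add a routine computation. For a lift $s\colon Z\to Y$ over $\bP^2$ with anticanonical image, $D=s^*H_2$ satisfies $D^2=H_2^2\cdot(2H_1+2H_2)=2$ and $D\cdot h_1=4$. In $\langle h_1,h_2\rangle\simeq\Lts$ this leaves only $h_2$ and $4h_1-h_2=\iota^*h_2$.

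\textbf{The gap.} What is missing is the next step: that for a \emph{fixed} class $D$ there is only one lift. That is the entire content of the lemma, and you do not prove it. The sentence ``Hence the two points of $Y$ over $x$ cut out by $Z$ are exchanged by the covering involution'' does not follow from the class computation; it restates the goal.

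Your Riemann--Roch plan does not close it either. Lifts with $s^*H_2=D$ are nowhere-vanishing sections, up to scalars, of $E=f^*\Omega_{\bP^2}(1)\otimes\mathcal{O}_Z(D)$; this is the same as your $L\otimes f^*T_{\bP^2}(-1)$ with $L=\mathcal{O}_Z(D-h_1)$. Since $c_1(E)=2D-h_1$ and $c_2(E)=0$, you get $\chi(E)=1$ and $h^2(E)=0$, hence only $h^0(E)\geq 1$. Uniqueness needs $h^1(E)=0$. Equivalently, the multiplication map $H^0(\mathcal{O}_Z(h_1))\otimes H^0(\mathcal{O}_Z(D))\to H^0(\mathcal{O}_Z(h_1+D))$, from a $9$- to an $8$-dimensional space, must be surjective, and you do not address this. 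Your fallback of comparing symmetric-matrix descriptions is not an argument: binary quadratic forms over $\bP^2$ with the same discriminant need not be equivalent, so this is exactly where a proof is required.

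\textbf{How the paper closes it.} The paper argues geometrically. First choose $\psi\colon Z\simeq Z'$ over the first $\bP^2$ with $\psi^*h_2'=h_2$; this is possible after composing with $\iota$, by your own class computation. Both second projections are then given by the complete linear system $|h_2|$, since $h^0(\mathcal{O}_Z(h_2))=3$. So they differ by some $\phi\in\mathrm{PGL}_3$, and $Z'=(\id\times\phi)(Z)$. If $\phi\neq\id$, then $Z'\subset Y\cap(\id\times\phi)(Y)$, a divisor of class $H_1+H_2$ on $Y$, which cannot contain a member of $|2H_1+2H_2|$.

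In your language: write $x$ for the pulled-back coordinates and $m$ for a basis of $H^0(\mathcal{O}_Z(D))$. Lifts of class $D$ then correspond to matrices $A$ with $x^{T}Am=0$, which form a linear space containing the identity. The paper's argument excludes every invertible non-scalar $A$, and this is precisely what forces $h^0(E)=1$.
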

\begin{proof}
    By assumption, we have an isomorphism $\psi\colon Z\simeq Z'$ fitting into a commutative diagram:
    \[
    \xymatrix{
    Z \ar@{^(->}[d]_{i_Z} \ar[r]^\simeq_\psi & Z' \ar@{^(->}[d]^{i_{Z'}} \\
    Y \ar[d]_\pi & Y \ar[d]^{\pi}\\
    \bP^2 \ar[r]^\simeq_\phi & \bP^2,
    }
    \]
    where the compositions of the vertical maps are the sextic structures $f$ and $g$. Since any automorphism of $\bP^2$ lifts to an automorphism of $Y$, it follows that there exists an automorphism $\tilde{\phi}\in \Aut(Y)$ such that the diagram 
    \[
    \xymatrix{
    Y \ar[r]^\simeq_{\tilde{\phi}} \ar[d] & Y \ar[d] \\
    \bP^2 \ar[r]^\simeq_{\phi}& \bP^2
    }
    \]
    commutes. 

    Therefore, $Z'' \coloneqq (\tilde{\phi}\circ i_Z)(Z)$ and $Z'$ are two K3 surfaces in $Y$ such that the double covers $Z'\to \bP^2$ and $Z''\to \bP^2$ have precisely the same branch locus $C \subset \bP^2$. Finally, we may use Lemma \ref{lem: branch determines anticanonical divisor} below, combined with Lemma \ref{lem: bidegree (1,1) divisors are projective bundles over the plane}, to conclude that $Z'' = Z'$, as required.
\end{proof}

\begin{lemma}\label{lem: branch determines anticanonical divisor}
    Let $Y$ be a bidegree $(1,1)$ divisor in $\bP^2\times \bP^2$. Let $Z, Z'\in |-K_Y|$ be K3 surfaces with $\NS(Z) \simeq \Lts \simeq \NS(Z')$. Let $C$ and $C'$ be the discriminant sextics of $f_{h_1}$ and $f_{h'_1}$, respectively. Then, if $C = C'$ as subvarieties of $\bP^2$, we have $Z = Z'$ as subvarieties of $Y$.
\end{lemma}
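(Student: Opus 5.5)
We use the projective bundle structure $Y\simeq \bP(T_{\bP^2})$ over the first factor (Lemma \ref{lem: bidegree (1,1) divisors are projective bundles over the plane}), so $p_1\colon Y\to\bP^2$ is a $\bP^1$-bundle. The plan is to write $Z$ fibrewise as a double cover and to recover its defining section from the discriminant sextic.

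First we describe $Z$ fibrewise. Write $Y=\bP_{\bP^2}(\cV)$ with $\cV$ a rank $2$ bundle (a twist of $T_{\bP^2}$ or its dual, depending on conventions). Since $-K_Y=2H_1+2H_2$ and $H_2$ is the relative $\cO(1)$ up to twisting by $p_1^*\cO(a)$, we get
\[
|-K_Y| = |\cO_{\bP(\cV)}(2)\otimes p_1^*\cO(b)|, \qquad H^0(Y,-K_Y)\simeq H^0(\bP^2,\mathrm{Sym}^2\cV^\vee(b)),
\]
for an appropriate $b$. Thus $Z$ corresponds, up to scalar, to a family of binary quadratic forms $q_Z$ on the fibres of $\cV$, twisted by a line bundle. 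The double cover $f_{h_1}=p_1|_Z\colon Z\to\bP^2$ is finite, so $q_Z$ vanishes on no fibre. Its branch curve $C$ is the zero locus of the discriminant $\disc(q_Z)\in H^0(\bP^2,(\det \cV^\vee)^{2}(2b))=H^0(\cO(6))$.

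Next we reconstruct $Z$ from $C$. A binary quadratic form with nonzero discriminant is determined up to scalar by its two roots, and a form with zero discriminant by its double root. Over $\bP^2\setminus C$ the zeros of $q_Z$ form an étale double cover of $\bP^2\setminus C$, namely $Z\setminus f^{-1}(C)\to\bP^2\setminus C$. Since $\bP^2$ is simply connected and the cover is ramified exactly along $C$, the double cover $Z\to\bP^2$ branched along $C$ is unique up to isomorphism as a cover. The difficulty is that we need equality of the embedded subvarieties $Z,Z'\subset Y$, not merely isomorphism over $\bP^2$.

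Two sections of $\mathrm{Sym}^2\cV^\vee(b)$ with the same discriminant need not be proportional in general. So we consider a second argument, which we expect to be the main obstacle. The divisors $Z$ and $Z'$ both contain the ramification curve $R=f^{-1}(C)$, because over $C$ both consist of the unique double root. Is $R$ then the same curve in $Y$ for both? It is not automatic, since the double root over a point $c\in C$ can differ between $Z$ and $Z'$.

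We therefore use the second projection together with the lattice. The curve $\tilde C\subset Z$ lying over $C$ has class $3h_1$ in $\NS(Z)=\langle h_1,h_2\rangle$, and its image under $p_2$ is determined by the class $h_2|_{\tilde C}$. The plan is to show that the natural map $\tilde C\to\bP(\cV)|_C$ is forced: $\tilde C\simeq C$ via $f$, and the section $C\to Y|_C$ is a section of a $\bP^1$-bundle whose degree is $h_2\cdot 3h_1=12$. If we can show that such a section is unique, then $Z$ and $Z'$ agree on $p_1^{-1}(C)$ to order $2$ (tangency). A restriction argument on $H^0(Y,-K_Y)\to H^0(2\tilde C)$ would then show that $Z-Z'$, as sections, is a multiple of the discriminant-related section. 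Comparing degrees, this forces $Z=Z'$.

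If this uniqueness fails, the fallback is to use the genericity: for a very general $Z$ we have $O_{\mathrm{Hodge}}(T(Z))=\pm1$. Then the isomorphism $Z\simeq Z'$ over $\bP^2$ is either the identity or the covering involution, and both preserve $h_2$ only if they are compatible with the embedding through $|h_2|$. The embedding $Z\hookrightarrow Y\subset\bP^2\times\bP^2$ is given by the pair $(|h_1|,|h_2|)$. Hence an isomorphism $\psi\colon Z\simeq Z'$ with $\psi^*h_i'=h_i$ for both $i$, and commuting with the maps to $\bP^2$, yields $Z=Z'$ inside $\bP^2\times\bP^2$ after fixing the coordinates on the second factor. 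The key remaining step, and the hardest, is to show that $\psi^*h_2'=h_2$ and that the induced automorphism of the second $\bP^2$ preserves $Y$ and is trivial. We would get $\psi^*h_2'=h_2$ because $\psi^*$ fixes $h_1$ and is an isometry of $\Lts$ preserving the ample cone, which forces $h_2\mapsto h_2$.
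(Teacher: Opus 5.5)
There is a genuine gap: neither of your two routes actually reaches $Z=Z'$. The first route is only conjectural. It relies on uniqueness of the double-root section of $Y|_C\to C$, followed by a restriction argument to $2\tilde C$. Nothing you say shows that $Z$ and $Z'$ have the same double root over each point of $C$. Still less does it show that the two quadratic forms agree to second order along $p_1^{-1}(C)$ up to a global scalar. So this route cannot carry the proof.

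Your fallback is the right framework, and it is essentially the paper's: take an isomorphism $\psi\colon Z\simeq Z'$ over $\bP^2$, match $h_2$, and use that $Z\hookrightarrow\bP^2\times\bP^2$ is the map $(f_{h_1},f_{h_2})$. However, two things go wrong.

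First, your justification of $\psi^*h_2'=h_2$ is false. In $\Lts$ the classes $D$ with $D^2=2$ and $D\cdot h_1=4$ are $h_2$ and $4h_1-h_2$. The second is also ample: there are no $(-2)$-classes, so the ample cone is the whole positive cone. In fact $4h_1-h_2=\iota^*h_2$, where $\iota$ is the covering involution of $f_{h_1}$, since $h_2+\iota^*h_2=f^*f_*h_2=4h_1$. The fix is to replace $\psi$ by $\psi\circ\iota$ if necessary. This needs no Hodge-theoretic genericity, which the lemma does not assume anyway, because any two isomorphisms $Z\simeq Z'$ over $\bP^2$ differ by $\iota$.

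Second, and more seriously, you explicitly leave open the decisive step. This is showing that the automorphism $\phi$ of the second factor with $f_{h_2'}\circ\psi=\phi\circ f_{h_2}$ is trivial. The missing idea is a divisor-class argument.
\begin{itemize}
\item From $(\id\times\phi)(Z)=Z'$ you get $Z'\subset Y\cap(\id\times\phi)(Y)$.
\item Suppose $\phi\neq\id$. Write $Y=\{x^{T}My=0\}$ with $M$ invertible and $\phi$ given by a matrix $A$; then $(\id\times\phi)(Y)=\{x^{T}MA^{-1}y=0\}$. This equals $Y$ only if $A$ is scalar, so $(\id\times\phi)(Y)\neq Y$.
\item Hence $S=Y\cap(\id\times\phi)(Y)$ is an effective divisor on $Y$ of class $H_1+H_2$ containing the irreducible surface $Z'$, whose class is $2H_1+2H_2$.
\item Then $S-Z'\sim -H_1-H_2$ would be effective, which is absurd.
\end{itemize}
Hence $\phi=\id$ and $Z'=Z$. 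Note that you should not try to show a priori that $\phi$ preserves $Y$; that is an output of this argument, not an input.
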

\begin{proof}
    Since $C = C'$, we find that $Z$ and $Z'$ are abstractly isomorphic, as they are both double covers of $\bP^2$ with the same branch locus. Fix an isomorphism $\psi\colon Z\simeq Z'$ fitting in a commutative triangle
    \begin{equation*}
    \xymatrix{
    Z \ar[dr]_{f_{h_1}} \ar[rr]^\simeq_\psi && Z' \ar[dl]^{f_{h_1'}} \\
    &\bP^2. &
    }
    \end{equation*}
    Then $\psi$ also induces an isomorphism between $f_{h_2}$ and $f_{h_2'}$:
    \begin{equation*}
    \xymatrix{
    Z \ar[d]_{f_{h_2}} \ar[r]^\simeq_\psi & Z' \ar[d]^{f_{h_2'}} \\
    \bP^2 \ar[r]_\phi^\simeq&\bP^2. 
    }
    \end{equation*}
    We claim that $\phi = \id_{\bP^2}$. Note that this immediately implies the result, as the embedding $Z \hookrightarrow \bP^2 \times \bP^2$ is the product of the two sextic structures $f_{h_1}$ and $f_{h_2}$. 
    
    Since $(\id_{\bP^2}\times \phi)(Z) = Z'$, we have 
    \begin{equation}\label{eq: K3 in hyperplane section}
    Z'\subset S \coloneqq Y\cap \left((\id_{\bP^2}\times \phi)(Y)\right).
    \end{equation}
    Assume for the sake of contradiction that $\phi\neq \id_{\bP^2}$. Then $S$ is the intersection of $Y$ with a different $(1,1)$ divisor in $\bP^2 \times \bP^2$, thus $S$ is a divisor in $Y$. Moreover, we have $S = H_1 + H_2 \in \Pic Y$. Moreover, by \eqref{eq: K3 in hyperplane section}, we can write $S = Z' + T$ for some effective divisor $T$. However, since $Z'\in |-K_Y|$, we have $Z' = 2H_1 + 2H_2$, thus $T = S - Z = -H_1 - H_2$. Therefore $T$ is not an effective divisor, which is a contradiction.  Therefore, we have $\phi = \id_{\bP^2}$, and the result follows.
\end{proof}

\begin{proof}[Proof of Proposition \ref{prop: branch determines cover 2-6}]
    By Lemma \ref{lem: two sextic structures}, there exists an isomorphism $\psi\colon Z\simeq Z'$, for which the ample divisor $\psi^*h_1'$ is equal to either $h_1$ or $h_2$. Suppose that we have $\psi^*h_1'= h_1$, as the other case follows by symmetry.
    Then, by Lemma \ref{lem: isomorphism of sextic structures implies isomorphic fanos}, there is an automorphism $\tilde\phi\colon Y\simeq Y$ satisfying $\tilde\phi(Z) = Z'$, hence we have $X\simeq X'$.
\end{proof}

\subsection{Family 2-8}
In this subsection, we prove the following proposition.
\begin{proposition} \label{prop: branch determines cover 2-8}
    Let $X, X' \in \cX_{2\text{-}8}$ be $Z$-general. If there is an isomorphism of branch divisors $Z \simeq Z'$, then $X \simeq X'$.
\end{proposition}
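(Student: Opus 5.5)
I will follow the same strategy as for Family 2-6(b), with the anticanonical model of $Z$ playing the role of the sextic structures. For $Y=\Bl_p\bP^3$ and $Z\in|-K_Y|=|4H-2E|$, the composition $\pi\circ i_Z\colon Z\to\bP^3$ maps $Z$ isomorphically onto a quartic surface $\bar Z\subset\bP^3$ that has a node at $p$. On the K3 side this map is given by $|h|$ and contracts the $(-2)$-curve $e$; here $Z\cap E$ is a smooth conic, since $X$ is in 2-8(a). Conversely, $Z$ is the strict transform of $\bar Z$ under the blow-up at $p$. So it suffices to show that any isomorphism $\psi\colon Z\simeq Z'$ can be modified by an automorphism of $Z'$ so that $\psi^*h'=h$ and $\psi^*e'=e$.

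\textbf{Step 1 (lattice).} I will classify the relevant classes in $\Lte=\langle h,e\rangle$, with $h^2=4$, $e^2=-2$, $h\cdot e=0$. The $(-2)$-classes are the solutions of $ah+be$ with $2a^2-b^2=-1$, a Pell-type equation with infinitely many solutions. However, the chamber structure is simple: the nef cone is cut out by $e$ and by a second $(-2)$-class. I will check whether $h$ is the unique nef class of square $4$ that is orthogonal to a $(-2)$-curve, modulo isometries preserving the ample cone. Concretely, I will compute the two boundary rays of the nef cone, namely $e^\perp$ and $\delta^\perp$ for the other $(-2)$-curve $\delta$. The $(-2)$-curves on $Z$ are $e$ and possibly $\delta$; a natural candidate is $\delta=h-e$, which has square $4-2=2$, so it is not $(-2)$. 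I will therefore determine $\delta$ from the Pell equation, the first solution being $a=1,b=\pm\sqrt3$, which is not integral. It is also possible that the only $(-2)$-curve is $e$ and the other boundary ray of the nef cone is isotropic: $2a^2=b^2$ has no integral solutions, but the ray is irrational, so this case gets resolved by an analysis of the structure of the whole cone.

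\textbf{Step 2 (automorphisms).} Given $\psi\colon Z\simeq Z'$, identify $\NS(Z')$ with $\NS(Z)$ via $\psi^*$. Then $\psi^*e'$ is a $(-2)$-curve and $\psi^*h'$ is a nef class of square $4$ orthogonal to it. The discriminant group is $A_{\Lte}\simeq\bZ/4\oplus\bZ/2$, generated by $\tfrac14h$ and $\tfrac12e$. If $\psi^*(h',e')\neq(h,e)$, I will construct an isometry $\sigma$ of $\NS(Z)$ that preserves the ample cone and sends $(\psi^*h',\psi^*e')$ to $(h,e)$, and check that $\bar\sigma=\pm\id$ on $A_{\Lte}$. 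I expect this to hold because $O(A_{\Lte})$ is small and the relevant elements of order $2$ and $4$ are distinguished by $q$. Lemma \ref{lem: extending isometries} together with Theorem \ref{thm: torelli theorem} then turns $\sigma$ into an automorphism of $Z$, and composing it with $\psi$ gives what we need. The main obstacle is this step: controlling the $(-2)$-curves and the ample cone in $\Lte$, and showing that there is essentially one "quartic with a node" polarisation. This is where I expect the real work to be.

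\textbf{Step 3 (geometry).} Once $\psi^*h'=h$, the isomorphism $\psi$ is induced by a projective linear map $\phi\in PGL_4$ with $\phi(\bar Z)=\bar Z'$. Since $\psi$ maps $e$ to $e'$, $\phi$ sends the node $p$ of $\bar Z$ to the node $p'$ of $\bar Z'$. Composing with a projectivity that moves $p'$ back to $p$, we may assume $\phi(p)=p$. Then $\phi$ lifts to an automorphism $\tilde\phi$ of $Y=\Bl_p\bP^3$ with $\tilde\phi(Z)=Z'$, since $Z$ is the strict transform of $\bar Z$. The involution-equivariant double covers branched along $Z$ and $Z'$ are then isomorphic, so $X\simeq X'$.
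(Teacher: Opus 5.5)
\textbf{Verdict: genuine gap.} Your overall reduction is the right one. Modify $\psi$ by an automorphism until $\psi^*h'=h$, then lift the induced projectivity to $\Bl_p\bP^3$. Step~3 is fine, apart from two slips: $\pi\circ i_Z$ is birational onto $\bar Z$ (it contracts $e$ to the node), not an isomorphism; and $p'=p$ automatically, since $Z$ and $Z'$ lie in the same $\Bl_p\bP^3$.

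The gap is that Steps~1 and~2, which you yourself flag as the real work, are never carried out, and Step~1 heads in the wrong direction.
\begin{itemize}
\item The equation $2a^2-b^2=-1$ does have integral solutions beyond $a=0$, namely $(a,b)=(2,\pm3)$.
\item The class $\delta\coloneqq 2h-3e$ has $h\cdot\delta=8>0$, so it is effective.
\item A root $\alpha h+\beta e$ has wall $b/a=2\alpha/\beta$, so the walls have slopes $0,\pm\tfrac43,\pm\tfrac{24}{17},\dots$.
\item The ample cone contains $2h-e$, so it is $\{ah+be : -\tfrac43a<b<0\}$.
\end{itemize}
Hence $\delta$ is a second $(-2)$-curve, and the nef cone is spanned by $h$ and $h^\dagger\coloneqq 3h-4e$, where $(h^\dagger)^2=4$ and $h^\dagger\cdot\delta=0$. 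There is no irrational boundary ray. A nef square-$4$ class orthogonal to a $(-2)$-curve is therefore $h$ or $h^\dagger$, and both values of $\psi^*h'$ really occur (take $\psi=\iota$ below). So the isometry in your Step~2 has to be exhibited; it does not follow from bookkeeping in $O(A_{\Lte})$ alone.

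The missing ingredient is the map $\sigma(h)=3h-4e$, $\sigma(e)=2h-3e$.
\begin{itemize}
\item It is an isometric involution of $\Lte$ that swaps the two extremal rays of the nef cone, so it preserves the ample cone.
\item On $A_{\Lte}$ it sends $\tfrac14h\mapsto\tfrac34h-e\equiv-\tfrac14h$ and $\tfrac12e\mapsto h-\tfrac32e\equiv\tfrac12e$. Thus $\bar\sigma=-\id$, as it must be, since $O(A_{\Lte})=\{\pm\id\}$.
\item It therefore glues with $-\id_{T(Z)}$ to a Hodge isometry of $H^2(Z,\bZ)$. Theorem~\ref{thm: torelli theorem} then gives $\iota\in\Aut(Z)$ with $\iota^*=\sigma$ on $\NS(Z)$.
\item Geometrically, $\iota$ is the covering involution of the projection from the node, i.e.\ of the double plane $|h-e|$.
\end{itemize}
If $\psi^*h'=h^\dagger$, replace $\psi$ by $\psi\circ\iota$. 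Then $\psi^*h'=h$, and $\psi^*e'=e$ because $\pm e$ are the only roots in $h^\perp$. Your Step~3 then finishes the proof.

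For comparison, the paper skips this step. It asserts that every isomorphism satisfies $\phi^*(h)=h'$, deducing $b=0$ from $\phi(h)\cdot e'=-2b^2\ge 0$. But $(ah'+be')\cdot e'=-2b$, so nefness only gives $b\le 0$, and $(a,b)=(3,-4)$ survives; it is realised by $\iota$. So your instinct to allow an automorphism correction is correct, and the completion above is also what is needed to repair the paper's argument.
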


Before we proceed to the proof of Proposition \ref{prop: branch determines cover 2-8}, we first study the embeddings of the branch divisor $Z$ into $Y = \Bl_p\bP^3$.

\begin{remark} \label{rem: 2-8 singular quartic}
    The surface $Z$ is the strict transform of a quartic hypersurface in $\bP^3$ with an ordinary double point at $p$. 
    
    Indeed, let $\pi \colon Y \to \bP^3$ be the blow-up map, and $E$ the exceptional divisor. Denote $H_Y \coloneqq \pi^* \cO_{\bP^3}(1)$. We have $K_Y = \pi^* K_{\bP^3} + 2E = -4H + 2E$. Hence $-K_Y = 4H - 2E$. For such a blow-up, we have $\pi_* \cO_Y(-kE) = I_p^k$ for $k \geq 0$. We have $\cO_Y(-K_Y) = \cO_Y(4H - 2E) = \pi^* \cO_{\bP^3}(4) \otimes \cO_Y(-2E)$. Applying the projection formula, we get $\pi_* \cO_Y(-K_Y) = \cO_{\bP^3}(4) \otimes \pi_* \cO_Y(-2E) = I_p^2(4)$. This means that the global sections of $\cO_Y(-K_Y)$ are precisely degree-$4$ polynomials on $\bP^3$ vanishing to order at least $2$ at the point $p$, in other words, we have a double point at $p$. Taking the strict transform removes the multiplicity-$2$ component along $E$, giving a divisor in $|-K_Y|$.
\end{remark}

\begin{lemma}\label{lem: ample divisor dictates aut-orbit of embeddings 2-8}
    Let $Z\in |-K_Y|$ be a general anticanonical divisor of $Y = \Bl_p\bP^3$, so that $Z$ is a smooth K3 surface.
    Let $i\colon Z \hookrightarrow Y$ and $j\colon Z\hookrightarrow Y$ be two embeddings. Assume that $i^*(H) = j^*(H) = h \in \NS(Z)$. Then there is an automorphism $\psi\colon Y\simeq Y$ making the following diagram commute:
    \begin{equation}\label{eq: please commute}
        \xymatrix{
        & Z \ar@{_(->}[dl]_i \ar@{^(->}[dr]^j&\\
        Y \ar[rr]^\simeq_\psi && Y
        }
    \end{equation}
\end{lemma}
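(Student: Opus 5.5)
Here is the plan. The map $\pi\circ i\colon Z\to \bP^3$ is given by the linear system $|h|$, with $h=i^*H$. Its image is the quartic surface $\bar Z=\pi(i(Z))$, which has an ordinary double point at $p$ (Remark \ref{rem: 2-8 singular quartic}). Moreover $\pi\circ i\colon Z\to \bar Z$ is the minimal resolution contracting the $(-2)$-curve $i^*E\cap Z$. The proof then has three steps.

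\textbf{Step 1: identify $\bar Z$ as the image of $\varphi_{|h|}$.} First check that $H^0(Y,\cO_Y(H))\to H^0(Z,\cO_Z(h))$ is an isomorphism. Both spaces are $4$-dimensional: the left side clearly, the right side by Riemann--Roch since $h^2=4$ and $h$ is nef and big. For injectivity, restrict along $0\to\cO_Y(H+K_Y)\to\cO_Y(H)\to\cO_Z(h)\to0$. Here $H^0(\cO_Y(H+K_Y))=H^0(\cO_Y(-3H+2E))=0$, because $-3H+2E$ has negative degree on a general line. Hence $\pi\circ i$ and $\pi\circ j$ are both the morphism $\varphi_{|h|}\colon Z\to \bP^3$, each composed with some choice of basis of $H^0(Z,h)$. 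So there is a $g\in PGL_4$ with $\pi\circ j=g\circ\pi\circ i$, and $g$ maps the quartic $\bar Z_i=\pi(i(Z))$ onto $\bar Z_j=\pi(j(Z))$.

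\textbf{Step 2: $g$ fixes $p$.} The point $p$ is the unique singular point of each quartic $\bar Z_i$ and $\bar Z_j$. One way to see this: $\varphi_{|h|}$ contracts exactly the curves $C$ with $h\cdot C=0$, and the only such curve is $e=i^*E$. This holds because $Z$ is general, so that $\NS(Z)=\Lte$ can be arranged, and the classes $C$ with $h\cdot C=0$ are then multiples of $e$. Since $g$ maps the singular point of $\bar Z_i$ to the singular point of $\bar Z_j$, we get $g(p)=p$.

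\textbf{Step 3: lift $g$ and conclude.} Since $g(p)=p$, the automorphism $g$ lifts to an automorphism $\psi$ of $Y=\Bl_p\bP^3$. We have $\pi\circ\psi\circ i=g\circ\pi\circ i=\pi\circ j$, so $\psi\circ i$ and $j$ agree after composing with $\pi$. The map $\pi$ is an isomorphism away from $E$, so $\psi\circ i$ and $j$ agree on the dense open subset $Z\setminus e$, where $e=i^*E$. As $Y$ is separated and $Z$ is reduced, they agree everywhere, and diagram \eqref{eq: please commute} commutes.

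The main obstacle is Step 2. We must ensure that $j^*E=i^*E$, or equivalently that $\pi\circ j$ contracts the same curve as $\pi\circ i$. Both are numerically forced by $h$, using the generality hypothesis through $\NS(Z)\simeq\Lte$. In $\Lte$, the orthogonal complement of $h$ is $\bZ e$ with $e^2=-2$. Hence $j^*E$, which is effective, satisfies $(j^*E)^2=-2$, and is orthogonal to $h$, must equal $e$. This is the step where generality is needed.
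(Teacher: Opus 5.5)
Your proposal is correct and follows essentially the same route as the paper. Both $\pi\circ i$ and $\pi\circ j$ are $\varphi_{|h|}$ up to some $g\in\mathrm{PGL}_4$, this $g$ fixes $p$, and it lifts to an automorphism of $Y=\Bl_p\bP^3$. You also supply details the paper leaves implicit: the isomorphism $H^0(Y,\mathcal{O}_Y(H))\simeq H^0(Z,\mathcal{O}_Z(h))$, the reason $g(p)=p$, and the separatedness argument for commutativity.

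One small remark. Step 2 does not need $\NS(Z)\simeq\Lte$, which is a very general rather than merely general condition, while the lemma only assumes $Z$ general. The map $\pi\circ j=g\circ\pi\circ i$ contracts the conic $i^{-1}(E)$ to the point $g(p)$. Since $j$ is an embedding and $\pi$ is injective away from $E$, that point must be $p$.
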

\begin{proof}
    Consider the morphism $\phi \coloneqq \phi_{|h|}\colon Z \to \bP^3$. The image $\phi(Z)$ is a singular quartic K3 surface with an ordinary double point in $p$. Explicitly, the map $\phi$ is the restriction of the blow-up $\Bl_p\colon Y \to \bP^3$, and the exceptional $(-2)$-curve of $Z$ is contracted to an ordinary double point (see Remark \ref{rem: 2-8 singular quartic}). 

    By the assumption that $i^*H = j^*H = h$, there exists an automorphism $\psi'\colon \bP^3 \simeq \bP^3$ such that the diagram 
    \[
    \xymatrix{
        & Z \ar@{_(->}[dl]_{i} \ar@{^(->}[dr]^j & \\
        Y \ar[d] && Y \ar[d] \\
        \bP^3 \ar[rr]^\simeq_{\psi'} && \bP^3
        }
    \]
    commutes. In particular, $\psi'$ fixes the point $p\in \bP^3$, hence it lifts to the desired automorphism $\psi\in \Aut(Y)$ making \eqref{eq: please commute} commute.
\end{proof}

\begin{proof}[Proof of Proposition \ref{prop: branch determines cover 2-8}]
    By $Z$-generality of $X$, the Néron--Severi lattice $\mathrm{NS}(Z) = \Pic Z$ is given by the Gram matrix 
    \begin{equation*}
            \begin{pmatrix}
        4 & 0 \\
        0 & -2
    \end{pmatrix}. 
    \end{equation*} 
    Suppose we have an isomorphism of branch divisors $\phi \colon Z \simeq Z'$. In this setting, $\NS(Z)$ is generated by $h \coloneqq H|_Z$ and $e \coloneqq E|_Z$, and $\NS(Z')$ is similarly generated by $h'$ and $e'$ by Proposition \ref{prop: branch description 2-8}. The isomorphism $\phi$ induces an isometry $\phi^* \colon \NS(Z) \simeq \NS(Z')$. We claim we must have $\phi^*(h) = h'$ and $\phi^*(e) = e'$.

    Indeed, let $a,b\in \bZ$ such that  $\phi^*(h) = ah' + be'$. Since $\phi$ is an isomorphism, $\phi^*(h)$ must be effective, since $h$ is effective. It follows that $\phi(h)\cdot e' = -2b^2\geq 0$, hence $b = 0$. From this, it easily follows that $a = \pm 1$. However, since $-h'$ is not effective, we obtain $\phi(h) = h'$. 
    Now, since $e$ is orthogonal to $h$, it follows that $\phi(e)$ is orthogonal to $\phi(h) = h'$. We have $h'^\perp = \langle e' \rangle \subset \NS(Z')$, so we obtain $\phi(e) = \pm e'$. Finally, since $e$ is effective, $\phi(e)$ is also effective, and we must have $\phi(e) = e'$. 

    Now, write $i_Z\colon Z\hookrightarrow Y$ and $i_{Z'}\colon Z'\hookrightarrow Y$ for the inclusions. Then $i_Z$ and the composition $i_{Z'} \circ \phi$ are two different embeddings of $Z$ into $Y$, and by the above computations we have $$i_Z^*(H)= h = \phi^*(h') = (i_{Z'}\circ \phi)^*(H).$$
    This means that we may use Lemma \ref{lem: ample divisor dictates aut-orbit of embeddings 2-8} to obtain an automorphism $\psi\in \Aut(Y)$ fitting into a commutative diagram
    \begin{equation*}
        \xymatrix{
        Z \ar[r]^\simeq_\phi \ar@{^(->}[d]_{i_{Z}} & Z' \ar@{^(->}[d]^{i_{Z'}} \\
        Y \ar[r]^\simeq_\psi & Y.
        }
    \end{equation*}
    From this, we immediately obtain an isomorphism of the double covers $X\simeq X'$, as required.
\end{proof}

\subsection{Family 3-1} Next, we consider Theorem \ref{thm: branch determines Fano} for Family 3-1.
\begin{proposition} \label{prop: branch determines cover 3-1}
    Let $X, X' \in \cX_{3\text{-}1}$ be $Z$-general. If there is an isomorphism of branch divisors $Z \simeq Z'$, then $X \simeq X'$.
\end{proposition}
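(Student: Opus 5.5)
Given an isomorphism $\phi\colon Z\simeq Z'$, I will show that $\phi^*$ sends the triple $\{h_1',h_2',h_3'\}$ to $\{h_1,h_2,h_3\}$, possibly permuted. I will then lift $\phi$ to an automorphism of $Y=\bP^1\times\bP^1\times\bP^1$, as was done for Families 2-6(b) and 2-8.

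\textbf{Step 1: the isotropic classes.} On $\NS(Z)\simeq\Lto$, write $D=ah_1+bh_2+ch_3$. Then
\[
D^2=4(ab+bc+ca),
\]
so $\Lto$ has no classes of square $-2$. Hence every effective class with positive square is ample, and the nef cone is the closure of the positive cone.

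Each $h_i$ is base-point free with $h_i^2=0$. It is primitive, since $h_i\cdot h_j=2$ and $\Lto$ is not $2$-divisible. So each $h_i$ is an elliptic fibration class, and it spans an extremal ray of the nef cone. Equivalently, $h_i$ is primitive isotropic and lies on the boundary of the positive cone.

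I expect this to be the main obstacle: showing that there are no other primitive nef isotropic classes. The boundary of the positive cone is a round cone in rank $3$, so there are infinitely many rational isotropic rays, and I need to rule them out as nef classes. Since $\Lto$ has no $(-2)$-classes, every isotropic class in the closure of the positive cone is nef, so I cannot argue from the cone structure alone. Instead I argue as follows. The map $\phi^*$ sends nef isotropic classes to nef isotropic classes and preserves the sum $h_1+h_2+h_3$ up to the structure of the map. Better, $\phi^*$ preserves the anticanonical restriction only if I know it. So I plan to argue via the discriminant group instead.

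\textbf{Step 2: Nikulin's criterion.} The discriminant group is $A_{\Lto}\simeq \bZ/2\oplus\bZ/2\oplus\bZ/4$ (the determinant is $16$). The isometry $\overline{\phi^*}$ must be compatible with a Hodge isometry of $T(Z)$. By Lemma~\ref{lem: odd picard rank few hodge isometries}, the rank is odd, so $O_{\mathrm{Hodge}}(T(Z))=\{\pm\id\}$. By Lemma~\ref{lem: extending isometries}, this forces $\overline{\phi^*}=\pm\id$ on $A_{\NS}$, after identifying $Z$ and $Z'$ via some reference marking. The fallback identification is to compose with any isometry $\Lto\simeq\NS(Z')$ sending $h_i\mapsto h_i'$.

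Next I compute the subgroup of $O(\Lto)$ acting by $\pm\id$ on $A_{\Lto}$ and preserving the positive cone. I would show that this subgroup maps the set $\{h_1,h_2,h_3\}$ to itself. A possible route is that this subgroup is the kernel of a reflection-type group that acts trivially on the ``$h_i \bmod 2$'' structure, and then to compare $\phi^*h_1'$ with $h_1$ modulo $2\Lto^*$. Concretely, $\frac12 h_i\in\Lto^*$, and the three elements $\frac12 h_i$ are distinct in $A_{\Lto}$. The condition $\overline{\sigma}=\pm\id$ then forces $\sigma(h_i)\equiv h_i$ modulo $2\Lto$. Combined with the requirement that $\sigma(h_i)$ be primitive, isotropic and nef, I hope to conclude $\sigma(h_i)=h_i$. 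If this fails, I will instead use the ample class $h_1+h_2+h_3$, which has square $24$, and bound the nef isotropic $D$ with $D\cdot(\sigma(h_1+h_2+h_3))$ small.

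\textbf{Step 3: lifting to $Y$.} Once $\phi^*h_i'=h_{\pi(i)}$ for some permutation $\pi$, compose with the corresponding permutation of the factors of $Y$. The embedding $i_Z$ is the product of the three maps $\phi_{|h_i|}\colon Z\to\bP^1$. These maps agree with $\phi_{|h_i'|}\circ\phi$ up to automorphisms $g_i$ of $\bP^1$. Then $\psi=(g_1,g_2,g_3)\in\Aut(Y)$ satisfies $\psi\circ i_Z=i_{Z'}\circ\phi$. Hence $\psi(Z)=Z'$, and the double covers $X$ and $X'$ branched along $Z$ and $Z'$ are isomorphic.
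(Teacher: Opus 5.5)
Step~1 and Step~3 are fine, and you correctly identify the obstacle (there are infinitely many nef isotropic classes). However, the claim in your first sentence is false, so Step~2 cannot be completed.

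\textbf{The gap.} For an arbitrary isomorphism $\phi\colon Z\simeq Z'$, the pullback $\phi^*$ need not map $\{h_1',h_2',h_3'\}$ onto $\{h_1,h_2,h_3\}$. Take $Z'=Z$ and let $\phi=\iota_3$ be the covering involution of the double cover $(p_1,p_2)|_Z\colon Z\to\bP^1\times\bP^1$. Then $\iota_3^*$ fixes $h_1,h_2$ and sends $h_3$ to $2h_1+2h_2-h_3$. This class is primitive, isotropic, nef and congruent to $h_3$ modulo $2\Lto$, but it is not one of the $h_i$. This isometry preserves the positive cone and acts as $-\id$ on $A_{\Lto}$: it fixes $A_{\Lto}[2]=\frac12\Lto/\Lto$ pointwise and sends $\frac14(-h_1+h_2+h_3)$ to its negative. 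Consequently:
\begin{itemize}
\item The subgroup of $O(\Lto)$ that preserves the positive cone and acts by $\pm\id$ on $A_{\Lto}$ is infinite and does not stabilise $\{h_1,h_2,h_3\}$.
\item No argument using congruences, primitivity, isotropy and nefness can force $\sigma(h_i)=h_i$.
\end{itemize}

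Your assertion that $\overline{\phi^*}=\pm\id$, once $\NS(Z')$ is identified with $\NS(Z)$ via $h_i'\mapsto h_i$, is also unfounded. Lemma~\ref{lem: extending isometries} only ties $\overline{\phi^*}$ to $\phi^*|_{T(Z')}$. For example, let $Z'$ be the image of $Z$ under the automorphism of $Y$ swapping the first two factors. Automorphisms of $Z$ act as $\pm\id$ on $A_{\NS(Z)}$ by Lemma~\ref{lem: odd picard rank few hodge isometries}. Hence for \emph{every} isomorphism $\chi\colon Z\simeq Z'$, composing $\chi^*$ with your marking swaps $\frac12h_1$ and $\frac12h_2$ in $A_{\NS(Z)}$, so it is never $\pm\id$.

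\textbf{The missing idea.} You must replace $\phi$ by $\phi\circ\psi$ for a suitable $\psi\in\Aut(Z)$. The discriminant group should be used to \emph{construct} this automorphism, not to constrain $\phi^*$. This is what the paper does in Lemmas~\ref{lem: three elliptic fibrations} and~\ref{lem: isomorphism action on NS 3-1}.

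The classes $\phi^*h_i'$ form a basis of $\NS(Z)$ with the Gram matrix of $\Lto$. So the elements $\frac12\phi^*h_i'$ generate three distinct isotropic subgroups of order $2$ in $A_{\NS(Z)}$. By Lemma~\ref{lem: isotropic subgroups of Ato}, these are $\langle\frac12h_{\pi(i)}\rangle$ for some permutation $\pi$.

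Let $\sigma$ be the isometry of $\NS(Z)$ with $\sigma(\phi^*h_i')=h_{\pi(i)}$. It fixes $A_{\NS(Z)}[2]$ pointwise. Writing $y=\frac14(h_1+h_2-h_3)$, the difference $\overline{\sigma}(y)-y$ lies in $A_{\NS(Z)}[2]\cap A_{\NS(Z)}[2]^\perp=\{0,2y\}$, so $\overline{\sigma}=\pm\id$.

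Moreover, $\sigma$ maps the ample class $\phi^*(h_1'+h_2'+h_3')$ to the ample class $h_1+h_2+h_3$. Glue $\sigma$ with $\pm\id_{T(Z)}$ via Lemma~\ref{lem: extending isometries} and apply Theorem~\ref{thm: torelli theorem}. This yields $\psi\in\Aut(Z)$ with $\psi^*|_{\NS(Z)}=\sigma$, that is, $(\phi\circ\psi)^*h_i'=h_{\pi(i)}$. Your Step~3, applied to $\phi\circ\psi$ instead of $\phi$, then finishes the proof as in the paper.
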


\begin{definition}\label{def: elliptic fibrations} \leavevmode
    \begin{enumerate}
        \item An \textit{elliptic fibration} of a K3 surface $Z$ is a surjective morphism $f\colon Z\to \bP^1$.
        \item Let $f\colon Z\to \bP^1$ be an elliptic fibration, and let $x\in \bP^1$ be a closed point. Then the class $[Z_x] \in \NS(Z)$ is called the \textit{fibre class} of $f$. The fibre class is independent of the chosen point $x$.
        \item For an elliptic fibration $f\colon Z\to \bP^1$ with fibre class $F\in \NS(Z)$, a curve $C\subset Z$ with the property that $C\cdot F > 0$ is called a \textit{multisection} of $f$. 
        \item The \textit{multisection index} $t\geq 1$ of an elliptic fibration $f\colon Z\to \bP^1$ is the minimal degree of a multisection. If $F\in \NS(Z)$ is the fibre class of $f$, then $t$ is the divisibility of $F$ in $\NS(Z)$ (see \eqref{eq: divisibility}): 
        \[
        t = \mathrm{div}(F).
        \]
        \item Two elliptic fibrations $f\colon Z\to \bP^1$, $f'\colon Z'\to \bP^1$ are said to be \textit{isomorphic} if there is a commutative square as below, where the horizontal maps are isomorphisms
        \begin{equation*}
            \xymatrix{
            Z \ar[r]^\simeq \ar[d]_f & Z' \ar[d]^{f'} \\
            \bP^1 \ar[r]^\simeq & \bP^1.
            }
        \end{equation*}
        Equivalently, there is an isomorphism $\phi\colon Z\simeq Z'$ satisfying $\phi^*F' = F$, where $F$ and $F'$ are the fibre classes of $f$ and $f'$, respectively.
    \end{enumerate}    
\end{definition}

Recall that a Fano threefold in Family 3-1 is a double cover of $Y = \bP^1\times \bP^1 \times \bP^1$ branched in an anticanonical K3 surface $Z\subset Y$. For such a K3 surface, the three projections $Z\to \bP^1$ are elliptic fibrations of multisection index $2$ with fibre classes given by $h_1,h_2$ and $h_3$, respectively. Our first goal is to show that $Z$ admits at most three elliptic fibrations up to isomorphism, provided $\NS(Z) \simeq \Lts,$ see Lemma \ref{lem: three elliptic fibrations} below.

\begin{lemma}\label{lem: three isotropic classes up to isometries}
    Let $F\in \Lto$ be a primitive isotropic class.
    Then there exist primitive isotropic classes $F', F''$ such that $F,F',F''$ is a basis of $\NS(Z)$ whose Gram matrix is 
    \[
    \begin{pmatrix}
        0 & 2 & 2 \\ 2 & 0 & 2 \\ 2 & 2 & 0
    \end{pmatrix}
    \]
    In particular, for each $1\leq i \leq 3$, there exists an isometry $\sigma\in O(\Lto)$ such that $\sigma(F) = h_i$.
\end{lemma}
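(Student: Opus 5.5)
Let $F = a h_1 + b h_2 + c h_3$ be primitive and isotropic in $\Lto$. The plan is to classify such classes by hand and then write down the two complementary classes explicitly. The isotropy condition reads
\[
F^2 = 4(ab+bc+ca) = 0,
\]
so $ab+bc+ca=0$. I will use the standard parametrization of primitive integer solutions of this quadric: the conic $ab+bc+ca=0$ has the rational points $h_1,h_2,h_3$, so projecting from one of them gives a rational parametrization. Concretely, writing $c(a+b) = -ab$, the primitive solutions are, up to a common sign,
\[
(a,b,c) = (m(m+n),\, n(m+n),\, -mn),\qquad \gcd(m,n)=1 .
\]
In fact it is more convenient to avoid the parametrization and argue by descent. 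Set $s = a+b+c$. Then $s^2 = a^2+b^2+c^2$. I will look for an isometry of $\Lto$ that reduces $|a|+|b|+|c|$ whenever $F$ is not $\pm h_i$.

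The natural candidates are the reflections in the vectors $h_i - h_j$, which have square $-4$ and divisibility $2$. For example, $h_1-h_2$ pairs with $h_1,h_2,h_3$ to give $-2,2,0$, so it has divisibility $2$. Hence
\[
x\mapsto x - 2\frac{x\cdot(h_i-h_j)}{(h_i-h_j)^2}(h_i-h_j)
\]
is integral, but it only swaps $h_i$ and $h_j$. More useful are the transformations $x \mapsto -x + (x\cdot w)\,w'$ built from the isotropic basis. The descent step I would try first is the one analogous to Vieta jumping on $ab+bc+ca=0$: the map $(a,b,c)\mapsto (a,b,c')$ with $c' = -c - \dots$. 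I will determine a concrete integral isometry of $\Lto$ realizing such a jump, and show it strictly decreases a height unless $F\in\{\pm h_1,\pm h_2,\pm h_3\}$. This descent is the main obstacle, since one must check both that the map lies in $O(\Lto)$ and that the height decreases. The fallback is to use Lemma \ref{lem: overlattice subgroup correspondence}/\ref{lem: extending isometries} style discriminant arguments: $A_{\Lto}$ is computed from the Gram determinant $16$, and primitive isotropic vectors of divisibility $2$ should form one orbit.

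Once some $\sigma_0\in O(\Lto)$ and a sign give $\sigma_0(F)=\pm h_1$, I set $F' = \pm\sigma_0^{-1}(h_2)$ and $F'' = \pm\sigma_0^{-1}(h_3)$ with the same sign. These are primitive isotropic classes, and together with $F$ they form a basis with the stated Gram matrix, because $\sigma_0$ is an isometry and the sign flip preserves all pairings. Finally, for each $i$, composing with the permutation isometry sending $h_1\mapsto h_i$ gives the required $\sigma$ with $\sigma(F)=h_i$. This permutation is an isometry because the Gram matrix is symmetric under permutations.
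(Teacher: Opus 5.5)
\textbf{There is a genuine gap.} The whole content of the lemma is the existence of $\sigma_0\in O(\Lto)$ with $\sigma_0(F)=\pm h_1$, or equivalently completing $F$ to a basis with the given Gram matrix. You never construct $\sigma_0$. Your last paragraph is a correct but tautological repackaging of that existence statement.

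The descent you announce cannot work in the form you describe. Since $ab+bc+ca=ab+c(a+b)$ is \emph{linear} in $c$, for fixed $(a,b)$ with $a+b\neq 0$ there is no second root to jump to. So no isometry acts by $(a,b,c)\mapsto(a,b,c')$ with $c'\neq c$. A genuine move has to change two coordinates at once. An example is the reflection in $h_1+h_2-h_3$ (square $-4$, divisibility $4$), which acts by $(a,b,c)\mapsto(a+2c,\,b+2c,\,-c)$. Combined with permutations, this can be made to lower $|a|+|b|+|c|$, but that verification is exactly the part you left out.

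The fallback is circular. Neither Lemma~\ref{lem: extending isometries} nor Lemma~\ref{lem: overlattice subgroup correspondence} says anything about $O(\Lto)$-orbits of isotropic vectors. Eichler-type transitivity would need $U\oplus U\subset\Lto$, which is impossible in rank $3$.

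\textbf{The missing idea,} which is how the paper argues, is to build the basis directly from $F$.
\begin{itemize}
\item Write $F=ah_1+bh_2+ch_3$. Isotropy ($ab+bc+ca=0$) forces $a,b,c$ not all odd. Hence $\gcd(b+c,a+c,a+b)=1$ and $\mathrm{div}(F)=2$. Isotropy genuinely matters here: $h_1+h_2+h_3$ has divisibility $4$.
\item Pick $D'$ with $F\cdot D'=2$. Every square in $\Lto$ lies in $4\bZ$, so $D'^2=4n$, and $F'\coloneqq D'-nF$ is isotropic with $F\cdot F'=2$.
\item Since all pairings in $\Lto$ are even, $\langle F,F'\rangle$ is primitive. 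It therefore extends to a basis $F,F',D''$ with $F\cdot D''=2\alpha$, $F'\cdot D''=2\beta$, $D''^2=4\gamma$.
\item Comparing the Gram determinant with $\det\Lto=16$ gives $\gamma=\alpha\beta-1$. Then $F''\coloneqq(1-\beta)F+(1-\alpha)F'+D''$ satisfies $F\cdot F''=F'\cdot F''=2$ and $F''^2=0$.
\end{itemize}
Equivalently, $\Lto\simeq L'(2)$ with $L'\simeq U\oplus\langle -2\rangle$. A primitive isotropic vector of $L'$ has divisibility $1$, so a hyperbolic plane containing it splits off. Once you have this, your final paragraph finishes the proof.
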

\begin{proof}
    Firstly we note that $F$ has divisibility $2$ (indeed, every primitive class in $\Lto$ has divisibility $2$). Therefore, we may find a primitive class $D'$ such that $F\cdot D' = 2$. Note that $D'^2 = 4n$ for some $n\in \bZ$, thus $(D' - nF)^2 = 0$. Thus, denoting $F'\coloneqq D'-nF$, we now have a primitive sublattice $\langle F, F'\rangle \subset \Lto$ with Gram matrix 
    \[
    \begin{pmatrix}
        0 & 2 \\ 2 & 0
    \end{pmatrix}.
    \]
    We may now choose a second primitive class $D''\in \Lto$ such that $F,F',D''$ is a $\bZ$-basis for $\Lto$. Its Gram matrix is given by 
    \[ M\coloneqq 
    \begin{pmatrix}
        0 & 2 & 2a \\
        2 & 0 & 2b \\
        2a & 2b & 4c
    \end{pmatrix}
    \]
    for some $a,b,c\in \bZ$ such that $\det M = 16ab - 16c = 16$, hence $c = ab-1$. Now note that 
    \[
    F'' \coloneqq (1-a)F + (1-b)F' + D''
    \]
    satisfies $F\cdot F'' = F'\cdot F'' = 2$ and $F''^2 = 0$, and $F,F',F''$ is a $\bZ$-basis of $\Lto$. Therefore, the group automorphism $\sigma \in \Aut(\Lto)$ defined by $\sigma(F) = h_1$, $\sigma(F') = h_2$ and $\sigma(F'') = h_3$ is an isometry.
    This finishes the proof, since there is a natural action of the symmetric group $S_3$ on $\Lto$ by interchanging the basis vectors of $\Lto$.
\end{proof}

Next,we prove a technical lemma about isotropic subgroups of the discriminant group $\Ato \coloneqq A_{\Lto}$. Note that the order of the discriminant group is $|\Ato| = \det(\Lto) = 16$. This lemma will also be relevant in Section \ref{sec: FM partners of the branch}, where we study Fourier--Mukai partners of the branch divisors.
\begin{lemma} \label{lem: isotropic subgroups of Ato}
    The discriminant group $\Ato$ contains precisely $3$ isotropic subgroups of order $2$, and does not contain any isotropic subgroups of order $4$. The three isotropic subgroups of order $2$ are those generated by $\frac{1}{2}h_1,$ $\frac{1}{2}h_2,$ and $\frac{1}{2}h_3$, respectively.
\end{lemma}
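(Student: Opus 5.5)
The plan is to compute $\Ato$ and its discriminant form explicitly, then classify isotropic subgroups by hand.

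\emph{Step 1: the $2$-torsion of $\Ato$.} Write the Gram matrix of $\Lto$ as $G = 2M$, where $M$ has zeros on the diagonal and ones off it, so $\det M = 2$ and $\det G = 16$. Since $h_i\cdot h_j\in\{0,2\}$, every element $\frac12 v$ with $v\in\Lto$ pairs integrally with $\Lto$, so $\frac12\Lto\subset \Lto^*$. Conversely, any $x\in \Lto^*$ with $2x\in\Lto$ lies in $\frac12\Lto$. Hence the $2$-torsion subgroup of $\Ato$ is
\[
\tfrac12\Lto/\Lto\simeq(\bZ/2\bZ)^3 ,
\]
consisting of the seven nonzero classes $\frac12(ah_1+bh_2+ch_3)$ with $a,b,c\in\{0,1\}$ not all zero. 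Using $\det G=16$, a short computation with the dual basis $G^{-1}$ shows $\Ato\simeq \bZ/2\oplus\bZ/2\oplus\bZ/4$. This structure is not strictly needed below, but it serves as a check.

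\emph{Step 2: isotropic elements of order $2$.} For $v=ah_1+bh_2+ch_3$ we have $v^2=4(ab+bc+ca)$, so
\[
q\left(\tfrac12 v\right)\equiv ab+bc+ca \pmod{2\bZ}.
\]
This is well defined, since changing $a$, $b$ or $c$ by $2$ changes the value by an even integer. Among the seven classes, this is $0$ exactly when one of $a,b,c$ equals $1$ and the others vanish. The value is $1$ when two of them equal $1$, and $3\equiv 1$ when all three do. So the isotropic subgroups of order $2$ are precisely $\langle\frac12h_i\rangle$ for $i=1,2,3$.

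\emph{Step 3: no isotropic subgroups of order $4$.} An isotropic subgroup of order $4$ is either $\bZ/2\oplus\bZ/2$ or cyclic $\bZ/4$. In the first case all three nonzero elements are isotropic of order $2$, so by Step 2 they are among the $\frac12h_i$. But then the subgroup would contain $\frac12(h_i+h_j)$, which has $q=1$; contradiction. In the cyclic case, a generator $x$ satisfies $2x$ isotropic of order $2$, so $2x\equiv\frac12h_i$ for some $i$; by the $S_3$-symmetry, take $i=1$. Then $x=\frac14h_1+\frac12w$ with $w=ah_1+bh_2+ch_3\in\Lto$, and
\[
x\cdot h_2=\tfrac12+(a+c)\notin\bZ ,
\]
so $x\notin\Lto^*$; contradiction. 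Hence no element of $\Ato$ doubles to an isotropic element, and there are no isotropic subgroups of order $4$.

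The only point requiring care is Step 3's cyclic case: I must make sure every preimage of $\frac12h_i$ under doubling is captured by the parametrisation $\frac14h_i+\frac12\Lto$, which holds because $2x\equiv\frac12 h_i$ forces $x - \frac14 h_i\in\frac12\Lto$. Everything else is routine modular arithmetic.
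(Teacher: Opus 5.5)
Your proof is correct and follows the same route as the paper: an explicit computation of the discriminant form of $\Lto$, then a direct enumeration of the order-$2$ and order-$4$ subgroups. You also carry out the checks the paper leaves as ``straightforward'', namely $q(\tfrac12 v)\equiv ab+bc+ca$ and the non-integrality of $x\cdot h_2$ in the cyclic case, and your count of seven order-$2$ subgroups (from $\tfrac12\Lto/\Lto\simeq(\bZ/2\bZ)^3$) is the right one, whereas the paper's text says six, a harmless slip.
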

\begin{proof}
    The discriminant group $\Ato$ is generated by the elements
    \[
    \begin{array}{ll}
         h_1^* &= \frac{1}{4}(-h_1 + h_2 + h_3)  \\
         h_2^* &= \frac{1}{4}(h_1 - h_2 + h_3)  \\
         h_3^* &= \frac{1}{4}(h_1+ h_2 - h_3).
    \end{array}
    \]
    However, these three elements are not independent in $\Ato$, and a more convenient generating set of $\Ato$ is
    \[
    \frac{1}{2}h_1, \qquad \frac{1}{2}h_2, \qquad \frac{1}{4}(h_1 + h_2 - h_3).
    \]
    Indeed, we have 
    \begin{equation} \label{eq: Ato generators}
    \Ato = \left\langle \frac{1}{2}h_1 \right\rangle \oplus \left\langle \frac{1}{2}h_2 \right\rangle \oplus \left\langle \frac{1}{4}(h_1 + h_2 - h_3) \right\rangle \simeq \bZ/2\bZ \oplus \bZ/2\bZ \oplus \bZ/4\bZ.
    \end{equation}
    Since $h_1$, $h_2$ and $h_3$ are isotropic, the subgroups 
    \begin{equation} \label{eq: Ato isotropic subgroups}
        \left\langle \frac{1}{2}h_1 \right\rangle, \quad \left\langle \frac{1}{2}h_2 \right\rangle, \quad \left\langle \frac{1}{2}h_3 \right\rangle
    \end{equation} are isotropic subgroups of order $2$ in $\Ato$. Moreover, using \eqref{eq: Ato generators}, we see that $\Ato$ has precisely $6$ subgroups of order $2$, and it is not difficult to show that precisely three of them are isotropic, namely those appearing in \eqref{eq: Ato isotropic subgroups}. Finally, a straightforward computation shows that $\Ato$ has no isotropic subgroups of order $4$. 
\end{proof}

We are now ready to prove that a K3 surface $Z$ with $\NS(Z)\simeq \Lto$ has at most three elliptic fibrations up to isomorphism.

\begin{lemma}\label{lem: three elliptic fibrations}
    Let $Z$ be a K3 surface with $\NS(Z) \simeq \Lto$. Suppose $Z\to \bP^1$ is an elliptic fibration with fibre class $F \in \NS(Z)$. 
    Then there is an isomorphism $\phi\colon Z\simeq Z$ such that $\phi^*F = h_i$ for some $1\leq i \leq 3$. 
\end{lemma}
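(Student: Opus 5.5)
The plan follows the proof of Lemma \ref{lem: two sextic structures}: translate the problem into lattice theory on $\NS(Z)\simeq\Lto$, produce an isometry moving $F$ to some $h_i$ that acts on $A_{\NS(Z)}$ by $\pm\id$, extend it to a Hodge isometry, and conclude with the Torelli Theorem \ref{thm: torelli theorem}.

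\textbf{Step 1: choose the target $h_i$.} The fibre class $F$ is primitive and isotropic; primitivity holds because $F$ is the class of a fibre of a fibration with connected fibres. By Lemma \ref{lem: isotropic subgroups of Ato}, $\frac12 F$ has square $0$ in $\bQ/2\bZ$, so it generates an isotropic subgroup of $\Ato$. It is nonzero, since $F\notin 2\Lto$ by primitivity. Hence it has order $2$, and by Lemma \ref{lem: isotropic subgroups of Ato} we get $\frac12 F=\frac12 h_i$ in $\Ato$ for some $i$.

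\textbf{Step 2: build the isometry.} By Lemma \ref{lem: three isotropic classes up to isometries}, there are primitive isotropic $F',F''$ such that $F,F',F''$ is a basis with the standard Gram matrix. Again by Lemma \ref{lem: isotropic subgroups of Ato}, the classes $\frac12F'$ and $\frac12F''$ are also among the $\frac12h_j$. They are distinct from $\frac12 F$ and from each other, because $F,F',F''$ is a basis and so their halves are distinct in $\Ato$. After permuting, which uses the $S_3$-symmetry, define $\sigma$ by $F\mapsto h_i$, $F'\mapsto h_j$, $F''\mapsto h_k$, matching halves in $\Ato$. Then $\overline\sigma$ fixes $\frac12h_1,\frac12h_2,\frac12h_3$, using the decomposition \eqref{eq: Ato generators}. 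The remaining generator $g=\frac14(h_1+h_2-h_3)$ has order $4$, and $\overline\sigma(g)$ must be an element of order $4$ with $2\overline\sigma(g)=\overline\sigma(2g)=2g$ and the same $q$-value. I would check directly that the only such elements are $\pm g$, or possibly $\pm g$ plus an order-$2$ element, and rule out the latter through the bilinear pairings with the fixed $\frac12h_j$. This gives $\overline\sigma=\pm\id_{\Ato}$.

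\textbf{Step 3: extend and fix the cone.} Since $\pm\id$ on $T(Z)$ are Hodge isometries, Lemma \ref{lem: extending isometries} extends $\sigma$, or the composite of $\sigma$ with $-\id$ on $T(Z)$, to a Hodge isometry $\Psi$ of $H^2(Z,\bZ)$. We need $\Psi$ to preserve ampleness. The issue is that $\Lto$ may contain $(-2)$-classes: for example $(h_1-h_2)^2=-4$, but $(h_1+h_2-h_3)^2=-4$ as well, and in fact every class has square $\equiv 0 \pmod 4$ since all products are even and diagonal entries vanish. So there are no $(-2)$-classes, and the ample cone is one of the two components of the positive cone. Both $F$ and $h_i$ are nef, being fibre classes, so they lie in the closure of the same component. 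Thus $\sigma$ preserves the positive component containing the ample cone, and hence the ample cone itself. If it did not, we would replace $\sigma$ by $-\sigma$, but the nefness of $F$ and $h_i$ rules that out. Theorem \ref{thm: torelli theorem} then gives $\phi$ with $\phi^*=\Psi$, so $\phi^*F=h_i$ after orienting $\phi$ correctly.

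\textbf{Main obstacle.} The step I expect to be hardest is the discriminant-group computation in Step 2, namely showing that an isometry of $\Ato$ fixing the three order-$2$ isotropic elements is $\pm\id$.
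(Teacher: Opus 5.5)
Your proposal is correct and follows essentially the same route as the paper. You complete $F$ to a standard basis via Lemma \ref{lem: three isotropic classes up to isometries}, match $\frac12F,\frac12F',\frac12F''$ with the $\frac12h_j$ via Lemma \ref{lem: isotropic subgroups of Ato}, show $\overline\sigma=\pm\id$, and conclude with Lemma \ref{lem: extending isometries} and Theorem \ref{thm: torelli theorem}. Your pairing computation forcing $\overline\sigma(g)=\pm g$, and your observation that every square in $\Lto$ is divisible by $4$ (so the ample cone is a whole positive-cone component), supply exactly the details the paper summarises as ``one sees'' and ``$\sigma$ preserves the ample cone''.
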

\begin{proof}
    By Lemma \ref{lem: three isotropic classes up to isometries}, we may find isotropic classes $F',F''\in \NS(Z)$ such that $F,F',F''$ generate $\NS(Z)$, and such that their Gram matrix is precisely that of $\Lto$. It follows that $\frac{1}{2}F, \frac{1}{2}F', \frac{1}{2}F''$ generate three different isotropic subgroups of order $2$ in $\Ato$. By Lemma \ref{lem: isotropic subgroups of Ato}, these three subgroups are the subgroups generated by $\frac{1}{2}h_i$ for $1\leq i \leq 3$. We will assume
     \begin{equation*}
        \left\langle \frac{1}{2}F \right\rangle=\left\langle \frac{1}{2}h_1 \right\rangle, \quad \left\langle \frac{1}{2}F' \right\rangle= \left\langle \frac{1}{2}h_2 \right\rangle, \quad \left\langle \frac{1}{2}F'' \right\rangle= \left\langle \frac{1}{2}h_3 \right\rangle
    \end{equation*}
    as the other 5 cases follow by symmetry. Now let $\sigma\in O(\NS(Z))$ be the isometry with $\sigma(F) = h_1$, $\sigma(F')=h_2$ and $\sigma(F'') = h_3$. Then $\sigma$ preserves the ample cone, and using \eqref{eq: Ato generators} one sees that $\overline{\sigma} = \pm \id_{A_{\NS(Z)}}$, hence $\sigma = \phi^*$ for some automorphism $\phi\in \Aut(Z)$, which finishes the proof.
\end{proof}

\begin{remark}
    Similarly to Remark \ref{rmk: at most two, but usually two}, a K3 surface $Z$ with $\NS(Z)\simeq \Lto$ has precisely three elliptic fibrations up to isomorphism if $O_{\mathrm{Hodge}}(T(Z)) \simeq \bZ / 2 \bZ$, but this number may drop for special K3 surfaces which admit more Hodge isometries of $T(Z)$.
\end{remark}

As an immediate consequence of Lemma \ref{lem: three elliptic fibrations}, we obtain the following result.
\begin{lemma}\label{lem: isomorphism action on NS 3-1}
    Let $Z,Z' \in |-K_Y|$ be K3 surfaces with $\NS(Z) \simeq \Lto \simeq \NS(Z')$. Suppose we have $Z\simeq Z'$. Then there exists an isomorphism $\phi\colon Z\simeq Z'$ and a permutation $\sigma\in S_3$ such that $\phi^*(h_i') = h_{\sigma(i)}$ for all $1\leq i \leq 3$.
\end{lemma}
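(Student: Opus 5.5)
Fix an arbitrary isomorphism $\phi_0\colon Z\simeq Z'$. The idea is to correct $\phi_0$ by automorphisms of $Z$, using Lemma \ref{lem: three elliptic fibrations} once for each of the fibre classes $h_i'$, and then use the fact that any isometry of $\Lto$ fixing the classes $\tfrac12 h_i$ in $A_{\Lto}$ acts by $\pm\id$ there.

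\textbf{Step 1.} The class $h_1'$ is the fibre class of the elliptic fibration $Z'\to\bP^1$ given by the first projection. Pulling it back, $\phi_0^*h_1'$ is the fibre class of an elliptic fibration of $Z$. By Lemma \ref{lem: three elliptic fibrations} there is an automorphism $\alpha\in\Aut(Z)$ with $\alpha^*\phi_0^*h_1' = h_{i}$ for some $i$. Replacing $\phi_0$ by $\phi_1\coloneqq\phi_0\circ\alpha$, we get $\phi_1^*h_1'=h_i$.

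\textbf{Step 2.} I would not simply repeat Step 1 for $h_2'$ and $h_3'$, since a further automorphism of $Z$ could move $h_i$ again. Instead, set $F=\phi_1^*h_1'$, $F'=\phi_1^*h_2'$, $F''=\phi_1^*h_3'$. Because $\phi_1^*$ is an isometry, these are isotropic classes forming a basis of $\NS(Z)$ with the Gram matrix of $\Lto$, and each is the fibre class of an elliptic fibration, hence nef.

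As in the proof of Lemma \ref{lem: three elliptic fibrations}, $\tfrac12F,\tfrac12F',\tfrac12F''$ generate three distinct isotropic subgroups of order $2$ in $\Ato$. By Lemma \ref{lem: isotropic subgroups of Ato} these subgroups are, in some order, $\langle\tfrac12 h_1\rangle,\langle\tfrac12 h_2\rangle,\langle\tfrac12 h_3\rangle$. So there is a permutation $\sigma\in S_3$ with $\tfrac12\phi_1^*h_j'\equiv\tfrac12 h_{\sigma(j)}$ in $\Ato$.

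\textbf{Step 3.} Define the isometry $\tau\in O(\NS(Z))$ by $\tau(\phi_1^*h_j')=h_{\sigma(j)}$. This is an isometry because both triples are bases with the same Gram matrix. By construction $\overline\tau$ fixes each $\tfrac12 h_k$. The proof of Lemma \ref{lem: three elliptic fibrations}, via \eqref{eq: Ato generators}, then gives $\overline\tau=\pm\id_{A_{\NS(Z)}}$.

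Next, $\tau$ maps the ample class $\phi_1^*(h_1'+h_2'+h_3')$ to the ample class $h_1+h_2+h_3$; the first is ample as the pullback under an isomorphism of a restriction of an ample class on $Y$. Since $\pm\id_{T(Z)}$ is a Hodge isometry, Lemma \ref{lem: extending isometries} extends $\tau$ to a Hodge isometry of $H^2(Z,\bZ)$ that preserves an ample class. Theorem \ref{thm: torelli theorem} then produces $\beta\in\Aut(Z)$ with $\beta^*=\tau^{\pm1}$ on $\NS(Z)$, with the sign fixed by the variance convention. Setting $\phi=\phi_1\circ\beta$ (or $\phi_1\circ\beta^{-1}$) gives $\phi^*h_j'=h_{\sigma(j)}$ for all $j$.

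The main obstacle is Step 3. Two things need care there: checking that $\overline\tau=\pm\id$ holds whenever $\tau$ fixes the three order-two isotropic classes, since $\Ato$ also contains the order-four generator $\tfrac14(h_1+h_2-h_3)$; and making sure the Torelli isomorphism goes in the correct direction. For the first, I would compute $\overline\tau$ on $\tfrac14(h_1+h_2-h_3)$ directly, using that $\tau$ sends each $h_k$ to $h_k$ modulo $2\NS(Z)$ in the relevant sense. This should leave only the options $\pm$ on the $\bZ/4$ factor.
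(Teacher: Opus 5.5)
Your proposal is correct and follows essentially the paper's route: the paper calls this lemma an immediate consequence of Lemma \ref{lem: three elliptic fibrations}, and your Steps 2--3 are that lemma's argument (identify the $\langle\tfrac12\phi^*h_j'\rangle$ with the $\langle\tfrac12 h_i\rangle$, show $\overline\tau=\pm\id$, extend via Lemma \ref{lem: extending isometries}, apply Theorem \ref{thm: torelli theorem}) run directly on the triple $\phi_0^*h_1',\phi_0^*h_2',\phi_0^*h_3'$, so your Step 1 can be dropped. The check you flag also goes through: $\overline\tau$ fixes $A[2]=\langle\tfrac12 h_1,\tfrac12 h_2,\tfrac12 h_3\rangle$ pointwise, so for $c=\tfrac14(h_1+h_2-h_3)$ the difference $\overline\tau(c)-c$ is $2$-torsion and pairs to zero with every $\tfrac12 h_i$, which forces $\overline\tau(c)-c\in\{0,2c\}$ and hence $\overline\tau=\pm\id$.
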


Let $Z$ be the branch divisor of a $Z$-general Fano threefold $X$ in the family $\mathcal{X}_{3\text{-}1}$. Then the three effective, primitive, isotropic classes $h_i\in \NS(Z)$ induce three elliptic fibrations $f_i\colon Z\to \bP^1$ of multisection index $2$ (see Definition \ref{def: elliptic fibrations}). We note that the inclusion $Z\hookrightarrow \bP^1 \times \bP^1 \times \bP^1$ is simply the product of these three elliptic fibrations. 

\begin{proof}[Proof of Proposition \ref{prop: branch determines cover 3-1}]
    Let $\phi\colon Z\simeq Z'$ be an isomorphism. By Lemma \ref{lem: isomorphism action on NS 3-1}, it follows that there exists a permutation $\sigma\in S_3$ such that $\phi^*(h_i') = h_{\sigma(i)}$ for all $1\leq i \leq 3$. In particular, for each $1\leq i \leq 3$, there is a commutative diagram
    \[
    \xymatrix{
        Z \ar[d]_{f_{\sigma(i)}}\ar[r]^\simeq_\phi & Z' \ar[d]^{f_i'} \\
        \bP^1 \ar[r]^\simeq_{\psi_i} & \bP^1.
    }
    \]
    Denote 
    \[
    \function{\phi_\sigma}{\bP^1 \times \bP^1 \times \bP^1}{\bP^1 \times \bP^1 \times \bP^1}{(x_1,x_2,x_3)}{(x_{\sigma(1)}, x_{\sigma(2)}, x_{\sigma(3)})}
    \]
    and $$\psi \coloneqq (\psi_1\times \psi_2 \times \psi_3) \circ \phi_\sigma.$$
    Then we see that there is a commutative diagram
    \[
    \xymatrix{
    Z \ar[r]^\simeq_\phi \ar@{^(->}[d]_{i_Z} & Z'\ar@{^(->}[d]^{i_{Z'}} \\
    \bP^1 \times \bP^1 \times \bP^1 \ar[r]^\simeq_{\psi} & \bP^1 \times \bP^1 \times \bP^1,
    }
    \]
    which shows that we have an isomorphism $X\simeq X'$.
\end{proof}

\subsection{Verra fourfolds} In this subsection, we prove the following proposition, which was also noted in \cite{Ver}.

\begin{proposition} \label{prop: branch determines cover V4}
    Let $V_4$ and $V_4'$ be Verra fourfolds branched in ordinary Verra threefolds $V_3$ and $V_3'$, respectively. Then, if $V_3$ and $V_3'$ are isomorphic, there is an isomorphism $V_4 \simeq V_4'$.
\end{proposition}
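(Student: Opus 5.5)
The plan is to show that any isomorphism $\phi\colon V_3\simeq V_3'$ is the restriction of an automorphism $\Phi$ of $Y=\bP^2\times\bP^2$ with $\Phi(V_3)=V_3'$. Once we have this, the double covers are identified. Indeed, $V_4$ is the relative spectrum of $\cO_Y\oplus\cO_Y(-1,-1)$, with algebra structure given by a section $s\in H^0(\cO_Y(2,2))$ cutting out $V_3$. Pulling back along $\Phi$ carries $\cO_Y(1,1)$ to $\cO_Y(1,1)$, possibly after swapping the factors. Hence $\Phi^*s'$ is a nonzero multiple of $s$. After rescaling, which is harmless over $\bC$, this gives an isomorphism of the $\cO_Y$-algebras, and therefore $V_4\simeq V_4'$.

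To extend $\phi$, I will use the two conic bundle structures $p_i\colon V_3\to\bP^2$ and work in $\NS(V_3)=\bZ H_1\oplus\bZ H_2$ (at least for smooth $V_3$). The key step is to show that $\phi^*$ permutes $\{H_1',H_2'\}\mapsto\{H_1,H_2\}$. The intersection form from Proposition \ref{prop: branch description V4} helps: a class $aH_1+bH_2$ is nef and non-big exactly when it has cube zero and is nef, i.e.\ it is a nonnegative multiple of $H_1$ or of $H_2$, since $6ab(a+b)=0$ forces $a=0$, $b=0$ or $a=-b$, and the last is not nef. Because $\phi^*$ preserves the nef cone and the primitive generators of its extremal rays, it must send $\{H_1',H_2'\}$ to $\{H_1,H_2\}$. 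More intrinsically, the two extremal rays of the nef cone of $V_3$ are the contractions $p_1$ and $p_2$. So after composing with the swap of the factors of $\bP^2\times\bP^2$, I may assume $\phi^*H_i'=H_i$.

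Next, I need the restriction map $H^0(\bP^2,\cO(1))\to H^0(V_3,H_i)$ to be an isomorphism. This follows from the Koszul sequence $0\to\cO_Y(-1,-2)\to\cO_Y(1,0)\to\cO_{V_3}(H_1)\to0$ and the vanishing of $H^0$ and $H^1$ of $\cO_Y(-1,-2)$ by Künneth. So $p_i$ is the morphism given by the complete linear system $|H_i|$. Then $\phi^*$ identifies $|H_i'|$ with $|H_i|$, which induces linear automorphisms $\psi_i$ of $\bP^2$ with $p_i'\circ\phi=\psi_i\circ p_i$. The embedding $V_3\hookrightarrow Y$ is $(p_1,p_2)$, so $\Phi\coloneqq\psi_1\times\psi_2$ (composed with the swap if needed) restricts to $\phi$. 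This is essentially the same argument as in the proof of Proposition \ref{prop: branch determines cover 3-1}.

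The main obstacle is the step identifying the nef cone and ruling out any $\phi^*$ that mixes the classes. If $V_3$ is only assumed isomorphic and possibly not general, I must make sure $\NS(V_3)$ still has rank $2$ and is generated by $H_1,H_2$. By the remark on standard conic bundles, every smooth $V_3$ has Picard rank $2$. For generation over $\bZ$, I would use Lefschetz for the ample divisor $V_3\subset Y$ to get $\Pic V_3\simeq\Pic Y$. If $V_3$ is singular, I would restrict the statement to the smooth, Fano-general case, which is all that is needed later.
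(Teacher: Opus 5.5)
Your proposal is correct and follows essentially the same route as the paper. Both arguments use the cubic form $6ab(a+b)$ together with nefness to force $\phi^*H_i'\in\{H_1,H_2\}$, then assemble the induced automorphisms of the two $\bP^2$ factors into an automorphism of $\bP^2\times\bP^2$ carrying $V_3$ to $V_3'$. Your extra justifications fill in steps the paper leaves implicit: that $p_i$ is the map given by the complete linear system $|H_i|$ (via the Koszul sequence), and that an automorphism of $Y$ matching the branch loci identifies the double covers.
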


Recall that an ordinary Verra threefold is a bidegree $(2,2)$-divisor $V_3$ in $\bP^2\times \bP^2$. For an ordinary Verra threefold, the two projection maps $p_i\colon V_3 \to \bP^2$ are conic bundles. We now prove Proposition \ref{prop: branch determines cover V4} by showing that these are the only conic bundle structures on $V_3$, up to isomorphism.

\begin{proof}[Proof of Proposition \ref{prop: branch determines cover V4}]
    Suppose that we have an isomorphism $f\colon V_3 \simeq V_3'$. Then $f$ induces an isomorphism $f^*\colon \NS(V_3') \simeq \NS(V_3)$. Since $V_3$ and $V_3'$ are standard by the Lefschetz hyperplane theorem, we have $\NS(V_3) \simeq \bZ H_1 \oplus \bZ H_2$, where $H_i \coloneqq \pi_i^*L$ for a line $L\in \NS(\bP^2)$. 
    By Proposition \ref{prop: branch description V4}, the intersection form on $\NS(V_3)$ is given by:
    \[
    (aH_1+bH_2)^3 = 6ab(a+b).
    \]

   Therefore, the only primitive isotropic classes in $\NS(V_3)$, up to a sign, are $H_1$, $H_2$, and $H_1 - H_2$. However, it is easy to see that $\pm(H_1-H_2)$ is not nef, so $f^*H_1'$ is either $H_1$ or $H_2$ and similarly $f^*H_2'$ is either $H_1$ or $H_2$. 

    Suppose that $f^*H_1' = H_1$ and $f^*H_2' = H_2$. It suffices to prove the claim in this case, since we have an automorphism of $\bP^2 \times \bP^2$ which swaps the two factors.
    
    For each $i = 1,2$ we have a commutative square
    \[
    \xymatrix{
    V_3 \ar[r]_f^\simeq \ar[d]_{p_i} & V_3' \ar[d]^{p_i'} \\
    \bP^2 \ar[r]^\simeq_{\varphi_i} & \bP^2.
    }
    \]
    Putting these two squares together, we obtain a commutative square where the horizontal maps are isomorphisms and the vertical maps are the inclusion maps:
   \begin{equation} \label{eq: commutative square with verra threefolds}
        \xymatrix{
            V_3 \ar[r]_f^\simeq \ar[d]_{(p_1\times p_2)} & V_3' \ar[d]^{(p_1'\times p_2')} \\
            \bP^2 \times \bP^2 \ar[r]^\simeq _{(\varphi_1\times \varphi_2)} & \bP^2\times \bP^2.
        }
    \end{equation}
    This shows that $V_4 \simeq V_4'$. 
\end{proof}

\section{Equivalences of the branch loci} \label{sec: FM partners of the branch}
The main goal of this section is to prove the following result.

\begin{theorem} \label{thm: the k3s have no FM partners}
    Let $X$ be a $Z$-general Fano variety in one of the families $\mathcal{X}_{2\text{-}6(b)}, \mathcal{X}_{2\text{-}8}$, and $\mathcal{X}_{3\text{-}1}$. Let $Z$ be the associated K3 surface of $X$. Then $Z$ has no non-trivial Fourier--Mukai partners. That is, for any K3 surface $Z'$, we have 
    \[
    \Db(Z) \simeq \Db(Z') \implies Z\simeq Z'.
    \]
\end{theorem}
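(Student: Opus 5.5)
The plan is to apply the Counting Formula, Theorem \ref{thm: counting formula}, to $N_0 \coloneqq \NS(Z)$, which by $Z$-generality is $\Lts$, $\Lte$ or $\Lto$. It suffices to prove two lattice-theoretic facts about $N_0$. First, the genus $\cG(N_0)$ consists of the single class $N_0$. Second, the natural map $O(N_0)\to O(A_{N_0})$ is surjective. Given these, the sum in Theorem \ref{thm: counting formula} has one term, and the double coset space $O(N_0)\backslash O(A_{N_0})/O_{\mathrm{Hodge}}(T(Z))$ is a single point, whatever $O_{\mathrm{Hodge}}(T(Z))$ is. So $|\FM(Z)|=1$. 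This route avoids any very-general assumption on the Hodge isometries of $T(Z)$, which matters because $Z$-general does not imply very general. For $\Lto$ we could in any case use Lemma \ref{lem: odd picard rank few hodge isometries}, since the rank is odd. In the rank $2$ cases, however, we cannot control $O_{\mathrm{Hodge}}(T(Z))$, so surjectivity is the robust statement to aim for.

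\textbf{Genus uniqueness.} For $\Lts$ (discriminant $-12$) and $\Lte$ (discriminant $-8$) I will use the reduction theory of indefinite binary quadratic forms. I list the finitely many reduced even forms $\begin{pmatrix} 2a & b\\ b & 2c\end{pmatrix}$ of the given determinant and group them into proper equivalence cycles. Then I compare discriminant forms, for instance via the values of $q$ on elements of order $2$ as in the proof of Lemma \ref{lem: two sextic structures}, to check that only $N_0$ has $(A_{N_0},q)$. For $\Lto$ I first observe that $h_1,h_2$ span $U(2)$ and $h_1+h_2-h_3$ spans its orthogonal complement with square $-4$. Since the determinants agree ($\pm16$), this gives $\Lto\simeq U(2)\oplus\langle -4\rangle$. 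The lattice is indefinite of rank $3$, so I would invoke Eichler's theorem in the form of the Conway--Sloane or Miranda--Morrison criterion (one class per spinor genus, and spinor genus equals genus at the prime $2$ because of the $U(2)$ summand). Failing that, I would run a direct reduction argument for ternary forms of determinant $16$.

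\textbf{Surjectivity.} For $\Lts$, $A\simeq \bZ/2\oplus\bZ/6$ by \eqref{eq: discriminant group Ats}. I will compute $O(A)$ directly. On the $2$-part it can at most swap $\tfrac12h_1,\tfrac12h_2$, the only elements with $q=\tfrac12$. On the $3$-part it acts as $\pm1$. These elements are realised by the swap $h_1\leftrightarrow h_2$, by $-\id$, and by a reflection if a sign mismatch remains; for example, the reflection in $h_1-h_2$, of square $-4$, is integral because $\mathrm{div}(h_1-h_2)=2$ (it pairs to $-2$ with both $h_1$ and $h_2$). For $\Lte=\langle 4\rangle\oplus\langle -2\rangle$, $A\simeq \bZ/4\oplus\bZ/2$ is small, and the sign changes $h\mapsto -h$, $e\mapsto -e$ should generate $O(A)$; I will verify this by enumeration. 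For $\Lto$, $A\simeq(\bZ/2)^2\oplus\bZ/4$ as in \eqref{eq: Ato generators}. By Lemma \ref{lem: isotropic subgroups of Ato}, $O(A)$ permutes the three isotropic order-$2$ subgroups; the $S_3$-permutations of $h_1,h_2,h_3$ realise all such permutations. It then remains to show that the stabiliser of each $\tfrac12h_i$ is generated by $-\id$ together with images of reflections in suitable $(-4)$-vectors of divisibility $2$, such as $h_1+h_2-h_3$.

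I expect the main obstacle to be this last step: computing $O(A_{\Lto})$ and exhibiting lifts of every element, along with the genus uniqueness for $\Lto$. Rank $3$ with length $3$ falls just outside Nikulin's easy criterion (rank $\ge \ell(A)+2$), so a more delicate $2$-adic argument or an explicit ternary reduction is needed.
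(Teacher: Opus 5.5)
Your proposal is correct. For Families 2-6(b) and 2-8 it is the paper's argument: $\Lts$ and $\Lte$ are unique in their genera by reduction of binary forms, and the double coset in Theorem~\ref{thm: counting formula} is trivial. Your formulation via surjectivity of $O(N_0)\to O(A_{N_0})$ is in fact the accurate one. The group $O(A_{\Lts})$ has order $4$ and contains the class of the swap $h_1\leftrightarrow h_2$, which is not $\pm\id$. So the paper's claim $O(A_{\NS(Z)})=\{\pm\id\}$ holds only for $\Lte$, though the conclusion is unaffected because the swap lifts, exactly as you say.

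For Family 3-1 your route genuinely differs. The paper never computes the genus of $\Lto$ or the group $O(A_{\Lto})$. Instead, following \cite{MS24}, it proceeds in four steps:
\begin{enumerate}
\item it gives a Fourier--Mukai partner $Z'$ an elliptic fibration;
\item it views $T(\Jac^0(Z'))$ as an overlattice of $T(Z)$, determined by an isotropic subgroup of $A_{\NS(Z)}$;
\item it uses Lemma~\ref{lem: isotropic subgroups of Ato} to force this subgroup to have order $2$;
\item it uses $O_{\mathrm{Hodge}}(T(Z))=\{\pm\id\}$ (odd Picard rank) to identify $\Jac^0(Z')$ with the Jacobian of some $f_i$, compatibly with the Brauer classes, and concludes by \cite[Proposition 4.8]{MS24}.
\end{enumerate}
Your approach is uniform and purely lattice-theoretic. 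It needs neither the Hodge-isometry input nor the results on elliptic K3 surfaces and their Jacobians. The paper's approach is more geometric, and it would still work in cases where uniqueness in the genus or surjectivity fails.

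The step you expect to be the main obstacle is in fact short. The discriminant form of $\Lto\simeq U(2)\oplus\langle -4\rangle$ splits off the discriminant form of $U(2)$. This is exactly the case allowed by the refined $p=2$ condition in \cite[Theorem 1.14.2]{Nik80}, which permits $\rk=\ell(A_{2})$ here. That theorem then gives both uniqueness in the genus and surjectivity at once.

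You can also see surjectivity directly. An isometry of $A_{\Lto}$ fixing $\tfrac12h_1$ and $\tfrac12h_2$ fixes $A_{U(2)}$ pointwise. It must therefore send $\tfrac14(h_1+h_2-h_3)$ to an order-$4$ element of the orthogonal complement $A_{\langle-4\rangle}$, hence to $\pm$ itself. So $O(A_{\Lto})\simeq S_3\times\{\pm\id\}$, and every element is realised by a permutation of $h_1,h_2,h_3$ composed with $\pm\id$.

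Your spinor-norm route works as well. The reflections in $h_1+ah_2$ with $a\in\bZ_2^\times$ are $2$-adically integral and have spinor norm $a$. Hence there is a single spinor genus, and Eichler's theorem applies.
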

\begin{proof}
    Combine Proposition \ref{prop: the branch has no partners 2-6 2-8} and Proposition \ref{prop: the branch has no partners 3-1} below.
\end{proof}

Recall that $X$ is said to be \textit{$Z$-general} if the N\'eron--Severi lattice of $Z$ is isometric to $\Lts, \Lte$, or $\Lto$, see Definition \ref{def: Z general}.

\subsection{Families 2-6(b) and 2-8}
In this subsection, we prove Theorem \ref{thm: the k3s have no FM partners} for the families 2-6(b) and 2-8. 
We use the Counting Formula for Fourier--Mukai partners of K3 surfaces of \cite{HLOY}, see Theorem \ref{thm: counting formula}.

We first show that $\Lts$ and $\Lte$ are each unique in their genus. 
This result was also verified using the OSCAR function 
\texttt{genus\_representatives}
developed by Simon Brandhorst and Stevell Muller \cite{OSCAR, OSCAR-book}.

To compute the genera of $\Lts$ and $\Lte$, we use the well-known correspondence between lattices and quadratic forms. Our main reference for binary quadratic forms is \cite[Chapter 15]{CS98}. 
Explicitly, the indefinite even lattice 
\[
L = \begin{pmatrix}
    2a & b \\ b & 2c
\end{pmatrix}
\]
corresponds to the binary quadratic form $f_L\coloneqq ax^2 + bxy + cy^2$. The discriminant of $f_L$ is defined to be $\disc(f_L)\coloneqq b^2 - 4ac$ (note that this coincides with our definition of $\disc(L)$ in this case). The form $f_L$ is said to be a \textit{reduced binary quadratic form} if the following inequalities are satisfied:
\begin{equation}\label{eq: reduced inequalities for quadratic forms}
    0 < b < \sqrt{\disc(f_L)} < \mathrm{min}\left(b + |a|, b + |c|\right).
\end{equation}
It is well-known that every indefinite binary quadratic form is properly equivalent to at least one reduced binary quadratic form, see \cite[\S15.3.3]{CS98}.
Using this fact, we will prove the following result for $\Lte$:

\begin{lemma} \label{lem: Lte is the only one of its discriminant}
    The lattice $\Lte$ is the unique indefinite even lattice of rank $2$ and discriminant $8$. In particular, $\Lte$ is unique in its genus.
\end{lemma}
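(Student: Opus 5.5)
Via the dictionary above, an indefinite even lattice $L$ of rank $2$ with $\disc(L)=8$ corresponds to an indefinite binary quadratic form $f_L = ax^2+bxy+cy^2$ with $b^2-4ac = 8$, and isometry classes of lattices correspond to equivalence classes of forms (proper or improper equivalence, since an isometry may have determinant $-1$). Note that $\Lte$, with Gram matrix $\mathrm{diag}(4,-2)$, has $\det = -8$ and $f_{\Lte} = 2x^2 - y^2$, whose discriminant is $0 - 4\cdot 2\cdot(-1) = 8$. So it suffices to show that every indefinite form of discriminant $8$ is equivalent to $2x^2-y^2$. Since every indefinite form is properly equivalent to a reduced one \cite[\S15.3.3]{CS98}, I would enumerate the reduced forms of discriminant $8$ and check that they are all equivalent to one another.

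The reduced inequalities \eqref{eq: reduced inequalities for quadratic forms} with $\sqrt{8}\approx 2.83$ give $0<b<2.83$. Since $b^2\equiv 8 \pmod 4$, $b$ is even, so $b=2$. Then $4ac = 4-8=-4$, so $ac=-1$, i.e. $(a,c)=(1,-1)$ or $(-1,1)$. The remaining inequality $\sqrt 8 < 2+|a|=3$ holds in both cases. So the reduced forms are exactly $x^2+2xy-y^2$ and $-x^2+2xy+y^2$.

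For the first form, completing the square gives $(x+y)^2-2y^2$. Substituting $x\mapsto x-y$ turns it into $x^2-2y^2$, which is equivalent (swap the variables, then negate via the sign choice) to $-2x^2+y^2$. Its lattice is $\mathrm{diag}(2,-4)$. Here I must be careful about the overall sign: $\mathrm{diag}(2,-4)$ is $\Lte(-1)$, not $\Lte$. So the key check is whether $\langle 2\rangle\oplus\langle -4\rangle \simeq \langle 4\rangle\oplus\langle -2\rangle$. To verify this, I would look for a vector of square $4$ in $\mathrm{diag}(2,-4)$. The vector $(a,b)$ has square $2a^2-4b^2$, and $(2,1)$ gives $8-4=4$. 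Its orthogonal complement is spanned by $v$ with $4a-4b=0$, i.e. $v=(1,1)$, which has square $2-4=-2$. The Gram determinant of the pair is $-8$, matching the discriminant, so $\{(2,1),(1,1)\}$ is a basis and the lattices are isometric. The second reduced form is the negative of the first, and a diagonal form with entries of opposite signs is isometric to its negative by the same computation, so it is also equivalent to $\Lte$.

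Hence every indefinite even rank-$2$ lattice of discriminant $8$ is isometric to $\Lte$. Uniqueness in the genus follows immediately, because any lattice in $\cG(\Lte)$ has the same signature $(1,1)$ (so it is indefinite), is even of rank $2$, and has discriminant group of order $8$. The only delicate point is this sign/orientation bookkeeping between proper equivalence, improper equivalence, and negation; it is dispatched by exhibiting explicit bases as above.
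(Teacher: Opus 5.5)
Your proof is correct and follows essentially the same route as the paper: enumerate the reduced forms of discriminant $8$, force $b=2$ and $ac=-1$, and show the two resulting forms give the same lattice. The paper simply notes that the two reduced forms are properly equivalent, which already suffices because $\Lte$ is itself such a lattice, so your explicit basis computation is a harmless extra check. One caution: your aside that a diagonal lattice with entries of opposite signs is always isometric to its negative is false in general (the lattices $L_1\simeq\langle 2\rangle\oplus\langle -6\rangle$ and $L_2\simeq L_1(-1)$ in the discriminant-$12$ lemma are not isometric), but you never need it, since the negative $\mathrm{diag}(-2,4)$ of $\mathrm{diag}(2,-4)$ becomes $\Lte$ just by swapping the two basis vectors.
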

\begin{proof}
    We use the correspondence between binary quadratic forms and even lattices of rank 2. For the first statement, it suffices to show that there exists a unique reduced binary quadratic form of discriminant $8$, up to equivalence.
    Let $f = ax^2 + bxy + cy^2$ be a reduced binary quadratic form of discriminant $\disc(f) = 8$. By \eqref{eq: reduced inequalities for quadratic forms}, we have $b = 1$ or $b = 2$. However, since $b^2-4ac = 8$, we have $b^2 \equiv 0 \pmod 4$, thus it follows that $b = 2$. Again using $b^2 -4ac = 8$, we find $ac = -1$, and so $(a,c) = (\pm 1, \mp 1)$. However, the binary quadratic forms $x^2 + 2xy - y^2$ and $-x^2 + 2xy + y^2$ are clearly properly equivalent, so there exists a unique equivalence class of reduced binary quadratic forms of discriminant $8$, as required.

    For the final assertion, suppose $L$ is any lattice in the same genus as $\Lte$. Then $L$ is an even, indefinite lattice of rank $2$ and discriminant $\disc(L) = 8$, so we have $L\simeq \Lte$ by the above discussion.
\end{proof}

For $\Lts$, the situation is slightly more delicate:
\begin{lemma}\label{lem: Lts is the only one of its discriminant}
    There are precisely $2$ even, indefinite lattices of rank $2$ and discriminant $12$, and they are not in the same genus. In particular, $\Lts$ is unique in its genus.
\end{lemma}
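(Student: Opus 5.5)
Following the proof of Lemma \ref{lem: Lte is the only one of its discriminant}, I will pass to binary quadratic forms and enumerate reduced forms of discriminant $12$. The even lattice with Gram matrix $\begin{pmatrix} 2a & b\\ b & 2c\end{pmatrix}$ corresponds to $ax^2+bxy+cy^2$ with $b^2-4ac = 12$. The reduction inequalities \eqref{eq: reduced inequalities for quadratic forms} give $0<b<\sqrt{12}$, so $b\in\{1,2,3\}$. Parity forces $b$ to be even, hence $b=2$ and $ac=-2$.

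The candidates are therefore $(a,c)\in\{(1,-2),(-1,2),(2,-1),(-2,1)\}$, and all of them satisfy $\sqrt{12}<2+\min(|a|,|c|)$. I will then sort these into equivalence classes. The forms $x^2+2xy-2y^2$ and $-2x^2+2xy+y^2$ are equivalent via swapping $x,y$ (with a sign change to keep the equivalence proper, or simply up to improper equivalence, which is what lattice isometry requires). Likewise $-x^2+2xy+2y^2$ and $2x^2+2xy-y^2$ are equivalent.

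This leaves at most two classes, represented by $L_+=\begin{pmatrix}2&2\\2&-4\end{pmatrix}$ and $L_-=L_+(-1)=\begin{pmatrix}-2&2\\2&4\end{pmatrix}$. To identify $\Lts$ among them, I will use the basis change $h_1,\ h_2-2h_1$. This gives the Gram matrix $\begin{pmatrix}2&0\\0&-6\end{pmatrix}$ on a sublattice of index $1$, so $\Lts\simeq\langle 2\rangle\oplus\langle -6\rangle$. Similarly $L_+\simeq\langle 2\rangle\oplus\langle-6\rangle$ by the change $e_1, e_2-e_1$, and $L_-\simeq\langle-2\rangle\oplus\langle 6\rangle$.

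The main step is to show that $L_+$ and $L_-$ are not in the same genus, which also shows they are non-isometric. Both have discriminant group $\bZ/2\oplus\bZ/6\simeq \bZ/2\oplus\bZ/2\oplus\bZ/3$, so I will compare the discriminant forms on the $3$-parts.
\begin{itemize}
\item For $L_+=\langle 2\rangle\oplus\langle-6\rangle$, the $3$-part is generated by $\tfrac13 f$ with $f^2=-6$, giving $q=-\tfrac{6}{9}=-\tfrac23\equiv\tfrac43 \pmod{2}$. Its values on the nonzero elements $\pm\tfrac13 f$ are $\tfrac43$.
\item For $L_-=\langle-2\rangle\oplus\langle 6\rangle$, the corresponding value is $\tfrac23$.
\end{itemize}
Since $\tfrac43\neq\tfrac23$ in $\bQ/2\bZ$, the discriminant forms are not isomorphic. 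The $2$-parts could be checked as well, but they are not needed.

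It follows that the two lattices are not isometric and lie in different genera. Any lattice in the genus of $\Lts$ is even, indefinite, of rank $2$ and discriminant $12$, so it is $L_+$ or $L_-$. The discriminant-form mismatch rules out $L_-$, so the lattice is $L_+\simeq\Lts$.
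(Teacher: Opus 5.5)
Your proof follows the paper's argument. You use $0<b<\sqrt{12}$ and parity to force $b=2$ and $ac=-2$, which leaves the two lattices $\begin{pmatrix}2&2\\2&-4\end{pmatrix}$ and $\begin{pmatrix}-2&2\\2&4\end{pmatrix}$. You then separate them by the $3$-primary discriminant forms, $\tfrac43$ versus $\tfrac23$, exactly as the paper does. The only difference is cosmetic: you identify $\Lts$ with the first lattice by diagonalising to $\langle 2\rangle\oplus\langle -6\rangle$, where the paper uses the basis $v,\,w-v$.

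There is one slip, which is harmless. You claim the four candidates satisfy $\sqrt{12}<2+\min(|a|,|c|)$, but this is false, since $2+1=3<\sqrt{12}$. The paper's displayed reduction inequality drops a factor of $2$ for forms $ax^2+bxy+cy^2$. The correct Gauss condition is $\sqrt{D}<b+2\min(|a|,|c|)$, and all four candidates do satisfy it. Your proof never relies on this check, because the upper bound on the number of classes uses only the necessary condition $0<b<\sqrt{12}$. The lower bound comes from the discriminant-form computation.
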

\begin{proof}
    We use the same strategy as in the proof of Lemma \ref{lem: Lte is the only one of its discriminant}. If $f = ax^2 + bxy + cy^2$ is a reduced binary quadratic form of discriminant $12$, then \eqref{eq: reduced inequalities for quadratic forms} implies that $b = 1$, $b=2$, or $b=3$. Once again we can reduce the equality $b^2 - 4ac = 12$ modulo $4$ to rule out $b=1$ and $b=3$. Thus, we have $b = 2$ and $ac = -2$. This leads to two possibly inequivalent reduced binary quadratic forms, whose lattices are given by 
    \[
    L_1 \coloneqq \begin{pmatrix}
        2 & 2 \\ 2 & -4
    \end{pmatrix},
    \qquad L_2 \coloneqq \begin{pmatrix}
        -2 & 2 \\ 2 & 4
    \end{pmatrix}.
    \]
    We claim that $L_1 \not \simeq L_2$. In fact, we will show that $L_1$ and $L_2$ are not in the same genus, because they do not have isometric discriminant groups. 
    Firstly, note that $A_{L_1}$ and $A_{L_2}$ are isomorphic as groups, as they are each isomorphic to the group $\bZ/2\bZ \oplus \bZ/6\bZ$. A straightforward but technical check shows that the $3$-primary part $A_{L_1}^{(3)}$ of $A_{L_1}$ is a group of order $3$ for which any generator $x\in A_{L_1}^{(3)}$ has $q(x) = \frac{4}{3}\pmod{2\bZ}$, whereas any generator $y\in A_{L_2}^{(3)}$ has $q(y) = \frac{2}{3} \pmod{2\bZ}.$ Thus, we find $A_{L_1}\not \simeq A_{L_2}$, hence $L_1$ and $L_2$ are not in the same genus, and there exist precisely two even, indefinite lattices of rank $2$ and discriminant $12$. 
\end{proof}

\begin{remark}
    One can check that $L_1$ is isometric to $\Lts$. Indeed, if $v,w\in \Lts$ are basis vectors whose Gram matrix is $$\begin{pmatrix}
        2 & 4 \\ 4 & 2
    \end{pmatrix},$$
    then $v, w-v\in \Lts$ are basis vectors whose Gram matrix is 
    $$\begin{pmatrix}
        2 & 2 \\ 2 & -4
    \end{pmatrix}.$$
\end{remark}

We immediately obtain the following.
\begin{proposition} \label{prop:unique_in_genus}
    The lattices $\Lts$ and $\Lte$ are each unique in their respective genera. 
\end{proposition}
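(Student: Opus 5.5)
This follows directly from Lemma \ref{lem: Lte is the only one of its discriminant} and Lemma \ref{lem: Lts is the only one of its discriminant}. The plan is to unwind the definition of the genus (Definition \ref{def: genus}) and reduce each case to the classification of even indefinite binary lattices of a fixed discriminant proved in those lemmas.

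Let $L$ be a lattice in $\cG(\Lte)$ or in $\cG(\Lts)$. First record the invariants that genus membership forces.
\begin{enumerate}
\item $L$ has rank $2$ and the same signature as the model lattice. Both $\Lte$ and $\Lts$ have signature $(1,1)$, since their Gram determinants are $-8$ and $-12$. Hence $L$ is indefinite.
\item There is an isometry of discriminant groups $A_L \simeq A_{\Lte}$ (respectively $A_L\simeq A_{\Lts}$). In particular $\disc(L) = |A_L|$ equals $8$ (respectively $12$).
\item $L$ is even. This holds under the convention, implicit in Definition \ref{def: genus}, that lattices in a genus carry discriminant quadratic forms valued in $\bQ/2\bZ$, as in the lemmas.
\end{enumerate}

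For $\Lte$, Lemma \ref{lem: Lte is the only one of its discriminant} says $\Lte$ is the unique even indefinite rank-$2$ lattice of discriminant $8$. Therefore $L\simeq\Lte$.

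For $\Lts$, Lemma \ref{lem: Lts is the only one of its discriminant} says there are exactly two even indefinite rank-$2$ lattices of discriminant $12$, namely $L_1$ and $L_2$, and that they lie in different genera. By the remark following that lemma, $L_1\simeq\Lts$. So $L$ is isometric to $L_1$ or $L_2$. Since $L$ lies in the genus of $\Lts = L_1$, and $L_2$ does not, we get $L\simeq L_1\simeq \Lts$.

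I do not expect any real obstacle: all the substance is in the two preceding lemmas. The only point needing care is the bookkeeping in the second case. One must use that the discriminant group isometry class separates $L_1$ from $L_2$, which is the $3$-primary computation in Lemma \ref{lem: Lts is the only one of its discriminant}, and combine it with the identification $L_1\simeq \Lts$.
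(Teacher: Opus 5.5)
Your proof is correct and follows the paper's, which simply combines Lemma~\ref{lem: Lte is the only one of its discriminant} and Lemma~\ref{lem: Lts is the only one of its discriminant}. What you add is explicit bookkeeping: unwinding the genus into signature, discriminant and evenness, and invoking $L_1\simeq\Lts$ from the subsequent remark, which the paper's one-line citation leaves implicit.
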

\begin{proof}
    Combine Lemma \ref{lem: Lte is the only one of its discriminant} and Lemma \ref{lem: Lts is the only one of its discriminant}.
\end{proof}

\begin{proposition} \label{prop: the branch has no partners 2-6 2-8}
    Let $Z$ be a K3 surface for which $\NS(Z)$ is isometric to either $\Lts$ or $\Lte$. Then $Z$ has no non-trivial Fourier--Mukai partners.
\end{proposition}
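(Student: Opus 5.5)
We use the Counting Formula of Theorem \ref{thm: counting formula}. By Proposition \ref{prop:unique_in_genus}, the genus $\cG(\NS(Z))$ consists of the single class $N = \NS(Z)$. The formula therefore reduces to
\[
|\FM(Z)| = |O(N)\backslash O(A_N)/O_{\mathrm{Hodge}}(T(Z))|.
\]
It suffices to show that the natural map $O(N)\to O(A_N)$ is surjective. Then the double coset space is a single point, whatever $O_{\mathrm{Hodge}}(T(Z))$ is, and $|\FM(Z)| = 1$.

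\textbf{The case $N\simeq\Lte$.} Take the basis $h,e$ with $h^2=4$, $e^2=-2$, $h\cdot e=0$. Then $A_N = \langle \tfrac14 h\rangle\oplus\langle\tfrac12 e\rangle \simeq \bZ/4\bZ\oplus\bZ/2\bZ$, with $q(\tfrac14 h)=\tfrac14$ and $q(\tfrac12 e)=-\tfrac12 \pmod{2\bZ}$. We list the elements of order $4$ with $q=\tfrac14$, namely $\pm\tfrac14 h$ and $\pm\tfrac14 h+\tfrac12 e$. We then compute $q$ and the bilinear pairings to determine $O(A_N)$. The element $\tfrac14 h+\tfrac12 e$ has $q=\tfrac14-\tfrac12=-\tfrac14$, so it is excluded, and $O(A_N)$ should be generated by the sign changes of the two summands. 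These sign changes are induced by the isometries $h\mapsto -h$ and $e\mapsto -e$ of $N$. Hence $O(N)\to O(A_N)$ is surjective.

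\textbf{The case $N\simeq\Lts$.} Use \eqref{eq: discriminant group Ats}: $A_N\simeq \bZ/2\bZ\oplus\bZ/6\bZ$. The elements of order $2$ are $\tfrac12 h_1$, $\tfrac12 h_2$ and $\tfrac12(h_1+h_2)$, with $q$-values $\tfrac12$, $\tfrac12$ and $1$. The $3$-primary part has order $3$, so its isometry group is $\{\pm\id\}$. An isometry of $A_N$ preserves the $2$-primary and $3$-primary parts separately. On the $2$-part it must fix $\tfrac12(h_1+h_2)$, so it either fixes or swaps $\tfrac12 h_1$ and $\tfrac12 h_2$. Hence $|O(A_N)|\le 4$, and $O(A_N)$ is generated by $\pm\id$ together with the map swapping $\tfrac12 h_1$ and $\tfrac12 h_2$, which may act nontrivially on the $3$-part. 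Lifts to $O(N)$ are available: $-\id_N$ gives $-\id$, and the swap $\tau\colon h_1\leftrightarrow h_2$ is an isometry of $N$. The composition $-\tau$ also lies in $O(N)$. Their images exhaust the at most four elements of $O(A_N)$, so the map is again surjective.

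The main obstacle is the bookkeeping in the $\Lts$ case. One must check that $\overline\tau$ and $\overline{-\id}$ really give distinct elements, so that all of $O(A_N)$ is covered. If this bookkeeping fails, the fallback is that for very general $Z$ we have $O_{\mathrm{Hodge}}(T(Z))=\{\pm\id\}$, so the right action is already accounted for by $-\id_N$. That fallback does not cover arbitrary $Z$, however, so the surjectivity argument is the version to carry out. It gives the result for every $Z$ with $\NS(Z)\simeq \Lts$ or $\Lte$.
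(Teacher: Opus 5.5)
Your proof is correct and follows the paper's route: uniqueness in the genus reduces the counting formula of Theorem~\ref{thm: counting formula} to a single double coset, which is trivial once $O(N)\to O(A_N)$ is surjective. For $\Lts$ your bookkeeping is in fact more accurate than the paper's, whose proof asserts $O(A_N)=\{\pm\id\}$. In fact $\overline{\tau}$ sends $\tfrac12 h_1$ to $\tfrac12 h_2$ while $\pm\id$ fix both, so $O(A_N)\simeq\bZ/2\bZ\times\bZ/2\bZ$ and your extra lifts $\pm\tau$ are genuinely needed. The distinctness you flag is immediate, since in addition $\overline{g}$ and $\overline{-g}$ differ on the $3$-part.
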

\begin{proof}
    Since $\Lts$ and $\Lte$ are unique in their genera, the Counting Formula of Theorem \ref{thm: counting formula} consists of a single term:
    \begin{equation} \label{eq: double quotient for FM number}
    |\FM(Z)| = \left|O(\NS(Z))\backslash O(A_{\NS(Z)}) / O_{\mathrm{Hodge}}(T(Z))\right|.
    \end{equation}
    With a straightforward computation, one can show that $O(A_{\NS(Z)}) = \left\{\pm \id\right\}$, thus the natural map $O(\NS(Z)) \to O(A_{\NS(Z)})$ is surjective and the double quotient on the right-hand side of \eqref{eq: double quotient for FM number} is trivial. Therefore, we have $|\FM(Z)| = 1$, as required. 
\end{proof}

\subsection{Family 3-1}
Our strategy for proving Theorem \ref{thm: the k3s have no FM partners} for Family 3-1 is inspired by \cite{MS24}. We note that the branch divisor $Z$ is an elliptic K3 surface of Picard rank 3. In that paper, only elliptic K3 surfaces of Picard rank 2 are considered, but we will show that the techniques can be extended to $Z$.

We begin by recalling certain facts from \cite{MS24}. To any elliptic K3 surface $f\colon S\to \bP^1$, we can associate a relative Jacobian which we denote by $\Jac^0(S)\to \bP^1$. 
Then, by \cite[Lemma 2.8]{MS24}, we have a short exact sequence
\begin{equation} \label{eq: transcendental brauer sequence}
     0 \longrightarrow T(S) \longrightarrow T(\Jac^0(S)) \overset{\alpha_{S}}{\longrightarrow} \bZ/t\bZ \longrightarrow 0,
\end{equation}
where $t$ is the multisection index of $f$. Via Lemma \ref{lem: overlattice subgroup correspondence}, the quotient $ T(\Jac^0(S))/T(S)$ can be seen as a subgroup of $A_{\NS(S)}$. Explicitly, this subgroup is cyclic and isotropic, generated by the class $\frac{1}{t}F \in A_{\NS(S)}$, where $F$ is the fibre class of the elliptic fibration $f$. 

Moreover, the map $\alpha_S$ of \eqref{eq: transcendental brauer sequence} is the element of $\Br(\Jac^0(S)) \simeq \Hom(T(\Jac^0(S)),\bQ/\bZ)$ which corresponds to the class of $S$ via the natural isomorphism $\Br(S) \simeq \Sha(\Jac^0(S))$. Here $\Sha(\Jac^0(S))$ denotes the \textit{Tate--\v{S}afarevi\v c group} of $\Jac^0(S)$.

\begin{proposition} \label{prop: the branch has no partners 3-1}
    Let $Z$ be a K3 surface with $\NS(Z)\simeq \Lto$. Then $Z$ has no non-trivial Fourier--Mukai partners.
\end{proposition}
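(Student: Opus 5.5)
Let $Z'$ be any Fourier--Mukai partner of $Z$. Following \cite{MS24}, we will reconstruct $Z'$ from $Z$ through relative Jacobians of elliptic fibrations. By the Derived Torelli Theorem (Theorem \ref{thm: derived torelli theorem}) there is a Hodge isometry $T(Z)\simeq T(Z')$. Since $\NS(Z')$ is in the genus of $\NS(Z)\simeq\Lto$, it has the same discriminant form.

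The first step is to show that $\NS(Z')\simeq\Lto$, either by proving $\Lto$ is unique in its genus or by working with any $N$ in the genus. This should follow from Nikulin's uniqueness criteria, or from a direct check, e.g.\ via OSCAR as for $\Lts$ and $\Lte$. Note that $\Lto=U(2)\oplus\langle -4\rangle$ after the change of basis $h_1,h_2,h_1+h_2-h_3$.

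Granting this, $Z'$ also has an isotropic class $F'$ of divisibility $2$ which we may take nef. Hence $Z'$ carries an elliptic fibration of multisection index $2$, and similarly $Z$ carries the fibration $f_1$ with fibre class $h_1$.

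The second step compares Jacobians. By \eqref{eq: transcendental brauer sequence}, $T(\Jac^0(Z))$ is the overlattice of $T(Z)$ corresponding, under Lemma \ref{lem: overlattice subgroup correspondence} and the isomorphism $A_{\NS(Z)}\simeq A_{T(Z)}(-1)$, to the isotropic subgroup $\langle\frac12 h_1\rangle$. By Lemma \ref{lem: isotropic subgroups of Ato}, the only isotropic subgroups of order $2$ are $\langle\frac12 h_i\rangle$. Let $\psi\colon T(Z)\simeq T(Z')$ be the Hodge isometry. It induces $\bar\psi$ on discriminant groups, which sends the three isotropic order-$2$ subgroups of $A_{T(Z)}$ bijectively to those of $A_{T(Z')}$. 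In particular $\bar\psi$ sends $\langle\frac12 h_1\rangle$ to the subgroup attached to some fibre class $F'_j$ of $Z'$. Then by Lemma \ref{lem: overlattice subgroup correspondence} $\psi$ extends to a Hodge isometry $T(\Jac^0(Z))\simeq T(\Jac^0(Z'))$, with $\Jac^0$ taken for the corresponding fibration. This identifies both Jacobians as K3 surfaces with the same transcendental lattice. Their Néron--Severi lattices contain $U$, since they have a section, so they are unique in their genus and admit no nontrivial partners. Hence $\Jac^0(Z)\simeq \Jac^0(Z')$ by the Torelli theorem.

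The third step, which I expect to be the main obstacle, is showing that $Z$ and $Z'$ correspond to the same element of $\Br(J)[2]\simeq\Sha(J)[2]$ up to $\Aut$ and sign, where $J\coloneqq \Jac^0$. Both $\alpha_Z$ and $\alpha_{Z'}$ are surjections $T(J)\to\bZ/2\bZ$ with kernels $T(Z)$ and $T(Z')$, and these kernels are matched by the Hodge isometry constructed above. So $\alpha_{Z'}$ coincides with $\alpha_Z$ transported along a Hodge isometry of $T(J)$ preserving the kernel. Since an order-$2$ class is determined by its kernel, and $\pm$ does not matter for order $2$, $\alpha_Z$ and $\alpha_{Z'}$ correspond under an isomorphism of Jacobians. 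The delicate points are that this Hodge isometry of $T(J)$ extends to an automorphism of $J$ compatible with the elliptic fibration, and that Ogg--Shafarevich then gives $Z\simeq Z'$ as torsors. Here I would use Lemma \ref{lem: odd picard rank few hodge isometries}: $\rho(J)=3$ is odd, so $O_{\mathrm{Hodge}}(T(J))=\{\pm\id\}$, which forces the identification and avoids extension issues. I would then conclude as in \cite{MS24}, matching this with the counting formula (Theorem \ref{thm: counting formula}) as a sanity check.
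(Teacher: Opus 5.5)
Your endgame matches the paper's: Lemma \ref{lem: odd picard rank few hodge isometries} reduces $\tilde\psi$ to $\pm g^*$, and then the Brauer classes are compared as in \cite{MS24}. However, you run the matching of fibrations in the opposite direction.

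The paper never needs $\NS(Z')$. It takes an arbitrary elliptic fibration on $Z'$ from \cite[Proposition 16]{HT17}. It then reads $T(\Jac^0(Z'))/\phi(T(Z))$ as an isotropic subgroup of $A_{\NS(Z)}\simeq\Ato$. Order $4$ is excluded by Lemma \ref{lem: isotropic subgroups of Ato}. Order $1$ is excluded because a Jacobian $Z'$ would have no partners by \cite[Corollary 2.7]{HLOY}, while $Z$ has no section. So the subgroup is some $\langle\frac12 h_i\rangle$, which corresponds to a fibration of $Z$.

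Starting instead from $f_1$ on $Z$, you need every isotropic order-$2$ subgroup of $A_{\NS(Z')}$ to come from a fibration of $Z'$. That is why you need Step 1, which you only assert. The usual criterion $\rk\geq l(A)+2$ does not apply, since $\rk\Lto=3=l(\Ato)$. What works is the refined statement \cite[Theorem 1.14.2]{Nik80}. It allows $\rk L$ to equal the length of the $2$-primary part of $A_L$, provided the $2$-part of the discriminant form has a summand $u_+^{(2)}(2)$. That holds here because $\Lto\simeq U(2)\oplus\langle-4\rangle$, as you noticed.

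That theorem also gives surjectivity of $O(\Lto)\to O(\Ato)$. You can see this directly: $|O(\Ato)|=12$, and $S_3\times\{\pm\id\}\subset O(\Lto)$ injects into it. Hence Theorem \ref{thm: counting formula} gives $|\FM(Z)|=1$ at once, exactly as in Proposition \ref{prop: the branch has no partners 2-6 2-8}. So once Step 1 is properly justified, Steps 2 and 3 are superfluous.

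If you keep Step 3, its justification is off. Lemma \ref{lem: odd picard rank few hodge isometries} only controls $g^*$ on $T(J)$. It says nothing about whether the isomorphism $J'\simeq J$ supplied by Torelli respects the elliptic fibrations, and that is what you need in order to pass through $\Br(J)\simeq\Sha(J)$. The paper is equally brief here: it asserts that $\tilde\phi$ is, up to sign, the pullback along a group isomorphism, and then cites \cite[Proposition 4.8]{MS24}.

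The missing point is a statement about $\NS$ and can be supplied as follows. First, $\NS(J)\simeq U\oplus\langle-4\rangle\simeq\NS(J')$ and $O(A_{\NS(J)})=\{\pm\id\}$. Hence $\id_U\oplus(\pm\id)$, with the sign chosen suitably, glues with $\tilde\psi$ to a Hodge isometry that sends the fibre class and zero section of $J$ to those of $J'$. Second, no $(-2)$-class is orthogonal to the fibre class, since such a class would have square $-4c^2$. Therefore this isometry maps the ample cone to the ample cone. Torelli then gives a fibration-preserving $g$ with $g^*|_{T(J)}=\tilde\psi$, and Ogg--Shafarevich applies.
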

\begin{proof}
    Suppose $Z'$ is a Fourier--Mukai partner of $Z$. We wish to show that $Z'\simeq Z$. 

    Firstly, note that $Z'$ admits an elliptic fibration $f\colon Z'\to \bP^1$ by \cite[Proposition 16]{HT17}. Denote by $\Jac^0(Z')$ the relative Jacobian of $f$. 
    Fix any Hodge isometry $\phi\colon T(Z)\simeq T(Z')$, which exists due to the derived Torelli theorem, see Theorem \ref{thm: derived torelli theorem}. Then the composition 
    \[
    T(Z) \overset{\phi}{\simeq} T(Z') \hookrightarrow T(\Jac^0(Z'))
    \]
    is an overlattice of $T(Z)$, hence $H\coloneqq T(\Jac^0(Z'))/T(Z)$ naturally defines an isotropic subgroup of $A_{\NS(Z)}$ via Lemma \ref{lem: overlattice subgroup correspondence}. 
    Moreover, we have $$16 = \det(T(Z)) = |H|^2 \det(T(\Jac^0(Z))),$$ thus $H$ has order $1, 2$ or $4$. However, $A_{\NS(Z')}\simeq \Ato$ has no isotropic subgroups of order $4$ by Lemma \ref{lem: isotropic subgroups of Ato}. 
    On the other hand if $H$ is the trivial group, then the multisection index of $f$ is $1$. In this case $Z'$ has no non-trivial Fourier--Mukai partners by \cite[Corollary 2.7]{HLOY}. However, by assumption, $Z$ is a Fourier--Mukai partner of $Z'$ and $Z$ admits no elliptic fibrations of multisection index $1$, a contradiction. We conclude that $H$ must be an isotropic subgroup of order $2$. 
    By Lemma \ref{lem: isotropic subgroups of Ato}, $A_{\NS(Z)}$ has precisely $3$ isotropic subgroups of order $2$, namely those appearing in \eqref{eq: Ato isotropic subgroups}. We assume that $H = \langle \frac{1}{2}h_1\rangle$, as the other two cases follow by symmetry. 
    Then, if we denote by $\Jac^0(Z)$ the relative Jacobian of the elliptic fibration $f_1\colon Z \to \bP^1$ defined by $h_1$, we have a commutative diagram  by Lemma \ref{lem: overlattice subgroup correspondence}:
    \begin{equation} \label{eq: diagram relating brauer classes}
    \xymatrix{
    0 \ar[r]& T(Z) \ar[r] \ar[d]_\simeq^{\phi} & T(\Jac^0(Z)) \ar[r]^{\alpha_Z} \ar[d]_\simeq^{\tilde{\phi}}  & \bZ/2\bZ \ar[r] \ar[d]_\simeq & 0 \\
    0 \ar[r] & T(Z') \ar[r]  & T(\Jac^0(Z')) \ar[r]^{\alpha_{Z'}}  & \bZ/2\bZ \ar[r]  & 0.
    }
    \end{equation}
    Since $Z$ has Picard rank $3$, we have $O_{\mathrm{Hodge}}(T(Z)) = \left\{\pm \id\right\}$ by Lemma \ref{lem: odd picard rank few hodge isometries}. Hence, up to a sign, $\tilde{\phi}$ is the pullback along a group isomorphism $g\colon \Jac^0(Z')\simeq \Jac^0(Z)$. We see from \eqref{eq: diagram relating brauer classes} that the isomorphism $g$ satisfies $g^*\alpha_{Z} = \alpha_{Z'},$ thus we have $Z\simeq Z'$ by \cite[Proposition 4.8]{MS24}.
\end{proof}

\subsection{Verra fourfolds} \label{sec: Verra fourfolds}

Recall that for a Verra fourfold $V_4$, the two quadric surface fibrations $V_4 \to \bP^2$ defined by the projection maps each determine a twisted K3 surface $(S,\alpha)$, where $\alpha\in \Br(S)_2$ is some $2$-torsion Brauer class, see \cite[Lemma 4.2]{Kuzcub}.
This twisted K3 surface is obtained as follows.
By Theorem \ref{thm: Kuz quadric fibrations SOD}, the derived category of $V_4$ has a semiorthogonal decomposition of the form
\[
\Db(V_4) = \langle \Db(\bP^2,\cB_0), \cO(-1,0),\cO,\cO(1,0),\cO,(0,1),\cO(2,1)\rangle.
\]
We call $\cA_{V_4} \coloneqq \Db(\bP^2,\cB_0)$ the Kuznetsov component of $V_4$. There is an equivalence $\Db(S,\alpha) \simeq \Db(\bP^2,\cB_0)$ by \cite[Lemma 4.2]{Kuzcub}. 
In this subsection, we study the twisted Fourier--Mukai partners of the twisted K3 surface $(S,\alpha)$.

We first briefly recall the theory developed in \cite[\S 9.8]{vGe} about $2$-torsion Brauer classes on K3 surfaces.
If $S$ is a K3 surface with $\NS(S) = \langle h \rangle  = \langle 2 \rangle$, then $\phi_h \colon S \to \bP^2$ is a sextic structure on $S$. Letting $C\subset \bP^2$ be the branch sextic, we have a short exact sequence 
\[
0 \to \Br(\bP^2 \setminus C)_2 \overset{\phi_h^*}{\to} \Br(S)_2 \to \bZ/2\bZ \to 0.
\]

\begin{definition}
    A Brauer class $\alpha \in \Br(S)_2$ of order $2$ on a K3 surface $S$ of degree $2$ and Picard rank $\rho = 1$ is said to be of \textit{Verra type} if $\alpha$ is contained in the image of $\phi_h^*$.
\end{definition}

The reason we say that these Brauer classes are of Verra type is the following.
Let $\alpha\in \Br(S)$ be a Brauer class of Verra type, and let $\alpha'\in \Br(\bP^2 \setminus C)$ be the Brauer class which satisfies $\alpha = \phi_h^*\alpha'$. Then $\alpha'$ defines a conic bundle $V_\alpha \to \bP^2$ whose discriminant locus is $C$, and this conic bundle is an ordinary Verra threefold. We call $V_\alpha$ the ordinary Verra threefold \textit{associated with} the twisted K3 surface $(S,\alpha)$.

If $V_4$ is a Verra fourfold, then the two twisted K3 surfaces $(S_1,\alpha_1)$, $(S_2,\alpha_2)$ turn out to be of Verra type. Moreover, we now show that the Verra threefolds associated with them are isomorphic, and in fact isomorphic to the branch locus $V_3$ of the double cover $V_4 \to \bP^2\times \bP^2$:

\begin{lemma}\label{lem: brauer classes match}
    Let  $V_4$ be a Verra fourfold with branch locus $V_3$. Then the quadric surface fibrations $V_4 \to \bP^2$ defined by the projection maps induce two twisted K3 surfaces of Verra type $(S_1,\alpha_1)$, $(S_2,\alpha_2)$, and we have $V_{\alpha_1} \simeq V_3 \simeq V_{\alpha_2}$.
\end{lemma}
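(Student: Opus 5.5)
We argue with the explicit geometry of the double cover $V_4 \to \bP^2\times\bP^2$, by symmetry for $i=1$. Write $V_3 = \{q=0\}$ with $q$ a bidegree $(2,2)$ form. Then $V_4 = \{w^2 = q(x,y)\}$ in the total space of $\cO(1,1)$. Fixing $x\in\bP^2$, the polynomial $q(x,-)$ is a conic $Q_x$ in the second $\bP^2$. The fibre of $\pi_1$ over $x$ is the quadric surface $\{w^2 = q(x,y)\}\subset \bP^3$, which is the double cover of $\bP^2_y$ branched along $Q_x$. This quadric surface is singular exactly when $Q_x$ is singular. Hence the discriminant curve of $\pi_1$ is the sextic $C_1\subset\bP^2$ over which $Q_x$ degenerates, and this is precisely the discriminant curve of the conic bundle $p_1\colon V_3\to\bP^2$. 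So $S_1$ is the double cover of $\bP^2$ branched along $C_1$ (Kuznetsov's construction in \cite[Lemma 4.2]{Kuzcub}). It is a K3 surface of degree $2$, and for very general data it has Picard rank $1$.

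Next we identify $\alpha_1$. By Kuznetsov, $\alpha_1$ is the Brauer class of the even Clifford algebra $\cB_0$, viewed as an Azumaya algebra on $S_1$. Over $x\notin C_1$, the quadric $\{w^2=q(x,y)\}$ has two rulings, indexed by the two points of $S_1$ above $x$. The even Clifford algebra of the quadratic form $w^2 - q(x,y)$ in four variables splits as $C_0 \simeq C(q_x)\otimes (\text{quadratic étale algebra})$, where $C(q_x)$ is the full Clifford algebra of the ternary form $q(x,-)$. Its Brauer class is that of the even Clifford algebra of the ternary form $q(x,-)$, which is the quaternion algebra whose Severi--Brauer variety is the conic $Q_x$.

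Globally over $\bP^2\setminus C_1$, the ternary form $q$ defines the conic bundle $V_3\to\bP^2$. Its even Clifford algebra $\cB_0^{V_3}$ is an Azumaya quaternion algebra on $\bP^2\setminus C_1$ with Brauer class $\alpha_1'$. The decomposition above then gives
\[
\alpha_1 = \phi_h^*\alpha_1'
\]
on $S_1$ minus the ramification curve. Since $\Br(S_1)\to\Br(S_1\setminus R)$ is injective, this equality holds in $\Br(S_1)$. Thus $\alpha_1$ is of Verra type, and the conic bundle defined by $\alpha_1'$ is the Severi--Brauer conic bundle of $\cB_0^{V_3}$, namely $p_1\colon V_3\to\bP^2$. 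This gives $V_{\alpha_1}\simeq V_3$, and likewise $V_{\alpha_2}\simeq V_3$ via $p_2$.

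The main obstacle is making the decomposition of $C_0$ rigorous in families, and checking that the conic bundle attached to $\alpha'$ in \cite[\S 9.8]{vGe} coincides with the Severi--Brauer conic bundle (in particular that it is smooth and independent of choices). We would first argue at the generic point. Over the function field $K=\bC(\bP^2)$, the form $w^2 - q$ gives $C_0\simeq C(q)$ as algebras over its centre $K(\sqrt{-\det q})=\bC(S_1)$, together with the standard identity $C(q)\sim C_0(q)\otimes_K K(\sqrt{\disc})$ in the Brauer group. We would then use purity to extend the identity to all of $S_1$.

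Identifying the conic bundle also requires care. A conic bundle over $\bP^2$ with discriminant $C_1$ and generic Brauer class $\alpha_1'$ is determined only up to birational equivalence. We would rely on the paper's convention that $V_\alpha$ is the standard conic bundle constructed in \cite{vGe}. Since $V_3\to\bP^2$ is itself a standard conic bundle realising $\alpha_1'$ with discriminant exactly $C_1$, we can take $V_3$ as the model, which yields the stated isomorphisms.
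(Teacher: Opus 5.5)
Your proposal is correct and matches the paper's argument. The paper also works at the generic point, using $C_0(\langle 1\rangle\perp q_3)\simeq C(q_3)\simeq C_0(q_3)\otimes_K \bC(S_1)$ and the injectivity of $\Br(S_1)\to\Br(\bC(S_1))$, so no purity is needed; like you, it takes $V_3$ itself as the conic bundle attached to $\alpha_1'$, and your check that the discriminant sextics of $\pi_1$ and $p_1$ coincide makes explicit a point the paper uses tacitly.

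One slip: the isomorphism $C_0\simeq C(q_x)\otimes(\text{quadratic \'etale algebra})$ in your third paragraph is off by a factor of two in dimension. The correct chain, which you do state later, is $C_0(w^2-q)\simeq C(q)\simeq C_0(q)\otimes_K K(\sqrt{\disc})$.
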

\begin{proof}
    We consider only the quadric surface fibration $\pi_1\colon V_4\to \bP^2$, as the situation is completely symmetric. Let $C\subset \bP^2$ be the discriminant locus of $\pi_1$. The conic bundle $\pi_1|_{V_3}\colon V_3 \to \bP^2$ induces an element $\alpha'\in \Br(\bP^2\setminus C)$, which we pull back to $S_1$ via $\phi_h^*$. We must show that we have 
    \[
    \alpha_1 = \phi_h^*\alpha' \in \Br(S).
    \]
    We note that it suffices to work over generic fibres, since there is a natural inclusion $\Br(S) \hookrightarrow \Br(\bC(S))$.
    Let $K = \bC(\bP^2)$, and let $L = \bC(S)$.
    The conic bundle $V_3\to \bP^2$ is given generically by a $3$-dimensional quadratic form $q_3$ over $K$. Its even Clifford algebra $\cB_0(q_3)$ is a quaternion algebra, whose Brauer class is $\alpha'\in \Br(K)$ by construction. 
    For the Verra fourfold, the generic fibre of $V_4\to \bP^2$ is given by the quadratic form $q_4 = \langle 1\rangle \oplus q_3$, and it follows that we have
    $\cB_0(q_4) \simeq \cB(q_3)$, where $\cB(q_3)$ denotes the full Clifford algebra of $q_3$. Since $q_3$ has odd dimension, the centre of $\cB(q_3)$ is $L$, and moreover $\cB(q_3) \simeq \cB_0(q_3)\otimes_K L$ as $L$-algebras, so we have the following equalities in $\Br(L)$:
    \[
    \alpha_1 = [\cB_0(q_4)] = [\cB(q_3)] = [\cB_0(q_3) \otimes_K L] = \phi_h^*[\cB_0(q_3)] = \phi_h^*\alpha',
    \]
    which is what we set out to prove.
\end{proof}

We now count the twisted Fourier--Mukai partners of $(S,\alpha)$ using the counting formula of \cite{Ma}. For a twisted K3 surface $(S,\alpha)$, we write
\[
\FM(S,\alpha) \coloneqq \left\{(S',\alpha') \mid \Db(S,\alpha) \simeq \Db(S',\alpha')\right\}/_\simeq
\]
where two twisted K3 surfaces $(S',\alpha')$ and $(S'',\alpha'')$ are said to be isomorphic if there is an isomorphism $f\colon S'\simeq S''$ such that $f^*\alpha'' = \alpha'$.
\begin{lemma}\label{lem: the twisted k3s have few partners}
    Let $S$ be a K3 surface of degree $2$ and Picard rank $\rho = 1$. Let $\alpha \in \Br(S)_2$ be a Brauer class of Verra type. Then we have $|\FM(S,\alpha)| \leq 2$.
\end{lemma}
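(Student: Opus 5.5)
We will use the counting formula for twisted Fourier--Mukai partners from \cite{Ma}, transported to lattices via van Geemen's description \cite[\S 9]{vGe} of $2$-torsion Brauer classes. A class $\alpha\in\Br(S)_2\simeq \Hom(T(S),\bZ/2\bZ)$ corresponds to a surjection $T(S)\to\bZ/2\bZ$. Its kernel $T_\alpha\coloneqq\ker\alpha\subset T(S)$ is an index-$2$ sublattice carrying the induced Hodge structure. By Ma's results (extending the twisted derived Torelli theorem of Huybrechts--Stellari), twisted partners $(S',\alpha')$ of $(S,\alpha)$ correspond to Hodge isometries $T_\alpha\simeq T_{\alpha'}$. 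Ma's count has the shape
\[
|\FM(S,\alpha)| \;=\; \sum_{(S',\alpha')} \ldots ,
\]
where the sum runs over primitive embeddings of $T_\alpha$ into the twisted Mukai-type lattice (equivalently, isomorphism classes of the orthogonal lattice $N$ together with gluing data), modulo $O(N)$ and $O_{\mathrm{Hodge}}(T_\alpha)$. The plan is to bound the number of terms and the size of each double coset.

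\textbf{Step 1 (the lattice $T_\alpha$).} For $\NS(S)=\langle 2\rangle$ we have $T(S)\simeq \langle -2\rangle\oplus U^{2}\oplus E_8(-1)^{2}$. Van Geemen shows that the $\Br(S)_2$-classes fall into finitely many $O(T(S))$-orbits, distinguished by the discriminant form of $T_\alpha$. The Verra-type classes are exactly those for which $T_\alpha$ has discriminant group of order $8$ with a specific form; I expect $T_\alpha\simeq U\oplus U(2)\oplus\langle -2\rangle\oplus E_8(-1)^2$ or similar. I will recover the precise form from \cite[\S 9.8]{vGe}: the Verra type corresponds to $\alpha$ vanishing on the class $\tfrac12 h$-part, i.e. $\alpha\cdot$ "$B$-field" satisfying $B\cdot h\equiv 0$ and $B^2\equiv 0 \pmod 2$.

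\textbf{Step 2 (the complement and the genus).} Embed $T_\alpha$ primitively into $\widetilde H(S,\bZ)\simeq U^{4}\oplus E_8(-1)^2$ via the $B$-field twist. The orthogonal complement $N$ is an even lattice of signature $(2,1)$ with $A_N\simeq A_{T_\alpha}(-1)$. By Nikulin's criteria (indefinite, rank $\ge 3$, small discriminant length), $N$ is unique in its genus and $O(N)\to O(A_N)$ is surjective, or has image of index at most $2$. Since $S$ is very general of Picard rank $1$, $O_{\mathrm{Hodge}}(T_\alpha)=\{\pm\id\}$, and this acts trivially on the relevant double coset.

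\textbf{Step 3 (the double coset).} It remains to show that $O(N)\backslash O(A_N)$ has at most $2$ elements. This amounts to computing $O(A_N)$ for a discriminant form of order $8$, say $\bZ/2\oplus\bZ/4$ or $(\bZ/2)^3$ with an explicit form, and showing that the image of $O(N)$ has index at most $2$. Equivalently, one shows that the embeddings $T_\alpha\hookrightarrow\widetilde H$ come in at most two $O(\widetilde H)$-orbits up to sign. Geometrically, the two classes should correspond to $(S_1,\alpha_1)$ and $(S_2,\alpha_2)$ from Lemma \ref{lem: brauer classes match}, which is why the bound is $2$.

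The main obstacle is Step 3: identifying $A_{T_\alpha}$ exactly for the Verra type and determining the cokernel of $O(N)\to O(A_N)$. I would first try to write $N$ explicitly, e.g. as $\langle 2\rangle\oplus U(2)$ or $U\oplus\langle 8\rangle$, and then compute $O(A_N)$ by hand, as in the proof of Lemma \ref{lem: Lts is the only one of its discriminant}, checking against OSCAR if necessary.
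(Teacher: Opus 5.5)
\textbf{There is a genuine gap: your Steps 2--3 count something other than $|\FM(S,\alpha)|$.} The formula you attribute to Ma is not his. The double coset $O(N)\backslash O(A_N)/O_{\mathrm{Hodge}}(T_\alpha)$, with $N=T_\alpha^\perp$ taken in the rank-$24$ Mukai lattice $U^{\oplus 4}\oplus E_8(-1)^{\oplus 2}$, classifies primitive Hodge embeddings of $T_\alpha$ into that lattice, i.e.\ twisted Mukai Hodge structures containing $T_\alpha$ up to isometry.

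Derived equivalent twisted K3 surfaces have Hodge isometric twisted Mukai lattices (Huybrechts--Stellari). So every partner of $(S,\alpha)$ gives the same point of this set: it detects derived equivalence classes, not partners. The untwisted analogue, with $N=U\oplus\NS(S)$, is always a single point even though K3 surfaces can have many partners. That is why \cite{HLOY} uses complements in $\Lambda_{K3}$.

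Concretely, your guess $N\simeq\langle 2\rangle\oplus U(2)$ is right. Then $A_N\simeq(\bZ/2\bZ)^3$ with $q=\tfrac12x^2+yz$, and $O(A_N)\simeq\bZ/2\bZ$, generated by swapping $y$ and $z$, which lifts to $O(N)$. So your Step~3 returns $1$ and would ``prove'' $|\FM(S,\alpha)|\le 1$. That is false: for very general $V_\alpha$ the second twisted K3 surface $(S',\alpha')$ of $V_\alpha$ is a partner with $S'\not\simeq S$. For the same reason, $(S_1,\alpha_1)$ and $(S_2,\alpha_2)$ cannot be your two cosets, since they are derived equivalent.

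\textbf{The missing idea is the mechanism that actually separates partners, namely overlattices.} If $\Db(S,\alpha)\simeq\Db(S',\alpha')$, then $T(S',\alpha')=\ker(\alpha')\simeq T_\alpha$ sits inside the untwisted lattice $T(S')$ with cyclic quotient of order $\mathrm{ord}(\alpha')$. Hence $T(S')$ is the overlattice of $T_\alpha$ attached to an isotropic element of $A_{T_\alpha}$ of that order. This correspondence is what the paper takes from \cite[Lemma 3.2]{Ma}.

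With van Geemen's form $\tfrac12x^2+yz$ on $A_{T_\alpha}\simeq(\bZ/2\bZ)^3$, the bound follows at once:
\begin{enumerate}
\item The zero element is excluded: $T_\alpha$ has rank $21$ and $\ell(A_{T_\alpha})=3$, so it admits no primitive embedding into $\Lambda_{K3}$, and there are no untwisted partners.
\item The only nonzero isotropic elements are $(0,1,0)$ and $(0,0,1)$.
\item Each gives an index-$2$ overlattice of discriminant $2$. This forces $\NS(S')\simeq\langle 2\rangle$, so $S'$ is determined by its transcendental Hodge structure, and $\alpha'$ of order $2$ is determined by its kernel $T_\alpha$. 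Since $O_{\mathrm{Hodge}}(T_\alpha)=\{\pm\id\}$ acts trivially on $A_{T_\alpha}$, the assignment is well defined.
\end{enumerate}
Hence $|\FM(S,\alpha)|\le 2$.

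Equivalently, inside the single Mukai lattice your framework produces, partners are recorded by primitive isotropic vectors $v\in N$ playing the role of the point class $(0,0,1)$. Their divisibility in $N$ is the order of the Brauer class, and the class of $v/\mathrm{div}(v)$ in $A_N\simeq A_{T_\alpha}(-1)$ is exactly the isotropic element above. This is the data your double coset forgets.
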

\begin{proof}
    Recall from \cite[\S9.8]{vGe} that the discriminant form of the twisted transcendental lattice $T(S,\alpha)$ is given by 
    \[
    \functionstar{(\bZ/2\bZ)^3}{\bQ/2\bZ}{(\overline{x},\overline{y},\overline{z})}{\frac{1}{2}x^2 +  yz \pmod{2\bZ}.}
    \]
    From this, combined with \cite[Lemma 3.2]{Ma}, it follows that any Fourier--Mukai partner $(S',\alpha')$ of $(S,\alpha)$ satisfies $\alpha'\in \Br(S')_2$. Moreover, $(S,\alpha)$ has no untwisted Fourier--Mukai partner, since $T(S,\alpha)$ cannot be primitively embedded in the K3-lattice.
    Since $A_{T(S,\alpha)}$ contains precisely two non-zero isotropic vectors, namely $(0,1,0)$ and $(0,0,1)$, we may use \cite[Lemma 3.2]{Ma} to see that $|\FM(S,\alpha)|\leq 2$. 
\end{proof}

Let $\alpha\in \Br(S)$ be a Brauer class of Verra type. Since ordinary Verra threefolds have two conic bundle structures defined by the projections to $\bP^2$, the Verra threefold $V_\alpha$ has two twisted K3 surfaces it is associated with, namely $(S,\alpha)$ and some other twisted K3 surface $(S',\alpha')$. Moreover, we have
\[
\Db(S,\alpha) \simeq \cA_{V_4} \simeq \Db(S',\alpha'),
\]
where $V_4$ is the Verra fourfold that has branch locus $V_\alpha$ \cite[Theorem 1.1]{ADPZ}.
If $V_\alpha$ is a very general ordinary Verra threefold, then $(S',\alpha')$ is the unique non-trivial Fourier--Mukai partner of $(S,\alpha)$, since we have $S\not \simeq S'$ by \cite{KKM}.

\begin{proposition} \label{prop: the derived category of the verra twisted K3 determines the Verra threefold}
    Let $S$ and $S'$ be two K3 surfaces of degree $2$ with $\rho = 1$. Let $\alpha \in \Br(S)$ and $\alpha'\in \Br(S')$ be Brauer classes of Verra type whose associated ordinary Verra threefolds $V_\alpha$ and $V_{\alpha'}$ are very general. Then, if there is an equivalence $\Db(S,\alpha)\simeq \Db(S',\alpha')$, it follows that we have $V_\alpha \simeq V_{\alpha'}$.
\end{proposition}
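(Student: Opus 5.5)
We use Lemma \ref{lem: the twisted k3s have few partners} together with the discussion preceding the proposition. Suppose $\Db(S,\alpha)\simeq \Db(S',\alpha')$. Let $V_\alpha$ be the very general ordinary Verra threefold associated with $(S,\alpha)$, and let $V_4$ be the Verra fourfold branched along $V_\alpha$. The second conic bundle structure of $V_\alpha$, given by the other projection to $\bP^2$, produces a second twisted K3 surface $(S_2,\alpha_2)$ of Verra type. By Lemma \ref{lem: brauer classes match} its associated Verra threefold is again $V_\alpha$. By \cite[Theorem 1.1]{ADPZ},
\[
\Db(S,\alpha)\simeq \cA_{V_4}\simeq \Db(S_2,\alpha_2),
\]
and since $V_\alpha$ is very general we have $S\not\simeq S_2$ by \cite{KKM}. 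So $(S,\alpha)$ and $(S_2,\alpha_2)$ are two non-isomorphic elements of $\FM(S,\alpha)$. Lemma \ref{lem: the twisted k3s have few partners} gives $|\FM(S,\alpha)|\le 2$, hence
\[
\FM(S,\alpha)=\{(S,\alpha),(S_2,\alpha_2)\}.
\]

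Since $(S',\alpha')\in\FM(S,\alpha)$, there are two cases. If $(S',\alpha')\simeq (S,\alpha)$, there is an isomorphism $f\colon S'\simeq S$ with $f^*\alpha=\alpha'$. The key step is to show that the associated Verra threefold depends only on the isomorphism class of the twisted K3 surface. Because $\rho(S)=1$, the polarisation $h$ is the unique ample class of square $2$, so $f^*h=h'$. Hence $f$ descends to an automorphism $\bar f$ of $\bP^2$ carrying the branch sextic $C'$ onto $C$. Injectivity of $\phi_h^*$ in the sequence $0\to\Br(\bP^2\setminus C)_2\to\Br(S)_2$ then gives $\bar f^*\alpha_0=\alpha_0'$ for the classes $\alpha_0,\alpha_0'$ on $\bP^2\setminus C$ and $\bP^2\setminus C'$ whose pullbacks are $\alpha,\alpha'$. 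Two conic bundles over $\bP^2$ with the same discriminant curve and the same Brauer class over the complement are isomorphic, using the standard correspondence between conic bundles with smooth discriminant and such data (or directly van Geemen's construction in \cite{vGe}). Therefore $V_{\alpha'}\simeq V_\alpha$.

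If instead $(S',\alpha')\simeq (S_2,\alpha_2)$, the same argument gives $V_{\alpha'}\simeq V_{\alpha_2}$, and $V_{\alpha_2}\simeq V_\alpha$ by Lemma \ref{lem: brauer classes match}.

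The main obstacle is the claim that the associated conic bundle $V_\alpha$ is well defined up to isomorphism by the isomorphism class of $(S,\alpha)$. This requires the reconstruction of the conic bundle from the pair $(C,\alpha_0)$, up to isomorphism, for smooth $C$, which I would cite from \cite{vGe}. One must also check that the conic bundle attached to $\alpha_0$ really is the ordinary Verra threefold and not merely birational to it. I would handle this using that standard conic bundles with a given smooth discriminant and Brauer class are unique up to isomorphism over $\bP^2$ (the relevant data being a theta characteristic on $C$).
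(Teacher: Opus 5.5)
This is the paper's argument: Lemma~\ref{lem: the twisted k3s have few partners} and \cite{KKM} force $\FM(S,\alpha)=\{(S,\alpha),(S_2,\alpha_2)\}$, so $(S',\alpha')$ is one of the two, and you additionally spell out, correctly via $f^*h=h'$ and injectivity of $\phi_h^*$, the step the paper leaves implicit, namely that $V_\alpha$ depends only on the isomorphism class of $(S,\alpha)$. The one fix needed is in the uniqueness you invoke at the end: it fails for arbitrary standard conic bundles over $\bP^2$ (elementary transformations give non-isomorphic models with the same discriminant and Brauer class; in the empty-discriminant case, e.g.\ the bundles $\bP(\mathcal{O}\oplus\mathcal{O}(k))$), but it holds for bidegree $(2,2)$ divisors in $\bP^2\times\bP^2$, whose symmetric $3\times 3$ matrix of quadrics is determined up to congruence by its cokernel $\eta(2)$, where $\eta\in\Pic(C)[2]$ is the half-period given by the residue of $\alpha_0$ (not a theta characteristic) and $h^0(\eta(1))=0$.
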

\begin{proof}
    By the above discussion, $V_\alpha$ and $V_{\alpha'}$ are each associated with 2 twisted K3 surfaces. The assumption $\Db(S,\alpha)\simeq \Db(S',\alpha')$ implies that $V_\alpha$ and $V_{\alpha'}$ have the same associated twisted K3 surfaces, hence they are isomorphic. 
\end{proof}

\section{Equivariant Kuznetsov components and semiorthogonal decompositions} \label{sec: Kuznetsov components and SODs}
We now study the equivariant Kuznetsov components of the Fano varieties in the three families of Fano threefolds under consideration. The results for Fano threefolds in this section hold in greater generality than Theorem \ref{thm: branch determines Fano} and Theorem \ref{thm: the k3s have no FM partners}, which hold for $Z$-general Fano threefolds. 
Let $X$ be a Fano threefold in one of the families $\mathcal{X}_{2\text{-}6(b)}, \mathcal{X}_{2\text{-}8}$, and $\mathcal{X}_{3\text{-}1}$. Recall from Definition \ref{def: Z general} that we say that $X$ is \textit{K3-general} if the branch locus $Z$ of the double cover $X\to Y$ is a smooth K3 surface.
Note that $Z$-general Fano threefolds are always K3-general, but the converse does not hold, see also Remark \ref{rem: Z general K3 general}.

The main goal of this section is to prove the following two results.
\begin{theorem}\label{thm: any equivalence lifts to equivariant categories}
    Let $X$ be a K3-general Fano variety in one of the families $\mathcal{X}_{2\text{-}6(b)}, \mathcal{X}_{2\text{-}8}$, and $\mathcal{X}_{3\text{-}1}$, and let $X'$ be a Fano variety in the same family. Then any equivalence $\cA_X\simeq \cA_{X'}$ lifts to an equivalence of equivariant categories $\cA_X^{\mu_2} \simeq \cA_{X'}^{\mu_2}$:
    \[
    \cA_X \simeq \cA_{X'} \implies \cA_{X}^{\mu_2} \simeq \cA_{X'}^{\mu_2}.
    \]
\end{theorem}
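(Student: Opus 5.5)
The plan has two parts. First, show that in each family the Serre functor of the Kuznetsov component satisfies $S_{\cA_X}\simeq \tau_{\cA_X}[2]$; second, use this to make any equivalence intertwine the two $\mu_2$-actions. For the Serre functor we apply Proposition \ref{prop: IK prop 3.2} with $\dim X - 1 = 2$. We choose $H$ with $Z\sim 2H\sim -K_Y$, namely $H = H_1+H_2$ for 2-6(b), $H = 2H-E$ for 2-8, and $H=(1,1,1)$ for 3-1. We then check that $\{\underline{\cE}(-H),\underline{\cE}\}$ is a \emph{full} exceptional collection on $Y$.

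The collections are as follows.
\begin{itemize}
\item \textbf{Family 3-1.} Take $\underline{\cE}=\{\cO,\cO(1,0,0),\cO(0,1,0),\cO(0,0,1)\}$. Then $\underline{\cE}(-H)$ consists of $\cO(-1,-1,-1)$, $\cO(0,-1,-1)$, $\cO(-1,0,-1)$ and $\cO(-1,-1,0)$. The union is a reordering of the standard full collection of $(\bP^1)^3$, obtained by twisting the Beilinson collection $\{\cO(a,b,c)\}_{a,b,c\in\{-1,0\}}$ appropriately. Exceptionality follows from the K\"unneth formula.
\item \textbf{Family 2-8.} Take $\underline{\cE}=\{\cO(-H),\cO(-E),\cO\}$ from Lemma \ref{lem:blow_up_sod}. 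After twisting by $-H=E-2H$ we must check that the six line bundles form a full collection on $\Bl_p\bP^3$. We use Orlov's blow-up formula $\Db(Y)=\langle \cO_E(\ast),\pi^*\Db(\bP^3)\rangle$ together with direct cohomology computations of line bundles $\cO(aH+bE)$.
\item \textbf{Family 2-6(b).} Use $Y\simeq\bP(T_{\bP^2})$ and Orlov's projective bundle formula $\Db(Y)=\langle \pi^*\Db(\bP^2)\otimes\cO(-H_2),\pi^*\Db(\bP^2)\rangle$, with $\underline{\cE}=\pi^*\{\cO,\cO(1),\cO(2)\}$ suitably twisted. The twist by $-H$ carries it to the other block, possibly after a mutation.
\end{itemize}
This is the step I expect to be the main obstacle: matching the paper's exceptional collections to the hypotheses of Proposition \ref{prop: double cover SODs IK}, and verifying fullness, especially for the non-rectangular case 2-8. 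If fullness fails for the chosen ordering, I would first try mutating via Lemma \ref{lem:SODs and Serre functors}.

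Next we check that $\tau$ preserves $\cA_X$. Each object of $f^*\underline{\cE}$ is $\tau$-invariant since it is pulled back from $Y$, so $\tau$ preserves the right orthogonal $\cA_X$ and $\tau_{\cA_X}$ is well defined.

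Now let $\Phi\colon \cA_X\xrightarrow{\sim}\cA_{X'}$ be an equivalence. Equivalences commute with Serre functors, so $\Phi\circ S_{\cA_X}\simeq S_{\cA_{X'}}\circ\Phi$. Hence
\[
\Phi\circ\tau_{\cA_X}\simeq\tau_{\cA_{X'}}\circ\Phi .
\]
To lift $\Phi$ to equivariant categories, we need this isomorphism to be compatible with the $\mu_2$-structures. Concretely, the composite
\[
\Phi\simeq \Phi\tau^2\simeq\tau'\Phi\tau\simeq\tau'^2\Phi\simeq\Phi
\]
must be the identity, and $\mu_2$-equivariant objects $(F,\phi)$ must then map to $(\Phi F,\ldots)$. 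The subtlety is that an abstract isomorphism of functors need not respect the cocycle $c_{g,h}$.

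To handle this we work with Fourier--Mukai kernels, since $\Phi$ is of Fourier--Mukai type. The Serre functor isomorphism $S\simeq\tau[2]$ from \cite{IK} is canonical and compatible with the $\mu_2$-structure, because the action is generated by a Serre-functor power. The natural isomorphism $\Phi S\simeq S'\Phi$ is also canonical, being induced by Serre duality. Compatibility with $c_{g,h}$ then reduces to the fact that $\tau^2\simeq\id$ corresponds under $S^2\simeq[4]$ to the identity, which again follows from canonicity. If that fails, the discrepancy is an automorphism of $\Phi$ of order dividing $2$. Since $\cA_X$ is connected (it has $\Hom(\Phi,\Phi)=\bC$), this automorphism is $\pm1$, and we can rescale the isomorphism to correct the sign. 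We then obtain the equivariant equivalence $\Phi^{\mu_2}\colon\cA_X^{\mu_2}\simeq\cA_{X'}^{\mu_2}$, with quasi-inverse constructed symmetrically.
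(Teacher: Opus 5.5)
Your two-step structure is the paper's. First you show $S_{\mathcal{A}_X}\simeq\tau_{\mathcal{A}_X}[2]$ in each family (Propositions \ref{prop:2-6b_serre_involution_relationship}, \ref{prop:2-8_serre_involution_relationship}, \ref{prop:3-1 serre involution relationship}), then you lift $\Phi$. The first step is routine rather than the main obstacle:
\begin{itemize}
\item For 2-8, fullness of $\{\underline{\mathcal{E}}(-D),\underline{\mathcal{E}}\}$ with $D=2H-E$ is \cite[Proposition 3.6]{IK}; the paper simply cites \cite[Corollary 3.7]{IK}.
\item For 3-1, fullness is Lemma \ref{lem:3-1 D(Y) SOD}. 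This collection is \emph{not} a twist or reordering of the box collection. It comes from $\{\mathcal{O}(a,b,c)\}_{a,b,c\in\{-1,0\}}$ via the mutations of Lemma \ref{lem:3-1_mutations}, so your justification there is inaccurate.
\item For 2-6(b), no mutation is needed. With $\pi=p_1$ and $\underline{\mathcal{E}}=\{\mathcal{O},\mathcal{O}(H_1),\mathcal{O}(2H_1)\}$, one has $\underline{\mathcal{E}}(-H_1-H_2)=\pi^*\{\mathcal{O}(-1),\mathcal{O},\mathcal{O}(1)\}\otimes\mathcal{O}(-H_2)$, which generates the first block of Orlov's decomposition.
\end{itemize}

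The real gap is the lifting step. The paper does not prove this step but imports it from \cite[Lemma 6.2]{DJR}.

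Your ``canonicity'' argument cannot work as stated. Rescale the chosen isomorphism $S_{\mathcal{A}_X}\simeq\tau_{\mathcal{A}_X}[2]$ by $\lambda\in\bC^*$. Then $\theta\colon\Phi\tau\simeq\tau'\Phi$ is rescaled by $\lambda^{\pm1}$. Your discrepancy $\epsilon(\theta)\in\Aut(\Phi)$, the composite $\Phi\simeq\Phi\tau^2\simeq\tau'\Phi\tau\simeq\tau'^2\Phi\simeq\Phi$, is rescaled by $\lambda^{\pm2}$. So the cocycle condition depends on a normalisation that nothing canonical singles out; there is no canonical $S_{\mathcal{A}_X}^2\simeq[4]$.

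Your fallback is the right mechanism, but it rests entirely on the unproved assertion $\mathrm{End}(\Phi)=\bC$, i.e.\ $HH^0(\mathcal{A}_X)=\bC$. This is not automatic for a semiorthogonal component of $\Db(X)$, and calling $\mathcal{A}_X$ ``connected'' does not establish it.

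Two smaller points:
\begin{itemize}
\item The claim that $\epsilon$ has order dividing $2$ is unsupported but unnecessary, since $\epsilon(\lambda\theta)=\lambda^2\epsilon(\theta)$ kills any scalar.
\item Working with kernels presupposes that $\Phi$ is of Fourier--Mukai type, which is not among the hypotheses.
\end{itemize}

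One way to close the gap with the paper's own results, working with kernels throughout:
\begin{itemize}
\item A direct check shows that $\epsilon$ is fixed by the involution $b\mapsto\tau^{-1}b\tau$ of $\mathrm{End}(\Phi)\cong HH^0(\mathcal{A}_X)$.
\item Changing $\theta$ by an automorphism $b$ multiplies $\epsilon$ by $b\cdot\tau^{-1}b\tau$.
\item Hence it suffices that $HH^0(\mathcal{A}_X)^{\mu_2}=\bC$.
\item This invariant part injects into $HH^0(\mathcal{A}_X^{\mu_2})\cong HH^0(Z)=\bC$ by Theorem \ref{thm: equivariant kuznetsov components are the branch loci}.
\end{itemize}
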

\begin{proof}
    This follows from Proposition \ref{prop:2-6b_serre_involution_relationship}, Proposition \ref{prop:2-8_serre_involution_relationship}, and Proposition \ref{prop:3-1 serre involution relationship} below, and \cite[Lemma 6.2]{DJR}. An analogous result was also used in the proof of \cite[Theorem 9.9]{JLLZ}.
\end{proof}

\begin{theorem}\label{thm: equivariant kuznetsov components are the branch loci}
    Let $X$ be a K3-general Fano variety in one of the families $\mathcal{X}_{2\text{-}6(b)}, \mathcal{X}_{2\text{-}8}$, and $\mathcal{X}_{3\text{-}1}$. Let $Z$ be the associated K3 surface of $X$. Then we have an equivalence
    \[
    \cA_X^{\mu_2}\simeq \Db(Z).
    \]
\end{theorem}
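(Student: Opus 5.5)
The plan is to compare two semiorthogonal decompositions of $\Db(X)^{\mu_2}$. The first comes from the Kuznetsov component. The second comes from Theorem \ref{thm: KP thm 4.1}. We then match the pieces by mutations.

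\textbf{First decomposition.} In each family we have $\Db(X) = \langle \cA_X, f^*\underline{\cE}\rangle$, where $\underline{\cE}$ is an exceptional collection on $Y$ such that $\{\underline{\cE}(-H),\underline{\cE}\}$ is a full exceptional collection of $\Db(Y)$ and $2H \sim -K_Y \sim Z$.
\begin{itemize}
\item For 2-6(b), $\underline{\cE}$ is $\pi^*\Db(\bP^2)$ and $H = H_1+H_2$.
\item For 2-8, $\underline{\cE} = \{\cO(-H),\cO(-E),\cO\}$ and $H = 2H-E$.
\item For 3-1, $\underline{\cE} = \{\cO,\cO(1,0,0),\cO(0,1,0),\cO(0,0,1)\}$ and $H = (1,1,1)$.
\end{itemize}
Every component of this decomposition is $\mu_2$-stable. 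Theorem \ref{thm:G action commutes through SOD} therefore gives
\[
\Db(X)^{\mu_2} = \langle \cA_X^{\mu_2}, (f^*\underline{\cE})^{\mu_2}\rangle.
\]
The involution acts trivially on each pulled-back exceptional object $f^*\cE_i$, with a canonical linearisation. So by Proposition \ref{prop:trivial action SOD} the second piece splits as $\langle f_0^*\underline{\cE}, f_1^*\underline{\cE}\rangle$, up to ordering, which I would fix by checking Homs. This yields
\[
\Db(X)^{\mu_2} = \langle \cA_X^{\mu_2}, f_0^*\underline{\cE}, f_1^*\underline{\cE}\rangle .
\]

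\textbf{Second decomposition.} Theorem \ref{thm: KP thm 4.1} together with the full collection on $Y$ gives
\[
\Db(X)^{\mu_2} = \langle f_0^*\underline{\cE}(-H), f_0^*\underline{\cE}, j_{0*}\Db(Z)\rangle .
\]

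\textbf{Matching the two.} The key identity is $f_1^* F \simeq f_0^*(F(-H))\otimes \cO_X(R)$, where $R$ is the ramification divisor with $2R = f^*Z$ and $\cO_X(R)$ carries the nontrivial character. Equivalently, $f_0^*(F(-H))$ twisted by the sign character sits in a triangle
\[
f_0^*F(-H)\otimes\rho_1 \to f_0^* F \to j_{0*}(F|_Z).
\]
I would use this to run the mutations as follows.
\begin{itemize}
\item Use the Serre functor of $\Db(X)^{\mu_2}$, namely $-\otimes \omega_X[3]$ with $\omega_X = f^*(\omega_Y(H))$ carrying the appropriate character, via Lemma \ref{lem:SODs and Serre functors}. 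This moves $f_0^*\underline{\cE}(-H)$ to the far right, where it becomes $f_1^*\underline{\cE}$ up to a character twist.
\item Then mutate $j_{0*}\Db(Z)$ leftwards past $f_0^*\underline{\cE}$.
\end{itemize}
After this, both decompositions have the form $\langle ?, f_0^*\underline{\cE}, f_1^*\underline{\cE}\rangle$. Since $\cA_X^{\mu_2}$ is then the left orthogonal to the same admissible subcategory, $\cA_X^{\mu_2} \simeq \bL_{f_0^*\underline{\cE}}\, j_{0*}\Db(Z)$, and this is equivalent to $\Db(Z)$ because mutation functors are equivalences.

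\textbf{Main obstacle.} The hard step is the bookkeeping of characters. One must check that the Serre twist sends $f_0^*\underline{\cE}(-H)$ exactly onto the $\rho_1$-twisted copy $f_1^*\underline{\cE}$, in the right order, so that the right-hand blocks of the two decompositions coincide and are not merely abstractly equivalent. I would verify this on $\omega_X \simeq f^*\cO_Y(-H)$ with its equivariant structure: the canonical form of $X$ is anti-invariant along $R$, so $\omega_X \simeq f^*\omega_Y\otimes\cO(R)$ carries $\rho_1$. For 2-6(b), where $\underline{\cE}$ is a block $\pi^*\Db(\bP^2)$ rather than individual objects, the same argument applies blockwise.
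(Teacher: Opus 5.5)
Your proposal is correct and follows the paper's proof. The paper likewise compares $\langle \mathcal{A}_X^{\mu_2}, f_0^*\underline{\mathcal{E}}, f_1^*\underline{\mathcal{E}}\rangle$ with the decomposition of Theorem \ref{thm: KP thm 4.1}, uses $S_{\Db(X)^{\mu_2}} = -\otimes f^*\mathcal{O}_Y(-H)\rho_1[3]$ to move $f_0^*\underline{\mathcal{E}}(-H)$ to the right as $f_1^*\underline{\mathcal{E}}$, and left-mutates $j_{0*}\Db(Z)$ past $f_0^*\underline{\mathcal{E}}$; for Family 2-6(b) the paper instead cites \cite[Theorem 1.1]{KP}, though your blockwise version also works there, and for Family 3-1 you use the full collection of Lemma \ref{lem:3-1 D(Y) SOD} directly, sparing the paper's final use of Lemma \ref{lem:3-1_mutations} (and, in the paper's convention, $\mathcal{A}_X^{\mu_2}$ is the right, not left, orthogonal).
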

\begin{proof}
    This follows from Proposition \ref{prop:2-6b_equivariant_Ku}, Proposition \ref{prop:2-8_equivariant_Ku}, and Proposition \ref{prop:3-1_equivariant_Ku} below.
\end{proof}

\subsection{Family 2-6(b)}
Let $X$ be a K3-general Fano variety in Family 2-6(b). That is, $X$ is a double cover of a bidegree $(1,1)$-divisor $Y \subset \bP^2\times \bP^2$ branched in an anticanonical K3 surface $Z\subset Y$. Recall from Lemma \ref{lem: bidegree (1,1) divisors are projective bundles over the plane} that $Y$ can equivalently be described as $\bP(T_{\bP^2})$, the projectivised tangent bundle of the projective plane. By Orlov's Projective Bundle Formula \cite{OrlovBlowUp}, we have the semiorthogonal decomposition 
\begin{equation}\label{eq: projective bundle sod unrefined}
    \Db(\bP(T_{\bP^2})) = \langle \pi^* \Db(\bP^2), \pi^* \Db(\bP^2) \otimes \cO_{\bP(T_{\bP^2})}(1) \rangle 
\end{equation}
where $\pi \colon Y \to \bP^2$ is the projective bundle map, and $\cO_{\bP(T_{\bP^2})}(1)$ is the tautological bundle on the projectivisation. We note that \eqref{eq: projective bundle sod unrefined} is a rectangular Lefschetz decomposition with $\cB = \pi^*\Db(\bP^2)$, see Definition \ref{def: rectangular lefschetz decomposition}.

By Lemma \ref{lem: SOD from rectangular lefschetz base}, we have 
\begin{equation} \label{eq: 2-6(b) Kuznetsov component}
    \Db(X) = \langle \cA_X, \pi^* \Db(\bP^2) \rangle.
\end{equation}
Moreover, \cite[Theorem 1.1]{KP} directly applies, and gives us the following description of the equivariant Kuznetsov component.

\begin{proposition} \label{prop:2-6b_equivariant_Ku}
    We have 
    \begin{equation*}
        \cA_X^{\mu_2} \simeq \Db(Z) .
    \end{equation*}
\end{proposition}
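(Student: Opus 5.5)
Since $Y\simeq \bP(T_{\bP^2})$ has the rectangular Lefschetz decomposition \eqref{eq: projective bundle sod unrefined} with $\cB=\pi^*\Db(\bP^2)$ and $m=2$, and $Z\in|-K_Y|=|\cO_Y(2d)|$ with $d=1$ (here $\cO_Y(1)$ is the relative $\cO(1)$, which we identify with $H_1+H_2$ after a suitable twist of $T_{\bP^2}$), we are exactly in the situation of Kuznetsov--Perry \cite[Theorem 1.1]{KP}. The plan is to rederive their statement concretely, by computing $\Db(X)^{\mu_2}$ in two ways and comparing the leftover components.

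\textbf{First decomposition.} The subcategory $\cB_X=f^*\pi^*\Db(\bP^2)$ is $\mu_2$-stable, and the involution acts trivially on it, since objects are pulled back from $Y$ and the linearisation is canonical. By Theorem \ref{thm:G action commutes through SOD} applied to \eqref{eq: 2-6(b) Kuznetsov component}, and by Proposition \ref{prop:trivial action SOD}, we obtain
\[
\Db(X)^{\mu_2}=\langle \cA_X^{\mu_2},\ \cB_X^{\mu_2}\rangle=\langle \cA_X^{\mu_2},\ f_0^*\cB,\ f_1^*\cB\rangle .
\]

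\textbf{Second decomposition.} Start from Theorem \ref{thm: KP thm 4.1} and insert \eqref{eq: projective bundle sod unrefined}:
\[
\Db(X)^{\mu_2}=\langle f_0^*\cB,\ f_0^*\cB(1),\ j_{0*}\Db(Z)\rangle .
\]
We then mutate to bring it into the shape $\langle j_{*}\Db(Z)\text{-type piece}, f_0^*\cB, f_1^*\cB\rangle$. The key identity is that, on $\Db(X)^{\mu_2}$, we have $f_1^*(F)\simeq f_0^*(F)\otimes\cO_X(-R)$ with the $\rho_1$-twist, where $\cO_X(R)=f^*\cO_Y(d)$. There is also the standard triangle relating $f_0^*F(-d)\otimes\rho_1$, $f_0^*F$ and $j_{0*}(F|_Z)$ coming from $0\to\cO_X(-R)\to\cO_X\to\cO_R\to0$ equivariantly. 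Using the Serre functor $S=-\otimes\omega_X[3]$ with $\omega_X=f^*\cO_Y(-d)$ (twisted by $\rho_1$), apply Lemma \ref{lem:SODs and Serre functors} to move $f_0^*\cB(1)$ to the far left, where it becomes $f_0^*\cB(1)\otimes\omega_X=f_1^*\cB$, up to identification of the character. Then mutate $j_{0*}\Db(Z)$ leftwards past the $\cB$-pieces so that the two $\cB$-blocks sit at the right end, in the same order as in the first decomposition. Mutation functors are equivalences onto their image, so the remaining component is equivalent to $\Db(Z)$.

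\textbf{Conclusion.} Both decompositions then have the form $\langle ?,\ f_0^*\cB,\ f_1^*\cB\rangle$ with identical right-hand pieces. Right orthogonals agree, so $\cA_X^{\mu_2}\simeq\Db(Z)$.

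The main obstacle is the bookkeeping in the mutation step. One must check that the Serre twist sends $f_0^*\cB(1)$ exactly onto $f_1^*\cB$, with the correct character and with no residual shift by $\cO_Y(1)$. This uses $-K_Y=2H$ and $Z\sim 2H$, so that $\omega_X=f^*\cO_Y(-1)$. One must also verify the order of $f_0^*\cB$ and $f_1^*\cB$ after mutation, checking semiorthogonality via $f_*\cO_X=\cO_Y\oplus\cO_Y(-1)\rho_1$. If this becomes messy, we simply cite \cite[Theorem 1.1]{KP}: its hypotheses ($d=1<m=2$, a rectangular Lefschetz base, and a branch divisor in $|\cO_Y(2)|$) hold by Lemma \ref{lem: bidegree (1,1) divisors are projective bundles over the plane}, and its conclusion is precisely the stated equivalence.
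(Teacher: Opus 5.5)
Your proposal is correct. The paper's proof of this proposition is simply your fallback: it notes that $Y\simeq\bP(T_{\bP^2})$ carries the rectangular Lefschetz decomposition \eqref{eq: projective bundle sod unrefined} with $m=2$ and $Z\in|\cO_Y(2)|$, and cites \cite[Theorem 1.1]{KP}.

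Your main route instead reproves that theorem in this case by comparing two decompositions of $\Db(X)^{\mu_2}$. This is the same technique the paper uses for Families 2-8 and 3-1 (Propositions \ref{prop:2-8_equivariant_Ku} and \ref{prop:3-1_equivariant_Ku}). It gives a self-contained argument whose only inputs are $\omega_X\simeq f_1^*\cO_Y(-1)$ and $\Hom^\bullet(\cB(1),\cB)=0$; the citation is of course shorter.

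Two bookkeeping points. First, $f_0^*\cB(1)$ is not at the right end, so you cannot move it alone. Instead, apply Lemma \ref{lem:SODs and Serre functors} with $\cA_1=f_0^*\cB$ and $\cA_2=\langle f_0^*\cB(1), j_{0*}\Db(Z)\rangle$. This gives $\langle f_1^*\cB,\ S(j_{0*}\Db(Z)),\ f_0^*\cB\rangle$. Then mutate $S(j_{0*}\Db(Z))$ to the left past $f_1^*\cB$ and swap the two $\cB$-blocks. The swap is allowed because, for $F,G\in\cB$,
\[
\Hom^\bullet(f_0^*F,f_1^*G)\simeq\Hom^\bullet(F,G(-1))\simeq\Hom^\bullet(f_1^*F,f_0^*G),
\]
and these vanish.

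Second, your stated key identity is off by a twist. Equivariantly $\cO_X(-R)\simeq f_1^*\cO_Y(-1)$, so $f_0^*F\otimes\cO_X(-R)\simeq f_1^*(F(-1))$, not $f_1^*F$. This is harmless, since the argument only uses $S(f_0^*\cB(1))=f_1^*\cB$.
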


We also have an explicit description of the involution autoequivalence $\tau_{\cA_X} \colon \cA_X \to \cA_X$ in this case:

\begin{proposition} \label{prop:2-6b_serre_involution_relationship}
    We have the isomorphism of functors 
    \begin{equation*}
        S_{\cA_X} \simeq \tau_{\cA_X}[2] .
    \end{equation*}
\end{proposition}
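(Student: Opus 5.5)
The plan is to apply Proposition \ref{prop: IK prop 3.2} with $\dim X = 3$, which gives $S_{\cA_X}\simeq \tau_{\cA_X}[2]$ directly. So we only need to find $H\in\Pic Y$ with $Z\sim 2H\sim -K_Y$, and an exceptional collection $\underline{\cE}$ in $\Db(Y)$ such that $\{\underline{\cE}(-H),\underline{\cE}\}$ is a full exceptional collection in $\Db(Y)$, and such that the right orthogonal of $f^*\underline{\cE}$ in $\Db(X)$ is the category $\cA_X$ of \eqref{eq: 2-6(b) Kuznetsov component}.

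We take $H = H_1+H_2$. Since $K_Y = -2(H_1+H_2)$, we have $-K_Y = 2H$, and $Z\in|-K_Y|$ gives $Z\sim 2H$. Under the identification $Y\simeq\bP(T_{\bP^2})$ of Lemma \ref{lem: bidegree (1,1) divisors are projective bundles over the plane} with $\pi = p_1$, the tautological bundle $\cO_{\bP(T_{\bP^2})}(1)$ equals $\cO_Y(H_2)$ up to twisting by a pullback from $\bP^2$ (this depends on the convention for $\bP(-)$). Consequently $\cO_Y(H) = \cO_Y(H_1)\otimes\cO_Y(H_2)$ is a relative $\cO(1)$, possibly after such a twist. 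Orlov's decomposition \eqref{eq: projective bundle sod unrefined} remains valid after tensoring the second component by any line bundle pulled back from the base, so it can be written as
\[
\Db(Y) = \langle \pi^*\Db(\bP^2)(-H), \pi^*\Db(\bP^2)\rangle,
\]
after tensoring the whole decomposition by $\cO_Y(-H)$ if needed. Take $\underline{\cE} = \pi^*\{\cO,\cO(1),\cO(2)\}$, the pullback of Beilinson's collection. Then $\{\underline{\cE}(-H),\underline{\cE}\}$ is a full exceptional collection of length six on $Y$, because $\pi^*$ is fully faithful and we are refining the two-term decomposition above. Proposition \ref{prop: double cover SODs IK} then gives $\Db(X) = \langle \cA_X, f^*\underline{\cE}\rangle$. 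Since $f^*\underline{\cE}$ generates $f^*\pi^*\Db(\bP^2)$, this $\cA_X$ agrees with the Kuznetsov component of \eqref{eq: 2-6(b) Kuznetsov component}.

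The main obstacle is the bookkeeping: verifying that $\cO_Y(H_1+H_2)$ really is a relative hyperplane class, i.e.\ of relative degree $1$ on the fibres of $p_1$, which are lines. This holds because $H_2$ restricts to $\cO(1)$ on each fibre and $H_1$ restricts trivially. Given that, the difference between $\cO_Y(H)$ and the tautological $\cO(1)$ is a line bundle pulled back from $\bP^2$, and such twists are absorbed by $\pi^*\Db(\bP^2)$. If necessary, one can instead check the needed vanishings $\Hom^\bullet(\cE_i,\cE_j(-H))=0$ directly by a Künneth-type computation on $\bP^2\times\bP^2$, using the Koszul resolution of $Y$. Finally, we need the involution $\tau$ to preserve $\cA_X$. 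It does, because $\tau^* f^* = f^*$, so $\tau$ preserves $f^*\underline{\cE}$ and hence its orthogonal.
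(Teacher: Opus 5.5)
Your proposal is correct and takes the same route as one of the three justifications the paper cites: it applies Proposition \ref{prop: IK prop 3.2} with $H = H_1+H_2$ and the pulled-back Beilinson collection. The paper's proof is only a citation (it also offers \cite[Corollary 3.18]{KuzCY} and \cite[Theorem 7.7 and Proposition 7.10]{KP} as alternatives), and your verification of the hypotheses is sound: $Z \sim 2H \sim -K_Y$, $\{\underline{\cE}(-H),\underline{\cE}\}$ is full and exceptional, and the resulting orthogonal agrees with $\cA_X$ from \eqref{eq: 2-6(b) Kuznetsov component}.
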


\begin{proof}
    This is by either \cite[Corollary 3.18]{KuzCY}, or \cite[Theorem 7.7 and Proposition 7.10]{KP}, or Proposition \ref{prop: IK prop 3.2}.
\end{proof}

\subsection{Family 2-8}
Let $X$ be a K3-general Fano variety in Family 2-8. That is, $X$ is the double cover of $Y \coloneqq \Bl_p(\bP^3)$ branched in an anticanonical K3 surface $Z\subset Y$. Recall that the image of the composition $Z\hookrightarrow Y \to \bP^3$ is a singular quartic K3 surface $Z'\subset \bP^3$ with an ordinary double point at $p$. If we let $X'$ be the double cover of $\bP^3$ branched in $Z'$, then we have $X = \Bl_p(X')$. In other words, we are in the setting of \cite[\S 3.2]{IK}.

If we denote by $H \in \NS(Y)$ the pullback of a hyperplane class, and by $E\in \NS(Y)$ the exceptional divisor, then the canonical bundle of $Y$ is $K_Y = 2E - 4H$. Let $D = 2H-E$, so that $K_Y = -2D$. Then, by \cite[Proposition 3.6]{IK}, we obtain the following semiorthogonal decomposition of $\Db(Y)$: 
\begin{align}
    \Db(Y) &= \langle \cO_Y(-3H+E), \cO_Y(-2H), \cO_Y(-2H+E), \cO_Y(-H), \cO_Y(-E), \cO_Y \rangle \label{eq: 2-8 Y SOD}\\
    &= \langle \cE_1(-D), \cE_2(-D),\cE_3(-D),\cE_1,\cE_2,\cE_3 \rangle, \notag
\end{align}
where 
\[
    \cE_1\coloneqq \cO_Y(-H),\quad \cE_2 \coloneqq \cO_Y(-E), \quad \cE_3 \coloneqq \cO_Y.
\]
Applying Proposition \ref{prop: double cover SODs IK}, we obtain the following semiorthogonal decomposition of $\Db(X)$. 
\begin{lemma} \label{lem:blow_up_sod}
    Let $f \colon X \to Y$ be as above. Then there is a semiorthogonal decomposition
    \begin{equation}
        \Db(X) = \langle \cA_X, \cO_X(-H), \cO_X(-E), \cO_X  \rangle.
    \end{equation}
\end{lemma}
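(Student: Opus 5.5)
This follows from Proposition \ref{prop: double cover SODs IK} applied to the collection $\underline{\cE} = \{\cE_1,\cE_2,\cE_3\} = \{\cO_Y(-H), \cO_Y(-E), \cO_Y\}$ with $H$ replaced by the divisor $D = 2H - E$.

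First we check the hypotheses of that proposition. The branch divisor satisfies $Z \sim -K_Y = 4H - 2E = 2D$, so $Z \sim 2D \sim -K_Y$ as required. Next, the collection $\{\underline{\cE}(-D), \underline{\cE}\}$ is exceptional in $\Db(Y)$: it is exactly the six-term collection \eqref{eq: 2-8 Y SOD}, which is a (full) exceptional collection by \cite[Proposition 3.6]{IK}. To see that it really is the twist of $\underline{\cE}$ by $-D$, we compute
\[
\cE_1(-D) = \cO_Y(-H-2H+E) = \cO_Y(-3H+E),\quad \cE_2(-D) = \cO_Y(-2H),\quad \cE_3(-D) = \cO_Y(-2H+E),
\]
and these agree with the first three terms of \eqref{eq: 2-8 Y SOD}.

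Proposition \ref{prop: double cover SODs IK} then tells us that $f^*\underline{\cE}$ is an exceptional collection in $\Db(X)$. Its right orthogonal $\cA_X$ gives the semiorthogonal decomposition $\Db(X) = \langle \cA_X, f^*\cO_Y(-H), f^*\cO_Y(-E), f^*\cO_Y \rangle$. Since pullback of line bundles gives $f^*\cO_Y(-H) = \cO_X(-H)$, $f^*\cO_Y(-E) = \cO_X(-E)$ and $f^*\cO_Y = \cO_X$, this is the claimed decomposition.

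The only nontrivial input is the exceptionality of \eqref{eq: 2-8 Y SOD}. If one wanted to verify it directly rather than cite \cite{IK}, one would compute $H^\bullet(Y, \cO_Y(aH+bE))$ for the relevant differences of divisors, using the blow-up description and $\pi_*\cO_Y(-kE) = I_p^k$. That computation would be the main work; here we simply quote it.
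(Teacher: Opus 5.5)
Your proposal is correct and is exactly the paper's argument: the paper applies Proposition~\ref{prop: double cover SODs IK} with $D = 2H - E$ (so $Z \sim 2D \sim -K_Y$) to $\underline{\cE} = \{\cO_Y(-H), \cO_Y(-E), \cO_Y\}$, where $\{\underline{\cE}(-D), \underline{\cE}\}$ is the six-term exceptional collection of \cite[Proposition 3.6]{IK}. Your check that $\underline{\cE}(-D)$ gives the first three terms, and your observation that only exceptionality (not fullness) is needed here, are both accurate.
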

Now, using Lemma \ref{thm:G action commutes through SOD}, we obtain the following semiorthogonal decomposition of the equivariant derived category.
\begin{lemma} \label{lem:second 2-8 equivariant SOD}
    We have the semiorthogonal decomposition
    \begin{align*}
        \Db(X)^{\mu_2} &= \langle \cA_X^{\mu_2},  \cO_X(-H) \rho_0, \cO_X(-E) \rho_0 , \cO_X \rho_0 ,  \cO_X(-H) \rho_1, \cO_X(-E) \rho_1, \cO_X \rho_1   \rangle . 
    \end{align*}
\end{lemma}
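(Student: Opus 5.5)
The decomposition follows by applying Theorem \ref{thm:G action commutes through SOD} to the semiorthogonal decomposition of Lemma \ref{lem:blow_up_sod}, and then Proposition \ref{prop:trivial action SOD} to each of the exceptional pieces. I will carry this out in three steps.

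\emph{First step: the decomposition is $\mu_2$-stable.} We must show that each component of
\[
\Db(X) = \langle \cA_X, \cO_X(-H), \cO_X(-E), \cO_X\rangle
\]
is preserved by the covering involution $\tau$. The line bundles $\cO_X(-H)$, $\cO_X(-E)$ and $\cO_X$ are pullbacks $f^*\cO_Y(-H)$, $f^*\cO_Y(-E)$, $f^*\cO_Y$ along $f \colon X \to Y$. Since $f\circ\tau = f$, each satisfies $\tau^* f^*\cE_i \simeq f^*\cE_i$, so each subcategory $\langle f^*\cE_i\rangle$ is $\tau$-stable. The autoequivalence $\tau^*$ preserves the union of the three exceptional subcategories, so it preserves their right orthogonal $\cA_X$. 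Theorem \ref{thm:G action commutes through SOD} then gives
\[
\Db(X)^{\mu_2} = \langle \cA_X^{\mu_2}, \langle\cO_X(-H)\rangle^{\mu_2}, \langle\cO_X(-E)\rangle^{\mu_2}, \langle\cO_X\rangle^{\mu_2}\rangle .
\]

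\emph{Second step: the action on each exceptional piece is trivial.} For $\cE = f^*\cE_i$, the subcategory $\langle \cE\rangle \simeq \Db(\mathrm{pt})$ carries the canonical linearisation of a pullback, $\tau^* f^* \cE_i \simeq f^*\cE_i$. These isomorphisms satisfy the cocycle condition because they come from $f\circ\tau = f$ on the nose. They therefore give a trivialisation $t_g \colon \id \xrightarrow{\sim} g^*$ on $\langle \cE\rangle$ compatible with the $c_{g,h}$. Proposition \ref{prop:trivial action SOD} then yields
\[
\langle \cE\rangle^{\mu_2} = \langle \cE\rho_0, \cE\rho_1\rangle .
\]
This is the main point to check. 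The relevant fact is that $H^2(\mu_2,\bC^*) = 0$ and that, for $\Hom(\cE,\cE)=\bC$, any action on $\langle\cE\rangle$ fixing $\cE$ is trivial after choosing a linearisation. The pullback linearisation is an explicit such choice.

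\emph{Third step: reorder the pieces.} Substituting the second step into the first gives
\[
\langle \cA_X^{\mu_2}, \cO_X(-H)\rho_0, \cO_X(-H)\rho_1, \cO_X(-E)\rho_0, \cO_X(-E)\rho_1, \cO_X\rho_0, \cO_X\rho_1\rangle,
\]
whereas the statement groups all the $\rho_0$ objects before all the $\rho_1$ objects. To reorder, it suffices to show that $\Hom^\bullet(\cE_a\rho_k, \cE_b\rho_l)=0$ whenever $k\neq l$ and $a<b$, so that the mutations are trivial. Equivariant Hom is the invariant part:
\[
\Hom^\bullet(\cE_a\rho_k,\cE_b\rho_l) = \Hom^\bullet_X(f^*\cE_a,f^*\cE_b)^{\rho_k\otimes\rho_l}.
\]
Using $f_*\cO_X = \cO_Y\oplus\cO_Y(-D)$, where $\mu_2$ acts by the sign character on the second summand, the anti-invariant part equals $\Hom^\bullet_Y(\cE_a,\cE_b(-D))$. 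This vanishes for all $a,b$ because $\{\underline{\cE}(-D),\underline{\cE}\}$ is exceptional in \eqref{eq: 2-8 Y SOD}. Hence the mixed-character pieces are completely orthogonal, and the $\rho_0$ and $\rho_1$ blocks can be reordered to give the decomposition stated in the lemma.
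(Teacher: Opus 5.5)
Your proof is correct and follows the paper's route. The paper's proof consists only of applying Theorem~\ref{thm:G action commutes through SOD} to the decomposition of Lemma~\ref{lem:blow_up_sod}. You additionally make explicit two points the paper leaves implicit: that the action on the exceptional pieces is trivial, and that the $\rho_0$ and $\rho_1$ objects can be reordered. Your vanishing of the anti-invariant part $\Hom^\bullet_Y(\cE_a,\cE_b(-D))$, via exceptionality of $\{\underline{\cE}(-D),\underline{\cE}\}$, is exactly what the reordering needs. You could also avoid the reordering: apply the theorem to the two-block decomposition $\langle \cA_X, \langle \cO_X(-H),\cO_X(-E),\cO_X\rangle\rangle$, then apply Proposition~\ref{prop:trivial action SOD} to the whole block, on which the action is trivial; this gives the stated order at once.
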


\begin{proof}
    Apply Lemma \ref{thm:G action commutes through SOD} to the semiorthogonal decomposition from Lemma \ref{lem:blow_up_sod}.
\end{proof}

Recall from Theorem \ref{thm: KP thm 4.1} that $\Db(X)^{\mu_2}$ admits another semiorthogonal decomposition given as $$\Db(X)^{\mu_2} = \langle f_0^* \Db(Y), j_{0 *} \Db(Z) \rangle.$$ Combining this with the semiorthogonal decomposition \eqref{eq: 2-8 Y SOD} of $\Db(Y)$, we obtain
\begin{align} 
        \Db(X)^{\mu_2} = \langle &f_0^* \Db(Y), j_{0 *} \Db(Z) \rangle \notag \\
        =\langle &\cO_X(-3H+E) \rho_0, \cO_X(-2H) \rho_0, \cO_X(-2H+E) \rho_0, \label{eq: 2-8 second equivariant SOD of X}\\
        &\cO_X(-H) \rho_0, \cO_X(-E) \rho_0, \cO_X \rho_0, j_{0*} \Db(Z) \rangle \notag.
\end{align}
We now show that the semiorthogonal decomposition of $\Db(X)^{\mu_2}$ of Lemma \ref{lem:second 2-8 equivariant SOD} is a mutation of \eqref{eq: 2-8 second equivariant SOD of X}, which allows us to compute the equivariant Kuznetsov component $\cA_X^{\mu_2}$. 

\begin{proposition}\label{prop:2-8_equivariant_Ku}
    We have 
    \begin{equation*}
        \cA_X^{\mu_2} \simeq \Db(Z) .
    \end{equation*}
\end{proposition}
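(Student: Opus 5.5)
We compare the two semiorthogonal decompositions of $\Db(X)^{\mu_2}$: the one in Lemma \ref{lem:second 2-8 equivariant SOD},
\[
\langle \cA_X^{\mu_2}, \cO_X(-H)\rho_0,\cO_X(-E)\rho_0,\cO_X\rho_0,\cO_X(-H)\rho_1,\cO_X(-E)\rho_1,\cO_X\rho_1\rangle,
\]
and the one in \eqref{eq: 2-8 second equivariant SOD of X}. Both contain the block $\langle \cO_X(-H)\rho_0,\cO_X(-E)\rho_0,\cO_X\rho_0\rangle = f_0^*\langle \cE_1,\cE_2,\cE_3\rangle$. The plan is to mutate the first decomposition until the same exceptional objects appear in both, and then identify the remaining components:
\[
\cA_X^{\mu_2}\simeq \text{(left orthogonal complement of the common exceptional objects)} \simeq j_{0*}\Db(Z)\simeq \Db(Z),
\]
where the last equivalence is Theorem \ref{thm: KP thm 4.1}.

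\textbf{Step 1: the key identification.} In $\Db(X)^{\mu_2}$ we need to express $f_1^*\cE_i = f^*\cE_i\otimes\rho_1$ in terms of $f_0^*\cE_i(-D)$ and $j_{0*}$-objects. Since $\cO_X(-R)$ carries the nontrivial character, where $R$ is the ramification divisor with $2R=f^*Z$ and $\cO_X(R)\simeq f^*\cO_Y(D)$, we get an equivariant isomorphism $\cO_X(-R)\simeq f^*\cO_Y(-D)\otimes\rho_1$ (or with $\rho_0$, depending on conventions; fix the sign carefully). Twisting the equivariant sequence
\[
0\to\cO_X(-R)\to\cO_X\to j_*\cO_Z\to 0
\]
by $f^*\cE_i$ gives triangles
\[
f_0^*\cE_i(-D)\to f_1^*\cE_i \to j_{1*}(\cE_i|_Z)
\]
or, with the other convention, the analogous triangles with the roles of $\rho_0$ and $\rho_1$ exchanged.

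\textbf{Step 2: mutation.} Use Lemma \ref{lem:SODs and Serre functors} to move the $\rho_1$ block to the far left. The Serre functor of $\Db(X)^{\mu_2}$ is $-\otimes\omega_X[3]$ with its equivariant structure. We have $\omega_X = f^*\cO(K_Y+D)=f^*\cO(-D)$, twisted by a character. So $S^{-1}$ sends $f^*\cE_i\otimes\rho_1$ to something of the form $f^*\cE_i(-D)\otimes\rho_{0}$ or $\rho_1$ (up to shift), and we must track which character appears.

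If the character comes out as $\rho_0$, the decomposition becomes
\[
\langle f_0^*\cE_\bullet(-D),\ \cA_X^{\mu_2},\ f_0^*\cE_\bullet\rangle .
\]
Comparing with \eqref{eq: 2-8 second equivariant SOD of X}, written as $\langle f_0^*\cE_\bullet(-D), f_0^*\cE_\bullet, j_{0*}\Db(Z)\rangle$, we then mutate $j_{0*}\Db(Z)$ leftward past $f_0^*\cE_\bullet$, using $\bL_{\langle f_0^*\cE_\bullet\rangle}$. The left mutation is an equivalence onto its image, and uniqueness of orthogonal complements gives
\[
\cA_X^{\mu_2}=\bL_{\langle f_0^*\cE_\bullet\rangle}\, j_{0*}\Db(Z)\simeq \Db(Z).
\]

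If instead the character comes out as $\rho_1$, we first rewrite via the triangles of Step 1, which relate $f_1^*\cE_i$ to $f_0^*\cE_i(-D)$ modulo $j_*$-objects. Alternatively, a cleaner route is to write $\Db(Y)=\langle \cE_\bullet(-D),\cE_\bullet\rangle$ and apply Theorem \ref{thm: KP thm 4.1} with $\rho_1$ in place of $\rho_0$ (the theorem holds for either character by symmetry), choosing the version that matches.

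\textbf{Main obstacle.} The delicate step is bookkeeping the characters: determining whether the equivariant structures on $\omega_X$ and $\cO_X(-R)$ carry $\rho_0$ or $\rho_1$, and verifying that the twisted $\rho_1$-block lands exactly on $f_0^*\cE_\bullet(-D)$ rather than on a shifted or twisted variant. I would first settle this on the local model $x\mapsto x^2$, where the function $x$ on $X$ is anti-invariant, and then check that the semiorthogonality conditions match those of \eqref{eq: 2-8 Y SOD}.
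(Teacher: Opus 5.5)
Your argument is the paper's proof run in mirror image. The paper takes $\langle f_0^*\Db(Y), j_{0*}\Db(Z)\rangle$ and moves the block $f_0^*\mathcal{E}_\bullet(-D)$ to the right end with $S^{-1}$, where it becomes $f_1^*\mathcal{E}_\bullet$. You instead move the $\rho_1$-block of the other decomposition to the left end, which uses $S$, not $S^{-1}$ as you wrote. Both arguments then left-mutate $j_{0*}\Db(Z)$ through $f_0^*\mathcal{E}_\bullet$ and compare.

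The one thing you leave undecided must be decided, because your argument only closes in one case. It is exactly the fact the paper uses: $S_{\Db(X)^{\mu_2}}\simeq -\otimes\mathcal{O}_X(-2H+E)\rho_1[3]$, i.e.\ $\omega_X\simeq f^*\mathcal{O}_Y(-D)\otimes\rho_1$ equivariantly. Your local model settles it. For $y=x^2$, the generator $dx$ of $\omega_X$ is anti-invariant, while $f^*dy=2x\,dx$ is invariant. Equivalently, $\omega_X\simeq f^*\omega_Y\otimes\mathcal{O}_X(R)$, and the section of $f^*\mathcal{O}_Y(D)$ cutting out $R$ is anti-invariant, so $\mathcal{O}_X(R)\simeq f^*\mathcal{O}_Y(D)\otimes\rho_1$.

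Hence $S(f_1^*\mathcal{E}_i)\simeq f_0^*\mathcal{E}_i(-D)[3]$. Only your first case occurs, and with this fact stated your argument is complete. Step 1 and the fallback for the other case can be dropped; that fallback is only sketched and, as written, would not close the argument.
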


\begin{proof}
    We apply Lemma \ref{lem:SODs and Serre functors} to the semiorthogonal decomposition \eqref{eq: 2-8 second equivariant SOD of X}, with
    \begin{equation*}
        \cA_1 = \langle \cO_X(-3H+E) \rho_0, \cO_X(-2H) \rho_0, \cO_X(-2H+E) \rho_0 \rangle 
    \end{equation*}
    and 
    \begin{equation*}
        \cA_2 = \langle \cO_X(-H) \rho_0, \cO_X(-E) \rho_0, \cO_X \rho_0, j_{0*} \Db(Z) \rangle .
    \end{equation*}
    Note that $S_{\Db(X)^{\mu_2}}(-) = - \otimes \cO_X(-2H+E) \rho_1$. Thus, we get
    \begin{align}
        \Db(X)^{\mu_2} = \langle &\cA_2, S_{\Db(X)^{\mu_2}}^{-1}(\cA_1)  \rangle \notag\\
        = \langle &\cO_X(-H) \rho_0, \cO_X(-E) \rho_0, \cO_X \rho_0 , j_{0*} \Db(Z) ,
        \cO_X(-H) \rho_1 , \cO_X(-E) \rho_1, \cO_X \rho_1 \rangle \notag\\
        = \langle &\bL_{\cO_X(-H) \rho_0, \cO_X(-E) \rho_0, \cO_X \rho_0} j_{0*} \Db(Z) , \cO_X(-H) \rho_0, \cO_X(-E) \rho_0, \cO_X \rho_0, \notag\\
        &\cO_X(-H) \rho_1 , \cO_X(-E) \rho_1, \cO_X \rho_1   \rangle . \notag
    \end{align}
    
    This exhibits $\bL_{\cO_X(-H) \rho_0, \cO_X(-E) \rho_0, \cO_X \rho_0} j_{0*} \Db(Z) \simeq \Db(Z)$ as the right-orthogonal of the subcategory
    \[
    \langle \cO_X(-H) \rho_0, \cO_X(-E) \rho_0, \cO_X \rho_0, \cO_X(-H) \rho_1 , \cO_X(-E) \rho_1, \cO_X \rho_1   \rangle,
    \]
    which is precisely $\cA_X^{\mu_2}$, by Lemma \ref{lem:second 2-8 equivariant SOD}.
\end{proof}

\begin{proposition} \label{prop:2-8_serre_involution_relationship}
    We have the isomorphism of functors 
    \begin{equation*}
        S_{\cA_X} \simeq \tau_{\cA_X}[2] .
    \end{equation*}
\end{proposition}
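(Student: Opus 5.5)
The plan is to apply Proposition \ref{prop: IK prop 3.2} directly, in the form already set up for Lemma \ref{lem:blow_up_sod}. Here $X$ is a double cover of $Y=\Bl_p\bP^3$ branched in $Z\in|-K_Y|$. Write $D=2H-E$, so that $Z\sim 2D\sim -K_Y$; this is the divisor playing the role of $H$ in Proposition \ref{prop: double cover SODs IK}. Take the exceptional collection $\underline{\cE}=\{\cE_1,\cE_2,\cE_3\}=\{\cO_Y(-H),\cO_Y(-E),\cO_Y\}$. The hypotheses of Proposition \ref{prop: IK prop 3.2} are then twofold: the concatenation $\{\underline{\cE}(-D),\underline{\cE}\}$ must be an exceptional collection in $\Db(Y)$, and it must be full.

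First I would verify the twisted collection. Since $-H-D=-3H+E$, $-E-D=-2H$ and $-D=-2H+E$, the collection $\underline{\cE}(-D)$ is exactly
\[
\cO_Y(-3H+E),\ \cO_Y(-2H),\ \cO_Y(-2H+E).
\]
Hence $\{\underline{\cE}(-D),\underline{\cE}\}$ coincides with the six-term collection in \eqref{eq: 2-8 Y SOD}. Both exceptionality and fullness of that collection are the content of \cite[Proposition 3.6]{IK}, which we have already quoted. If one wanted to check this independently, I would start from Orlov's blow-up formula $\Db(Y)=\langle \iota_*\cO_E(-E)\text{-type object},\ \pi^*\Db(\bP^3)\rangle$ with Beilinson's collection on $\bP^3$. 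I would then mutate, using the short exact sequences $0\to\cO_Y(aH-E)\to\cO_Y(aH)\to\cO_E\to 0$ (note $H|_E$ is trivial), to reach the stated line bundles. Vanishing of the relevant $\Hom^\bullet$ reduces to cohomology of line bundles $\cO_Y(aH+bE)$, which is computed via $\pi_*\cO_Y(-kE)=I_p^k$ as in Remark \ref{rem: 2-8 singular quartic}.

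With both hypotheses in place, Proposition \ref{prop: IK prop 3.2} gives $S_{\cA_X}\simeq\tau_{\cA_X}[\dim X-1]=\tau_{\cA_X}[2]$, since $\dim X=3$. Here $\cA_X$ is precisely the right orthogonal of $f^*\underline{\cE}$, which matches Lemma \ref{lem:blow_up_sod}.

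The only genuinely nontrivial step is fullness and exceptionality of \eqref{eq: 2-8 Y SOD}, which we are taking from \cite{IK}. A secondary point needs care: the smoothness of $Z$ (K3-generality) ensures that $X$ is smooth, so $\cA_X$ is an admissible subcategory with a Serre functor. This is the standing assumption under which \cite[Proposition 3.2]{IK} holds.
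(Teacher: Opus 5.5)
Your proposal is correct and is essentially the paper's argument. The paper simply cites \cite[Corollary 3.7]{IK}, which packages exactly what you do: feed the full exceptional collection \eqref{eq: 2-8 Y SOD} of \cite[Proposition 3.6]{IK}, written as $\{\underline{\cE}(-D),\underline{\cE}\}$ with $D=2H-E$, into Proposition \ref{prop: IK prop 3.2} to obtain $S_{\cA_X}\simeq\tau_{\cA_X}[2]$.
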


\begin{proof}
    This is by \cite[Corollary 3.7]{IK}.
\end{proof}

\subsection{Family 3-1}

Recall that the Fano threefolds $X$ in Family 3-1 are double covers of $Y = \bP^1 \times \bP^1 \times \bP^1$ branched in divisors $Z$ of tridegree $(2,2,2)$. In this subsection, we assume that $X$ is a K3-general Fano threefold in Family 3-1, so that $Z$ is a smooth K3 surface. We first compute some mutations that we will need later on.

\begin{lemma} \label{lem:3-1_mutations}
    We have 
    \begin{align*}
        &\bR_{\cO_Y(0,0,0)} \cO_Y(-1,0,0) \simeq \cO_Y(1,0,0)[-1] \\
        &\bR_{\cO_Y(0,0,0)} \cO_Y(0,-1,0) \simeq \cO_Y(0,1,0)[-1] \\
        &\bR_{\cO_Y(0,0,0)} \cO_Y(0,0,-1) \simeq \cO_Y(0,0,1)[-1] .
    \end{align*}
\end{lemma}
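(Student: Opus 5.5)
By symmetry of the three factors it suffices to treat the first isomorphism. Since $\cO_Y(0,0,0)$ is exceptional, the right mutation is given by the triangle
\[
\bR_{\cO_Y} \cO_Y(-1,0,0) \to \cO_Y(-1,0,0) \to \cO_Y \otimes \Hom^\bullet(\cO_Y(-1,0,0), \cO_Y)^\vee .
\]
The plan is to compute the graded vector space in this triangle, identify the evaluation map, and recognise its cocone as a shift of $\cO_Y(1,0,0)$.

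First, compute $\Hom^\bullet(\cO_Y(-1,0,0),\cO_Y) = H^\bullet(Y,\cO_Y(1,0,0))$. By the K\"unneth formula this is $H^\bullet(\bP^1,\cO(1)) \otimes H^\bullet(\bP^1,\cO)\otimes H^\bullet(\bP^1,\cO) = V[0]$, where $V \coloneqq H^0(\bP^1,\cO(1))$ is $2$-dimensional. The triangle then reads
\[
\bR_{\cO_Y} \cO_Y(-1,0,0) \to \cO_Y(-1,0,0) \to V^\vee \otimes \cO_Y ,
\]
and the second map is the canonical coevaluation map $\cO_Y(-1,0,0) \to H^0(\cO_Y(1,0,0))^\vee \otimes \cO_Y$.

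Next, observe that this map is pulled back along $p_1 \colon Y \to \bP^1$ from the corresponding map on $\bP^1$. There the Euler sequence, in its dualised form, gives a short exact sequence
\[
0 \to \cO_{\bP^1}(-1) \to V^\vee \otimes \cO_{\bP^1} \to \cO_{\bP^1}(1) \to 0 .
\]
Its first map is exactly the coevaluation map: the cokernel is a line bundle of degree $0-(-2)+(-1)\cdot 0$, which by rank and degree count is $\det(V^\vee\otimes\cO)\otimes\cO(1) = \cO(1)$. Equivalently, this is the Koszul complex of $\bP^1$ twisted by $\cO(1)$. Pulling back along the flat morphism $p_1$ yields an exact sequence
\[
0 \to \cO_Y(-1,0,0) \to V^\vee\otimes \cO_Y \to \cO_Y(1,0,0)\to 0
\]
on $Y$. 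The cone of $\cO_Y(-1,0,0)\to V^\vee\otimes\cO_Y$ is therefore $\cO_Y(1,0,0)$, so its cocone is $\cO_Y(1,0,0)[-1]$, which gives $\bR_{\cO_Y}\cO_Y(-1,0,0)\simeq \cO_Y(1,0,0)[-1]$.

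The only point requiring care is checking that the map in the mutation triangle is the coevaluation map, so that it agrees with the Euler-sequence injection rather than some other map $\cO(-1)\to\cO^2$. This follows from the definition of the right mutation via the unit of the adjunction, whose components are dual to the evaluation pairing. Alternatively, one can avoid this check: any nonzero-on-fibres map is equivalent to it up to $\mathrm{GL}(V)$, and the resulting cone is a line bundle of the same class.
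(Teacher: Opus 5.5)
Your proof is correct and follows the same route as the paper: compute $\Hom^\bullet(\mathcal{O}_Y(-1,0,0),\mathcal{O}_Y)\simeq \bC^2[0]$ by K\"unneth, then identify the mutation triangle with the pullback along $p_1$ of the Euler sequence on $\bP^1$. Your remark that the map in the triangle is the coevaluation fills in a point the paper leaves implicit; as a minor slip, the degree of the cokernel of $\mathcal{O}_{\bP^1}(-1)\to\mathcal{O}_{\bP^1}^{\oplus 2}$ is $0-(-1)=1$ rather than the garbled expression you wrote, though your conclusion that it is $\mathcal{O}_{\bP^1}(1)$ is right.
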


\begin{proof}
    We show the first isomorphism. For the mutation in question, the relevant Hom-space is the dual of
    \begin{align*}
        \Hom^\bullet(\cO_Y(-1,0,0), \cO_Y(0,0,0)) &= \Hom^\bullet(\cO_Y(0,0,0), \cO_Y(1,0,0)) \\
        &\simeq H^\bullet(\bP^1, \cO_{\bP^1}(1)) \otimes H^\bullet(\bP^1, \cO_{\bP^1}) \otimes H^\bullet(\bP^1, \cO_{\bP^1}) \\
        &= \bC^2[0]
    \end{align*}
    where for the second isomorphism, we have used the Künneth Theorem for sheaf cohomology. Hence, the right mutation fits into the triangle 
    \begin{equation*}
        \bR_{\cO_Y(0,0,0)} \cO_Y(-1,0,0) \to \cO_Y(-1,0,0) \to \cO_Y(0,0,0)^{\oplus 2} .
    \end{equation*}
    Now consider the Euler short exact sequence $0 \to \cO_{\bP^1}(-1) \to \cO_{\bP^1}^{\oplus 2} \to \cO_{\bP^1}(1) \to 0$ on the first $\bP^1$ factor. Pulling this back to $Y$ and comparing with the triangle above gives the desired isomorphism. The rest of the isomorphisms are similar. 
\end{proof}

\begin{lemma} \label{lem:3-1 D(Y) SOD}
    We have the semiorthogonal decomposition
    \begin{align*}
        \Db(Y) = \langle &\cO_Y(-1,-1,-1),\cO_Y(0,-1,-1), \cO_Y(-1,0,-1), \cO_Y(-1,-1,0), \\
&\cO_Y(0,0,0), \cO_Y(1,0,0), \cO_Y(0,1,0), \cO_Y(0,0,1) \rangle .
    \end{align*}
\end{lemma}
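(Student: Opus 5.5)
Start from the Künneth / Beilinson description. On $\bP^1$ we have the full strong exceptional collections $\langle \cO(-1), \cO\rangle$ and $\langle \cO, \cO(1)\rangle$. By the box-product theorem (Orlov, or Bondal), the eight line bundles $\cO_Y(a,b,c)$ with $a,b,c\in\{-1,0\}$ form a full exceptional collection on $Y=\bP^1\times\bP^1\times\bP^1$. Concretely, we order them by total degree:
\[
\Db(Y)=\langle \cO_Y(-1,-1,-1),\ \cO_Y(0,-1,-1),\cO_Y(-1,0,-1),\cO_Y(-1,-1,0),\ \cO_Y(-1,0,0),\cO_Y(0,-1,0),\cO_Y(0,0,-1),\ \cO_Y(0,0,0)\rangle .
\]
Semiorthogonality reduces, via Künneth, to the vanishing $H^\bullet(\bP^1,\cO(-1))=0$: for $\cO_Y(a,b,c)$ placed after $\cO_Y(a',b',c')$ in this order, some coordinate satisfies $a'-a=-1$, and the others lie in $\{-1,0\}$.

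The key check is that within each block of equal total degree the objects are completely orthogonal. For example, $\Hom^\bullet(\cO_Y(0,-1,-1),\cO_Y(-1,0,-1))=H^\bullet(\cO(-1))\otimes H^\bullet(\cO(1))\otimes H^\bullet(\cO)=0$. So the ordering inside a block is irrelevant, and the blocks may be permuted freely. Fullness follows from the product theorem.

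Next, I move the three objects $\cO_Y(-1,0,0),\cO_Y(0,-1,0),\cO_Y(0,0,-1)$ to the right past $\cO_Y(0,0,0)$ by right mutation. The standard mutation identity is
\[
\langle \cdots, E, F,\cdots\rangle=\langle\cdots, F,\bR_F E,\cdots\rangle .
\]
First apply it to $E=\cO_Y(0,0,-1)$ and $F=\cO_Y(0,0,0)$. Lemma \ref{lem:3-1_mutations} gives $\bR_{\cO_Y}\cO_Y(0,0,-1)\simeq\cO_Y(0,0,1)[-1]$, and the shift does not change the generated subcategory. Then do the same for $\cO_Y(0,-1,0)$ and finally for $\cO_Y(-1,0,0)$. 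At each stage the object being mutated is adjacent to $\cO_Y(0,0,0)$, because the earlier ones have already been moved past it. This yields
\[
\langle \cO_Y(-1,-1,-1),\dots,\cO_Y(-1,-1,0),\ \cO_Y(0,0,0),\ \cO_Y(1,0,0),\cO_Y(0,1,0),\cO_Y(0,0,1)\rangle,
\]
after reversing the order of the last three objects. This reversal is harmless: by Künneth, $\Hom^\bullet(\cO_Y(1,0,0),\cO_Y(0,1,0))=H^\bullet(\cO(-1))\otimes\cdots=0$, so those three are mutually orthogonal. This is exactly the claimed decomposition.

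The only place requiring care is the bookkeeping of adjacency during the successive mutations, together with the complete orthogonality within blocks. Both are immediate Künneth computations using $H^\bullet(\bP^1,\cO(-1))=0$, so I expect no real obstacle.
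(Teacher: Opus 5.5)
Your proposal is correct and follows the paper's proof. Like the paper, you start from the full exceptional collection $\{\cO_Y(i,j,k)\}_{i,j,k\in\{-1,0\}}$ ordered by total degree, then move $\cO_Y(-1,0,0),\cO_Y(0,-1,0),\cO_Y(0,0,-1)$ past $\cO_Y$ using Lemma \ref{lem:3-1_mutations}; you additionally spell out the Künneth checks and the adjacency bookkeeping that the paper leaves implicit.

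One small bookkeeping point: mutating $\cO_Y(0,0,-1)$, then $\cO_Y(0,-1,0)$, then $\cO_Y(-1,0,0)$ already yields $\cO_Y(1,0,0),\cO_Y(0,1,0),\cO_Y(0,0,1)$ in the stated order. So no reversal is needed, though, as you note, it would be harmless by mutual orthogonality.
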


\begin{proof}
    We have 
    \begin{align*}
    \Db(Y) = \langle &\Db(\bP^1) \boxtimes \Db(\bP^1) \boxtimes \Db(\bP^1) \rangle \\
    = \langle  &\cO_Y(i,j,k) \rangle_{i,j,k \in \{-1, 0 \}} \\
    = \langle &\cO_Y(-1,-1,-1) , \cO_Y(0,-1,-1), \cO_Y(-1,0,-1) , \cO_Y(-1,-1,0) , \\
    &\cO_Y(-1,0,0), \cO_Y(0,-1,0), \cO_Y(0,0,-1), \cO_Y(0,0,0)  \rangle
\end{align*}
    by \cite[Theorem 5.8]{KuzBC}. Now apply Lemma \ref{lem:3-1_mutations} to get the desired semiorthogonal decomposition.
\end{proof}

 Set 
\begin{equation*}
    \mathcal{E}_1 \coloneqq \cO_Y(0,0,0), \quad \mathcal{E}_2 \coloneqq \cO_Y(1,0,0), \quad \mathcal{E}_3 \coloneqq \cO_Y(0,1,0), \quad \mathcal{E}_4 \coloneqq \cO_Y(0,0,1) 
\end{equation*}
with $\underline{\cE} = \{ \cE_1, \dots, \cE_4 \}$. Also set $\mathbf{1} = \cO_Y(1,1,1)$. Then clearly $\underline{\cE}$ is an exceptional collection in $\Db(Y)$, as is $\{ \underline{\cE}(-\mathbf{1}), \underline{\cE} \} = \Db(Y)$ by Lemma \ref{lem:3-1 D(Y) SOD}. Then by Proposition \ref{prop: double cover SODs IK}, we get following semiorthogonal decomposition.

\begin{lemma} \label{lem:3-1 A_X SOD}
    We have the semiorthogonal decomposition
    \begin{align*}
        \Db(X) = \langle \cA_X ,  \cO_X(0,0,0), \cO_X(1,0,0), \cO_X(0,1,0), \cO_X(0,0,1)  \rangle .
    \end{align*}
\end{lemma}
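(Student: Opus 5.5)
The decomposition should follow directly from Proposition \ref{prop: double cover SODs IK}, applied to the exceptional collection $\underline{\cE} = \{\cO_Y(0,0,0), \cO_Y(1,0,0), \cO_Y(0,1,0), \cO_Y(0,0,1)\}$ on $Y = \bP^1\times\bP^1\times\bP^1$ with $H = \mathbf{1} = (1,1,1)$. Then $Z \sim 2H \sim -K_Y$, since $Z$ has tridegree $(2,2,2)$. So the proof reduces to checking two things: that $\underline{\cE}$ is exceptional, and that the doubled collection $\{\underline{\cE}(-\mathbf{1}), \underline{\cE}\}$ is exceptional.

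\textbf{Exceptionality of $\underline{\cE}$.} All objects are line bundles, so each is exceptional because $H^\bullet(Y,\cO_Y) = \bC[0]$ by Künneth. For semiorthogonality I need $\Hom^\bullet(\cE_j, \cE_i) = 0$ for $j > i$. By Künneth this is $H^\bullet$ of a line bundle with a $-1$ in some factor: $\cO_Y(-1,0,0)$, $\cO_Y(1,-1,0)$, and so on. These are all acyclic because $H^\bullet(\bP^1, \cO(-1)) = 0$.

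\textbf{Exceptionality of the doubled collection.} Twisting $\underline{\cE}$ by $-\mathbf{1}$ gives
\[
\{\cO_Y(-1,-1,-1), \cO_Y(0,-1,-1), \cO_Y(-1,0,-1), \cO_Y(-1,-1,0)\},
\]
which are exactly the first four objects of Lemma \ref{lem:3-1 D(Y) SOD}. The last four objects there are $\underline{\cE}$. Hence $\{\underline{\cE}(-\mathbf{1}), \underline{\cE}\}$ is precisely the exceptional collection of Lemma \ref{lem:3-1 D(Y) SOD}, and in fact it is full. If one wants to double-check the cross-orthogonalities directly, they amount to the vanishing of $H^\bullet(Y, \cO_Y(a,b,c))$ where some entry equals $-1$. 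The only delicate pairs involve an entry of $-2$, for example $\Hom^\bullet(\cO_Y(1,0,0), \cO_Y(-1,0,-1))$, and these vanish because the third factor contributes $H^\bullet(\bP^1,\cO(-1)) = 0$.

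With both hypotheses verified, Proposition \ref{prop: double cover SODs IK} shows that $f^*\underline{\cE} = \{\cO_X(0,0,0), \cO_X(1,0,0), \cO_X(0,1,0), \cO_X(0,0,1)\}$ is exceptional in $\Db(X)$. It also gives the decomposition $\Db(X) = \langle \cA_X, f^*\underline{\cE}\rangle$, with $\cA_X$ the right orthogonal, which is the statement.

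The only step requiring care is the identification with Lemma \ref{lem:3-1 D(Y) SOD}, which already encodes the mutation computation of Lemma \ref{lem:3-1_mutations}. Once that identification is made, there is no real obstacle. Fullness is not needed here; it will instead feed into Proposition \ref{prop: IK prop 3.2} later.
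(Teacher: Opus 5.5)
Your proposal is correct and is essentially the paper's argument. The paper likewise applies Proposition~\ref{prop: double cover SODs IK} with $H=\mathbf{1}$, observes that $\underline{\cE}$ is exceptional, and identifies $\{\underline{\cE}(-\mathbf{1}),\underline{\cE}\}$ with the (full) exceptional collection of Lemma~\ref{lem:3-1 D(Y) SOD}; your direct Künneth check of the cross-orthogonalities is a correct extra confirmation.
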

From Lemma \ref{lem:3-1 A_X SOD}, we obtain the first semiorthogonal decomposition of $\Db(X)^{\mu_2}$.
\begin{lemma} \label{lem:3-1 second equivariant sod}
    We have the semiorthogonal decomposition 
    \begin{align*}
        \Db(X)^{\mu_2} = \langle  &\cA_X^{\mu_2}, \mathcal{O}_X(0,0,0) \rho_0, \mathcal{O}_X(1,0,0) \rho_0, \mathcal{O}_X(0,1,0) \rho_0, \mathcal{O}_X(0,0,1) \rho_0 \\
        & \mathcal{O}_X(0,0,0)\rho_1, \mathcal{O}_X(1,0,0) \rho_1, \mathcal{O}_X(0,1,0) \rho_1, \mathcal{O}_X(0,0,1) \rho_1 \rangle .
    \end{align*}
\end{lemma}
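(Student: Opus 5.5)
This follows directly from Elagin's descent theorem, Theorem \ref{thm:G action commutes through SOD}, applied to the semiorthogonal decomposition of Lemma \ref{lem:3-1 A_X SOD}, together with Proposition \ref{prop:trivial action SOD} for the exceptional pieces. The argument runs parallel to Lemma \ref{lem:second 2-8 equivariant SOD} for Family 2-8.

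First, I will check that the decomposition
\[
\Db(X) = \langle \cA_X, \cO_X(0,0,0), \cO_X(1,0,0), \cO_X(0,1,0), \cO_X(0,0,1)\rangle
\]
is preserved by the covering involution $\tau$. Each exceptional object is a pullback $f^*\cE_i$ of a line bundle on $Y$. Since $f\circ\tau = f$, there is a canonical isomorphism $\tau^* f^*\cE_i \simeq f^*\cE_i$, and these isomorphisms satisfy the cocycle condition. Hence each subcategory $\langle f^*\cE_i\rangle$ is $\mu_2$-stable, and in fact $\mu_2$ acts trivially on it in the sense used in Proposition \ref{prop:trivial action SOD}, because the linearisation $f^*\cE_i\rho_0$ provides the trivialising isomorphisms $t_g$. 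The component $\cA_X$ is the right orthogonal of a $\tau$-stable subcategory, and $\tau^*$ is an autoequivalence, so $\tau^*\cA_X = \cA_X$ as well.

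Next, Theorem \ref{thm:G action commutes through SOD} gives
\[
\Db(X)^{\mu_2} = \langle \cA_X^{\mu_2}, \langle\cO_X(0,0,0)\rangle^{\mu_2}, \langle\cO_X(1,0,0)\rangle^{\mu_2}, \langle\cO_X(0,1,0)\rangle^{\mu_2}, \langle\cO_X(0,0,1)\rangle^{\mu_2}\rangle.
\]
For each exceptional object $E$, the category $\langle E\rangle \simeq \Db(\mathrm{pt})$ carries the trivial action. Proposition \ref{prop:trivial action SOD} therefore splits $\langle E\rangle^{\mu_2}$ into $\langle E\rho_0, E\rho_1\rangle$, and these two pieces are mutually completely orthogonal, since $\Hom$ between different characters vanishes after taking invariants. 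This gives the interleaved order $E_1\rho_0,E_1\rho_1,E_2\rho_0,\dots$.

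Finally, I will reorder to the order in the statement, all $\rho_0$ pieces followed by all $\rho_1$ pieces. For this I need $\Hom^\bullet(\cE_j\rho_k, \cE_i\rho_l) = 0$ whenever $k\ne l$ and $j>i$. Because $f_*\cO_X = \cO_Y\oplus\cO_Y(-\mathbf{1})$ with the summands carrying the characters $\rho_0$ and $\rho_1$, the group
\[
\Hom^\bullet(\cE_j\rho_k,\cE_i\rho_{1-k})
\]
equals the $\rho_1$-isotypic part of $\Hom^\bullet_X(f^*\cE_j, f^*\cE_i)$. That part is $\Hom^\bullet_Y(\cE_j, \cE_i(-\mathbf{1}))$, which vanishes by the exceptionality of $\{\underline{\cE}(-\mathbf{1}),\underline{\cE}\}$ (Lemma \ref{lem:3-1 D(Y) SOD}). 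The same vanishing also holds for $j=i$. So the $\rho_1$-blocks can be commuted past the later $\rho_0$-blocks, which yields the stated decomposition. The only point needing care is this isotypic bookkeeping; everything else is formal.
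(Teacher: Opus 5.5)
Your proposal is correct and follows the paper's argument: the paper's proof is the one-line application of Elagin's theorem (Theorem \ref{thm:G action commutes through SOD}) to Lemma \ref{lem:3-1 A_X SOD}, and you spell out what it leaves implicit, namely splitting each $\langle f^*\mathcal{E}_i\rangle^{\mu_2}$ into its $\rho_0$ and $\rho_1$ pieces and then reordering. One small index slip: the new vanishing needed to commute the blocks is $\Hom^\bullet(\mathcal{E}_j\rho_1,\mathcal{E}_i\rho_0)=0$ for $j<i$ (the case $j>i$ already follows from the interleaved order), but this is harmless because your computation $\Hom^\bullet_Y(\mathcal{E}_j,\mathcal{E}_i(-\mathbf{1}))=0$ holds for all $i,j$.
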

\begin{proof}
    Apply Theorem \ref{thm:G action commutes through SOD} to the semiorthogonal decomposition from Lemma \ref{lem:3-1 A_X SOD}.
\end{proof}

The second semiorthogonal decomposition of $\Db(X)^{\mu_2}$ comes from \Cref{thm: KP thm 4.1}.
\begin{lemma} \label{lem:3-1 cover SOD}
    We have the semiorthogonal decomposition 
    \begin{align*}
        \Db(X)^{\mu_2} = \langle &f_0^* \Db(Y), j_{0*} \Db(Z) \rangle \\
        = \langle &\cO_X(-1,-1,-1) \rho_0 , \cO_X(0,-1,-1) \rho_0, \cO_X(-1,0,-1) \rho_0 , \cO_X(-1,-1,0) \rho_0 \\
    &\cO_X(-1,0,0) \rho_0, \cO_X(0,-1,0) \rho_0, \cO_X(0,0,-1) \rho_0, \cO_X(0,0,0) \rho_0 , j_{0*} \Db(Z) \rangle .
    \end{align*}
\end{lemma}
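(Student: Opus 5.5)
The decomposition is obtained by combining \Cref{thm: KP thm 4.1} with the exceptional collection on $\Db(Y)$ from the proof of \Cref{lem:3-1 D(Y) SOD}, and then identifying the images of the line bundles under $f_0^*$.

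\Cref{thm: KP thm 4.1} gives $\Db(X)^{\mu_2} = \langle f_0^*\Db(Y), j_{0*}\Db(Z)\rangle$, with $f_0^*$ fully faithful. In the proof of \Cref{lem:3-1 D(Y) SOD}, the full exceptional collection
\[
\langle \cO_Y(i,j,k)\rangle_{i,j,k\in\{-1,0\}},
\]
ordered as the eight line bundles $\cO_Y(-1,-1,-1),\dots,\cO_Y(0,0,0)$, was obtained from \cite[Theorem 5.8]{KuzBC} applied to $\Db(\bP^1)=\langle\cO(-1),\cO\rangle$ on each factor. Since $f_0^*$ is fully faithful and exact, it carries this exceptional collection to an exceptional collection in $\Db(X)^{\mu_2}$ generating $f_0^*\Db(Y)$. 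Substituting it into the first component gives the second line of the claimed decomposition.

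It remains to identify $f_0^*\cO_Y(a,b,c)$ with $\cO_X(a,b,c)\rho_0$. By definition, $f_0^*$ first tensors with the trivial character $\rho_0$, which equips $\cO_Y(a,b,c)$ with the trivial linearisation. It then pulls back along the $\mu_2$-invariant map $f$. The result is $\cO_X(a,b,c)\coloneqq f^*\cO_Y(a,b,c)$ with its canonical (pullback) $\mu_2$-linearisation, which is exactly the object denoted $\cO_X(a,b,c)\rho_0$ in \Cref{lem:3-1 second equivariant sod}.

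The only point needing care is this convention check: the notation $\cO_X(a,b,c)\rho_0$ used here must match the one in \Cref{lem:3-1 second equivariant sod}, where it arises from applying \Cref{thm:G action commutes through SOD} and \Cref{prop:trivial action SOD} to the $\mu_2$-invariant exceptional object $\cO_X(a,b,c)$. The group $\mu_2$ acts trivially on the subcategory generated by such an object, and its canonical linearisation is the pullback one. The two notations therefore agree, and the lemma follows.
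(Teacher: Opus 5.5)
Your proposal is correct and follows the paper's own argument: combine Theorem~\ref{thm: KP thm 4.1} with the full exceptional collection of eight line bundles $\cO_Y(i,j,k)$, $i,j,k\in\{-1,0\}$, which appears in the proof of Lemma~\ref{lem:3-1 D(Y) SOD} before the final mutation. You also correctly use that pre-mutation collection, and your check that $f_0^*\cO_Y(a,b,c)=\cO_X(a,b,c)\rho_0$ spells out a notational identification the paper leaves implicit.
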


\begin{proof}
    This is an instance of \Cref{thm: KP thm 4.1} combined with Lemma \ref{lem:3-1 D(Y) SOD}.
\end{proof}

\begin{proposition} \label{prop:3-1_equivariant_Ku}
    We have 
    \begin{equation*}
        \cA_X^{\mu_2} \simeq \Db(Z) .
    \end{equation*}
\end{proposition}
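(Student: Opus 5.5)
We follow the same strategy as in the proof of Proposition \ref{prop:2-8_equivariant_Ku}: we start from the semiorthogonal decomposition of Lemma \ref{lem:3-1 cover SOD} and mutate it until it agrees with the one of Lemma \ref{lem:3-1 second equivariant sod}.

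The first step is to rewrite the left half of Lemma \ref{lem:3-1 cover SOD} using Lemma \ref{lem:3-1 D(Y) SOD}. The decomposition of $\Db(Y)$ there has the form $\langle \underline{\cE}(-\mathbf{1}), \underline{\cE}\rangle$, and $f_0^*$ is fully faithful. So we may write
\[
\Db(X)^{\mu_2} = \langle \cA_1, \cA_2 \rangle,
\]
where
\[
\cA_1 = \langle \cO_X(-1,-1,-1)\rho_0, \cO_X(0,-1,-1)\rho_0, \cO_X(-1,0,-1)\rho_0, \cO_X(-1,-1,0)\rho_0\rangle
\]
and
\[
\cA_2 = \langle \cO_X(0,0,0)\rho_0, \cO_X(1,0,0)\rho_0, \cO_X(0,1,0)\rho_0, \cO_X(0,0,1)\rho_0, j_{0*}\Db(Z)\rangle .
\]

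Next we apply Lemma \ref{lem:SODs and Serre functors} to obtain $\Db(X)^{\mu_2} = \langle \cA_2, S^{-1}(\cA_1)\rangle$. The crucial input is the equivariant Serre functor. Since $K_X = f^*(K_Y + \tfrac12 Z) = f^*\cO_Y(-1,-1,-1)$, and the equivariant structure on $\omega_X$ twists by the nontrivial character (the local generator changes sign under the involution along the ramification), we get
\[
S_{\Db(X)^{\mu_2}} = -\otimes \cO_X(-1,-1,-1)\rho_1\,[3].
\]
This parallels the formula used in the proof of Proposition \ref{prop:2-8_equivariant_Ku}, where $\cO_X(-2H+E)=\cO_X(-D)$. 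Since $\rho_1\otimes\rho_1=\rho_0$, the inverse Serre functor sends each $\cO_X(a,b,c)\rho_0$ to $\cO_X(a+1,b+1,c+1)\rho_1$ up to shift. Hence $S^{-1}(\cA_1) = \langle \cO_X(0,0,0)\rho_1, \cO_X(1,0,0)\rho_1, \cO_X(0,1,0)\rho_1, \cO_X(0,0,1)\rho_1\rangle$.

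Finally, we left-mutate $j_{0*}\Db(Z)$ through the four objects $\cO_X(0,0,0)\rho_0,\dots,\cO_X(0,0,1)\rho_0$. This gives
\[
\Db(X)^{\mu_2} = \langle \bL\, j_{0*}\Db(Z), \cO_X(0,0,0)\rho_0, \dots, \cO_X(0,0,1)\rho_0, \cO_X(0,0,0)\rho_1, \dots, \cO_X(0,0,1)\rho_1\rangle .
\]
The mutation functor is an equivalence onto its image and $j_{0*}$ is fully faithful by Theorem \ref{thm: KP thm 4.1}, so $\bL\, j_{0*}\Db(Z)\simeq \Db(Z)$. This component is the right orthogonal of the same eight exceptional objects as in Lemma \ref{lem:3-1 second equivariant sod}, whose right orthogonal is $\cA_X^{\mu_2}$. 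Hence $\cA_X^{\mu_2}\simeq\Db(Z)$.

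The main point requiring care is the equivariant Serre functor, and specifically that the twist carries $\rho_1$ rather than $\rho_0$; everything else is formal. We would justify it exactly as was done implicitly in the Family 2-8 case, by citing the description of $\omega_X$ for double covers in \cite{KP}.
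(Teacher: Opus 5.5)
Your proposal is correct and follows essentially the same argument as the paper. You send $\cA_1$ through the inverse equivariant Serre functor $-\otimes\cO_X(1,1,1)\rho_1[-3]$ to produce the $\rho_1$-block, then left-mutate $j_{0*}\Db(Z)$ through the $\rho_0$-block to reach the decomposition of Lemma \ref{lem:3-1 second equivariant sod}. The only difference is cosmetic: the paper starts from the unmutated collection $\cO_X(-1,0,0)\rho_0,\cO_X(0,-1,0)\rho_0,\cO_X(0,0,-1)\rho_0,\cO_X(0,0,0)\rho_0$ and applies Lemma \ref{lem:3-1_mutations} at the end, whereas you use the already-mutated collection of Lemma \ref{lem:3-1 D(Y) SOD} from the outset, exactly as in the Family 2-8 proof; both collections generate the same subcategory, so the mutation and the conclusion are the same.
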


\begin{proof}
    We apply Lemma \ref{lem:SODs and Serre functors} to the semiorthogonal decomposition from Lemma \ref{lem:3-1 cover SOD} with 
    \begin{equation*}
        \cA_1 = \langle \cO_X(-1,-1,-1) \rho_0 , \cO_X(0,-1,-1) \rho_0, \cO_X(-1,0,-1) \rho_0, \cO_X(-1,-1,0) \rho_0 \rangle 
    \end{equation*}
    and 
    \begin{equation*}
        \cA_2 = \langle \cO_X(-1,0,0) \rho_0, \cO_X(0,-1,0) \rho_0, \cO_X(0,0,-1) \rho_0, \cO_X(0,0,0) \rho_0 , j_{0*} \Db(Z) \rangle .
    \end{equation*}
    We also write 
    \[
    \cA_2' \coloneqq {}^{\perp}(j_{0*}\Db(Z)) \subset \cA_2.
    \]
    Note that $S_{\Db(X)^{\mu_2}}(-) = - \otimes \cO_X(-1,-1,-1) \rho_1$. We now mutate $\cA_2$ to the left along $\cA_1$, and then mutate $j_{0*}\Db(Z)$ to the left along $\cA_2'$: 
    \begin{align*}
        \Db(X)^{\mu_2} = \langle &\cA_2, S_{\Db(X)^{\mu_2}}^{-1}(\cA_1)  \rangle \\
        = \langle &\cO_X(-1,0,0) \rho_0, \cO_X(0,-1,0) \rho_0, \cO_X(0,0,-1) \rho_0, \cO_X(0,0,0) \rho_0 , j_{0*} \Db(Z) , \\
        &\mathcal{O}(0,0,0)\rho_1, \mathcal{O}(1,0,0) \rho_1, \mathcal{O}(0,1,0) \rho_1, \mathcal{O}(0,0,1) \rho_1    \rangle \\
        = \langle &\bL_{\cA_2'} j_{0*} \Db(Z) , 
        \cO_X(-1,0,0) \rho_0, \cO_X(0,-1,0) \rho_0, \cO_X(0,0,-1) \rho_0, \cO_X(0,0,0) \rho_0 \\
        &\mathcal{O}_X(0,0,0)\rho_1, \mathcal{O}_X(1,0,0) \rho_1, \mathcal{O}_X(0,1,0) \rho_1, \mathcal{O}_X(0,0,1) \rho_1   \rangle \\
        = \langle &\bL_{\cA_2'} j_{0*} \Db(Z) , 
        \mathcal{O}_X(0,0,0) \rho_0, \mathcal{O}_X(1,0,0) \rho_0, \mathcal{O}_X(0,1,0) \rho_0, \mathcal{O}_X(0,0,1) \rho_0 \\
        &\mathcal{O}_X(0,0,0)\rho_1, \mathcal{O}_X(1,0,0) \rho_1, \mathcal{O}_X(0,1,0) \rho_1, \mathcal{O}_X(0,0,1) \rho_1   \rangle
    \end{align*}
    For the last equality, we have applied Lemma \ref{lem:3-1_mutations} three times. The right-hand-sides of the above and the semiorthogonal decomposition from Lemma \ref{lem:3-1 second equivariant sod} are the same, and $\bL_{\cA_2'} $ is a fully faithful embedding, hence we conclude the result.
\end{proof}    

Finally, we show the relation between the Serre functor and the involution on $\cA_X$.
\begin{proposition}\label{prop:3-1 serre involution relationship}
    We have 
    \begin{equation*}
        S_{\cA_X} \simeq \tau_{\cA_X}[2] .
    \end{equation*}
\end{proposition}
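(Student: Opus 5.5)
The plan is to apply Proposition \ref{prop: IK prop 3.2} directly, exactly as in Family 2-8. Its setting is that of Proposition \ref{prop: double cover SODs IK}, so we need a class $H \in \Pic Y$ with $Z \sim 2H \sim -K_Y$, and an exceptional collection $\underline{\cE}$ in $\Db(Y)$ such that $\{\underline{\cE}(-H), \underline{\cE}\}$ is a \emph{full} exceptional collection.

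For $Y = \bP^1\times\bP^1\times\bP^1$ we take $H = \mathbf{1} = (1,1,1)$. Then
\[
-K_Y = \cO_Y(2,2,2) = 2\mathbf{1},
\]
and $Z \in |\cO_Y(2,2,2)|$ by definition of Family 3-1. We take
\[
\underline{\cE} = \{\cO_Y(0,0,0), \cO_Y(1,0,0), \cO_Y(0,1,0), \cO_Y(0,0,1)\},
\]
the collection already used to define $\cA_X$ in Lemma \ref{lem:3-1 A_X SOD}.

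The twist $\underline{\cE}(-\mathbf{1})$ is
\[
\{\cO_Y(-1,-1,-1), \cO_Y(0,-1,-1), \cO_Y(-1,0,-1), \cO_Y(-1,-1,0)\}.
\]
Lemma \ref{lem:3-1 D(Y) SOD} then says precisely that $\{\underline{\cE}(-\mathbf{1}), \underline{\cE}\}$ is a full exceptional collection in $\Db(Y)$. This fullness is what Proposition \ref{prop: IK prop 3.2} requires beyond Proposition \ref{prop: double cover SODs IK}. The Kuznetsov component defined in Lemma \ref{lem:3-1 A_X SOD} is the right orthogonal of $f^*\underline{\cE}$, which matches the $\cA_X$ of Proposition \ref{prop: double cover SODs IK}.

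Applying Proposition \ref{prop: IK prop 3.2} therefore gives
\[
S_{\cA_X} \simeq \tau_{\cA_X}[\dim X - 1] = \tau_{\cA_X}[2],
\]
since $\dim X = 3$.

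There is essentially no obstacle. The only point that needs care is the double-cover set-up. The cover is defined by the line bundle $\cO_Y(\mathbf{1})$ with $\cO_Y(2\cdot\mathbf{1}) \simeq \cO_Y(Z)$, so $f_*\cO_X = \cO_Y \oplus \cO_Y(-\mathbf{1})$. This is the hypothesis \cite{IK} uses. It ensures that the convention on the twist $-H$ agrees with the ordering in Lemma \ref{lem:3-1 D(Y) SOD}, which we have checked: the twisted block comes first. If one instead wanted a self-contained argument, one would compute $S_{\Db(X)} = -\otimes\cO_X(-\mathbf{1})[3]$ and then verify that the mutation realizing $S_{\cA_X}$ agrees with $\tau$ composed with $-\otimes\cO_X(\mathbf{1})$ via the triangle $f^*f_*F \to F\oplus\tau F$. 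We do not need this, since the cited result applies verbatim.
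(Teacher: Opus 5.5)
Your proposal is correct and is the paper's proof: Lemma \ref{lem:3-1 D(Y) SOD} shows that $\{\underline{\cE}(-\mathbf{1}), \underline{\cE}\}$ is a full exceptional collection in $\Db(Y)$. Proposition \ref{prop: IK prop 3.2}, applied with $H=\mathbf{1}$ and $\dim X = 3$, then gives $S_{\cA_X}\simeq \tau_{\cA_X}[2]$.
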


\begin{proof}
    Since $\Db(Y) = \langle \underline{\cE}(-\mathbf{1}), \underline{\cE} \rangle $ by Lemma \ref{lem:3-1 D(Y) SOD}, the collection $\{ \underline{\cE}(-\mathbf{1}), \underline{\cE} \}$ is full in $\Db(Y)$. Thus, by Proposition \ref{prop: IK prop 3.2}, we get the desired isomorphism of functors. 
\end{proof}

\section{Categorical Torelli Theorems} \label{sec:categorical torelli theorems}
We are now ready to put all of the previous results together to prove \Cref{thm: intro main theorem}.
Recall that a Fano threefold $X$ in one of the three families under consideration is called $Z$-general if the N\'eron--Severi lattice of the branch divisor $Z$ is isometric to $\Lts, \Lte$, or $\Lto$, see Definition \ref{def: Z general}. 

\begin{theorem} \label{thm: main theorem}
    Let $X$ be a $Z$-general Fano variety in one of the families $\mathcal{X}_{2\text{-}6(b)}, \mathcal{X}_{2\text{-}8}$, and $\mathcal{X}_{3\text{-}1}$,
    and let $X'$ be a Fano variety in the same family. Then, if there exists an 
    equivalence between the Kuznetsov components $\cA_X$ and $\cA_{X'}$, it follows that $X$ is isomorphic to $X'$:
    \[
    \cA_X \simeq \cA_{X'} \implies X\simeq X'.
    \]
\end{theorem}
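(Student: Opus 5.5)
The plan is to chain the four implications already established in the paper:
\[
\cA_X\simeq\cA_{X'} \implies \cA_X^{\mu_2}\simeq\cA_{X'}^{\mu_2} \implies \Db(Z)\simeq\Db(Z') \implies Z\simeq Z' \implies X\simeq X'.
\]

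Before doing so, we must check that the hypotheses of each earlier result apply to $X'$, since the theorem only assumes $X$ is $Z$-general.

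First, since $X$ is $Z$-general it is in particular K3-general. Theorem \ref{thm: any equivalence lifts to equivariant categories} only requires $X$ to be K3-general and $X'$ to be in the same family. Applied to the given equivalence, it yields $\cA_X^{\mu_2}\simeq\cA_{X'}^{\mu_2}$.

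Next, we want to apply Theorem \ref{thm: equivariant kuznetsov components are the branch loci} on both sides to get $\Db(Z)\simeq\Db(Z')$. This requires $X'$ to be K3-general as well, i.e.\ that its branch locus $Z'$ is a smooth K3 surface. This is the one step needing care. Since $X'$ is a smooth Fano threefold that is a double cover of a smooth $Y$, the branch divisor $Z'\in|-K_Y|$ is smooth, as the double cover is smooth exactly when the branch divisor is. By adjunction $K_{Z'}=0$. Connectedness and $H^1(Z',\cO_{Z'})=0$ follow from the sequence $0\to\cO_Y(K_Y)\to\cO_Y\to\cO_{Z'}\to0$ together with Kodaira vanishing on the Fano $Y$. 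So $Z'$ is a smooth K3 surface and $X'$ is K3-general. (If one prefers to avoid this argument, one can simply note that the paper's families are defined with smooth branch loci, so that every member is K3-general.)

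With both sides K3-general, Theorem \ref{thm: equivariant kuznetsov components are the branch loci} gives $\Db(Z)\simeq\cA_X^{\mu_2}\simeq\cA_{X'}^{\mu_2}\simeq\Db(Z')$.

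Since $X$ is $Z$-general, $\NS(Z)$ is isometric to $\Lts$, $\Lte$ or $\Lto$. Theorem \ref{thm: the k3s have no FM partners} then says $Z$ has no non-trivial Fourier--Mukai partners, so $Z\simeq Z'$. This also shows $\NS(Z')\simeq\NS(Z)$, hence $X'$ is $Z$-general, although Theorem \ref{thm: branch determines Fano} only needs this for $X$.

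Finally, Theorem \ref{thm: branch determines Fano} applied to $X$ and $X'$ turns $Z\simeq Z'$ into $X\simeq X'$, which completes the argument.

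The only potential obstacle is confirming that $X'$ satisfies the hypotheses of the equivariant-component result, i.e.\ that it is K3-general. As argued above, smoothness of $X'$ handles this. The rest is direct citation of earlier results.
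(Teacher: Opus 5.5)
Your proposal is correct and follows the paper's proof exactly, chaining Theorems \ref{thm: any equivalence lifts to equivariant categories}, \ref{thm: equivariant kuznetsov components are the branch loci}, \ref{thm: the k3s have no FM partners} and \ref{thm: branch determines Fano} in the same order. Your explicit check that $X'$ is K3-general (smoothness of $X'$ forces smoothness of $Z'$) fills a step the paper leaves implicit; conversely, the paper's remark via Orlov that $\Db(Z)\simeq\Db(Z')$ is of Fourier--Mukai type is not needed for Theorem \ref{thm: the k3s have no FM partners} as stated.
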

\begin{proof}
    Let $Z$ and $Z'$ be the associated K3 surfaces of $X$ and $X'$. 
    By Theorem \ref{thm: any equivalence lifts to equivariant categories}, the equivalence $\Phi$ lifts to an equivalence of equivariant categories $\Phi^{\mu_2}\colon \cA_X^{\mu_2}\simeq \cA_{X'}^{\mu_2}$.
    By Theorem \ref{thm: equivariant kuznetsov components are the branch loci}, this induces an equivalence $\Db(Z)\simeq \Db(Z')$.
    This equivalence is Fourier--Mukai by \cite{OrlovK3}. 
    In particular, $Z$ and $Z'$ are Fourier--Mukai partners. 
    By Theorem \ref{thm: the k3s have no FM partners}, this means that we must have $Z\simeq Z'$. 
    Finally, we conclude that $X\simeq X'$ by Theorem \ref{thm: branch determines Fano}.
\end{proof}

\begin{theorem}\label{thm: categorical torelli for verra fourfolds}
    Let $V_4, V_4'$ be very general Verra fourfolds and suppose that there is an equivalence $\cA_{V_4} \simeq \cA_{V_4'}$. Then it follows that $V_4$ is isomorphic to $V_4'$:
    \begin{equation*}
        \cA_{V_4} \simeq \cA_{V_4'} \implies V_4 \simeq V_4' .
    \end{equation*}
\end{theorem}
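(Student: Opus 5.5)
The plan is to chain together the results already established for Verra fourfolds, in the order sketched in the introduction. Let $V_3$ and $V_3'$ be the branch loci of $V_4$ and $V_4'$. Since $V_4$ and $V_4'$ are very general, they are Fano-general, so $V_3$ and $V_3'$ are smooth ordinary Verra threefolds, and we may assume they are very general in $|\cO_Y(2,2)|$. Fix one of the quadric surface fibrations, say $\pi_1\colon V_4\to \bP^2$. By Theorem \ref{thm: Kuz quadric fibrations SOD} and \cite[Lemma 4.2]{Kuzcub}, it gives an equivalence $\cA_{V_4}=\Db(\bP^2,\cB_0)\simeq \Db(S,\alpha)$. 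Here $S\to\bP^2$ is the double cover branched in the discriminant sextic of $\pi_1$, and $\alpha\in\Br(S)_2$. Doing the same for $V_4'$ gives $\cA_{V_4'}\simeq \Db(S',\alpha')$.

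Next we check the hypotheses of Proposition \ref{prop: the derived category of the verra twisted K3 determines the Verra threefold}. For very general $V_4$, the discriminant sextic is a very general plane sextic, so $S$ is a K3 surface of degree $2$ with $\rho(S)=1$; likewise for $S'$. This is a standard Noether--Lefschetz type statement. Because the discriminant sextics of $\pi_1$ and of the conic bundle $V_3\to\bP^2$ agree, the genericity of $V_3$ passes to $S$. By Lemma \ref{lem: brauer classes match}, $\alpha$ and $\alpha'$ are of Verra type, with $V_\alpha\simeq V_3$ and $V_{\alpha'}\simeq V_3'$. In particular the associated ordinary Verra threefolds are very general.

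Composing the equivalences gives $\Db(S,\alpha)\simeq\cA_{V_4}\simeq\cA_{V_4'}\simeq\Db(S',\alpha')$. Proposition \ref{prop: the derived category of the verra twisted K3 determines the Verra threefold} then yields $V_\alpha\simeq V_{\alpha'}$, that is, $V_3\simeq V_3'$. Finally, Proposition \ref{prop: branch determines cover V4} (equivalently, the second part of Theorem \ref{thm: branch determines Fano}) upgrades this to $V_4\simeq V_4'$.

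The step I expect to be the main obstacle is making ``very general'' precise enough to feed Proposition \ref{prop: the derived category of the verra twisted K3 determines the Verra threefold}. That proposition relies on \cite{KKM} to get $S_1\not\simeq S_2$ and on Lemma \ref{lem: the twisted k3s have few partners} to get $|\FM(S,\alpha)|\le 2$. So we need $V_3$ outside a countable union of proper closed subsets on which either $\rho(S_i)>1$ or $S_1\simeq S_2$. I would handle this by noting that each bad condition is a countable union of closed conditions on $|\cO_Y(2,2)|$, pulled back along the dominant map from Verra fourfolds to their branch loci. Each is proper by \cite{KKM} and the existence of sextics with Picard rank one double covers, so their complement is the required very general locus.
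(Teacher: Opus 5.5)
Your proposal is correct and follows the paper's proof. You identify $\cA_{V_4}$ with a twisted K3 surface of Verra type via a quadric surface fibration, use Lemma \ref{lem: brauer classes match} and Proposition \ref{prop: the derived category of the verra twisted K3 determines the Verra threefold} to get $V_3\simeq V_3'$, and conclude with Proposition \ref{prop: branch determines cover V4}. The paper writes both twisted K3 surfaces $(S_i,\alpha_i)$ into its chain of equivalences, but that difference is cosmetic, and your discussion of the very general locus spells out what the paper leaves implicit.
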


\begin{proof}
    Let $V_3$ and $V_3'$ be the branch loci of the double covering maps $V_4\to \bP^2\times \bP^2$ and $V_4' \to \bP^2 \times \bP^2$, i.e. $V_3$ and $V_3'$ are smooth ordinary Verra threefolds. 
    Then we have 
    \[
    \Db(S_1,\alpha_1) \simeq \Db(S_2,\alpha_2) \simeq \cA_{V_4} \simeq \cA_{V_4'} \simeq \Db(S_1',\alpha_1') \simeq \Db(S_2',\alpha_2').
    \]
    By our generality assumptions on $V_4$ and $V_4'$, the K3 surfaces $S_i$ and $S_i'$, where $i = 1,2$, each have Picard rank $1$, hence we may apply Proposition \ref{prop: the derived category of the verra twisted K3 determines the Verra threefold} and Lemma \ref{lem: brauer classes match}  to conclude that for $i,j\in \left\{1,2\right\}$, we have $V_3 \simeq V_{\alpha_i} \simeq V_{\alpha_j'} \simeq V_3'$. Thus, by Proposition \ref{prop: branch determines cover V4}, we have $V_4 \simeq V_4'$.
\end{proof}

\subsection{Open problems}
\subsubsection{Special Verra fourfolds}
Our proof of the categorical Torelli theorem for Verra fourfolds uses the generality assumption in two ways:
\begin{enumerate}
    \item To ensure that the two associated K3 surfaces of the Verra fourfold have Picard rank $1$ (needed in the proof of Lemma \ref{lem: the twisted k3s have few partners});
    \item To ensure that the two associated K3 surfaces of the Verra fourfold are not isomorphic (needed in the proof of Proposition \ref{prop: the derived category of the verra twisted K3 determines the Verra threefold}).
\end{enumerate}

For special Verra threefolds, both of these properties may fail, and it is an important problem to study the potential Fourier--Mukai partners of these Verra fourfolds. For example, using the similarity between Verra fourfolds and cubic fourfolds, we may ask the following two questions:
\begin{question} \leavevmode
    \begin{itemize}
    \item[(a)] If two Verra fourfolds have equivalent Kuznetsov components, are they birational?
    \item[(b)] Is a Verra fourfold rational if and only if its Kuznetsov component is equivalent to an untwisted K3 surface?
\end{itemize}
\end{question}

Another motivation to study Fourier--Mukai partners of Verra fourfolds is $L$-equivalence, since the associated K3 surfaces of a Verra fourfold are well-known for being $L$-equivalent under certain conditions, cf. \cite[\S2.6.2]{KS}, but not isomorphic \cite{KKM}. In light of the conjectural relationship between $L$- and $D$-equivalence for K3 surfaces \cite{KS, Mei}, as well as the analogous results for cubic fourfolds \cite{BL, MM}, we also ask the following question:
\begin{question}
    If two Verra fourfolds are $L$-equivalent, are their Kuznetsov components equivalent?
\end{question}

\subsubsection{Family 2-8(b)}
Recall from Remark \ref{rmk: why not 2-8(b)} that Family 2-8 splits into two subfamilies, called 2-8(a) and 2-8(b). 
Theorem \ref{thm: main theorem} holds for $Z$-general Fano threefolds in Family 2-8. 
These are all members of Family 2-8(a). A Fano threefold in Family 2-8 is a member of Family 2-8(b) if the scheme-theoretic intersection $Z\cap E$ is singular but reduced. 
In the smooth case, the exceptional divisor $e \in \NS(Z)$ always satisfies $e^2 = 2$. 
However, if $Z\cap E$ is not smooth, the N\'eron--Severi lattice of $Z$ depends on the singularities of $Z\cap E$. 
Thus, to prove a categorical Torelli theorem for very general members of Family 2-8(b), more N\'eron--Severi lattices need to be considered. 

\subsubsection{The remaining double covers} \label{sec: remaining double covers}
Finally, as we already noted in the introduction, there are only two remaining families of Fano threefolds of Picard rank $\rho \geq 2$ which are listed as double covers on Fanography \cite{fanography} for which a categorical Torelli theorem has not been proved, namely Families 2-2 and 2-18. 
    \begin{enumerate}
        \item \textbf{Family 2-2:} These are double covers of $\bP^1 \times \bP^2$ branched in a divisor of bidegree $(2,4)$. The approach of the current paper is not possible, since the canonical bundle of $Z$ is no longer trivial. In fact, $K_Z \simeq \cO_Z(0,1)$, so $Z$ is of general type. By the trichotomy discussed at the end of \cite[Section 3]{DJR}, one might expect the positivity of $K_Z$ to mean that the equivariant Kuznetsov component should contain a copy of $\Db(Z)$. One could then apply the techniques of \cite{DJR} to show that an equivalence of Kuznetsov components implies a Hodge isometry between the middle cohomologies of the respective branch divisors. However, we are unaware of a Torelli theorem for bidegree $(2,4)$ divisors in $\bP^1 \times \bP^2$.
        \item \textbf{Family 2-18:} These are double covers of $\bP^1 \times \bP^2$ branched in a divisor of bidegree $(2,2)$. In this case, $K_Z \simeq \cO_Z(0,-1)$, i.e. $Z$ is rational, but not Fano. By the trichotomy from \cite[Section 3]{DJR}, the negativity of $K_Z$ might lead one to expect $\Db(Z)$ to contain the equivariant Kuznetsov component.
    \end{enumerate}

\bibliographystyle{alpha}
{\small{\bibliography{references}}}
\end{document}